\newtheorem{theorem}{Theorem}[section]
\newtheorem{remark}[theorem]{Remark}
\newtheorem{definition}[theorem]{Definition}
\newtheorem{lemma}[theorem]{Lemma}
\newtheorem{proposition}[theorem]{Proposition}
\newtheorem{conjecture}[theorem]{Conjecture}
\newtheorem{corollary}[theorem]{Corollary}
\newtheorem{example}[theorem]{Example}
\newtheorem{question}[theorem]{Question}
\newcommand\Q{{\mathbb{Q}}}
\def\Supp{\mathop{\rm Supp}\nolimits}
\def\cO{{\mathcal O}}
\def\cE{{\mathcal E}}
\def\cR{{\mathcal R}}
\def\cG{{\mathcal G}}
\def\cH{{\mathcal H}}
\def\cF{{\mathcal F}}
\def\cQ{{\mathcal Q}}
\def\cM{{\mathcal M}}
\let\ol=\overline
\let\wh=\widehat
\def\bQ{{\mathbb Q}}
\def\bC{{\mathbb C}}
\def\bR{{\mathbb R}}
\def\bP{{\mathbb P}}
\begin{document}
\title[]{ORbifold Slope Rational-connectedness}

\author{Fr\'ed\'eric Campana}
\address{Universit\'e Lorraine \\
 Institut Elie Cartan\\
Nancy \\ Institut Universitaire de France\\ and KIAS scholar,\ KIAS\\
85 Hoegiro, Dongdaemun-gu\\
Seoul 130-722, South Korea,}

\email{frederic.campana@univ-lorraine.fr}

\date{\today}

\maketitle

\tableofcontents

%%%%%%%%%%%%%%%%%%%%%%%%%%%%%%%%%%%%%%%%%%%%%%%%%%%%%%%%%%%%%%%%%%%%%%%%%%%%%%%%%%%%%%%%%%%%%%%%%%%%%%%%%%%%%%%%%%%%

\begin{abstract} We define, for smooth projective orbifold pairs $(X,D)$ notions of `slope Rational connectedness', and of orbifold `slope Rational quotient' . These notions extend to this larger context the classical notions of rationally connected manifold and `rational quotient' (sometimes called `MRC fibration'). Our notions and proofs work entirely in characteristic zero, and are based on the consideration of foliations with minimal positive slope with respect to some suitable movable class. The existence of covering or connecting families of `orbifold rational curves' is indeed presently unknown in the orbifold context, in situations analogous to the classical case $D=0$. By contrast, the notions we introduce here, are checkable in practice and can certainly be used to show general properties expected from the existence of connecting families of `orbifold rational curves'. The proofs given here in the orbifold context provide new proofs in the classical case where $D=0$, since the classical proofs did not seem to adapt, with the presently existing techniques, to this broader context.
\end{abstract}

%%%%%%%%%%%%%%%%%%%%%%%%%%%%%%%%%%%%%%%%%%%%%%%%%%%%%%%%%%%%%%%%%%%%%%%%%%%%%%%%%%%%%%%%%%%%%%%%%%%%%%
\section{Introduction}

Let $X$ be any connected complex projective manifold. Its \emph{rational quotient map} $\rho:X\dasharrow R$ splits $X$ into its antithetical parts: rationally connected (the fibres), and non-uniruled (the base $R$). This splitting can aternatively be defined according to the slope-positivity of the cotangent bundle relatively to movable classes, without referring to rational curves. This will be the point of view adopted here.

Indeed, rational-connectedness is also characterised by the existence of a movable class $\alpha$ on $X$ such that $\mu_\alpha(\cQ)> 0$, for any quotient $\cQ$ of the tangent bundle $T_X$ (cf. Proposition \ref{prc} below). Replacing $T_X$ by the orbifold tangent bundle of a smooth orbifold pair $(X,D)$ leads to define the notion of \emph{slope Rationally Connected orbifolds}. 

\noindent The main objective of the present text is indeed to introduce this notion by several definitions shown to be equivalent, and to construct the rational quotient splitting in the category of smooth orbifold pairs $(X,D)$. This permits to formulate and extend to arbitrary smooth orbifold pairs results previously restricted to either manifolds without orbifold structure, or to orbifold pairs with pseudo-effective canonical bundles. 

One of the main problems of birational geometry is to `decompose' functorially (quasi)-projective manifolds, by suitable fibrations, into parts having a `signed' canonical bundle, the rational quotient map being the first step of such a decomposition. This decomposition is (conditionally in an orbifold version of the $C_{n,m}$ conjecture) achieved in \cite{Ca07}. The `parts' in the decomposition are, however, not manifolds, but orbifold pairs with a 'signed' canonical bundle. Understanding the structure of general (quasi)-projective manifolds thus requires the consideration of the larger category of orbifold pairs. They also appear in questions seemingly independent from these structural considerations such as the solution of the Shafarevich-Viehweg `hyperbolicity conjecture' in \cite{CP15}.

As said above, our considerations do not refer to rational curves. A crucial advantage is that only characteristic zero arguments are used. Moreover, many basic properties of rationally connected manifolds can be obtained using negativity properties of their cotangent or tensor bundles, bypassing the consideration of rational curves on them (\cite{BC}, \cite{CR}).

\medskip

Let us now state more precisely the main results of the text. We need to recall first some of the key notions. 
\medskip

\smallskip

Let $(X,D)$ be a `smooth projective orbifold pair. In our previous text \cite{CP15} (see also \cite{Ca04} and \cite{Ca07}), we introduced orbifold (co)tangent sheaves for general smooth orbifold pairs as above, by lifting them to some (or any) Kawamata cover $\pi:X'\to (X,D)$ adapted to $(X,D)$. We thus obtained dual locally-free sheaves $\pi^*(T(X,D))\subset \pi^*(TX)$ and $\pi^*(\Omega^1(X,D))\supset \pi^*(\Omega^1_X)$. Their determinants are the $\pi$-liftings of the usual anticanonical and canonical $\Bbb Q$-bundles $\pm(K_X+D)$.

Let now $\alpha$ be a movable class on $X$ (see \cite{CPe} for this notion, and the ones to follow, which were introduced there). 
The notions of slope, (semi-)stability, Harder-Narasimhan filtrations of torsion-free sheaves $\cE$ on $X$, with respect to any movable class $\alpha$, are defined exactly as in the classical case of polarisations, with the same properties\footnote{With the only exception that the restriction theorem of Mehta-Ramanathan remains an open problem in this broader context.}. In particular, the property $\mu_{\alpha,min}(\cE)>0$ means that any quotient $\cQ$ of $\cE$ has a strictly positive $\alpha$-slope. 
\medskip

\noindent Our first main result is:

\begin{theorem}\label{torc} Let $(X,D)$ be a smooth orbifold pair. The following properties are equivalent:
\begin{enumerate}

\item[{\rm 1.}] For any dominant rational map $f:X\dasharrow Z$  with connected fibres, the orbifold base $(Z,D_Z)$ of any `neat model' of $(f,D)$ has a canonical bundle $K_Z+D_Z$ which is not pseudo-effective, if $dim(Z)>0$. 

\item[{\rm 2.}] For any ample line bundle $A$ on $X$, and some $m_A$, one has: 
$h^0(X',\pi^*(\otimes^m( \Omega^1(X,D)))\otimes \pi^*(A))=0, \forall m\geq m_A$.

\item[{\rm 2'.}] For any ample $A$ on $X$, and some $m_A$, one has $\forall m\geq m_A$: 

$h^0(X',Sym^m(\wedge^p(\pi^*(\Omega^1(X,D)))\otimes \pi^*(A))=0, \forall p>0$.

Versions checked on $X$ also exist:

\item[{\rm 3.}] For any ample line bundle $A$ on $X$, and some $m_A$, one has: 
$h^0(X,[\otimes^m]( \Omega^1(X,D))\otimes \pi^*(A))=0, \forall m\geq m_A$.

\item[{\rm 3'.}] For any ample $A$ on $X$, and some $m_A$, one has $\forall m\geq m_A$: 
$h^0(X,[Sym^m(\wedge^p)](\Omega^1(X,D))\otimes \pi^*(A))=0, \forall m\geq m_A,\forall p>0$.

\item[{\rm 4.}] One has: $\mu_{\pi^*(\alpha),min}(\pi^*(TX,D))>0$, for some movable class $\alpha$ on $X$. Moreover, the class $\alpha$ can be chosen to be `movable-big' (i.e: interior to the movable cone of $X$).
\end{enumerate}

\end{theorem}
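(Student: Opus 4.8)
The plan is to prove the six conditions equivalent as a cycle, with $(4)\Rightarrow\{(2),(2'),(3),(3')\}$ and $\{(2),(2'),(3),(3')\}\Rightarrow(1)$ being the soft implications and $(1)\Rightarrow(4)$ carrying all the content.

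For $(4)\Rightarrow(2),(2')$ I would pass to a Kawamata cover $\pi:X'\to(X,D)$, where $\pi^*T(X,D)$, $\pi^*\Omega^1(X,D)$ are locally free and $\pi^*\alpha$ is again movable (and movable-big if $\alpha$ is). Since Harder--Narasimhan theory for movable classes behaves as in the polarized case (in particular, in characteristic zero semistability is stable under tensor product), $\mu_{\pi^*\alpha,\min}(\pi^*T(X,D))>0$ is equivalent to $\mu_{\pi^*\alpha,\max}(\pi^*\Omega^1(X,D))<0$, hence $\mu_{\max}$ of each $\otimes^m$, and of each $\mathrm{Sym}^m(\wedge^p)$ of $\pi^*\Omega^1(X,D)$ (a subquotient of a tensor power), is at most $m$ times a fixed negative number. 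Twisting by the fixed $\pi^*A$ shifts $\mu_{\max}$ by the fixed quantity $\mu_{\pi^*\alpha}(\pi^*A)$, so for $m\ge m_A$ the twisted sheaf has negative $\mu_{\max}$ and therefore no global section (a nonzero section would saturate to a subsheaf of slope $\ge 0$). This gives $(2)$ and $(2')$, and $(3),(3')$ follow at once since $H^0$ over $X$ of the $X$-level orbifold constructions injects into the corresponding $H^0$ over $X'$. Conversely, if $(1)$ fails, fix $f:X\dasharrow Z$ with $\dim Z>0$ whose neat model has $K_Z+D_Z$ pseudo-effective. Then $m(K_Z+D_Z)+A_Z$ is big for $A_Z$ ample and has nonzero sections for all large $m$ (in the relevant divisibility class), and functoriality of orbifold cotangent sheaves for neat models gives $f^*(K_Z+D_Z)\hookrightarrow[\wedge^{\dim Z}]\Omega^1(X,D)$; taking $m$-th symmetric powers, pulling back a section, and absorbing $f^*A_Z$ into an ample $A$ on $X$ produces nonzero elements of $H^0(X,[\mathrm{Sym}^m(\wedge^{\dim Z})]\Omega^1(X,D)\otimes A)$, hence of the tensor-power groups, for all large $m$ — contradicting $(3'),(3),(2'),(2)$.

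The heart is $(1)\Rightarrow(4)$, which I would prove by contradiction combined with induction on $\dim X$. Assume $(1)$ and suppose $\mu_{\alpha,\min}(\pi^*T(X,D))\le 0$ for every movable $\alpha$. If the $\alpha$-positive part $\cF^+_\alpha\subseteq T(X,D)$ (the largest step of the HN filtration with all slopes $>0$) vanishes for every $\alpha$, then $\mu_\alpha(-(K_X+D))=\mu_\alpha(\det T(X,D))\le 0$ for all $\alpha$, so $K_X+D$ is pseudo-effective by the duality between the pseudo-effective cone and the closed movable cone; applying $(1)$ to $f=\mathrm{id}_X$ is then a contradiction (the case $\dim X=0$ being trivial). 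Otherwise fix a movable-big $\alpha$ with $\cF^+:=\cF^+_\alpha\ne 0$. A slope argument shows the bracket map $\wedge^2\cF^+\to T(X,D)/\cF^+$ vanishes (source has $\mu_{\min}>0$, target has $\mu_{\max}\le 0$), so $\cF^+$ is a foliation of positive minimal slope; by an orbifold algebraicity-of-leaves result of Bogomolov--McQuillan / Campana--Paun type, its leaves are algebraic, giving a fibration $g:X\dasharrow Z$ with a neat model $(g,D)$, orbifold base $(Z,D_Z)$, such that generically $\cF^+$ is the relative orbifold tangent sheaf of $g$ and $g^*\Omega^1(Z,D_Z)$ maps into $\Omega^1(X,D)$ with image $(T(X,D)/\cF^+)^*$. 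If $\cF^+=T(X,D)$ then $\mu_{\alpha,\min}(T(X,D))>0$ outright, a contradiction; so $\dim Z\ge 1$. Since neat models of fibrations over $(Z,D_Z)$ pull back to neat models over $(X,D)$, $(Z,D_Z)$ satisfies $(1)$, hence by induction satisfies $(4)$: there is a movable-big $\beta$ on $Z$ with $\mu_{\beta,\min}(T(Z,D_Z))>0$. One then combines $\alpha$ and a movable modification of $g^*\beta$ into a single movable-big class $\alpha'$ on $X$ for which the HN filtration of $T(X,D)$ has all slopes positive — vertically from $\cF^+=\cF^+_\alpha$, and on the quotient $T(X,D)/\cF^+\cong g^*T(Z,D_Z)$ from $\mu_{\beta,\min}(T(Z,D_Z))>0$ via the pulled-back class — giving $\mu_{\alpha',\min}(T(X,D))>0$, the desired contradiction, and closing the induction.

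The main obstacle is this last step of $(1)\Rightarrow(4)$. First, one must establish in the orbifold setting both the algebraic integrability of the positive-slope foliation $\cF^+$ and the precise identification of $g^*\Omega^1(Z,D_Z)$ with $(T(X,D)/\cF^+)^*$ on a neat model — the functoriality of orbifold cotangent sheaves for a merely rational map requires careful choice of models and control of the boundary under blow-ups. Second, the construction of the combined class $\alpha'$ must be done so that the relevant push/pull operations keep the class in the movable cone (or its closure, with a final perturbation into the interior) and so that the slope estimates on the vertical sub and horizontal quotient genuinely add up to strict positivity. By contrast, the implications $(4)\Rightarrow\{(2),(2'),(3),(3')\}\Rightarrow(1)$ are essentially formal once the movable-class analogues of Harder--Narasimhan theory and of the pseudo-effective/movable duality are available.
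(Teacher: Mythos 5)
Your soft implications ($4\Rightarrow 2,2'$, the passage to the integral parts on $X$, and the contrapositive of $3'\Rightarrow 1$ by pulling back sections of $m(K_Z+D_Z)+A_Z$ through the functoriality of orbifold tensors) are correct and essentially those of the paper, and your overall architecture for $1\Rightarrow 4$ (algebraicity of a positive-slope orbifold foliation via \cite{CP15}, induction applied to the orbifold base, then combination of classes) is also the paper's. But the decisive step is exactly the one you defer to as an ``obstacle'', and as you have set it up it fails. With your fixed movable-big $\alpha$ and any lift $\beta_X$ of the class $\beta$ coming from $Z$, the combination $k\alpha+\beta_X$ runs into a sign conflict: quotients $\cQ_F$ of the vertical part $\cF^+$ force $k\gg 0$ (their $\beta_X$-slope may be very negative), while quotients of the horizontal part that are pulled back from $Z$ may have strictly \emph{negative} $\alpha$-slope as soon as $f_*\alpha\neq 0$, so the term $k\,\mu_\alpha$ then overwhelms the positive $\beta_X$-contribution. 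The paper's resolution is to \emph{replace} $\alpha$ by a class $\alpha'\in Mov^0(X/Z)$ with $f_*\alpha'=0$ and $\alpha'$ big on the general fibre: base-pulled-back quotients then have $\alpha'$-slope $0$, and the Grassmannian dichotomy of Lemma \ref{lsheaf} gives strict $\alpha'$-positivity for the remaining quotients. This substitution creates a second problem you do not address: after changing the class, nothing guarantees that $\cF^+=\cF^+_\alpha$ still has positive minimal slope. The paper handles it with an idea absent from your plan: choose, among all movable $\alpha$ with $\alpha\cdot(K_X+D)<0$, one whose associated foliation has \emph{minimal} relative dimension $d_\alpha$; then (Lemma \ref{lfibmin}, Corollary \ref{cfibmin}) the $\alpha'$-maximal destabilizing subsheaf is still $\cF$ and $\mu_{\alpha',\min}(\cF)>0$.

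Two further ingredients that your ``movable modification of $g^*\beta$'' glosses over are genuinely needed. First, $\pi^*(df)$ maps $\pi^*T(X,D)$ onto a subsheaf $\cH$ of the pullback of $T(Z,D_Z)$ whose cokernel is torsion supported on a divisor $F$ partially supported on the fibres, so the lift $\beta_X$ must be chosen with $\beta_X\cdot F_j=0$ for every component $F_j$; this is Proposition \ref{prelmov}.(6) and rests on a negativity lemma for intersection forms on vertical divisors, not on a generic perturbation. Second, the whole construction takes place on a neat birational model, so one still needs the descent statement (Lemma \ref{descent}) to recover positivity of the minimal slope on the original $(X,D)$, and Lemma \ref{movbig} for the final ``movable-big'' refinement. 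In short: right skeleton, but the mechanisms that make the slopes genuinely add up — minimality of $d_\alpha$ together with the switch to a relative, fibrewise-big class with zero pushforward, the orthogonally adjusted lift of $\beta$, and the descent to $(X,D)$ — are missing, and without them the combination step you propose does not close.
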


The sheaves $[\otimes^m]( \Omega^1(X,D))$ and $[Sym^m(\wedge^p)](\Omega^1(X,D))$ are locally free sheaves on $X$ (not $X'$), called the `integral parts' of $\otimes^m( \Omega^1(X,D))$ and $Sym^m(\wedge^p)(\Omega^1(X,D))$ respectively (hence the notation), and defined in \S.\ref{sipot}. They are expected (see \S.\ref{Ginv}) to `approximate' with an accuracy tending to $0$ the corresponding `fractional' sheaves when $m\to +\infty$.

The main implication (sketched at the end of the introduction) of the Theorem \ref{torc} is that 1$\Longrightarrow$ 4.

\begin{definition} \label{def OSRC}
A smooth projective orbifold pair satisfying the equivalent properties of Theorem \ref{torc} will be said to be `slope-Rationally Connected', or $sRC$ for short. 
\end{definition}

\medskip

\noindent The Harder-Narasimhan filtration of $\pi^*(T(X,D))$ and its slopes relative to $\pi^*(\alpha)$ are actually independent of the choice of the ramified cover
$\pi:X'\to X$, because of the conceptually crucial\footnote{The expectation is indeed that the geometry of $(X,D)$ is a well-defined concept, independent on the auxiliary constructions used to define it.} complement to our previous text \cite{CP15}:

\begin{theorem}\label{tind'} Let $(X,D)$ be a smooth projective orbifold pair, and $\pi:X'\to X$ a cover adapted to $(X,D)$. Let $\alpha$ be a movable class on $X$, and $\pi^*(\alpha)$ be its (movable) inverse image on $X'$. Let $HN_{\pi^*(\alpha)}(\pi^*(T(X,D)))$ be the Harder-Narasimhan filtration of $\pi^*(T(X,D))$ relative to $\pi^*(\alpha)$. 

There exists a filtration denoted $HN_{\alpha}(T(X,D))$ of $TX$ such that: $HN_{\pi^*(\alpha)}(\pi^*(T(X,D)))^{sat}=(\pi^*(HN_{\alpha}(T(X,D))))^{sat}$, the saturation being taken in $\pi^*(TX)$. 

Moreover, the filtration $HN_{\alpha}(T(X,D))$, as well as the $\pi^*(\alpha)$-slopes of $HN_{\pi^*(\alpha)}(\pi^*(T(X,D)))$, do not depend on the choice of the adapted cover $\pi$.
\end{theorem}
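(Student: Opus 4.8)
The plan is to reduce the independence statement to the case of a tower of adapted covers dominating a given one, and then to push a Harder--Narasimhan filtration up and down such a tower using the Galois action and the compatibility of slopes with finite pullback. First I would recall from \cite{CP15} that any two adapted covers $\pi_1:X_1\to X$ and $\pi_2:X_2\to X$ are dominated by a common adapted cover $\pi_3:X_3\to X$, so it suffices to treat a single morphism $\phi:X''\to X'$ of covers adapted to $(X,D)$, both sitting over $X$, and to show that the saturated pullback to $X''$ of $HN_{\pi'^*(\alpha)}(\pi'^*(T(X,D)))$ agrees with $HN_{\pi''^*(\alpha)}(\pi''^*(T(X,D)))$, with matching slopes. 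Since $\phi^*(\pi'^*(T(X,D)))$ and $\pi''^*(T(X,D))$ agree away from a codimension-$\geq 2$ set (both are reflexive lifts of the same orbifold tangent sheaf on $(X,D)$, and $\phi$ is \'etale in codimension one over the generic point of each component of $D$ by the adaptedness conditions), their saturations inside $\pi''^*(TX)$ coincide; so the two HN filtrations of the \emph{same} reflexive sheaf on $X''$ need only be compared, and here one invokes that $\mu_{\pi''^*(\alpha)}$ is just $\deg\phi$ (respectively the ramification-weighted degree) times $\mu_{\pi'^*(\alpha)}$ on subsheaves pulled back from $X'$, so semistability and slopes are preserved under $\phi^*$ up to saturation. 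Uniqueness of the HN filtration then forces the identification.

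The descent step---producing the filtration $HN_\alpha(T(X,D))$ of $TX$ itself---is where I expect the real work. The point is that $HN_{\pi^*(\alpha)}(\pi^*(T(X,D)))$ is canonical, hence invariant under the Galois group $G$ of a Galois adapted cover $\pi:X'\to X$ (after replacing $\pi$ by its Galois closure, which is still adapted). A $G$-invariant saturated subsheaf of $\pi^*(TX)$ descends to a saturated subsheaf of $TX$ on the complement of the branch locus, and then extends (by saturation/reflexive hull, using that $TX$ is locally free and $X$ smooth) across the branch locus to a saturated subsheaf of $TX$ on all of $X$; applying this to each step of the HN filtration yields $HN_\alpha(T(X,D))$. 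One must check that this descended filtration, pulled back and saturated, returns the original HN filtration---which holds because pullback-then-descend is the identity on $G$-invariant saturated subsheaves up to saturation---and that it is independent of the choice of Galois adapted cover, which follows from the reduction in the first paragraph applied to Galois closures.

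The main obstacle, and the place I would be most careful, is the interaction between saturation and the branch divisor $D$: the orbifold tangent sheaf is \emph{not} the pullback of $TX$, only a subsheaf of $\pi^*(TX)$ agreeing with it generically, so every comparison of HN filtrations is really a comparison of their saturations in $\pi^*(TX)$, and one must verify that saturation does not destroy $G$-equivariance (it does not, since $G$ acts on $\pi^*(TX)$ and saturation is canonical) and that the slopes are genuinely unchanged---this last needs the fact that adding a subsheaf supported in codimension $\geq 2$, or passing to a reflexive hull, changes neither the slope with respect to a movable class nor $\mu_{\max},\mu_{\min}$. Granting the standard dictionary between movable-class slopes and polarization slopes from \cite{CPe} (in particular that HN filtrations exist and are unique, and that $\mu$ is additive in exact sequences and multiplicative under finite pullback), the argument is then a formal descent along a Galois tower; the only genuinely new input beyond \cite{CP15} is that the orbifold tangent sheaf's reflexivity makes the saturated-pullback operation well behaved, which is precisely the ``conceptually crucial complement'' the theorem advertises.
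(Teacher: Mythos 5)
You follow essentially the same route as the paper: compare the two adapted covers on a common dominating object, use the canonicity (hence Galois-invariance) of the Harder--Narasimhan filtration together with descent of invariant saturated subsheaves (the paper's Lemma \ref{lmumin} and Corollaries \ref{csat}, \ref{csat'}), and the invariance of movable-class slopes under saturation and codimension-two modifications, exactly as in Theorem \ref{tind} and Corollary \ref{cind}. Two small caveats on your justifications: the paper does not invoke a common \emph{adapted} cover dominating both (no morphism between two Kawamata covers over $X$ is available in general), but works with a component of the normalised fibre product plus an $L$-equivariant resolution, i.e.\ a generically finite Galois map of the form ``finite Galois composed with equivariant birational'' to which the same descent formalism applies; and the codimension-one agreement of the two lifted orbifold tangent sheaves is not because the comparison map is \'etale there (it ramifies over $\Supp(D)$), but is verified by the local computation lifting fractional vector fields $x^c\partial_x$ under $x=z^{mm'}=z^{m'm}$, after which reflexivity gives equality everywhere.
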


Said otherwise: the slopes and distributions on $(X-Supp(D))$ induced by the orbifold divisor $D$ do not depend on $\pi$, only on $D$ (and $\alpha$ of course). See Theorem \ref{tind} and its corollary \ref{cind} for details.

\medskip

\noindent Theorem \ref{torc} is (except for the Property 1) the exact analogue of the classical case when $D=0$, since we showed in \cite{CP15} 
 (as a direct consequence of the classical results \cite{MiMo}, \cite{KMM}, \cite{GHS}, \cite{BDPP}, \cite{CDP}):

\begin{proposition}\label{prc} For a smooth projective connected complex manifold $X$, the following $4$ properties are equivalent:
\begin{enumerate}

\item[{\rm 1.}] $X$ is rationally connected (i.e: any two points can be connected by some rational curve)

\item[{\rm 2.}] For any ample line bundle $A$ on $X$, there is an $m(A)>0$ such that $h^0(X,\otimes^m( \Omega_X^1)\otimes A)=0, \forall m\geq m(A)$.

\item[{\rm 2'.}] For any ample line bundle $A$ on $X$, there is an $m'(A)>0$ such that $h^0(X,Sym^m(\Omega_X^p)\otimes A)=0, \forall m\geq m'(A), \forall p>0$.

\item[{\rm 3.}] For any dominant rational map $f:X\dasharrow Z$, with $Z$ smooth and $dim(Z)>0$, $K_Z$ is not pseudo-effective.

\item[{\rm 4.}] $\mu_{\alpha,min}(TX)>0$ for some movable class $\alpha$ on $X$ (see below for these notions).
\end{enumerate}
\end{proposition}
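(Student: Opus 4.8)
The plan is to prove the cycle $(1)\Rightarrow(2)\Rightarrow(2')\Rightarrow(4)\Rightarrow(1)$ together with $(1)\Leftrightarrow(3)$, which makes all four properties equivalent. Every step except $(2')\Rightarrow(4)$ is a routine repackaging of the classical theory of the rational quotient, and I expect that last step to be the main obstacle.

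For $(1)\Rightarrow(2)$ (and, by the same argument, $(1)\Rightarrow(2')$ and $(1)\Rightarrow(4)$) I would use \cite{KMM} and \cite{MiMo} to fix a covering family $\{C_t\}$ of very free rational curves, so that $T_X|_{C_t}$ is ample. Then $\Omega^1_X|_{C_t}$ is a sum of line bundles of degree $\le -1$, so the bundles $(\otimes^m\Omega^1_X\otimes A)|_{C_t}$ and $(Sym^m(\wedge^p\Omega^1_X)\otimes A)|_{C_t}$ are negative once $m>A\cdot C_t$, a bound independent of $t$ and of $p$; hence a global section of these sheaves vanishes on every $C_t$ and therefore on all of $X$. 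Taking $\alpha:=[C_t]$, the bottom graded piece $G$ of the Harder--Narasimhan filtration of $T_X$ along $\alpha$ is a quotient of the ample bundle $T_X|_{C_t}$, whence $\mu_{\alpha,\min}(T_X)=\mu_\alpha(G)>0$.

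Conversely $(4)\Rightarrow(2)$ and $(4)\Rightarrow(2')$ are a slope computation: $(4)$ reads $\mu_{\alpha,\max}(\Omega^1_X)<0$, and since in characteristic zero $\mu_{\alpha,\max}$ is additive on tensor products and does not increase on subsheaves, while $\wedge^p$ and $Sym^m$ are direct summands of tensor powers, the maximal $\alpha$-slopes of $\otimes^m\Omega^1_X$ and of $Sym^m(\wedge^p\Omega^1_X)$ tend to $-\infty$; a nonzero section of such a sheaf $E$ twisted by $A$ gives an injection $\cO_X\hookrightarrow E\otimes A$, so $0\le\mu_{\alpha,\max}(E)+A\cdot\alpha$, which fails for $m$ large. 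The implication $(2)\Rightarrow(2')$ is immediate since $Sym^m(\wedge^p\Omega^1_X)$ is a direct summand of $\otimes^{mp}\Omega^1_X$. Finally $(1)\Leftrightarrow(3)$ is classical: a dominant rational image of a rationally connected manifold is rationally connected, hence uniruled, hence has non-pseudo-effective canonical bundle (as $-K$ is positive on a covering family of rational curves, a movable class); and if $X$ is not rationally connected, its rational quotient $\rho:X\dasharrow R$ (\cite{KMM}) has $\dim R>0$ and non-uniruled base by \cite{GHS}, so $K_R$ is pseudo-effective by \cite{BDPP}, contradicting $(3)$ applied to $f=\rho$.

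There remain $(4)\Rightarrow(1)$ and the return $(2')\Rightarrow(4)$; these carry the substance and are essentially the content of \cite{CDP} (revisited in \cite{CP15}). For $(4)\Rightarrow(1)$ I would argue by contradiction: if $X$ is not rationally connected, its rational quotient $\rho:X\dasharrow R$ has $\dim R>0$ and non-uniruled base \cite{GHS}, so $K_R$ is pseudo-effective \cite{BDPP}; the saturation $\cG:=(\rho^*\Omega^1_R)^{sat}\subseteq\Omega^1_X$ has $\det\cG=\rho^*K_R+(\text{effective})$, and $\mu_{\alpha,\max}(\Omega^1_X)<0$ forces $\mu_\alpha(\cG)<0$, hence $K_R\cdot\rho_*\alpha<0$, where $\rho_*\alpha$ is a nonzero movable curve class on $R$ --- contradicting the pseudo-effective/movable duality of \cite{BDPP}. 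The genuine obstacle is the converse $(2')\Rightarrow(4)$, i.e.\ producing positivity of $T_X$ from the cohomological vanishing: one cannot simply pull pluricanonical sections back through a fibration (pseudo-effectivity does not by itself yield sections), so instead, arguing contrapositively, from $\mu_{\alpha,\max}(\Omega^1_X)\ge 0$ for all movable $\alpha$ one must extract a saturated subsheaf $\cF\subseteq\Omega^1_X$ with $\det\cF$ pseudo-effective, recognise it --- via the Bogomolov--McQuillan integrability of maximal destabilising subsheaves --- as (the conormal of) an algebraically integrable foliation, and identify the associated fibration with a dominant map onto a positive-dimensional base that is then necessarily non-uniruled, thereby contradicting $(2')$ (with $p$ the dimension of that base), or equivalently reaching a contradiction with $(4)$ through the duality of \cite{BDPP}. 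This extraction step, in which the classical rational-curve arguments are traded for the pseudo-effective/movable duality, is precisely what the orbifold statement of Theorem \ref{torc} is designed to perform, and is the heart of the proof.
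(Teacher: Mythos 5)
The paper itself does not reprove Proposition \ref{prc}: it is quoted from \cite{CP15} as a direct consequence of \cite{MiMo}, \cite{KMM}, \cite{GHS}, \cite{BDPP}, \cite{CDP}, and your first three paragraphs do reconstruct that derivation correctly (restriction to very free curves for $1\Rightarrow 2,2',4$; the tensor-slope computation for $4\Rightarrow 2,2'$; GHS$+$BDPP and the rational quotient for $1\Leftrightarrow 3$ and $4\Rightarrow 1$). The genuine gap is the one implication that actually closes your cycle, namely the return from the vanishing statements $2/2'$ to the rest: with what you have proved, $1,3,4$ are mutually equivalent and imply $2\Rightarrow 2'$, but nothing yet brings $2'$ back, and the step you propose for it, $2'\Rightarrow 4$, is both the hardest possible target and not established by your sketch. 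Its first move --- extracting, from ``$\mu_{\alpha,\max}(\Omega^1_X)\ge 0$ for every movable $\alpha$'', a single saturated subsheaf $\cF\subset\Omega^1_X$ with $\det\cF$ pseudo-effective --- is unjustified: the $\alpha$-maximal destabilising subsheaf varies with $\alpha$, while pseudo-effectivity of $\det\cF$ means non-negativity against \emph{all} movable classes simultaneously (BDPP duality), and the only known way to produce such an $\cF$ is via the MRC fibration, i.e.\ by already knowing $X$ is not rationally connected --- which makes the step circular. Moreover Bogomolov--McQuillan/\cite{CP15} integrability applies to subsheaves of $T_X$ of positive slope (foliations), not to subsheaves of $\Omega^1_X$ with pseudo-effective determinant, so the ``recognise it as the conormal of an algebraically integrable foliation'' step has no supporting statement in the form you use it.

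The fix is much simpler and is how \cite{CP15} (and, in the orbifold setting, the paper's own Theorem \ref{torc}, via $4\Rightarrow 2$ and $3'\Rightarrow 1$) organises the equivalences: prove $2'\Rightarrow 3$ by contraposition. If some dominant $f:X\dasharrow Z$ with $\dim Z=p>0$ has $K_Z$ pseudo-effective, then by Nakayama's numerical dimension ($L$ psef iff $\nu(Z,L)\ge 0$, quoted in the paper from \cite{BDPP},\cite{Nak}) one has $h^0(Z,mK_Z+A_Z)\neq 0$ for infinitely many $m$ and $A_Z$ sufficiently ample; pulling these sections back through a resolution $\mu:\tilde X\to X$ of $f$ gives nonzero sections of $Sym^m(\Omega^p_{\tilde X})\otimes \tilde f^*A_Z$, and after bounding $\tilde f^*A_Z$ by $\mu^*A$ for a fixed ample $A$ on $X$ (negativity lemma plus $A$ sufficiently ample) and using the birational invariance $\mu_*Sym^m\Omega^p_{\tilde X}=Sym^m\Omega^p_X$, one obtains nonzero sections of $Sym^m(\Omega^p_X)\otimes A$ for infinitely many $m$, contradicting $2'$. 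With $2'\Rightarrow 3\Rightarrow 1\Rightarrow 4\Rightarrow 2\Rightarrow 2'$ the cycle closes using only the cited classical results, and the hard foliation machinery you were invoking (which is the content of the paper's implication $1\Rightarrow 4$ in Theorem \ref{torc}) is not needed in the classical case, precisely because rational connectedness hands you the very free curve class directly.
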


\smallskip

\noindent The `Slope Rationally Connected Quotient' in the category of smooth orbifold pairs takes the following form, entirely similar to the classical `Rational quotient' recalled above (see \cite{Ca92}, \cite{KMM}):

\begin{theorem}\label{tRQ'} Let $(X,D)$ be smooth, complex projective and connected. There exists an orbifold birational model\footnote{See Definition \ref{deforbmorph} %and \ref{deforbrateq}
 below.} $(X',D')$, and an orbifold morphism $\rho:(X',D')\to (R,D_R)$ which is a fibration onto its (smooth) orbifold base\footnote{See Definition \ref{dob} below.} $(R,D_R)$ with the following two properties:
\begin{enumerate}

\item[{\rm1.}] Its smooth orbifold fibres $(X_r,D_r)$ are $sRC$.

\item[{\rm 2.}] $K_R+D_R$ is pseudo-effective.
\end{enumerate}
This fibration is unique, up to orbifold birational equivalence. 
\end{theorem}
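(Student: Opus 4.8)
The plan is to realise $\rho$ as the algebraic integration of the maximal positively-sloped subfoliation of the orbifold tangent bundle, and to characterise it intrinsically afterwards by Properties 1 and 2 of Theorem \ref{torc}, which refer to no auxiliary data. I would fix a movable-big class $\alpha$ on $X$ and, using Theorem \ref{tind'}, form the descended filtration $HN_\alpha(T(X,D))$ of $TX$; let $\cF\subseteq TX$ be its largest term all of whose graded slopes are strictly positive, so that $\cF$ has strictly positive minimal $\alpha$-slope while the quotient of the orbifold tangent bundle by $\cF$ (read on an adapted cover) has non-positive maximal slope. If this $\cF$ is zero --- in particular if $K_X+D$ is pseudo-effective --- there is nothing to prove, so assume $\cF\ne 0$. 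The first step is to check that $\cF$ is a foliation: pulling back via an adapted cover $\pi:X'\to(X,D)$, the $\cO_{X'}$-linear bracket morphism $\wedge^2\pi^*\cF\to \pi^*(T(X,D))/\pi^*\cF$ sends a sheaf of strictly positive $\pi^*(\alpha)$-slopes to one of non-positive slopes, hence vanishes, so $\cF$ is involutive; this is the movable-class avatar of the classical integrability criterion, legitimate here thanks to the slope formalism of \cite{CPe}.

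Next I would invoke the structure theory of foliations of minimal positive slope --- their algebraic integrability and the $sRC$-ness of the closure of the general leaf, which I take to be available from the earlier part of the paper in the wake of Theorem \ref{torc} and which I regard as the main input of the whole argument --- to conclude that $\cF$ is, up to saturation, the relative tangent sheaf of a fibration. Passing to a neat model then yields an orbifold morphism $\rho:(X',D')\to(R,D_R)$ onto its smooth orbifold base whose general fibre $(X_r,D_r)$ is $sRC$; this is Property 1. If one prefers not to quote the $sRC$-ness of the leaf off the shelf, the substitute is to transfer the positivity of the minimal slope of $\cF$ to $T(X_r,D_r)$ by a restriction argument along $\rho$ --- admissible for the fibres of a fibration even though the general Mehta--Ramanathan analogue is open --- and then apply the equivalence between Property 4 and $sRC$ in Theorem \ref{torc}.

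For Property 2 I would iterate. Suppose $K_R+D_R$ is not pseudo-effective; then, by BDPP \cite{BDPP}, there is a movable class $\alpha_R$ on $R$ with $\alpha_R\cdot(K_R+D_R)<0$, so $T(R,D_R)$ carries a non-zero subsheaf of strictly positive minimal $\alpha_R$-slope, which by the same structure theory is, up to saturation, the relative tangent sheaf of a fibration $R\dasharrow R'$ with $sRC$ fibres and $\dim R'<\dim R$. Composing with $\rho$ and passing to a further neat model produces a fibration $(X'',D'')\to(R',D_{R'})$ whose general fibre is an extension of an $sRC$ fibre of $\rho$ over an $sRC$ fibre of $R\dasharrow R'$. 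Such an extension is again $sRC$: I would deduce this from Property 1 of Theorem \ref{torc} by restricting an arbitrary dominant rational map to a general fibre of the sub-fibration, splitting into the cases where this restriction does or does not contract that fibre, and in each case comparing the orbifold base of the restricted map with the restriction of the orbifold base of the whole map. Since the base dimension strictly decreases at each iteration, the process stops at a fibration with $sRC$ fibres whose base has pseudo-effective orbifold canonical bundle: this is Property 2.

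Uniqueness again rests on Property 1 of Theorem \ref{torc}. Given two fibrations $\rho_1,\rho_2$ as in the statement, realised on a common orbifold model, a general fibre $(F_1,D_{F_1})$ of $\rho_1$ is $sRC$ and must be contracted by $\rho_2$: otherwise the orbifold base of $\rho_2|_{F_1}$ --- which dominates the restriction of $(R_2,D_{R_2})$ to the general member $\rho_2(F_1)$ of a covering family --- would have pseudo-effective canonical bundle, since pseudo-effectivity of $K_{R_2}+D_{R_2}$ passes to a general such member (this last point reducing to BDPP applied on both $R_2$ and the member), contradicting Property 1. Hence $\rho_2$ factors through $\rho_1$, symmetrically $\rho_1$ through $\rho_2$, and the two bases are orbifold-birationally equivalent; applied to the construction above for two choices of $\alpha$, this shows in addition that $\rho$ does not depend on $\alpha$. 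Granting the structure theory of positive-slope foliations, the remaining difficulty --- and the part I expect to be most delicate --- is precisely this orbifold bookkeeping: controlling orbifold base divisors under restriction to general fibres and under composition of fibrations, and verifying that pseudo-effectivity of an orbifold canonical bundle restricts to a general member of a covering family.
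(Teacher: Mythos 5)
Your architecture matches the paper's: produce one fibration with sRC fibres from a slope-positive piece of the orbifold tangent sheaf, iterate (equivalently, induct) on the base until its orbifold canonical class is pseudo-effective, use a composition statement (sRC fibres over an sRC base give an sRC total space), and prove uniqueness by a factorisation argument through restriction to general fibres. The genuine gap is in the two places where the paper invokes its factorisation criterion, Proposition \ref{pfactorisation}: your composition step (the case where the auxiliary map does not contract the fibre of the sub-fibration) and your uniqueness step both reduce to the claim that pseudo-effectivity of $K_{R_2}+D_{R_2}$ forces the orbifold base of the restricted fibration $\rho_2|_{F_1}$, over the general member $V=\rho_2(F_1)$ of a covering family, to have pseudo-effective canonical class. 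As written this needs an orbifold sub-adjunction on $V$ (comparing $K_V$ plus an induced boundary with $(K_{R_2}+D_{R_2})|_V$, with $V$ possibly singular and the restricted map not neat) and a comparison of the orbifold base of $\rho_2|_{F_1}$ with the restriction of $D_{R_2}$; this is exactly the "bookkeeping" you defer, it is where the content lies, and it is not established in the paper either. The paper's Proposition \ref{pfactorisation} bypasses it entirely by working on the total space: by Proposition \ref{pom} and Example \ref{exinttensors}, $\rho_2^*\bigl(m(K_{R_2}+D_{R_2})\bigr)$ injects into orbifold cotangent tensors of $(X,D)$; restricted to a general fibre $F_1$ of $\rho_1$ it remains of generic rank one as soon as $\dim\rho_2(F_1)>0$, it is pseudo-effective, hence pairs nonnegatively with a movable class $\alpha$ with $(\rho_1)_*\alpha=0$, contradicting $\mu_{\alpha,\max}\bigl(\pi^*(\Omega^1(F_1,D_{F_1}))\bigr)<0$ coming from sRC-ness of the fibre. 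You should replace your restriction-to-$V$ argument by this mechanism; no sub-adjunction and no orbifold base of a restricted map are then required, and both your composition lemma and your uniqueness statement follow as in Corollary \ref{ccomposition} and the paper's uniqueness proof.

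Two further cautions. Quoting "sRC-ness of the closure of the general leaf" off the shelf is circular relative to the paper's order: Theorem \ref{tofops} is proved after, and by means of, the (relative) sRC quotient (Theorem \ref{trelsRQ}). Your fallback, transferring $\mu_{\min}>0$ from the relative orbifold tangent sheaf to the general fibre and invoking Property 4 of Theorem \ref{torc}, is the right substitute, but it is not automatic: a destabilising quotient on one fibre is not a priori the restriction of a quotient defined near that fibre, so one needs the spreading-out/countability argument underlying Definition \ref{dgen} and Proposition \ref{pj_z} (the paper uses this implicitly when it asserts sRC fibres from Corollary \ref{cfibmin}); this is weaker than the open Mehta--Ramanathan problem, but it must be argued, not asserted. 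Finally, in the iteration you need the orbifold base of the composed fibration to coincide with the orbifold base computed on $(R,D_R)$, which by Proposition \ref{lobom} requires the first map to be an orbifold morphism on the chosen neat model; this is why the paper insists on neat orbifold morphisms at every stage, and it should appear explicitly in your induction step.
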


A similar orbifold fibration (with orbifold fibres having $\kappa^+=-\infty$, and orbifold base $\kappa\geq 0$) was defined in \cite{Ca04}, \cite{Ca07} conditionally in either an orbifold version $C_{n,m}^{orb}$ of Iitaka's $C_{n,m}$ conjecture, or in the `non-vanishing' conjecture. The present text permits to give an unconditional (conjecturally equivalent) definition. See Section \ref{motiv} for some brief details.

\medskip

Our next statement shows that the notion of \emph{slope Rationally Connected orbifold} also permits to obtain the expected strengthening of a former result of \cite{CP15} (see Theorem \ref{tofops} below for the proof, and more details):

\begin{theorem}\label{tofops'}  Assume that $\cF_D\subset \pi^*(T(X,D))$ is a $D$-foliation, and that $\mu_{\alpha',min}(\cF_D)>0$ for some movable class $\alpha$ on $X$, where $\alpha':=\pi^*(\alpha)$. Then:

1. $\cF$ is algebraic, let $f:X\dasharrow Z$, be such that $\cF=Ker(df)$.

2. On any `neat' orbifold birational model $f':(X',D')\to Z'$ of $f$, the generic orbifold fibre $(X'_z,D'_z)$ of $f'$ is sRC.

Conversely, if $(f,D)$ posesses the above property 2, the $D$-foliation $\cF_D$ associated to it\footnote{By the construction recalled before the statement of Theorem \ref{tofops}.} has $\mu_{\alpha',min}(\cF_D)>0$ for some $\alpha$ movable on $X$, and for any Kawamata cover $\pi$ adapted to $(X,D)$.
\end{theorem}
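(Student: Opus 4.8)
The plan is to deduce Theorem \ref{tofops'} by combining the foliation-algebraicity results from \cite{CP15} (which give Part~1 and the algebraic integrability of a $D$-foliation of positive minimal slope) with the new characterization of $sRC$ provided by Theorem \ref{torc}, applied fibrewise. The key conceptual point is that positivity of $\mu_{\alpha',min}(\cF_D)$ should transfer, via restriction to a general fibre, into positivity of the minimal slope of the orbifold tangent sheaf of that fibre with respect to a suitable movable class, which by equivalence $4 \Leftrightarrow 1$ of Theorem \ref{torc} is exactly the $sRC$ property.

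First I would invoke the algebraicity statement: a $D$-foliation $\cF_D \subset \pi^*(T(X,D))$ with $\mu_{\alpha',min}(\cF_D)>0$ is algebraic — this is the orbifold version of the Bogomolov--McQuillan / Bost-type criterion established in \cite{CP15}, and it yields the fibration $f:X\dasharrow Z$ with $\cF = \mathrm{Ker}(df)$, hence Part~1. Passing to a neat orbifold birational model $f':(X',D')\to Z'$ in the sense of Definition \ref{deforbmorph}, I would then observe that on such a model $\cF_D$ restricts (on a general fibre) to the full orbifold tangent sheaf $T(X'_z, D'_z)$ of the general orbifold fibre, up to saturation — this is where the ``neat'' hypothesis is used, to ensure the orbifold structure $D'_z$ on the fibre is the one induced by $D'$ and matches $\cF_D$. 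The second main step is to choose a movable class $\alpha'_z$ on $X'_z$ — concretely, the restriction to the fibre of a movable-big class on $X'$, or a complete-intersection-type class from ample divisors pulled back suitably — such that $\mu_{\alpha'_z,min}(T(X'_z,D'_z))>0$ follows from $\mu_{\alpha',min}(\cF_D)>0$ by a semicontinuity/restriction argument for Harder--Narasimhan slopes along the fibres of a fibration. Then property~4 of Theorem \ref{torc} applied to $(X'_z,D'_z)$ gives that this orbifold fibre is $sRC$, which is Part~2.

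For the converse, I would start from a neat model $f':(X',D')\to Z'$ whose general orbifold fibre $(X'_z,D'_z)$ is $sRC$, take the associated $D$-foliation $\cF_D$ (the one defined as the saturation of the relative orbifold tangent sheaf, as recalled before Theorem \ref{tofops}), and must produce a movable class $\alpha$ on $X$ with $\mu_{\alpha',min}(\cF_D)>0$. Here I would use that each fibre is $sRC$, so by Theorem \ref{torc}(4) each carries a movable-big class with positive $\mu_{min}$ of its orbifold tangent sheaf; the task is to ``spread out'' and glue these fibrewise classes into a single movable class on the total space whose restriction still sees the positivity — this is a relative version of the construction of movable classes, using that the relative setup is, on a suitable model, a locally trivial-enough family so that a class of the form (fibre class) $\cdot$ (pullback of a movable-big class on $Z'$) works, and then perturbing to stay strictly inside the movable cone. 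Finally, the independence of this conclusion from the choice of Kawamata cover $\pi$ is handled by Theorem \ref{tind'}, which guarantees the slopes of $HN_{\pi^*(\alpha)}(\pi^*(T(X,D)))$ — and hence the sign of $\mu_{min}$ of the relevant subsheaf — do not depend on $\pi$.

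The main obstacle I anticipate is the restriction-to-the-fibre step for minimal slopes: there is no Mehta--Ramanathan restriction theorem available for movable classes (as the footnote in the excerpt explicitly warns), so I cannot simply restrict a movable-stable picture to a general member of the class. The way around this is to work with movable classes adapted to the fibration — i.e. classes built from the fibration structure so that restriction to a general fibre is controlled by construction rather than by a general position theorem — and to use that positivity of $\mu_{min}$ is an open condition that propagates from a dense set of fibres, together with the semicontinuity of Harder--Narasimhan data in families of torsion-free sheaves. I expect the ``neat model'' bookkeeping (matching $D'_z$, the saturations in $\pi^*(TX')$, and the compatibility with the Kawamata cover restricted to a fibre) to be technically delicate but routine given the framework of \cite{CP15} and Theorem \ref{tind'}; the genuinely substantive point remains engineering the movable class on the total space from the fibrewise ones.
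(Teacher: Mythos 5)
Your Part~1 and the general architecture (algebraicity from \cite{CP15}, then use the sRC machinery) are fine, but the central step of your Part~2 has a genuine gap: the passage from $\mu_{\alpha',\min}(\cF_D)>0$ on the total space to positivity of $\mu_{\min}$ of $\pi_z^*(T(X'_z,D'_z))$ for \emph{some} movable class on the general fibre. The hypothesis gives an \emph{arbitrary} movable class $\alpha$ on $X$, not one adapted to $f$; such a class has no well-defined restriction to a fibre (only classes in $Mov(X/Z)$ correspond to classes on $X_z$, cf.\ Proposition \ref{pj_z}), and replacing $\alpha$ by a relative class while preserving $\mu_{\min}(\cF_D)>0$ is precisely the kind of statement that would need a Mehta--Ramanathan-type restriction theorem, which — as the paper's footnote warns — is unavailable for movable classes. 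Your proposed workaround (``semicontinuity of Harder--Narasimhan data in families'' plus openness of positivity) does not supply such a mechanism: no argument is given that converts the given $\alpha$ into a fibrewise class seeing the positivity, and in this generality the implication ``positive minimal slope on $X$ w.r.t.\ some $\alpha$ $\Rightarrow$ positive minimal slope on the general fibre w.r.t.\ some fibre class'' is exactly the hard point, not a routine reduction.

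The paper avoids restriction to fibres altogether. It argues by contradiction using the \emph{relative} sRC quotient (Theorem \ref{trelsRQ}): if the generic orbifold fibre of the neat model $f$ were not sRC, one would get a nontrivial factorisation $f=g\circ\rho_f$ with $\dim(Z)>\dim(Y)$, where the $D_Z$-foliation $\cG=Ker(dg)$ has pseudo-effective canonical bundle. The derivative map $\pi^*(d\rho_f):\pi^*(\cF_D)\to \rho_f^*(q^*(\cG))$ (Proposition \ref{pom}) is generically surjective, so the pullback of $\cG$ is (generically) a quotient of $\cF_D$ and inherits positive $\alpha'$-slope; pushing forward by the projection formula this gives a movable class on $Z$ meeting $K_{\cG}$ negatively, contradicting its pseudo-effectivity. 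This works for an arbitrary $\alpha$ and requires no restriction theorem. For the converse, the paper does not glue fibrewise classes ad hoc either: the statement is delivered by the relative version of the main theorem (Theorem \ref{torcrel}), proved by induction on the relative dimension using the minimal-rank destabilising foliation, the relative sRC quotient, and the construction $\gamma=k\alpha+\beta_X$ with $\beta_X$ lifted from the intermediate base orthogonally to the divisors partially supported on the fibres (Proposition \ref{prelmov}(6) and Theorem \ref{tesp}); your ``spread out and glue'' sketch points in this direction but supplies none of this mechanism, so as written it also rests on an unproved gluing step unless you explicitly invoke Theorem \ref{torcrel}.
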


In \cite{CP15}, Theorem 1.4, the conclusion was only that $K_{X'_z}+D'_z$ was not pseudo-effective for $z\in Z'$ generic. It was also shown when $D=0$ that $X'_z$ was rationally connected (Theorem 1.1). The absence of this notion in the orbifold case made it impossible to state (and of course to prove) anything when $D\neq 0$.

\medskip

An important example of `slope Rationally connected' smooth orbifold is the following\footnote{Answering a question of B. Claudon.}:

\begin{theorem}\label{tfsrc} Let $(X,D)$ be a smooth orbifold pair which is klt\footnote{This means that all coefficients of the components of $D$ are strictly less than $1$.}, and Fano (ie: $-(K_X+D)$ is ample on $X$). Then $(X,D)$ is slope rationally connected.
\end{theorem}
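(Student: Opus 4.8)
The plan is to verify property~4 of Theorem~\ref{torc}, namely to produce a movable (indeed movable-big) class $\alpha$ on $X$ with $\mu_{\pi^*(\alpha),\min}(\pi^*(T(X,D)))>0$. By Theorem~\ref{tind'} the Harder--Narasimhan slopes of $\pi^*(T(X,D))$ relative to $\pi^*(\alpha)$ are independent of the chosen Kawamata cover $\pi\colon X'\to (X,D)$, so it suffices to bound the minimal slope on one such $X'$. First I would take $\alpha$ to be represented by the class of $(-(K_X+D))^{n-1}$, or more robustly an $(n-1)$-st power of an ample $\mathbb{Q}$-divisor $H$ chosen so that $-(K_X+D)-\epsilon H$ is still ample; such a class lies in the interior of the movable cone since ample classes do, and its $\pi$-pullback is (a positive multiple of) $(-\pi^*(K_X+D))^{n-1}=(-K_{X'}-R_\pi)^{n-1}$ up to the ramification-correction term built into the adapted cover, which is itself $\pi^*$ of something with controlled positivity.

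The key step is then the slope computation. One has $\det \pi^*(T(X,D)) = -\pi^*(K_X+D)$, which pulls back an ample $\mathbb{Q}$-class, so $\mu_{\pi^*(\alpha)}(\pi^*(T(X,D)))>0$. To pass from the slope of the whole bundle to the \emph{minimal} slope, i.e.\ to rule out quotients of small or negative slope, I would invoke a Miyaoka--type / generic-nefness argument adapted to the movable-class setting: for $(X,D)$ klt Fano, the $\mathbb{Q}$-divisor $-(K_X+D)$ is ample, and any quotient $\cQ$ of $\pi^*(T(X,D))$ on $X'$ restricted to a general complete-intersection curve $C$ cut out by the movable class $\pi^*(\alpha)$ should be a quotient of a semipositive (nef on $C$) bundle by the orbifold Miyaoka semipositivity statement for the cotangent-dual in the Fano range; hence $\deg_C \cQ \ge 0$, and the ampleness of the determinant forces $\deg_C \cQ$ to be strictly positive whenever $\cQ\ne 0$. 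Summing/averaging over the curves sweeping out the movable class $\pi^*(\alpha)$ gives $\mu_{\pi^*(\alpha)}(\cQ)>0$ for every nonzero quotient $\cQ$, which is exactly $\mu_{\pi^*(\alpha),\min}(\pi^*(T(X,D)))>0$. An alternative, perhaps cleaner route is to use property~1 directly: for a klt Fano pair, every dominant rational map $f\colon X\dasharrow Z$ with $\dim Z>0$ has orbifold base $(Z,D_Z)$ with $K_Z+D_Z$ \emph{not} pseudo-effective, because the orbifold adjunction/canonical-bundle-formula forces $-(K_Z+D_Z)$ to inherit positivity (a piece of the ample $-(K_X+D)$ pushed down); this would let one quote Theorem~\ref{torc}, $1\Rightarrow 4$, with no further work.

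I expect the main obstacle to be the semipositivity input in the slope argument: the Mehta--Ramanathan restriction theorem is explicitly \emph{not} available for movable classes (as the footnote in the excerpt warns), so one cannot blithely reduce the computation of $\mu_{\min}$ to a general curve in $\pi^*(\alpha)$ and must instead argue intrinsically with the movable class, or else route everything through property~1 and the orbifold canonical bundle formula — which in turn requires care that the `neat model' of $(f,D)$ is compatible with the klt Fano hypothesis and that the orbifold base divisor $D_Z$ is computed correctly (multiplicities of multiple fibres, ramification of $f$). A secondary technical point is checking the ramification-correction term in passing between $\pi^*(\alpha)$-degrees on $X'$ and $\alpha$-degrees on $X$, but Theorem~\ref{tind'} is designed precisely to absorb this, so it should be routine. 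The cleanest write-up, I anticipate, reduces the theorem to property~1 via orbifold additivity of the canonical class under fibrations in the klt setting, using that a quotient with pseudo-effective orbifold canonical base would contradict ampleness of $-(K_X+D)$.
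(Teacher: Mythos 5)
Your first route (checking property~4 directly) does not close as written. The ``Miyaoka-type / generic-nefness'' input you invoke is not available here: Miyaoka's generic semipositivity concerns the cotangent sheaf of non-uniruled varieties, and there is no orbifold statement in the cited literature asserting that $\pi^*(T(X,D))$ is generically nef (let alone generically ample) for klt Fano pairs -- proving such a statement is essentially equivalent to the theorem you are trying to prove. Moreover, even granting nefness of the restriction to a general complete-intersection curve $C$ in the class, two further steps fail: the passage from degrees on $C$ to $\mu_{\pi^*(\alpha),\min}$ is exactly what the absence of a Mehta--Ramanathan theorem for movable classes blocks (as you yourself note), and ampleness of $\det\pi^*(T(X,D))=-\pi^*(K_X+D)$ does \emph{not} force every nonzero quotient to have strictly positive degree on $C$: a quotient can have degree zero while the kernel carries all the positivity. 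So the strict inequality $\mu_{\pi^*(\alpha),\min}>0$ does not follow from your argument.

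Your second route (verify property~1 of Theorem~\ref{torc}) is indeed the paper's strategy, but the sentence ``the orbifold adjunction/canonical-bundle-formula forces $-(K_Z+D_Z)$ to inherit positivity'' is precisely the missing content, and it is not a formal consequence of adjunction; in fact the conclusion is only that $K_Z+D_Z$ is not pseudo-effective, not that $-(K_Z+D_Z)$ is positive. The actual proof takes a neat model $f:(X',D')\to Z$ with $g:X'\to X$, uses the klt hypothesis to write $g^*(K_X+D)=K_{X'}+D'+\Delta'-E^{\bullet}$ with controlled discrepancies, decomposes the pulled-back ample class as $g^*(-(K_X+D))=\vartheta E'+A'+\varepsilon f^*(H)$, enlarges $D'$ to an lc divisor $D'':=D'+E'+A'+\varepsilon f^*(H)$ so that $g:(X',D'')\to(X,D)$ becomes an orbifold morphism and $K_{X'}+D''=E''+E^{\bullet}$ is $g$-exceptional, and then applies the key semipositivity input, Theorem~2.11 of \cite{CP13}: $K_{X'/Z}+(D'')^{vert}-D(f,0)$ is pseudo-effective. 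Intersecting with the movable class $g_*(C')$ of complete-intersection curves chosen to avoid $g(E')$ gives $g_*(C')\cdot(K_Z+D_Z)\le -\varepsilon\, g_*(C')\cdot H<0$, whence $K_Z+D_Z$ is not pseudo-effective. Your sketch names neither this relative semipositivity theorem nor the place where the klt hypothesis enters (controlling the discrepancy terms and keeping $(X',D'')$ lc after adding $E'$ with coefficient one and the ample pieces); since the statement is false for the lc, non-klt Fano pair $(\bP^2,L_1+L_2)$, any correct write-up must make this use of klt explicit, so as it stands there is a genuine gap.
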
 

The conclusion in general fails when $(X,D)$ is not klt, as shown by the following example of \cite{Ca07}, 6.17, p. 859: $(X:=\Bbb P^2,D)$ if $D$ is the union of $2$ distinct lines, each equipped with coefficient $1$ (or, equivalently, with multiplicity $+\infty$): see Example \ref{exfanonotsrc} for details. %this has a birational orbifold model $(X',D')$, where $X'$ is the blow-up of $X$ in the intersection point of the above two lines, while $D'$ is the reduced inverse image of $D$ in $X'$, which maps in the obvious way onto $(\Bbb P^1,\delta)$, $\delta$ consisting of $2$ distinct reduced points, so that $(\Bbb P^1,\delta)$ is an orbifold with trivial canonical bundle, the map $(X',D')\to (\Bbb P^1,\delta)$ being an `orbifold morphism' (see Definition \ref{deforbmorph}). 

We showed in \cite{CP15} that the orbifold cotangent bundle is `birationally stable, that is: $\nu^+(X,D)=\nu(X,D):=\nu(X,K_X+D)$ when $K_X+D$ is pseudo-effective, $\nu$ being N. Nakayama's `numerical dimension' of a line bundle. The preceding `birational stability' means that $\nu(X,L)\leq \nu(X,D)$ for any line line bundle $L$ on $X$ such that $p^*(L)$ admits an injective sheaf morphism in some $\otimes^m(p^*(\Omega^1(X,D)))$, for some $m>0$, if $p:Y\to X$ is any Kawamata cover adapted to $(X,D)$. We refer to \S\ref{sbirstab} for the definitions. 

The Slope rationally connected quotient permits to describe the invariant $\nu^+(X,D)$ in the general case, as follows: $\nu^+(X,D)=\nu(R,D_R)$ (Theorem \ref{tnugen}).

We moreover show (Theorem \ref{t-knef}) that if $(X,D)$ is smooth projective with $-(K_X+D)$ nef, then $\nu^+(X,D)=\nu(R,D_R)=\kappa(R,D_R)\in \{-\infty, 0\}$. 

In the first case, $(X,D)$ is $sRC$, in the second case, there exists $m>0,p>0$ such that: $h^0(X, [Sym^m(\wedge^p)](\Omega^1(X,D))\neq 0$. When $D=0$, this was essentially shown in \cite{QZ}. 

\medskip

As already said, we make here no reference to orbifold $D$-rational curves (see Definition \ref{deforc}. These are (as in \cite{Ca10}) defined in \S\ref{sorc'} to which we refer for more details and the conjectural characterisation of slope orbifold rational connectedness (resp. uniruledness) in terms of connecting (resp. covering) families of orbifold rational curves. Let us notice that even the answer to the much simpler question below is presently unknown:

\medskip

\noindent {\bf Question:}\label{qour} Let $(X,D)$ be smooth projective with $K_X+D$ not pseudo-effective. Is $X$ covered by rational curves $C$ with $(K_X+D).C<0$?

\medskip
A very weak and partial solution of the conjectures below is given in \S\ref{sorc}, assuming a positive answer to the preceding question \ref{qour}.

\medskip

\begin{conjecture}\label{corc} (See also \S.\ref{sorc'}). 1. $K_X+D$ is not pseudoeffective if and only if $X$ is covered by $D$-rational curves.

2. $(X,D)$ is slope Rationally Connected if and only if two generic points of $X$ can be connected by a chain of $D$-rational curves (or equivalently by a single $D$-rational curve). 
\end{conjecture}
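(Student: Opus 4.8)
The plan is to read Conjecture~\ref{corc} as two biconditionals and, in each, to separate a formal ``$\Leftarrow$'' half --- which follows from the results above together with the duality of \cite{BDPP} --- from a ``$\Rightarrow$'' half, which is in essence the presently open existence problem for covering and connecting families of orbifold rational curves. For the hard halves I would only aim at the weak conditional statement announced just before the conjecture, valid under a positive answer to Question~\ref{qour}, and be explicit about the remaining gap.

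For Part~1, ``$\Leftarrow$'': if $X$ carries a covering family of $D$-rational curves, i.e. of non-constant orbifold morphisms $h\colon\bP^1\to(X,D)$, then a general member is free within the family, so by the orbifold form of the classical estimate for free rational curves $\deg h^*(K_X+D)<0$; equivalently, the class $\alpha$ of the family is movable with $(K_X+D)\cdot\alpha<0$ (as is also visible after lifting the family to an adapted Kawamata cover $\pi\colon X'\to X$, where it becomes a covering family of rational curves of negative $\pi^*(K_X+D)$-degree), whence $K_X+D$ is not pseudo-effective by \cite{BDPP}. For Part~1, ``$\Rightarrow$'': assuming $K_X+D$ not pseudo-effective, a positive answer to Question~\ref{qour} provides a covering family of rational curves $C$ with $(K_X+D)\cdot C<0$ in the \emph{ordinary} sense, and the remaining --- and, I expect, hardest --- step is to \emph{upgrade} this to a covering family of genuine $D$-rational curves. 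The natural route is to pass to an adapted Kawamata cover $\pi\colon X'\to X$: since $\pi$ ramifies over each component $D_i$ of $D$ with index a multiple of $m_i$, the $\pi$-image of \emph{any} rational curve on $X'$ off the branch locus is automatically an orbifold morphism to $(X,D)$, so it suffices to produce a covering family of rational curves on $X'$ of controlled $\pi^*(K_X+D)$-degree. This would follow from $K_{X'}$ not being pseudo-effective (whence $X'$ uniruled by the minimal model program), and the delicate point is the discrepancy between $K_{X'}$ and $\pi^*(K_X+D)=\det\pi^*(\Omega^1(X,D))$ produced by the extra ramification of $\pi$, which at present I can control only in special cases (e.g. $D$ reduced, or low dimension) --- so only a partial, conditional statement results.

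For Part~2, ``$\Leftarrow$'': assume two generic points of $X$ are joined by a chain of $D$-rational curves. Pass to the orbifold model $(X',D')$ and the slope rationally connected quotient $\rho\colon(X',D')\to(R,D_R)$ of Theorem~\ref{tRQ'}, so $K_R+D_R$ is pseudo-effective. Letting the endpoints vary, the connecting chains form a family covering a dense subset of $X'\times X'$; in particular their members move, so a general member is a $D'$-rational curve of negative orbifold degree whose $\rho$-image is a point or a $D_R$-rational curve (that an orbifold morphism carries $D'$-rational curves to $D_R$-rational curves or to points is the only non-formal input, and follows from the definition of orbifold morphism). Were some chain to have positive-dimensional $\rho$-image, the images of the connecting chains would sweep out $R$ and join its generic points by $D_R$-rational curves, i.e. $R$ would be covered by $D_R$-rational curves, contradicting the pseudo-effectivity of $K_R+D_R$ by Part~1 ``$\Leftarrow$'' applied to $(R,D_R)$. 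Hence every connecting chain is $\rho$-vertical, the two generic points of $X'$ lie in one fibre, and since $\rho$ is a fibration $\dim R=0$; by property~1 of Theorem~\ref{tRQ'}, the pair $(X',D')$ --- hence $(X,D)$ --- is $sRC$.

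The remaining implication, Part~2 ``$\Rightarrow$'' (and its single-curve reformulation), is the genuinely hard one and I do not expect to settle it in full: from the slope and cohomological vanishing of Theorem~\ref{torc} one must manufacture a \emph{connecting} family of orbifold rational curves, and the orbifold counterparts of the bend-and-break, free-curve and \cite{GHS} machinery that does this when $D=0$ (yielding Proposition~\ref{prc}) are exactly what is missing, as stressed in the abstract. A conditional substitute, granting Question~\ref{qour}, would be to iterate Part~1 ``$\Leftarrow$'' on the sRC quotient of $(X,D)$ to see that it is trivial, extract a covering family of $D$-rational curves, and glue its members into chains joining two generic points; replacing such a chain by a single $D$-rational curve then still requires an orbifold smoothing statement, expected by analogy with the classical case but left open. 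The realistic outcome is therefore a proof of the two ``$\Leftarrow$'' halves together with a conditional, partial proof of the ``$\Rightarrow$'' halves, the recurring obstacle being the production of orbifold rational curves --- in particular the resolution of Question~\ref{qour}.
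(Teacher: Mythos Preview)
The statement you are addressing is a \emph{conjecture}; the paper does not prove it and does not claim to. What the paper offers in lieu of a proof is Remark~\ref{rorat} in \S\ref{sorc'}, which records without argument the ``easy'' implications
\[
\text{$D$-uniruled}\ \Longrightarrow\ \text{weakly uniruled}\ \Longrightarrow\ K_X+D\ \text{not pseudo-effective},
\qquad
\text{$D$-rationally connected}\ \Longrightarrow\ sRC,
\]
and states explicitly that the reverse implications are open except in a handful of special cases. So there is no ``paper's own proof'' to compare against: your plan --- prove the ``$\Leftarrow$'' halves, and give only a conditional/partial statement for the ``$\Rightarrow$'' halves under Question~\ref{qour} --- already goes beyond what the paper does, and your identification of Question~\ref{qour} as the key obstruction matches the paper exactly.

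That said, your sketch for Part~1 ``$\Leftarrow$'' has a real gap. The definition of a $D$-rational curve $h\colon(C,D_C)\to(X,D)$ says $\deg(K_C+D_C)<0$, \emph{not} $(K_X+D)\cdot C=\deg h^*(K_X+D)<0$; these differ by the degree of the orbifold conormal sheaf. To get the second from the first you need that a generic member of a covering family has orbifold normal bundle of non-negative degree --- the orbifold version of ``a free rational curve has nef normal bundle''. This is presumably what the paper has in mind when it calls the implication easy, but you cannot simply assert it. Your alternative route through a Kawamata cover $\pi\colon X'\to X$ is not clean either: the induced cover of $\bP^1$ branched along $D_C$ need not be rational (three branch points of multiplicity $2$ already give genus $1$), so lifting the family does not automatically produce a covering family of rational curves on $X'$.

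Your argument for Part~2 ``$\Leftarrow$'' via the $sRC$ quotient $\rho$ is a reasonable route, more explicit than anything the paper writes down. The soft spot is the claim that $\rho$ sends $D'$-rational curves to $D_R$-rational curves or points: composition of orbifold morphisms does preserve the orbifold-morphism condition and can only decrease the minimal divisor on the source, so a \emph{generic} $D'$-rational curve with image not in $\Supp(D_R)$ behaves as you say; but the intermediate links $C_i$ of a connecting chain are not generic, and $\rho|_{C_i}$ need not be birational onto its image. You can salvage this by letting the whole chain vary in the family that covers $X\times X$, so that its $\rho$-image sweeps out $R\times R$, but this step should be made explicit.
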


\noindent We give now a very brief description of the main implication $1\Rightarrow 3$ of theorem \ref{torc}: it requires several steps and intermediate constructions. The main ingredient in the proof is the weak version of Theorem \ref{tofops'} obtained in \cite{CP15}: if $\cF\subset \pi^*(T(X,D))$ is a $D$-foliation of positive minimal slope for $\pi^*(\alpha)$, where $\alpha$ is some movable class on $X$, then $\cF$ descends to an algebraic foliation i.e. there exists a map $f:X\dasharrow Z$ such that $\cF_X=Ker(df)$ and moreover $\alpha.(K_{X/Z}+D)<0$ (on any `neat' model of $f$), with $\dim(Z)<n:= \dim(X)$.

Our proof works by induction on $\dim(X)$. The first step is to chose $\alpha$ such that $dim(Z)=p<n$ is maximal. Then we change $\alpha$ in such a way that $f_*(\alpha)=0$, and moreover $\alpha$ is `big' on the `general' fibre $X_z$ of $f$. The minimality of the rank of $\cF$ then permits to show that $\cF$ is still the maximal destabilizing foliation associated to the new $\alpha$, and is $\alpha$-semi-stable (of positive slope). If $\cF=\pi^*(T(X,D))$, we are done. 
Otherwise, we get from the induction hypothesis a movable class $\beta$ on $Z$ such that $\mu_{p^*(\beta),min}(p^*(T(Z,D_Z)))>0,$
where $p:Z'\to (Z,D_Z)$ is a Kawamata cover adapted to $(Z,D_Z)$, the orbifold base of a `neat model' $f:(X,D)\to Z$ of our initial $f$. We then chose a movable lifting $\beta_X$ of $\beta$ to $X$ such that $f_*(\beta_X)=\beta$, and show that: $\mu_{\gamma,min}(\pi^*(T(X,D)))>0,$
if $\gamma:=\lambda.\alpha+\beta_X$ and $\lambda> 0$ is sufficiently big. It is here that the relative bigness of $\alpha$ is needed. Some additional technical difficulties arise from the fact that the image of $\pi^*(df):\pi^*(T(X,D))\to (f\circ \pi)^*(T(Z,D_Z))$ is not surjective on some $f$-vertical divisors. They are handled by using a `negativity-type' lemma.
\smallskip

%The structure of the text is described in the table of content.

%%%%%%%%%%%%%%%%%%%%%%%%%%%%%%%%%%%%%%%%%%%%%%%%%%%%%%%%%%%%%%%%%%%%%%%%%%%%%%%%%%%%%%%%%%%%%%%%%%%%%%

{\bf Thanks.} This text was essentially conceived during a stay at KIAS as a KIAS scholar in November-December 2015. Many thanks to this institution for its support and excellent working conditions, and especially to the staff of KIAS for their efficiency and amability.

I would also like to thank Mihai P\u aun for his support and encouragements during the conception of the text, for pointing out a gap in the statement an initial version  of Theorem \ref{torcrel'}, and for the proof of Proposition \ref{currents}, on which Theorem \ref{tbig} relies.

The results of the present text deeply depend on former articles \cite{CP15} and \cite{CPe} written in collaboration with him and Thomas Peternell, respectively. 

\medskip

We now start by several technical sections needed in the proof of Theorem \ref{torc}.

%%%%%%%%%%%%%%%%%%%%%%%%%%%%%%%%%%
%%%%%%%%%%%%%%%%%%%%%%%%%%%%%%%%%%%%%%%%%%%%%%%%%%%%%%%%%%

\section{Orbifold morphisms}\label{som}

\subsection{Orbifold morphisms}

\noindent We first recall some notions introduced in \cite{Ca07}, \S2 and \S3, which are needed here\footnote{All definitions and properties stated here work for compact complex normal spaces, and even, more generally for proper holomorphic fibrations. But we shall use them here only in the projective case.}.

Let $(X, D)$ be an orbifold pair; this consists of a connected complex projective normal variety $X$, 
together with an `orbifold divisor' 
\begin{equation}\label{1}
D:=\sum_{F\subset X} c_D(F).F,
\end{equation} 
where the components $F$ are all the pairwise distinct irreducible (Weil) divisors on $X$, and the coefficients $\displaystyle c_D(F)$ are zero for all but a finite number of divisors $F$. The coefficients $\displaystyle c_D(F)= 1-\frac{b}{a}\in [0,1]\cap \bQ$ are, moreover, rational numbers with $a\geq b$ coprime positive integers. The quotient $\frac{a}{b}\geq 1$ is called the `multiplicity' associated to the `coefficient' $c$. It carries the geometric information as an (extended) notion of ramification order.

We will thus also write: $\displaystyle c_D(F)= 1-\frac{1}{m_D(F)}$, in terms of: $m_D(F)=(1-c(F))^{-1}$, the `multiplicities of the $F's$ in $D$', with the convention that $+\infty=\frac{1}{0}$. The correspondence between coefficients and multiplicities is bijective and strictly increasing. Any irreducible divisor $F$ of $X$ not in the support of $D$, has thus $D$-coefficient $c_{D}(F):=0$, and $D$-multiplicity $m_{D}(F)=1$. 

The multiplicities have a natural geometric meaning: for example, if all $c_D(F)$ are of the form $(1-\frac{1}{m_F})$, for integers $m_F$, then $D$ is a ramification divisor. Moreover, the geometric functorial properties in the `orbifold category' are expressed in terms of multiplicities, not in terms of coefficients (see below). 

The pairs $(X, D)$ will usually be supposed to be `smooth', meaning tha $X$ is smooth, and that the support 
$\displaystyle  \Supp(D):=\cup_{\{F\subset X : c_D(F)\neq 0\}} F$ has simple normal crossings. These `smooth' orbifolds will now be given, following  \cite{Ca07}, a category structure.

%%%%%%%%%%%%%%%%%%%%%%%%%%%%%%%%%%%%%%%%%%%%%%%%%%
%%%%%%%%%%%%%%%%%%%%%%%%%%%%%%%%%%%%%%%%%%%%%%%%%%%%

\subsection{Definitions}
Let $f:X\to Z$ be a regular map {\it with connected fibres} between two complex connected normal projective varieties. Assume that orbifold structures $(X, D_X)$ and $(Z,D_Z)$ are given on $X$ and $Z$, respectively. Assume further that $f(X)$ is not contained in $Supp(D_Z)$. Let then $E$ be any irreducible divisor on $Z$. Assume moreover that: either $E$ is $\bQ$-Cartier on $Z$, or that  $Z$ is smooth, or $\Bbb Q$-factorial, or that $f(X)=Z$. We can then write, since the coefficients $c_{f^*(E)}$ are then well-defined for any irreducible divisor $F\in X$ contained in $f^{-1}(E)$:

\begin{equation}
f^*(E):= \sum_{F\subset X\vert f(F)=E} c_{f^*(E)}(F)\cdot F+R, 
\end{equation} 
where $R\subset X$ is an $f$-exceptional effective divisor (ie: such that $f(R)\subsetneq E$ is of codimension at least $2$ in $Z$). Notice that $R=0$ if $f$ has all its fibres equidimensional, or if $dim(Z)=1$.\medskip

%\begin{equation}
%f^*(E):= \sum_{F\subset X} c_{f^*(E)}(F)\cdot F
%\end{equation} 
  
\medskip

%If $f(X)=Z$, the coefficients $c_{f^*(E)}$ are always well defined, since $E$ meets the smooth locus of $Z$. In general, an additional condition is required.

When $f:X\to Z$, if the coefficient $t:=c_{f^*(E)}(F)$ is defined for $f(F)\subset E$, we shall simply write: $f^*(E)=t.F+...$ in order to isolate the component $F$ from the other components of $f^{-1}(E)$.

\begin{definition}\label{deforbmorph}(\cite{Ca07}, D\'efinition 2.3) Assume that $c_{f^*(E)}(F)$ is well-defined for each Weil divisor $E$ in $Z$ and each Weil divisor $F$ in $X$. We say that $f$ is an `orbifold morphism' if $f(X)$ is not contained in $Supp(D_Z)$, and if, for any $E\subset Z$, and for each $F$ such that $c_{f^*(E)}(F)>0$, we have:
\begin{equation}c_{f^*(E)}(F)\cdot m_{D_X}(F)\geq m_{D_Z}(E),
\end{equation} 
\end{definition} 

\subsection{Orbifold birational equivalence}

\begin{definition}\label{deforbifoldbirationalequivalence}The orbifold morphism $f:(X',D')\to (X,D)$ is said to be an `orbifold birational equivalence' if $(X',D')$ and $(X,D)$ are smooth orbifold pairs, if 
$f: X'\to X$ is birational, and if $D= f_\star(D')$, or equivalently, if $(X,D)$ is the orbifold base of $(f,D')$. 
\end{definition} 

\begin{definition}
We say then that $D'$ is $(f,D)$-minimal if, $(X,D)$ and $(f,X')$ being given, $D'$ is the smallest orbifold divisor on $X'$ for which $f$ is an `orbifold birational equivalence'. 
This also means that, for each irreducible divisor $F'$ on $X'$, its $D'$-multiplicity $m_{D'}(F')$ is either equal to $m_D(F)$ if $f(F')=F$ is a divisor of $X$, and otherwise, if $F'$ is $f$-exceptional, equal to $max\{1, m(f,F')\}$, where $m(f,F')$ is the minimum, taken over all irreducible irreducible divisors $F$ in $X$ containing $f(F')$ of the rational numbers: $\frac{m_D(F)}{t_{F,F'}}$, where $f^*(F)=t_{F,F'}.F'+\dots$ is the multiplicity of $F'$ in $f^*(F)$.

Notice that $f:(X',D')\to (X,D)$ being an orbifold birational equivalence, we can always-in a unique way-diminish the multiplicities of the components of $D'$ in order to make it $(f,D)$-minimal. We shall often write simply $f$-minimal instead when $D$ is implicitely known.
\end{definition} 

\begin{definition}\label{defbireq}
The `orbifold birational equivalence relation' is the one generated by this binary relation among smooth projective orbifold pairs. If $(X,D)$ and $(X',D')$ are birationally equivalent (in the orbifold sense) if there exists  an integer $m\geq 0$ and `orbifold birational equivanences' $u_i: (X_{2i},D_{2i})\to (X_{2i+1},D_{2i+1})$ and $v_i: (X_{2i+2},D_{2i+2})\to (X_{2i+1},D_{2i+1})$, for $i=0,1,\dots, m$ with $(X_0,D_0):=(X,D)$, and:

 $(X_{2m+2},D_{2m+2})=(X',D')$. We then say that the birational equivalence between $(X,D)$ and $(X',D')$ is induced by: $$g:=(v_m)^{-1}\circ u_m\circ (v_{m-1})^{-1}\circ u_{m-1}\circ \dots (v_0)^{-1}\circ u_0$$. 
\end{definition} 

\begin{definition} \label{defneat} The orbifold morphism $f$ is said to be `neat' if $(X,D_X)$ and $(Z,D_Z)$ are smooth, if $f^{-1}[f(\Supp(D))\cup D_f]$ is of simple normal crossings, where $D_f$ is the divisor on $X$ where $df$, the derivative of $f$, is not of maximal rank, and if, moreover, there exists a birational morphism $u:X\to X_0$, with $X_0$ smooth, such that every $f$-exceptional divisor $F\subset X$ is also $u$-exceptional. (This last property is obtained by Raynaud's flattening theorem together with desingularisation of the fibre-product). 
\end{definition}

The reason for these definitions will appear below, when composition of fibrations are considered. See Example \ref{exneat} below for an illustration of these notions, and \cite{Ca07} for further details. 

Notice that, $Supp(D_Z)\subset f(Supp(D_X))$, and $D_Z\leq f_*(D_X)$ if $f$ is birational, but even if $D_Z= f_*(D_X)$, it is not true that $f:(X, D_X)\to (Z, D_Z)$ is an orbifold morphism, in general.

\begin{remark}\label{cremona} Let us give an example (extracted from \cite{Ca07}, and inspired by the Cremona transformation) which shows that if $(X,D_i), i=1,2$ are smooth and orbifold-birationally equivalent to some smooth $(X,D)$ for some orbifold divisors $D_i,D$ on the same $X$, and if $D^+=sup(D_1,D_2)$, it may happen that $(X,D^+)$ is no longer orbifold birational to them. 

Let $b: X\to \Bbb P^2$ be the blow-up in $3$ points $a,b,c$ in general position, together with the $3$ exceptional curves $A,B,C$. Let $\alpha,\beta,\gamma$ be the three lines going through two of these points, and $\alpha',\beta',\gamma'$ their strict transforms in $X$. We thus get a new map $b':X\to (\Bbb P^2)'$ contracting the $3$ curves $\alpha',\beta',\gamma'$. Consider the three (reduced) orbifold divisors $D:=A+B+C$, $D':=\alpha'+\beta'+\gamma'$, and $D^+:=sup\{D,D'\}=D+D'$ on $X$. 

We define in general, for orbifold divisors $D,D'$ on $X$, the orbifold divisor $sup(D,D')$ to be the one such that its multiplicity on each irreducible (Weil) divisor $E$ of $X$ is the maximum of the two multiplicities of $E$ for $D$ and $D'$. The support of $sup(D,D')$ is thus the union of the two supports.

Then $b:(X,D)\to \Bbb P^2$, $b':(X,D')\to (\Bbb P^2)'$ are birational orbifold equivalences (the bases being equipped with the zero orbifold divisors). Since so are $b:X\to \Bbb P^2$, and $b':X\to (\Bbb P^2)'$, the identity map of $X$ induces orbifold birational equivalences: $(X,D)\cong X\cong (X,D')$ as well. Let $\Delta:= b_*(D')$ and $\Delta':=(b')_*(D)$ be orbifold divisors on $\Bbb P^2$ and $(\Bbb P^2)'$ respectively. Then $b:(X,D^+)\to (\Bbb P^2,\Delta)$ and $b':(X,D^+)\to ((\Bbb P^2)',\Delta')$ are also birational orbifold equivalences, but $b:(X,D')\to (\Bbb P^2,\Delta)$ and $b':(X,D)\to ((\Bbb P^2)',\Delta')$ are not orbifold morphisms, although they map to their orbifold bases (defined below). Notice also that $D^+$ is {\bf not} orbifold-birationally equivalent to $(X,D),(X,D'), X$, since: $\kappa(X,D^+)=0>-\infty=\kappa(X,D)=\kappa(X,D')$, and birational orbifold equivalence preserves the Kodaira dimension (and more generally, the differentials, see Proposition \ref{pom} below).
\end{remark}

%%%%%%%%%%%%%%%%%%%%%%%%%%%%%%%%%%%%%%%%%%%%%%%%%%%%%%%%%%%%%%%%%%%%%%%
%%%%%%%%%%%%%%%%%%%%%%%%%%%%%%%%%%%%%%%%%%%%%%%%%%%%%%%%%%%%%%%%%%%%%%

\subsection{Orbifold base of a fibration}

Let a fibration $f: X\to Z$ be given, $X,Z$ being normal connected, and $X$ moreover equipped with an orbifold structure $(X,D_X)$.

\begin{definition}\label{dob} {\rm (\cite{Ca07}, D\'efinition 3.2)} Let $E\subset Z$ be any irreducible divisor, and, as defined above:
\begin{equation}f^*(E)= R+ \sum_{f(F)= E}c_{f^*(E)}(F)\cdot F,
\end{equation} 
where $R$ is an $f$-exceptional divisor (i.e: $codim_X(f(R))\geq 2$). 

Define the multiplicity $m_{(f, D_X)}(E)$ of $E$ relative to $(f,D_X)$ by:
\begin{equation}m_{(f,D_X)}(E):= \inf_{f(F)= E}c_{f^*(E)}(F)\cdot m_D(F),
\end{equation} 
and the `orbifold base' $D_Z(f,D_X)= D_Z$ on $Z$ by the equality:
\begin{equation}D_Z:=\sum_{E\subset Z} \Big(1-\frac{1}{m_{(f,D_X)}(E)}\Big).E
\end{equation} 
The sum above being finite, this is an orbifold divisor on $Z$.
\end{definition}
\medskip

\noindent The orbifold base is thus the largest orbifold on the base making $f$ an orbifold morphism in codimension one over $Z$. In general, $f:(X,D_X)\to (Z,D_Z)$ is not an orbifold morphism, because the multiplicities on the $f$-exceptional divisors of $X$ are not taken into account, and may be too small. However, it is always possible to obtain a neat orbifold morphism simply by flattening $f$, desingularising the fibre product, and increasing sufficiently the multiplicities of $D_X$ on the exceptional divisors of $f$.
\smallskip

\begin{lemma}\label{lom}{\rm (\cite{Ca07}, Proposition 3.10)} Let $f:(X,D_X)\to Z$ be a fibration, wth $(X,D_X)$ smooth, and $Z$ normal. There exists a birational orbifold morphism $g:(X',D')\to (X,D_X)$ and a modification $h:Z'\to Z$, with $Z'$ smooth, together with a fibration $f':(X',D')\to Z'$ such that:
\begin{enumerate}

\item[{\rm (1)}] $f\circ g=h\circ f'$.

\item[{\rm (2)}] The orbifold base  $(Z',D_{Z'})$ of $f':(X,D')\to Z'$ is smooth.

\item[{\rm (3)}]  $f':(X',D')\to (Z',D_{Z'})$ is a neat orbifold morphism (see Definition \ref{defneat}).
\end{enumerate}

\end{lemma}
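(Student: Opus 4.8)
The plan is to build the desired diagram in two stages: first flatten $f$ and desingularize to control the $f$-exceptional divisors and make the orbifold base smooth, then further blow up the total space to force the neatness condition of Definition \ref{defneat}. First I would apply Raynaud's flattening theorem to $f:X\to Z$: there is a modification $h_0:Z_0\to Z$ such that the main component of the fibre product $X\times_Z Z_0$ is flat over $Z_0$. Desingularizing that component gives $X_0$ smooth with a fibration $f_0:X_0\to Z_0$ and a birational morphism $g_0:X_0\to X$ with $f\circ g_0 = h_0\circ f_0$; because $f_0$ is (after the flattening) equidimensional in codimension one over $Z_0$, the correction term $R$ in Definition \ref{dob} vanishes over codimension-one points, so the orbifold base is computed cleanly from the honest pullback coefficients. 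I would then equip $X_0$ with the $g_0$-minimal orbifold divisor $D_0$ so that $g_0:(X_0,D_0)\to (X,D_X)$ is an orbifold birational equivalence, and compute the orbifold base $(Z_0,D_{Z_0})$ of $f_0:(X_0,D_0)\to Z_0$.

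Next I would arrange smoothness of the orbifold base. A priori $\Supp(D_{Z_0})$ need not have simple normal crossings; I resolve this by a further modification $h_1:Z'\to Z_0$ making $h_1^{-1}(\Supp(D_{Z_0})\cup(\text{discriminant of }f_0))$ an SNC divisor and $Z'$ smooth, then pulling back the whole picture: replace $X_0$ by a desingularization $X_1$ of the main component of $X_0\times_{Z_0}Z'$, with induced fibration $f_1:X_1\to Z'$ and the induced $g_0$-minimal orbifold structure $D_1$. One checks that the orbifold base of $f_1:(X_1,D_1)\to Z'$ equals (the reduced pullback-corrected version of) $D_{Z'}:=h_1^*D_{Z_0}$, which is now smooth by construction — this uses the compatibility of the multiplicity formula in Definition \ref{dob} under base change, which holds because away from codimension $\ge 2$ the multiplicities multiply along the tower of pullbacks. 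This establishes (2).

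Finally, for (3) — neatness — I would blow up $X_1$ further: using embedded resolution, choose a birational morphism $X'\to X_1$ so that the preimage of $f_1(\Supp(D_1))\cup D_{f_1}$ becomes SNC, and so that (invoking the flattening/fibre-product desingularization as quoted in Definition \ref{defneat}) every $f'$-exceptional divisor of $X'$ is also exceptional for some birational morphism $X'\to X_{00}$ to a smooth variety. Endow $X'$ with the $(g,D_X)$-minimal orbifold divisor $D'$ where $g:X'\to X$ is the composite; since blowing up the base variety $X$ and taking minimal multiplicities never changes the orbifold base computed on $Z'$ (the $f'$-exceptional divisors carry, by minimality, exactly the multiplicities needed to leave $m_{(f',D')}(E)$ unchanged), the orbifold base of $f':(X',D')\to Z'$ is still $(Z',D_{Z'})$ and $f'$ is now an orbifold morphism by the minimality bound, hence neat. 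The main obstacle is the bookkeeping in the middle step: verifying that the orbifold base is genuinely preserved (not merely dominated) under the successive base changes and total-space blow-ups, i.e. that the infimum defining $m_{(f,D_X)}(E)$ in Definition \ref{dob} transforms correctly and that the $g$-minimal choice of multiplicities on exceptional divisors is exactly what compensates — this is where one must be careful that coefficients multiply, multiplicities need the extended arithmetic with $+\infty$, and the $f$-exceptional remainder $R$ does not reappear over codimension-one points after the modifications.
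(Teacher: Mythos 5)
Your overall architecture (Raynaud flattening, desingularisation of the fibre product, a further modification of the base to make the orbifold base divisor snc, and a final blow-up of the total space for neatness) is the intended construction; the paper itself does not prove Lemma \ref{lom} (it is quoted from [Ca07, Prop.\ 3.10]) and only indicates, right after Definition \ref{dob}, that one should flatten $f$, desingularise the fibre product, \emph{and increase sufficiently the multiplicities of $D_X$ on the exceptional divisors of $f$}. This last point is exactly where your argument has a genuine gap: you endow $X'$ with the $(g,D_X)$-\emph{minimal} orbifold divisor $D'$ and claim that ``$f'$ is now an orbifold morphism by the minimality bound''. The minimality bound controls the wrong inequality. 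Minimal multiplicities on an exceptional divisor $F'$ are dictated by the divisors of $X$ through $g(F')$ (Definition of $(f,D)$-minimality), whereas the orbifold-morphism condition of Definition \ref{deforbmorph} for $f'$ at an $f'$-exceptional divisor $F'\subset f'^{-1}(E)$ requires $c_{f'^*(E)}(F')\cdot m_{D'}(F')\geq m_{D_{Z'}}(E)$, where $m_{D_{Z'}}(E)$ is the infimum in Definition \ref{dob} taken only over divisors \emph{dominating} $E$; the two are unrelated. Concretely, if $f^{-1}(E)$ contains an $f$-exceptional component $G$ not lying in $\Supp(D_X)$ while every component dominating $E$ carries large (or infinite) multiplicity, then $m_{D_{Z'}}(E)$ is large but the minimal multiplicity on (the strict transform of) $G$, or on an exceptional divisor over a point of $G$ off $\Supp(D_X)$, is $1$, and the inequality fails: with the minimal $D'$, $f'$ is simply not an orbifold morphism.

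Your parenthetical justification is also internally confused: $f'$-exceptional divisors never enter the infimum defining $m_{(f',D')}(E)$, so they do not ``carry exactly the multiplicities needed to leave it unchanged'' — they leave it unchanged no matter what multiplicities they carry. The correct move is therefore the opposite of minimality: after flattening and desingularising, \emph{raise} the $D'$-multiplicities on all $f'$-exceptional divisors (including strict transforms of $f$-exceptional divisors of $X$) until the inequality above holds for every $E$. This costs nothing: it does not change the orbifold base (those divisors are absent from the infimum), and it preserves the fact that $g$ is an orbifold morphism, since the condition of Definition \ref{deforbmorph} for $g$ is a lower bound on $m_{D'}$. It may destroy $g_*(D')=D_X$ (when an $f$-exceptional divisor of $X$ itself must be boosted), which is precisely why the lemma asserts only that $g$ is a birational orbifold morphism and not an orbifold birational equivalence — a distinction your proof, by insisting on the $g$-minimal structure, does not accommodate. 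The remaining bookkeeping in your middle step (the orbifold base after the base change $Z'\to Z_0$ is recomputed via Definition \ref{dob}, not obtained as a pullback of $D_{Z_0}$) should also be done directly on the final model rather than transported, but that is a matter of care rather than a wrong idea.
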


%%%%%%%%%%%%%%%%%%%%%%%%%%%%%%%%%%%%%%%%%%%%

\subsection{Composition of fibrations.}

In this subsection we consider two fibrations $f:(X,D_X)\to Y$ and $g:Y\to Z$ with $(X,D_X)$, as well as $Y$ and $Z$ smooth. We aim at determining the orbifold base $(Z,D_{(g\circ f,D)})$ of the composition $(g\circ f, D_X)$  in the `smooth orbifold category'. The natural candidate is the orbifold base $\displaystyle (Z, D_{(g,D_{Y})})$ of $g$, with $\displaystyle (Y,D_Y)$ the orbifold base of $(f,D_X)$, that is: $D_Y:=D_{(f,D_X)}$ . 

The two orbifold bases $(Z,D_{(g\circ f,D)})$ and $\displaystyle (Z,D_{(g,D_Y)})$ on $Z$ differ, in general, although one always has: $\displaystyle D_{(g\circ f,D_X)}\leq D_{(g,D_{Y})}$. This is because of divisors $F\subset X$ of small multiplicity which are $f$-exceptional, but not $g\circ f$-exceptional. This phenomenon is however excluded when $f$ is an `orbifold' morphism:

\begin{proposition}\label{lobom} (\cite{Ca07}, Proposition 3.14). Let $f:(X,D_X)\to Y$ and $g:Y\to Z$ be fibrations. Let $(Y,D_Y)$ be the orbifold base of $(f,D_X)$. Assume that the induced map $f: (X,D_X)\to (Y,D_Y)$ is an orbifold morphism. Then we have $(Z,D_{(g\circ f,D)})=(Z, D_{(g,D_{Y})})$.
\end{proposition}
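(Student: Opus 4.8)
The plan is to compare the two orbifold divisors $D_{(g\circ f,D_X)}$ and $D_{(g,D_Y)}$ componentwise on $Z$. Fix an irreducible divisor $E\subset Z$. Since the inequality $D_{(g\circ f,D_X)}\le D_{(g,D_Y)}$ always holds (this is recalled in the text just before the statement), it suffices to show the reverse inequality on multiplicities, i.e. $m_{(g\circ f,D_X)}(E)\ge m_{(g,D_Y)}(E)$. I would unwind both sides: by Definition \ref{dob},
\[
m_{(g,D_Y)}(E)=\inf_{g(G)=E} c_{g^*(E)}(G)\cdot m_{D_Y}(G),
\]
where $G$ ranges over prime divisors of $Y$, and in turn $m_{D_Y}(G)=\inf_{f(F)=G} c_{f^*(G)}(F)\cdot m_{D_X}(F)$, while
\[
m_{(g\circ f,D_X)}(E)=\inf_{(g\circ f)(F)=E} c_{(g\circ f)^*(E)}(F)\cdot m_{D_X}(F),
\]
$F$ ranging over prime divisors of $X$. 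So on the left we see all prime divisors $F\subset X$ dominating $E$, whereas on the right, after substituting, we see only those $F$ that dominate some prime divisor $G\subset Y$ with $g(G)=E$; the missing ones are exactly the $f$-exceptional divisors $F$ (those with $\dim f(F)\le \dim Y-2$) which nonetheless satisfy $(g\circ f)(F)=E$. The content of the proposition is that the orbifold-morphism hypothesis on $f$ forces these extra divisors not to lower the infimum.

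First I would handle the non-$f$-exceptional $F$: for such $F$, $f(F)=G$ is a prime divisor of $Y$ with $g(G)=E$, and one has the multiplicativity of pullback coefficients in codimension one, $c_{(g\circ f)^*(E)}(F)=c_{g^*(E)}(G)\cdot c_{f^*(G)}(F)$ (valid away from the exceptional loci, which is the relevant range here). Hence the corresponding term $c_{(g\circ f)^*(E)}(F)\,m_{D_X}(F)=c_{g^*(E)}(G)\cdot\bigl(c_{f^*(G)}(F)\,m_{D_X}(F)\bigr)\ge c_{g^*(E)}(G)\,m_{D_Y}(G)\ge m_{(g,D_Y)}(E)$, using the definition of $m_{D_Y}(G)$ as an infimum and then of $m_{(g,D_Y)}(E)$ as an infimum. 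So these terms are all $\ge m_{(g,D_Y)}(E)$, as wanted. Then I would treat the $f$-exceptional $F$ with $(g\circ f)(F)=E$: write $f^*(G)=c_{f^*(G)}(F)\cdot F+\cdots$ for a prime divisor $G\supset f(F)$ with $g(G)=E$ (such $G$ exists because $(g\circ f)(F)=E$ is a divisor, so $f(F)$ is contained in some component of $g^{-1}(E)$). The orbifold-morphism assumption $f:(X,D_X)\to(Y,D_Y)$ gives precisely $c_{f^*(G)}(F)\cdot m_{D_X}(F)\ge m_{D_Y}(G)$ whenever $c_{f^*(G)}(F)>0$; combined again with $c_{(g\circ f)^*(E)}(F)\ge c_{g^*(E)}(G)\cdot c_{f^*(G)}(F)$ (or an equality up to the exceptional correction $R$, which only helps), the same chain of inequalities yields $c_{(g\circ f)^*(E)}(F)\,m_{D_X}(F)\ge m_{(g,D_Y)}(E)$ for these terms too. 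Taking the infimum over all $F$ with $(g\circ f)(F)=E$ gives $m_{(g\circ f,D_X)}(E)\ge m_{(g,D_Y)}(E)$, and since $E$ was arbitrary, $D_{(g\circ f,D_X)}\ge D_{(g,D_Y)}$; together with the reverse inequality recalled above this is the claimed equality $(Z,D_{(g\circ f,D)})=(Z,D_{(g,D_Y)})$.

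The main obstacle I anticipate is bookkeeping the exceptional divisors correctly: the pullback formula $(g\circ f)^*(E)=g^*(E)$ pulled back by $f$ only holds up to $f$-exceptional and $g$-exceptional correction terms $R$, so I must be careful that (i) for an $f$-exceptional prime $F$ with $(g\circ f)(F)=E$ there genuinely is a prime $G\subset Y$ with $f(F)\subset G$ and $g(G)=E$ — this needs $g(f(F))=E$ of codimension one while $f(F)$ has codimension $\ge 2$, so $g$ must drop dimension along $f(F)$, which is exactly the situation covered — and (ii) the inequalities go the right way, i.e. the correction divisors $R$ are effective and so can only increase the relevant coefficients, never decrease them. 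The one genuinely conceptual input is that without the orbifold-morphism hypothesis the $f$-exceptional $F$ of small $D_X$-multiplicity could make $m_{(g\circ f,D_X)}(E)$ strictly smaller; the hypothesis is used precisely and only to control those terms, which matches the explanation given in the text preceding the statement. I would organize the writeup as: (1) reduce to a componentwise multiplicity inequality; (2) the pullback-coefficient multiplicativity lemma in codimension one; (3) the non-exceptional case; (4) the exceptional case via the orbifold-morphism inequality; (5) conclude.
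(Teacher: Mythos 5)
Your proof is correct, and it is essentially the intended argument: the paper states this proposition without reproving it (citing \cite{Ca07}, Proposition 3.14), whose proof is exactly this componentwise multiplicity comparison, with the exact multiplicativity $c_{(g\circ f)^*(E)}(F)=c_{g^*(E)}(G)\,c_{f^*(G)}(F)$ for $f$-non-exceptional $F$ and the orbifold-morphism inequality handling the $f$-exceptional, non-$(g\circ f)$-exceptional components. The only two points worth making explicit in a final writeup are that any component of $g^{-1}(E)$ containing $f(F)$ must dominate $E$ (otherwise $g(f(F))\subsetneq E$), and that $c_{f^*(G)}(F)>0$ is automatic since $f(F)\subset G$ with $G$ Cartier on the smooth $Y$, so the hypothesis of Definition \ref{deforbmorph} indeed applies.
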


The following extremely simple example illustrates the above notions of orbifold base, orbifold rational equivalence, and orbifold morphism, as well as the preceding proposition.

\begin{remark}\label{exneat} Let $Z,V$ be any connected complex projective manifolds with $dim(Z)\geq 2, dim(V)\geq 1$. Define $Y:=Z\times V$, with $g:Y\to Z$ the first projection. Let $T_0\subset Z$ be any connected projective submanifold of codimension $1$. Let $T':=T_0\times \{v\}\subset Y$, for some $v\in V$. Let $f:X\to Y$ be the blow-up of $T'$ in $Y$, with $F$ the exceptional divor of $f$, and $T\subset X$ the strict transform of $T":=T_0\times V$. 

Let $m'\geq m>1$ be any rational numbers. We define on $X$ the two orbifold divisors $D:=(1-\frac{1}{m}).T$ and $D':=(1-\frac{1}{m'}).F+D$. We then observe: $(Y,D_Y:=(1-\frac{1}{m}).T")$ is the orbifold base of both $f:(X,D')\to Y$ and of $f:(X,D)\to Y$. However, the first one is an orbifold morphism (and hence an orbifold birational equivalence), while the second one is not. Next, the orbifold base of $g:(Y,D_Y)\to Z$ is obviously $(Z,D_Z:=(1-\frac{1}{m}).T")$. Thus: $(Z, D_{(g,D_Y)})=(Z,D_Z)$. An easy check shows that the orbifold base of $g\circ f: (X,D')\to Z$ is $(Z,D_Z)$, but the orbifold base of $g\circ f:(X,D)\to Z$ is $(Z,0)$ (since $g\circ f( F)=T_0\subset T"$, but the multiplicity of $F$ in $D$ is $1$). 

One may take for example: $Z=\Bbb P^2,V=\Bbb P^1$, and for $T_0$ a line in $\Bbb P^2=Z$. Blowing-down $T$ in $X$, one sees that $X$ is also the blow-up in one point of $X_0$, a smooth $\Bbb P^2$-bundle over $\Bbb P^1$. From which one may easily deduce that for any $m>0$, $(X,D)$ is (orbifold) birational to $(X_0,0)$, and that $(X,D')$ is slope Rational Connected for any $m>0$. 
\end{remark}

%%%%%%%%%%%%%%%%%%%%%%%%%%%%%%%
%%%%%%%%%%%%%%%%%%%%%%%%%%%%%%%%%%%%%%%%

\subsection{Differentials and orbifold morphisms.}

\noindent We show here a crucial functoriality property of differentials for orbifold morphisms. This property is actually almost a characterisation of orbifold morphisms (see \cite{Ca07}, Proposition 2.10, for a similar result). 

Let $f:(X,D_X)\to (Z,D_Z)$ be an orbifold morphism, and $\pi:Y\to X$ and $p: W\to Z$ be Kawamata covers adapted to $(X,D_X)$ and $(Z,D_Z)$ respectively. Let $G$ and $H$ be the finite groups acting on $Y$ and $W$ respectively. Let 
\begin{equation}
T:=(Y\times_ZW)^n
\end{equation} be the normalisation of any component of the fibred product, together with the natural projections $\rho:T\to Y$ and $\tau: T\to W$. The projection $\pi\circ \tau:T\to X$ is still Galois, with group $L$, normal in $(G\times H)$, the stabilizer of $T$ in $(Y\times_ZW)^n$, the group $L$ being onto on both $G$ and $H$. The components of $(Y\times_ZW)^n$ being exchanged under the operation of $(G\times H)/L$. 

Let $df: TX\to f^*(TZ)$ be the derivative of $f$. Its lifting to $T$ induces a map:
\begin{equation}(\pi\circ \rho)^*(df): (\pi\circ \rho)^*(TX)\to (f\circ \pi\circ \rho)^*(TZ)=(p\circ \tau)^*(TZ).
\end{equation}  
We recall (see \cite{CP15}) that we have the natural inclusions $\pi^*(T(X,D))\subset \pi^*(TX)$ as well as $p^*(T(Z,D_Z)\subset p^*(TZ)$.

\begin{proposition}\label{pom}  If $f$ is an orbifold morphism, these maps extend to define natural maps of sheaves on $T$:
 \begin{equation}
 (\pi\circ \rho)^*(df):\big(\rho^*(\pi^*(T(X,D_X))\big)\to \tau^*(p^*(T(Z,D_Z))), and:
  \end{equation}
  
  \begin{equation}
 (\pi\circ \rho)^*(df): \tau^*(p^*(\Omega^1(Z,D_Z)))\to\big(\rho^*(\pi^*(\Omega^1(X,D_X))\big),
 \end{equation}
the latter one being injective.
\end{proposition}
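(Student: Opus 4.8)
The plan is to reduce everything to a local computation at a generic point of each ramification divisor, since away from the supports of $D_X$ and $D_Z$ both $\pi^*(T(X,D_X))$ and $p^*(T(Z,D_Z))$ agree with the honest pullbacks of $TX$ and $TZ$, where the map $(\pi\circ\rho)^*(df)$ is already defined; by normality of $T$ (hence $S_2$, and the relevant sheaves being reflexive) it suffices to check that the existing rational map of sheaves has no pole in codimension one, i.e.\ along each prime divisor of $T$. So first I would fix an irreducible divisor $\widetilde F\subset T$, let $F'=\rho(\widetilde F)\subset Y$, $F=\pi(F')\subset X$, and similarly on the $Z$-side $E'=\tau(\widetilde F)\subset W$, $E=p(E')\subset Z$. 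The two cases to separate are (i) $f(F)$ is a divisor $E$ of $Z$, and (ii) $f(F)$ has codimension $\ge 2$ (the ``$f$-exceptional'' case). In case (ii) the claim is essentially automatic because near $\widetilde F$ the target sheaf $\tau^*(p^*(T(Z,D_Z)))$ locally equals $\tau^*(p^*(TZ))$ (no orbifold correction on $Z$ along a non-divisorial image), and the source is a subsheaf of $\rho^*(\pi^*(TX))$, so $(\pi\circ\rho)^*(df)$ restricted from the ambient (honest) derivative already lands there.

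The heart of the matter is case (i). Here I would choose local adapted coordinates: on $X$, $z$ a coordinate cutting out $F$ with $D_X$-multiplicity $m:=m_{D_X}(F)$, so that on the Kawamata cover $\pi^*(T(X,D_X))$ is locally generated by $z^{m-1}\,\partial/\partial z$ lifted appropriately — more precisely, if $w$ is a local coordinate on $Y$ with $\pi^*z = (\mathrm{unit})\cdot w^{e}$ for the ramification index $e$ of $\pi$ along $F'$ (with $m\mid e$, since $\pi$ is adapted), then $\pi^*(T(X,D_X))$ is locally free on $w^{e/m}\cdot(\text{generator of }\pi^*TX)$ in the $z$-direction, i.e.\ generated by $w^{e-e/m}\,\partial/\partial w$. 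Similarly on $Z$ with coordinate $t$ cutting out $E$, multiplicity $n:=m_{D_Z}(E)$, and ramification $e'$ of $p$ along $E'$ with $n\mid e'$. Writing $f^*(t)=(\mathrm{unit})\cdot z^{c}+\dots$ where $c=c_{f^*(E)}(F)$, the derivative $df$ sends $\partial/\partial z \mapsto c\,z^{c-1}\,(\mathrm{unit})\,\partial/\partial t + \dots$. Then I must track the exponents through $\rho$ and $\tau$: if $\widetilde F$ has ramification index $r$ over $F'$ under $\rho$ and $r'$ over $E'$ under $\tau$, compatibility of the fibre product forces $r\cdot e\cdot(c/?)=\dots$; concretely, $\tau^* p^* t$ and $(\pi\circ\rho)^* f^* t$ must generate the same ideal on $T$ along $\widetilde F$, which pins down the order of vanishing. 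The computation then reduces to checking a single inequality of integer exponents, and the inequality that makes it work is exactly the orbifold-morphism condition $c\cdot m\ge n$ from Definition \ref{deforbmorph}: that is the arithmetic input guaranteeing that the local generator $w^{e-e/m}\,\partial/\partial w$ maps to a section of $\tau^*p^*(T(Z,D_Z))$ (a multiple of the corresponding generator on the $W$-side) rather than one with a pole. The dual statement for $\Omega^1$ follows by transposing, and injectivity of the cotangent map is clear generically (it is the dual of a map which is generically an isomorphism, $f$ being dominant with $X,Z$ of the relevant dimensions — or rather $df$ is generically surjective onto $f^*TZ$, hence its transpose is generically injective) and then holds everywhere because the source $\tau^*p^*(\Omega^1(Z,D_Z))$ is torsion-free.

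The step I expect to be the main obstacle is the bookkeeping of ramification indices across the normalised fibre product $T=(Y\times_Z W)^n$: one has to verify that the local ramification data $(e,e',r,r')$ along a chosen divisor $\widetilde F\subset T$ are consistent (via $r\,e$ versus $r'\,e'$ and the coefficient $c$) so that the exponents line up, and in particular that the divisibilities forced by ``$\pi$ adapted to $(X,D_X)$'' and ``$p$ adapted to $(Z,D_Z)$'' survive the pullback to $T$. One should be slightly careful that $T$ may be singular in codimension $\ge 2$, but since all sheaves involved are reflexive (being pullbacks of locally free sheaves on $Y$ resp.\ $W$ along finite maps, then reflexive hulls, and $T$ is normal) and we only extend across codimension-one loci, Hartogs-type extension on the normal variety $T$ closes the argument. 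I would also remark that the compatibility is a strictly local (even formal-local, or analytic-local) statement along $\widetilde F$, so one may freely pass to the completion and reduce to the one-variable computation sketched above, the remaining coordinates being carried along as parameters.
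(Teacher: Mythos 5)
Your overall strategy (reduce to a codimension-one check on the normal variety $T$ via reflexivity, then a local exponent computation at a generic point of a divisor $F\subset X$, with the orbifold-morphism inequality $c\cdot m_{D_X}(F)\ge m_{D_Z}(E)$ as the arithmetic input, and duality plus torsion-freeness for the cotangent statement) is the paper's, and your case (i) computation is correct: it is exactly the paper's condition (c) in the special situation $q=1$. The genuine gap is your case (ii). You dismiss the divisors $F$ whose image $f(F)$ has codimension $\ge 2$ in $Z$ on the grounds that there is ``no orbifold correction on $Z$ along a non-divisorial image'', so that the target is locally $\tau^*(p^*(TZ))$ and the honest derivative suffices. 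That is false whenever $f(F)$ has codimension $\ge 2$ but is \emph{contained in} $\Supp(D_Z)$ — e.g.\ $F$ an $f$-exceptional divisor whose image lies inside a component of $D_Z$, or $F$ dominating an intersection of several components. The locus where $p^*(T(Z,D_Z))\subsetneq p^*(TZ)$ is all of $p^{-1}(\Supp(D_Z))$, so the criterion for the correction to be invisible along $\widetilde F$ is $f(F)\not\subset \Supp(D_Z)$, not $\dim f(F)\le \dim Z-2$; at the generic point of such an $\widetilde F$ the target really is the strictly smaller orbifold sheaf, and there is something to prove. This is precisely why Definition \ref{deforbmorph} imposes the inequality for \emph{every} prime divisor $F$ appearing with positive coefficient in $f^*(E)$, including the $f$-exceptional ones; these divisors are also the ones responsible later for the cokernel being ``partially supported on the fibres'' (Proposition \ref{pdpsf}), so they cannot be waved away.

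The repair is to run your local computation with all components $E_1,\dots,E_q$ of $\Supp(D_Z)$ containing $f(F)$ simultaneously, as the paper does: at a generic point of $F$ (where $F=\{x_1=0\}$) write $f=(x_1^{t_1}g_1,\dots,x_1^{t_q}g_q,g_{q+1},\dots,g_p)$ with $f(F)\subset E_h=\{z_h=0\}$ and $m'_h:=m_{D_Z}(E_h)$, and check the exponent inequality $\frac{t_h}{m'_h}\ge \frac{1}{m}$ for each $h\le q$ (and the easier estimates for $\partial_{x_k}$, $k\ge 2$); each of these inequalities is exactly the orbifold-morphism condition applied to the pair $(E_h,F)$. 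Your case (i), where $f(F)=E$ is a divisor, is the instance $q=1$ (by SNC only one component of $D_Z$ passes through the generic point of $E$). With this correction, the remaining bookkeeping you flag as the main obstacle — ramification indices through the normalized fibre product — is handled as you suggest and as in the paper: near the relevant generic points the adapted covers have the simple form $x_1=y_1^{k}$, $z_1=w_1^{\ell}$ with the divisibilities built into ``adapted'', so the fractional computation on $X$ and $Z$ lifts directly to $T$.
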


\begin{proof} Because both sheaves $\rho^*(\pi^*(T(X,D)))$ and $\tau^*(p^*(T(Z,D_Z)))$ are locally free, it is sufficient to establish the statement in codimension one on $T$, so in codimension one over $Y$, or $X$. It is thus sufficient to consider the situation over generic points of a divisor $F$ of $X$ which is contained in $f^{-1}(\Supp(D_Z))$, since $\tau^*(p^*(T(Z,D_Z))\subsetneq \tau^*(p^*(TZ))$ only there.

Let thus $x_0\in F$ be such a point, and $z_0=f(x_0)\in f(F)\subset E=f(F)$. 
 We may thus assume that, in suitable coordinates $x=(x_1,...,x_n)$ for $X$ near $x_0$ and $z=(z_1,...,z_p)$ for $Z$ near $z_0:=f(x_0)$ the following hold true near $x_0$ and $z_0$: $E=\{z_1=z_2=...=z_{q}=0\}$, for some $1\leq q\leq p$. Write $E_h=f^{-1}(z_h=0)$, and: $f^*(E_h)=t_h.F+...$, for some integer $t_h\geq 1$, for $h\in\{1,...,q\}$.

\begin{enumerate}

\item[{\rm (a)}] The divisor $F$ is given by the equation $x_1=0$ near $x_0$.

\item[{\rm (b)}] $f$ is given, for holomorphic functions $g_{\ell}(x),\ell\in\{1,...,p\}$, by:

 $f(x_1,...,x_n)= (x_1^{t_1}.g_1, x_1^{t_2}.g_2,...,x_1^{t_q}.g_q,g_{q+1},...,g_p)$
 \end{enumerate}

\noindent Let $m$ (resp. $m'_h$) be the multiplicity of $F$ in $D_X$ (resp.$f^{-1}(E_h),\forall h\leq q$ in $D_Z$). 
Because $f$ is an orbifold morphism, we also have:

\begin{enumerate}

\item[{\rm (c)}] $t_h.m\geq m'_h$, thus: $(1-\frac{1}{t_h.m})\geq (1-\frac{1}{m'_h}),\forall h\leq q$.
\end{enumerate}

By definition, the bundle $\pi^*(T(X,D))$ is `symbolically' generated as an $\cO_Y$-module by: $\displaystyle \pi^*(x_1^{(1-\frac{1}{m})}.\partial_{x_1}, \partial_{x_2},...\partial_{x_n})$ along $F$, and $p^*(T(Z,D_Z))$  is generated as an $\cO_W$-module by $\displaystyle p^*(z_j^{(1-(1/m'_1)}.\partial_{z_1}, \partial_{z_2},\dots, \partial_{z_p}))$ along $E_1$, and by similar expressions involving the $\displaystyle p^*(z_j^{(1-(1/m'_j)}.\partial_{z_j})$ along the other $E_j's$.

The relations (b)imply that: 
$df(\partial_{x_1})=(x_1^{s_j}.g'_j.f^*(\partial_{z_j}))_{j=1,...,p}$, where the $g'_j(x)$ are holomorphic, $s_j:=t_h-1,$ for $ j=1,...,q,$ and $s_j=0,$ for $j=q+1,...,p$. 

Thus: 
$df(x_1^{(1-\frac{1}{m})}.\partial_{x_1})=(x_1^{s_j+(1-\frac{1}{m})}.g'_j.f^*(\partial_{z_j}))_{j=1,...,p}.$

\medskip

Now observe that, for $j\leq q$, we have: 

$x_1^{t_j-1+(1-\frac{1}{m})}.(f^*(z_j^{(1-(1/m'_j)})^{-1}=x_1^{t_j-1+(1-\frac{1}{m})-t_j.(1-\frac{1}{m'_j})}=x_1^{(\frac{t_j}{m'_j}-\frac{1}{m})}.$

The conclusion follows from (c) above, which implies that the `fractional vector field' $df(x_1^{(1-\frac{1}{m})}.\partial_{x_1})$ is in $f^*(T(Z,D_Z))$ along $E=f(F)$ if one considers its $j$-th component for $j\leq q$, and one uses that $F\subsetneq f^*(E_j)$ for $j=q+1,...,p$. The assertion for $df(\partial_{x_k}),k\geq 2$ follows from the same computation of $df(\partial_{x_2})$, and simpler estimates of exponents, with $s_j$ replaced by $s_j+1$ for $j\leq q$, based on: $m'_j\geq m,\forall j\leq q$.

The dual case of the cotangent bundles is similar.

\medskip
 
\noindent Now these `symbolic' considerations easily imply the assertion, since the ramified covers $\pi$ and $p$ take the simplest possible form $x_1=y_1^k, z_1=w_1^{\ell}$, and $x_h=y_h,z_h=w_h$, for $h\geq 2$, near the points $x_0,z_0$, for suitable integers $k,\ell$. We leave the simple verifications to the reader.
\end{proof}

The preceding computation moreover shows that\footnote{Up to $f$-exceptional divisors $F\subset X$.}, since $\Supp(D_X)\subset f^{-1}(D_Z)$ when $(Z,D_Z)$ is the orbifold base of $(f,D_X)$, the quotient sheaf $\tau^*(p^*(T(Z,D_Z))/(\pi\circ \rho)^*(df)(\rho^*(\pi^*(T(X,D_X)))$ is supported on the inverse image in $T$ of the components $F$ of $f^{-1}(D_Z)$ for which: $t.m>m'$ in the above notations. The union $\widetilde{F}\subset X$ is thus a divisor \emph{partially supported on the fibres of $f$} in the following sense.
 
\begin{definition}\label{ddpsf}(See \cite{Ca04}) Let $f:X\to Z$ be as before. Let $F:=\cup_{s\in S}F_s$ be a finite union of irreducible divisors of $X$. We say that `$F$ is partially supported on the fibres of $f$' if, for each $s\in S$, either $F_s$ is $f$-exceptional (ie: $codim_Z(f(F_s))\geq 2$), or if the following two properties hold:

1. $f(F_s):=E_s$ is a divisor of $Z$.

2. $f^{-1}(E_s)\cap F\subsetneq f^{-1}(E_s)$. (ie: at least one irreducible component of the RHS is not a component of $F$).
\end{definition} 
\medskip

\noindent We thus have the following corollary.

\begin{proposition}\label{pdpsf} Let the situation $f:(X,D_X)\to (Z,D_Z)$ be as above, with $(Z,D_Z)$ the orbifold base of $(f,D_X)$. 

The quotient sheaf $\tau^*(p^*(T(Z,D_Z))/(\pi\circ \rho)^*(df)(\rho^*(\pi^*(T(X,D_X)))$ is supported in $(\pi\circ\rho)^{-1}(F)$, where $F\subset X$ is a divisor partially supported on the fibres of $f$.

The dual statement also holds similarly, the saturation being taken in $(\rho^*(\pi^*(\Omega^1(X,D_X)))$: $$[(\pi\circ \rho)^*(df)(\tau^*(p^*(\Omega^1(Z,D_Z)))]^{sat}/(\pi\circ \rho)^*(df)(\tau^*(p^*(\Omega^1(Z,D_Z)))$$ is supported in $(\pi\circ\rho)^{-1}(F)$, where $F\subset X$ is a divisor partially supported on the fibres of $f$.

\end{proposition}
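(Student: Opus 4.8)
The plan is to deduce Proposition \ref{pdpsf} directly from the local computation carried out in the proof of Proposition \ref{pom}, which already identifies \emph{where} the comparison map $(\pi\circ\rho)^*(df)$ fails to be an isomorphism onto its saturated image. First I would observe that both $\rho^*(\pi^*(T(X,D_X)))$ and $\tau^*(p^*(T(Z,D_Z)))$ are locally free on $T$, that $(\pi\circ\rho)^*(df)$ is a morphism between them which is generically surjective (since $f$ is a fibration, $df$ is generically of maximal rank $p=\dim Z$), and that the only possible cokernel support is in codimension one. Hence the question reduces to: over which divisors of $X$ (pulled back to $T$) is the map $(\pi\circ\rho)^*(df)$ not surjective? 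By the very definition of the orbifold base, $\Supp(D_X)\subset f^{-1}(\Supp(D_Z))$ up to $f$-exceptional divisors, so the only divisors where $\tau^*(p^*(T(Z,D_Z)))$ sits strictly inside $\tau^*(p^*(TZ))$ are (pullbacks of) components of $f^{-1}(\Supp(D_Z))$.

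Next I would revisit the exponent bookkeeping in the proof of Proposition \ref{pom}: with $F=\{x_1=0\}$ mapping to $E=\{z_1=\cdots=z_q=0\}$, writing $f^*(E_h)=t_h\cdot F+\cdots$ and letting $m=m_{D_X}(F)$, $m'_h=m_{D_Z}(E_h)$, the image of the generator $x_1^{1-1/m}\partial_{x_1}$ has $j$-th component (for $j\le q$) with $x_1$-exponent $t_j/m'_j - 1/m$ relative to the generator $z_j^{1-1/m'_j}\partial_{z_j}$ of $p^*(T(Z,D_Z))$. Since $m_{(f,D_X)}(E_j) = \inf_{f(F')=E_j} c_{f^*(E_j)}(F')\cdot m_{D_X}(F')$ and $(Z,D_Z)$ is the orbifold base, one has $m'_j = m_{(f,D_X)}(E_j) \le t_j\cdot m$, i.e. $t_j/m'_j \ge 1/m$, so the map is regular; it is moreover \emph{surjective} at the generic point of $E_j$ precisely when equality $t_j\cdot m = m'_j$ holds for \emph{the} component $F$ realizing the infimum. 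Thus, at the generic point of a component $E_j\subset\Supp(D_Z)$, the cokernel is nonzero exactly when, over $E_j$, every component $F'$ of $f^{-1}(E_j)$ which is a component of $D_X$ has $t_{F'}\cdot m_{D_X}(F') > m'_j$ — but this can only fail to force "$f^{-1}(E_j)$ contains a component not in $D_X$" if the infimum defining $m'_j$ is attained. In other words: the components $F'$ of $f^{-1}(\Supp(D_Z))$ over which $(\pi\circ\rho)^*(df)$ is non-surjective are, for each such $F'$ lying over $E_j$, either $f$-exceptional, or satisfy $t_{F',E_j}\cdot m_{D_X}(F') > m'_j = m_{(f,D_X)}(E_j)$, which by the definition of the infimum means $f^{-1}(E_j)$ has some \emph{other} component realizing a strictly smaller value — so $f^{-1}(E_j)\cap \widetilde F\subsetneq f^{-1}(E_j)$. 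Collecting all such $F'$ into $\widetilde F=\bigcup F'\subset X$, this is exactly the condition of Definition \ref{ddpsf} that $\widetilde F$ be partially supported on the fibres of $f$.

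Then I would package the conclusion: the support of the quotient sheaf $\tau^*(p^*(T(Z,D_Z)))/(\pi\circ\rho)^*(df)(\rho^*(\pi^*(T(X,D_X))))$ is contained in $(\pi\circ\rho)^{-1}(\widetilde F)$ with $\widetilde F$ partially supported on the fibres of $f$, which is the first assertion. For the dual statement I would dualize the whole setup: $(\pi\circ\rho)^*(df)$ induces an injective map $\tau^*(p^*(\Omega^1(Z,D_Z)))\hookrightarrow \rho^*(\pi^*(\Omega^1(X,D_X)))$ by Proposition \ref{pom}, and the cokernel of the inclusion of the \emph{saturation} into the image is, by the standard local duality between a torsion-free sheaf and its dual, supported on exactly the same divisor where the tangent-side map fails surjectivity — i.e. again in $(\pi\circ\rho)^{-1}(\widetilde F)$. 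I would spell this out via the snake lemma applied to the two exact sequences relating image, saturation, and ambient sheaf, using that outside $\widetilde F$ the tangent map is an isomorphism onto a subbundle, hence the cotangent map there identifies the saturation with the image.

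The main obstacle I anticipate is being careful about $f$-exceptional divisors and about the difference between the \emph{saturation} of the image and the image itself on the cotangent side: the excerpt's footnote "up to $f$-exceptional divisors" signals that one cannot literally say $\Supp(D_X)\subset f^{-1}(\Supp(D_Z))$, only up to $f$-exceptional contributions, and these exceptional divisors must be explicitly folded into $\widetilde F$ (Definition \ref{ddpsf} allows them as a case). The other delicate point is justifying that "non-surjective at the generic point of $E_j$" genuinely forces "$f^{-1}(E_j)$ has a component off $\widetilde F$", which rests entirely on the infimum in the definition of $m_{(f,D_X)}(E_j)$ being attained and on correctly tracking which component attains it — this is where I would want to argue most carefully rather than wave at the local computation. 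Everything else (local freeness, codimension-one reduction, the snake-lemma dualization) is routine once the exponent inequality $t_j/m'_j \ge 1/m$ with its equality case is in hand.
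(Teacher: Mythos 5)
Your proposal is correct and follows essentially the same route as the paper: Proposition \ref{pdpsf} is obtained there precisely as the remark following the proof of Proposition \ref{pom}, namely by reading off from the local exponent computation that non-surjectivity occurs exactly along components with $t\cdot m>m'$, and then using that the infimum defining $m_{(f,D_X)}(E)$ is attained on some component of $f^{-1}(E)$ (where equality holds, hence surjectivity), so the union of bad components is partially supported on the fibres, $f$-exceptional divisors being allowed by Definition \ref{ddpsf}. Your explicit dualization of the cotangent statement only spells out what the paper dismisses as "similar".
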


%%%%%%%%%%%%%%%%%%%%%%%%%%%%%%%%%%%%%%%%%%%%%%%%%%%%%%%%%%%%%%%%%%%%%%%%%
%%%%%%%%%%%%%%%%%%%%%%%%%%%%%%%%%%%%%%%%%%%

\subsection{Integral parts of orbifold tensors}\label{sipot}

Let $(X,D)$ be a smooth (complex projective, connected) orbifold pair. Let $\pi:Y\to X$ be an adapted Kawamata cover, together with its vector bundle $\pi^*(\Omega^1(X,D))$. We have the notion of orbifold tensors $\otimes^m(\pi^*(\Omega^1(X,D)))$ on $Y$, and of their global sections (which may be proved again to be invariant under orbifold rational equivalence). These a priori depend on the choice of $\pi$. But we shall give here a notion of `integral part' of both these sheaves and of their global sections defined directly on $X$, permitting to show that certain orbifold invariants of the differentials on $(X,D)$ defined on $Y$, are, in fact, independent on the choice of $\pi$.

We shall define, for any $m>0$ the locally free sheaf $[\otimes^m](\Omega^1(X,D)$ in local coordinates $(x_1,\dots,x_n)$ adapted to the snc orbifold divisor $D=c_1.H_1+\dots +c_n.H_n$ in these coordinates. Here $H_j$ is the coordinate hyperplane of equation $x_j=0, j\in \{1,\dots,n\}$, and $c_j\in [0,1]\cap \Bbb Q$.

In these coordinates, $[\otimes^m](\Omega^1(X,D)$ is the locally free sheaf of $\cO_X$-modules generated by the $n^m$ elements:

 $t_{(M_1,M_2,\dots,M_n)}=\frac{dx_1^{M_1}}{x_1^{[m_1.c_1]}}\otimes\dots \otimes \frac{dx_n^{M_n}}{x_n^{[m_n.c_n]}}$,where:

1. $(M_1,M_2,\dots ,M_n)$ is a partition of $M:=\{1,\dots,m\}$

2. $m_j$ is the cardinality of $M_j$, $j\in \{1,\dots,n\}$.

3. $dx_1^{M_1}\otimes\dots \otimes dx_n^{M_n}:=dx_{h(1)}\otimes dx_{j(2)}\otimes \dots \otimes dx_{h(m)},$ where $h(k)=j$ if and only if $k\in M_j$, for $k\in M,j\in \{1,\dots,n\}$.

It is easy to see that these sheaves are independent on the choosen adapted coordinates. 

The (locally free) sheaves $[Sym^m](\Omega^1(X,D))$ and $[Sym^m(\wedge^p)](\Omega^1(X,D)):=[Sym^m(\Omega^p)](X,D)$ are defined analogously. They are subsheaves of  $[\otimes^m](\Omega^1(X,D)$, and $[\otimes^{m.p}](\Omega^1(X,D)$, respectively. Let us notice that the preceding construction can also be similarly done for orbifold tensors of any type $(s,m)$) as well, and for arbitrary representations of $Gl(n,\Bbb C)$.

By the same proof (but simpler, since one does not need to consider covers) as for Proposition \ref{pom}, one shows:

\begin{proposition} \label{pom'} Let $f:(X,D)\to (Z,D_Z)$ be an orbifold morphism between smooth (complex projective) orbifolds. For any $m>0$, $f$ induces a morphism of sheaves of $\cO$-modules: $f^*: [\otimes^m](\Omega^1(Z,D_Z))\to [\otimes^m](\Omega^1(X,D)$. 

If $f$ is birational, then $f^*$ is isomorphic at the level of global sections. 

Moreover, $f_*:[\otimes^m](\Omega^1(X,D))\to [\otimes^m](\Omega^1(Z,D_Z)$ (resp. $(f^*)^{sat}: [\otimes^m](\Omega^1(Z,D_Z))\to [\otimes^m](\Omega^1(X,D))$ is then (if $f$ is birational) an isomorphism at the level of sheaves.

The same statements hold for $[Sym^m]$ and $[Sym^m(\wedge^p)],\forall m,p>0$. 
\end{proposition}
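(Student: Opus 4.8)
The plan is to follow the blueprint of Proposition \ref{pom} but in the simpler, cover-free setting. First I would reduce to a local statement: since both $[\otimes^m](\Omega^1(Z,D_Z))$ and $[\otimes^m](\Omega^1(X,D))$ are locally free $\cO$-modules, the existence of the morphism $f^*$ is a purely local question, and since the only subtlety comes from the components of $\Supp(D_X)$ lying over $\Supp(D_Z)$ (elsewhere $[\otimes^m](\Omega^1(-))$ coincides with the usual $\otimes^m\Omega^1$ and $f^*$ is the ordinary pullback of tensors), it suffices to check the claim near a generic point of a divisor $F\subset X$ with $f(F)=E\subset \Supp(D_Z)$. There I would choose the same adapted coordinates $x=(x_1,\dots,x_n)$ on $X$ and $z=(z_1,\dots,z_p)$ on $Z$ as in the proof of Proposition \ref{pom}, with $E_h=\{z_h=0\}$, $f^*(E_h)=t_h F+\cdots$, $m=m_{D_X}(F)$, $m'_h=m_{D_Z}(E_h)$, and the orbifold-morphism inequality $t_h m\geq m'_h$ available. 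Writing $c=1-\tfrac1m$ and $c'_h=1-\tfrac1{m'_h}$, and using $f(x)=(x_1^{t_1}g_1,\dots,x_1^{t_q}g_q,g_{q+1},\dots,g_p)$, the pullback of a generator $t_{(M_1,\dots,M_p)}=\bigotimes_h \dfrac{dz_h^{M_h}}{z_h^{[\,\#M_h\, c'_h\,]}}$ is computed term by term: $f^*(dz_h)=t_h x_1^{t_h-1}g_h\,dx_1+x_1^{t_h}dg_h$ for $h\le q$, and $f^*(z_h^{-[\#M_h c'_h]})=x_1^{-t_h[\#M_h c'_h]}g_h^{-[\#M_h c'_h]}$. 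Collecting the power of $x_1$ and comparing with the power $[\,\#N_1\,c\,]$ allowed in the target generator (where $N_1$ is the block of the partition of $\{1,\dots,m\}$ assigned to the coordinate $x_1$), the desired integrality $f^*(t_{(M_\bullet)})\in [\otimes^m](\Omega^1(X,D))$ follows from the elementary inequality $t_h[k c'_h]\le t_h - 1 + [k(1-\tfrac1m)]$ for every $k\ge 0$, which is exactly where $t_h m\ge m'_h$ (equivalently $c'_h\le 1-\tfrac1{t_h m}$) is used, together with the standard super-/sub-additivity facts about the floor function when one distributes the $m$ differentials among the coordinates.

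Once the local morphism is in place, I would globalize it: the local maps are compatible on overlaps because, as already remarked, $[\otimes^m](\Omega^1(X,D))$ is independent of the chosen adapted coordinates and off $\Supp(D)$ the map is just the ordinary $f^*$ on tensors, so they glue to a morphism of sheaves $f^*:[\otimes^m](\Omega^1(Z,D_Z))\to [\otimes^m](\Omega^1(X,D))$. For the birational refinement, I would argue as in the classical case: when $f$ is birational the generic fibre is a point, the map $df$ is an isomorphism on a dense open set, and on every prime divisor the local computation above is an isomorphism in codimension one (the floor inequalities become equalities up to the controlled $f$-exceptional loci, cf. Proposition \ref{pdpsf}); hence $f^*$ is injective with torsion-free cokernel supported in codimension $\ge 2$, which for locally free sheaves forces $(f^*)^{sat}$ to be an isomorphism of sheaves, and dually $f_*$ an isomorphism. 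The statement at the level of global sections is then immediate, and also follows more softly from the invariance of orbifold-tensor sections under orbifold birational equivalence recalled in \S\ref{sipot}.

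Finally, the $[Sym^m]$ and $[Sym^m(\wedge^p)]$ versions require no new idea: these are defined as the subsheaves of $[\otimes^m](\Omega^1(X,D))$ (resp. $[\otimes^{mp}](\Omega^1(X,D))$) cut out by the corresponding symmetrization/alternation conditions, which are $Gl_n$-equivariant and hence preserved by the pullback morphism constructed above; restricting $f^*$ to these subsheaves gives the claim, and the birational isomorphism statements follow by the same saturation argument applied inside these subsheaves. I expect the only genuinely delicate point to be the bookkeeping in the local step — getting the floor-function inequality $t_h[k c'_h]\le t_h-1+[k(1-\tfrac1m)]$ right for all partitions simultaneously and checking it is saturating in codimension one — and I would handle the partition combinatorics exactly as the exponent estimates ``$s_j$ replaced by $s_j+1$'' are handled in the proof of Proposition \ref{pom}, treating the $dx_1$-slot and the $dx_k$-slots ($k\ge2$) separately and summing.
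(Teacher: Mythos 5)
Your overall strategy is exactly the paper's: the paper proves Proposition \ref{pom'} by saying it is "the same proof (but simpler, since one does not need to consider covers) as for Proposition \ref{pom}", i.e. a local computation in adapted coordinates in codimension one, followed by the standard globalization, the codimension-two/saturation argument in the birational case, and equivariance for $[Sym^m]$ and $[Sym^m(\wedge^p)]$. Those surrounding steps in your write-up are fine. However, the one place where you actually do the computation, the displayed "elementary inequality" $t_h[k c'_h]\le t_h-1+[k(1-\tfrac1m)]$, is false as stated. Counterexample within the allowed regime: take $m=m_{D_X}(F)=1$ (so $c=0$), $t_h=2$, $m'_h=2$ (so $c'_h=\tfrac12$); the orbifold-morphism condition $t_h m\ge m'_h$ holds, but for $k=2$ the left side is $2[1]=2$ while the right side is $(t_h-1)+[0]=1$. (Similarly $m=2$, $t_h=2$, $m'_h=4$, $k=4$ gives $6\le 3$.) The slip is in the bookkeeping: when all $k=\#M_h$ factors of $dz_h$ in a block contribute the term $t_h x_1^{t_h-1}g_h\,dx_1$, you pick up the factor $x_1^{t_h-1}$ once per differential, not once per block, so the inequality you actually need is
$$t_h\,[k\,c'_h]\ \le\ k\,(t_h-1)+[k\,c],$$
and this one is true: from $t_h m\ge m'_h$ one gets $c'_h\le 1-\tfrac1{t_h m}$, hence $t_h\,k\,c'_h\le k t_h-\tfrac{k}{m}$, and since the left side of the displayed inequality is an integer, $t_h[k c'_h]\le \lfloor kt_h-\tfrac km\rfloor=kt_h-\lceil \tfrac km\rceil=k(t_h-1)+[k(1-\tfrac1m)]$.

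With that corrected per-block estimate the rest of your local step does close up: terms in which some factors come from $x_1^{t_h}dg_h$ only improve the exponent (the $dx_1$-component of $dg_h$ carries $x_1^{t_h}$, and $dx_i$-slots with $i\ge2$ need no pole allowance), and summing over the blocks $h\le q$ against the single allowance $[N_1 c]$ in the target uses the superadditivity of the integer part, $\sum_h[j_h c]\le[(\sum_h j_h)c]$, which goes in the right direction. So the proof is repairable by replacing your inequality with the correct one; note also that your appeal, for the global-sections statement, to "the invariance of orbifold-tensor sections under orbifold birational equivalence recalled in \S\ref{sipot}" is circular, since that invariance is precisely what this proposition and its corollary are meant to establish — in the birational case you should instead push sections down over the complement of the codimension-$\ge2$ locus of $Z$ over which $f$ fails to be an isomorphism and extend using local freeness, as in your sheaf-level argument.
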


\begin{corollary} Let $(X,D)$ and $(X',D')$ be smooth projective orbifolds which are birationally equivalent in the orbifold sense, this equivalence being induced by $g=:=(v_m)^{-1}\circ u_m\circ (v_{m-1})^{-1}\circ u_{m-1}\circ \dots (v_0)^{-1}\circ u_0,$ in the notations of Definition \ref{defbireq}. 

Then $g^*:=(v_m^*)^{sat}\circ (u_m)_*\circ (v_{m-1}^*)^{sat}\circ (u_{m-1})_*\circ \dots \circ(v_0^*)^{sat}\circ (u_0)_*$ is, for any $N>0$, an isomorphism of sheaves between $[\otimes^N](\Omega^1(X',D')$ and $[\otimes^N](\Omega^1(X,D)$. 
The same statements hold for $[sym^m(\wedge^p)]$, for any $m>,p>0$.
\end{corollary}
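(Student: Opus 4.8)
The plan is to reduce everything to Proposition \ref{pom'} applied to each of the finitely many elementary orbifold birational equivalences $u_0,v_0,u_1,v_1,\dots,u_m,v_m$ occurring in the chain of Definition \ref{defbireq}, and then to observe that a composition of isomorphisms of sheaves is again an isomorphism of sheaves.

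First I would record the exact shape of the chain: one has smooth orbifold pairs $(X_j,D_j)$, $j=0,\dots,2m+2$, with $(X_0,D_0)=(X,D)$ and $(X_{2m+2},D_{2m+2})=(X',D')$, together with orbifold birational equivalences $u_i\colon (X_{2i},D_{2i})\to (X_{2i+1},D_{2i+1})$ and $v_i\colon (X_{2i+2},D_{2i+2})\to (X_{2i+1},D_{2i+1})$ for $i=0,\dots,m$. Crucially, by Definition \ref{deforbifoldbirationalequivalence} each intermediate pair $(X_j,D_j)$ is a \emph{smooth} orbifold pair, so the locally free sheaf $[\otimes^N](\Omega^1(X_j,D_j))$ of \S\ref{sipot} is defined, and each $u_i$, $v_i$ is a birational orbifold morphism onto its target, which is moreover the orbifold base of its source; hence Proposition \ref{pom'} applies to every $u_i$ and every $v_i$.

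Next I would invoke Proposition \ref{pom'} factor by factor: applied to $u_i$ it gives that $(u_i)_*\colon [\otimes^N](\Omega^1(X_{2i},D_{2i}))\to [\otimes^N](\Omega^1(X_{2i+1},D_{2i+1}))$ is an isomorphism of sheaves, and applied to $v_i$ it gives that $(v_i^*)^{sat}\colon [\otimes^N](\Omega^1(X_{2i+1},D_{2i+1}))\to [\otimes^N](\Omega^1(X_{2i+2},D_{2i+2}))$ is an isomorphism of sheaves (indeed inverse to $(v_i)_*$). Reading off sources and targets, the composite
$$g^* = (v_m^*)^{sat}\circ (u_m)_*\circ (v_{m-1}^*)^{sat}\circ (u_{m-1})_*\circ\cdots\circ (v_0^*)^{sat}\circ (u_0)_*$$
therefore runs
$$[\otimes^N](\Omega^1(X_0,D_0))\to [\otimes^N](\Omega^1(X_1,D_1))\to\cdots\to [\otimes^N](\Omega^1(X_{2m+2},D_{2m+2})),$$
i.e. from $[\otimes^N](\Omega^1(X,D))$ to $[\otimes^N](\Omega^1(X',D'))$, as a composition of isomorphisms of sheaves; hence $g^*$ is itself an isomorphism of sheaves. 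The same argument, word for word, handles $[Sym^m(\wedge^p)]$ for any $m,p>0$, since Proposition \ref{pom'} covers those sheaves as well; and feeding in the $H^0$-part of Proposition \ref{pom'} at each step shows that $g^*$ also induces an isomorphism on global sections, recovering the known invariance of spaces of orbifold differentials under orbifold birational equivalence.

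I do not expect a genuine obstacle: once Proposition \ref{pom'} is available this is a bookkeeping argument. The only points needing a little care are (i) that the orbifold bases really do match along the chain, so that each $u_i$, $v_i$ is an orbifold birational equivalence in the strict sense of Definition \ref{deforbifoldbirationalequivalence} (this is precisely what Definition \ref{defbireq} builds in), and (ii) consistently using $(u_i)_*$ in the ``forward'' direction and $(v_i^*)^{sat}$ in the ``backward'' direction so that the codomain of each factor is the domain of the next and $g^*$ is a legitimate composition. Neither point is substantive.
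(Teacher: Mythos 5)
Your proposal is correct and is exactly the intended argument: the paper states this corollary as an immediate consequence of Proposition \ref{pom'}, applying it factor by factor to the chain of orbifold birational equivalences $u_i$, $v_i$ of Definition \ref{defbireq} and composing the resulting sheaf isomorphisms $(u_i)_*$ and $(v_i^*)^{sat}$. Your only deviation is cosmetic: you read the composite as running from $[\otimes^N](\Omega^1(X,D))$ to $[\otimes^N](\Omega^1(X',D'))$, which matches the literal order of the factors (the paper's notation $g^*$ and Remark \ref{remg^*} suggest the opposite direction, but the argument is symmetric under exchanging $g$ and $g^{-1}$).
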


\begin{remark}\label{remg^*} Notice that $g^*$ above is just the saturation in the relevant sheaves of the isomorphic map $g^*: \otimes^N\Omega^1_{X'}\to \otimes^N \Omega^1_X$ defined by the same formula.
\end{remark}

\begin{example}\label{exinttensors} Let $(Z,D_Z)$ be a smooth (projective connected) orbifold, $D_Z=\sum_J (1-\frac{b_j}{a_j}).D_j$, let $m>0$ be an integer divisible by each of the $a_j's$. Then $m.(K_Z+D_Z)\in [Sym^m(\wedge^p)](\Omega^1(Z,D_Z))$ if $p:=dim(Z)$. If $f:(X,D)\to (Z,D_Z)$ is an orbifold morphism, then $f^*(m.(K_Z+D_Z))\subset [Sym^m(\wedge^p)](\Omega^1(X,D))$.
\end{example}

\begin{theorem}\label{thmintegraltensors} Let $(X,D)$ be smooth projective, with $\pi:Y\to X$ be a Kawamata cover adapted to $D$, Galois of group $G$. Then, for nay $m>0$, we have: $[\otimes^m](\Omega^1(X,D)$ is the $G$-invariant part $\pi_*^G(\otimes^m(\pi^*(\Omega^1(X,D))))$of $\pi_*(\otimes^m(\pi^*(\Omega^1(X,D))))$. Here, for any torsionfree coherent $G$-sheaf $E$ of $\cO_Y-$modules on $Y$, $\pi_*^G(E)$ is the (coherent) sheaf of $\cO_X$-modules on $X$ associated to the presheaf $H^0(\pi^{-1}(U), E)^G$. 
\end{theorem}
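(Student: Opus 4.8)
The plan is to establish the identity $[\otimes^m](\Omega^1(X,D)) = \pi_*^G(\otimes^m(\pi^*(\Omega^1(X,D))))$ by a purely local computation in adapted coordinates, since both sides are sheaves of $\cO_X$-modules and the statement is local on $X$. First I would reduce to a neighbourhood $U$ of a point where $D = c_1 H_1 + \dots + c_n H_n$ in coordinates $(x_1,\dots,x_n)$ with $H_j = \{x_j = 0\}$, and recall that a Kawamata cover, locally over $U$, is (up to an étale factor which does not affect the invariant-pushforward computation) a product of cyclic covers $x_j = y_j^{d_j}$, where $d_j$ is divisible by the denominator of $c_j$; write $c_j = 1 - b_j/a_j$ with $a_j \mid d_j$. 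The local Galois group is $\mu_{d_1} \times \dots \times \mu_{d_n}$ acting by $y_j \mapsto \zeta_j y_j$.

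Next I would write down generators of $\pi^*(\Omega^1(X,D))$ over $\cO_Y$ near a point of $\pi^{-1}(U)$: these are the forms $y_j^{d_j c_j - 1}\,dx_j = d_j\, y_j^{d_j c_j - d_j}\,dy_j$ up to units, i.e. (absorbing the scalar $d_j$ and using $d_j c_j - d_j = -d_j/a_j \cdot b_j \cdot$ — more precisely $d_j(c_j-1) = -d_j/a_j$) a generator proportional to $y_j^{-d_j/a_j}\,dy_j$. Hence a typical local generator of $\otimes^m(\pi^*(\Omega^1(X,D)))$ is, up to a unit, $\prod_j y_j^{-m_j d_j/a_j} \cdot (dx_1^{M_1}\otimes\dots\otimes dx_n^{M_n})$ where $m_j = |M_j|$ and $(M_1,\dots,M_n)$ partitions $\{1,\dots,m\}$. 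I would then expand a general local section of $\pi_*(\otimes^m(\pi^*(\Omega^1(X,D))))$ as an $\cO_Y$-linear combination of these, expand the $\cO_Y$-coefficients as Laurent-type power series in the $y_j$ (they are actually holomorphic, being regular functions on $Y$), and impose $G$-invariance. Invariance under $y_j \mapsto \zeta_j y_j$ forces the coefficient of $\prod_j y_j^{-m_j d_j/a_j}\cdot(dx^M)$ to be a function whose $y_j$-exponents are all $\equiv m_j d_j/a_j \pmod{d_j}$, i.e. to lie in $y_j^{r_j}\cdot \cO_X[\![x_j]\!]$ where $r_j$ is the least nonnegative residue; a direct check shows $r_j = m_j d_j/a_j - d_j \lfloor m_j c_j\rfloor$ — equivalently, that $y_j^{-m_j d_j/a_j + r_j} = x_j^{-\lfloor m_j c_j\rfloor}$ after descending — so the invariant section descends exactly to an $\cO_X$-combination of the $t_{(M_1,\dots,M_n)} = \bigotimes_j dx_j^{M_j}/x_j^{\lfloor m_j c_j\rfloor}$. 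This shows both inclusions: every $t_{(M_1,\dots,M_n)}$ pulls back into the invariant part (giving $\subseteq$), and every local invariant section is an $\cO_X$-combination of these (giving $\supseteq$).

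The congruence bookkeeping in the previous paragraph is the technical heart: one must verify that $\lfloor m_j c_j \rfloor$ is precisely the quantity making $x_j^{-\lfloor m_j c_j\rfloor}\,dx_j^{M_j}$ both (i) a well-defined — i.e. invariant, hence descending — meromorphic tensor on $Y$ and (ii) the "largest" such, i.e. that no smaller power of $x_j$ in the denominator works. Point (ii) is where $c_j = 1 - b_j/a_j$ with $\gcd(a_j,b_j)=1$ enters: the residues of $m_j(1 - b_j/a_j) = m_j - m_j b_j/a_j$ modulo $1$ are controlled by $-m_j b_j \bmod a_j$, and coprimality of $a_j,b_j$ ensures the exponent count is exactly $\lfloor m_j c_j\rfloor$ and not larger. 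I expect this residue computation — matching $\lfloor m_j c_j \rfloor$ against the $G$-invariance condition on exponents — to be the main obstacle; everything else (locality, the reduction to cyclic products, the fact that $\pi_*^G$ of a torsion-free $G$-sheaf is coherent, the independence of the chosen coordinates from the earlier discussion) is routine. Once the local identification is in hand, the global statement follows because both $[\otimes^m](\Omega^1(X,D))$ and $\pi_*^G(\otimes^m(\pi^*(\Omega^1(X,D))))$ are defined by gluing these local models, and the same argument applies verbatim to $[Sym^m]$ and $[Sym^m(\wedge^p)]$ by restricting the combinatorics of the partitions $(M_1,\dots,M_n)$ to symmetric, respectively symmetric-in-blocks-of-$p$-alternating, tensors, since the $G$-action and the descent computation are compatible with these subrepresentations of $Gl(n,\bC)$.
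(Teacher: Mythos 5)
Your overall strategy is the same as the paper's: reduce to a local model of the Kawamata cover in adapted coordinates, decompose sections of $\otimes^m(\pi^*(\Omega^1(X,D)))$ into characters of the (abelian) local stabilizer, and identify the invariant part with the $\cO_X$-span of the $x_j^{-\lfloor m_jc_j\rfloor}dx_j^{M_j}$-monomials. But the step you yourself flag as the technical heart is wrong as written. The pulled-back local generator of $\pi^*(\Omega^1(X,D))$ along $H_j$ is $x_j^{-c_j}dx_j=y_j^{-d_jc_j}dx_j$, which equals (up to the unit $d_j$) $y_j^{d_jb_j/a_j-1}dy_j$ — a \emph{holomorphic} form vanishing to order $d_jb_j/a_j-1$ — and not $y_j^{d_jc_j-1}dx_j$ nor anything proportional to $y_j^{-d_j/a_j}dy_j$; your parenthetical ``$d_j(c_j-1)=-d_j/a_j$'' should read $-d_jb_j/a_j$, so your formulas silently assume $b_j=1$ and also get the sign of the relevant exponent wrong. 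Consequently the invariance condition is $t\equiv m_jd_jc_j\equiv -m_jd_jb_j/a_j \pmod{d_j}$ on the $y_j$-exponents $t$ of the coefficient, not $t\equiv m_jd_j/a_j$, and your ``direct check'' $r_j=m_jd_j/a_j-d_j\lfloor m_jc_j\rfloor$ is false in general: for $c_j=2/3$ (so $a_j=3,b_j=1$), $d_j=3$, $m_j=2$ it gives $r_j=2-3=-1$, which is not a least nonnegative residue. The correct identity is $r_j=m_jd_jc_j-d_j\lfloor m_jc_j\rfloor$ (in the example: $4-3=1$), and with it $y_j^{r_j-m_jd_jc_j}=x_j^{-\lfloor m_jc_j\rfloor}$, which is exactly the paper's computation ($q=\lceil bh/a\rceil$, i.e. $h-\lceil bh/a\rceil=\lfloor hc\rfloor$). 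Note also that your last paragraph, which tracks the residues of $m_jc_j$ via $-m_jb_j\bmod a_j$, is the correct bookkeeping and contradicts the exponents of your second paragraph; coprimality of $a_j,b_j$ plays no real role beyond normalizing the fraction.

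Once the exponents are fixed, your argument goes through and is essentially the published proof, with one organizational difference worth noting. The paper first uses that all three sheaves are reflexive to reduce the verification to codimension one on $X$, i.e. to generic points of $\Supp(D)$ away from crossings and away from the auxiliary ramification divisors of the Kawamata construction; there only one coordinate ramifies and a single cyclic group appears. You instead compute at arbitrary points with the product model $x_j=y_j^{d_j}$ and group $\mu_{d_1}\times\dots\times\mu_{d_n}$. That works, but then you are using the local normal form of the adapted cover at crossing points (smoothness of $Y$ plus SNC branch locus), which deserves a justification or a reference rather than ``recall that'', and you must also allow ramified coordinates with $c_j=0$ (the auxiliary branch divisors), which your computation does handle once the congruences are corrected. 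The paper's codimension-two reduction buys exactly the right to ignore these points.
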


For some details on the functor $\pi_*^G$, see Section \ref{Ginv} below.

\begin{proof} Because $\pi_*^G(\pi^*(\Omega^1(X,D))=\Omega^1(X)$ over $X-Supp(D)$, and since all three sheaves $\otimes^m(\pi^*(\Omega^1(X,D)))$,$\pi_*^G(\otimes^m(\pi^*(\Omega^1(X,D))))$ and $[\otimes^m](\pi^*(\Omega^1(X,D)))$ are reflexive, we just need to check the equality outside a Zariski closed set of codimension $2$ on $X$. We can thus check this over the domain $U\subset X$ of a local coordinate chart $(x_1,\dots,x_n)$ centered at $x_0\in Supp(D)$ over which $Supp(D)$ is smooth defined by the equation $x_1=0$, does not meet the ramification locus of $\pi$ elsewhere as over $Supp(D)\cap U$, and such that some (hence any) connected component $V$ of $\pi^{-1}(U)$ is the domain of a chart with coordinates $(y_1,\dots , y_n)$ such that the restriction of $\pi$ to $V$ is given by: $\pi_V:y=(y_1,\dots,y_n)\to x=(x_1:=y_1^{ka},x_2:=y_2,\dots, x_n:=y_n)$. Here the multiplicity of $(x_1=0)\cap U$ in $D$ is equal to $\frac{a}{b}$, with $a\geq b\geq 0$ non negative coprime integers, and $k>0$ is any given positive integer. The cover $\pi_V:V\to U$ is thus Galois of group $H$ cyclic of order $ka$, and since the orbit of $V$ under  the action of $G$ is $\pi^{-1}(U)$, with stabiliser $H$, we have over $U$ the identification $\pi_*^G(E)_{\vert U}=(\pi_V)_*^H(E_V)$ for any coherent $G$-sheaf on $X$. We shall thus now check the claimed equality for $X=U, Y=V, H=G$ cyclic as above, in the given coordinates. In order to simplify the notations, we show this for the symmetric differentials. 

Any local section $s(y)$ of $E:=\otimes^m(\pi^*(\Omega^1(X,D))$ can be uniquely writtent in the form: $s(y)= \Sigma_{h=0}^{m}f_h(y).y_1^{kbh}(\frac{dy_1}{y_1})^{\otimes h}\otimes \pi^*(w_h)$, for holomorphic functions $f_h(y)=\Sigma_{t=0}^{t=ka-1}\pi^*(f_{h,t}(x)).y_1^t$, and some holomorphic symmetric $(m-h)$-form in $(dx_2,\dots, dx_n)$. Let $\zeta\in \Bbb C^*$ be a $ka$-th primitive root of unity, and $g$ a generator of the group $H$ (of cardinality $ka$). The action of $g$ on $f_{h,t}(y)$ (resp. $y_1$) is thus given by multiplication by $\zeta^t$ (resp. $\zeta$), and $g.s=s$ if and only if $f_{h,t}=0$ each time $kbh+t$ is not divisible by $kba$. In other terms, $s$ is $H$-invariant if and only if it is a sum of terms $\pi^*(f_{h,t}(x)).y_1^{t+bkh}.(\frac{dy_1}{y_1})^{\otimes h}\otimes \pi^*(w_h),$ with $t+bkh=q.ka$ for some nonnegative integer $q$. In this case, $t=ku$ for some integer $u, 0\leq u\leq (a-1)$, and $1>\frac{u}{a}=q-\frac{bh}{a}$. Thus: $q=\lceil\frac{bh}{a}\rceil$, and $\pi^*(f_{h,t}(x)).y_1^{t+bkh}.(\frac{dy_1}{y_1})^{\otimes h}\otimes \pi^*(w_h)=\pi^*(f_{h,t}(x)).(x_1^{\lceil\frac{bh}{a}\rceil}).(\frac{dx_1}{x_1})^{\otimes h}\otimes w_h)$ (up to the multiplicative constant $(ka)^{qa}$). This means that $s\in \pi^*([\otimes^m]\Omega^1(X,D))$, the reverse inclusion being obvious \end{proof}

The integral parts of orbifold tensors also naturally appear in the context of fibrations:

\begin{theorem}\label{taller-retour}\cite{AC'}  Let $f:(X,D)\to (Z,D_Z)$ be a fibration which is also a`neat' orbifold morphism between smooth projective orbifold pairs, such that $(Z,D_Z)$ is also the `orbifold base' of $f$. Then, for any $m\geq 0$, one has a natural isomorphism of sheaves: $[\otimes^m]\Omega^1(Z,D_Z)=f_*((f^*)^{sat}(\otimes ^m \Omega^1_Z))$, the saturation taking place in $[\otimes^m]\Omega^1(X,D_X)$.
\end{theorem}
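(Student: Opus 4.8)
\textbf{Proof proposal for Theorem \ref{taller-retour}.}

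The plan is to establish the claimed isomorphism by a local computation along the components of $D_X$ and $f^{-1}(D_Z)$, exactly in the spirit of the proof of Proposition \ref{pom} and Theorem \ref{thmintegraltensors}, but now keeping track of the interaction between the vertical and horizontal directions. First I would reduce to a codimension-one statement on $X$: all three sheaves $[\otimes^m]\Omega^1(Z,D_Z)$, $f_*((f^*)^{sat}(\otimes^m\Omega^1_Z))$ (pushed forward and restricted over the locus where $f$ is smooth and $D_X, f^{-1}(D_Z)$ are snc), and the candidate identification are reflexive on $Z$, so it suffices to check equality outside a closed subset of codimension $\geq 2$ on $Z$. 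Since $f$ is neat, we may throw away $f$-exceptional divisors and the locus where $f^{-1}(\Supp(D_Z)\cup D_f)$ fails to be snc, and thus work over the generic point of an irreducible divisor $E\subset Z$, with $f^*(E)=t.F+\dots$ for a single component $F$ of multiplicity $t\geq 1$, in adapted local coordinates $z=(z_1,\dots,z_p)$ on $Z$ with $E=\{z_1=0\}$ and $x=(x_1,\dots,x_n)$ on $X$ with $F=\{x_1=0\}$, $f$ given (after the reductions) by $z_1=x_1^t$, $z_j=x_j$ for $j\geq 2$ (up to units, which are harmless).

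Next I would unwind both sides in these coordinates. On the right, $(f^*)^{sat}(\otimes^m\Omega^1_Z)$ inside $[\otimes^m]\Omega^1(X,D_X)$ is the saturation of the subsheaf generated by pullbacks of monomials in $dz_1=t\,x_1^{t-1}dx_1,dz_2,\dots,dz_p$; pushing forward by $f$ (which over the generic point of $E$ is $x_1\mapsto x_1^t$, i.e.\ a cyclic cover of order $t$) amounts to extracting the part of this saturated sheaf that is constant along the fibres of $x_1\mapsto x_1^t$, which is a purely arithmetic condition on the $x_1$-exponents modulo $t$. On the left, $[\otimes^m]\Omega^1(Z,D_Z)$ is generated by the monomials $\frac{dz_1^{M_1}}{z_1^{[m_1 c_E]}}\otimes(\text{form in }dz_2,\dots,dz_p)$ where $c_E=1-1/m_{(f,D_X)}(E)$ and $m_{(f,D_X)}(E)=t\cdot m_{D_X}(F)$ by Definition \ref{dob} (using that $F$ is the only component of $f^{-1}(E)$ after the neat reductions and that $(Z,D_Z)$ is the orbifold base). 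The key identity to verify is then that $[m_1\,c_E]=[m_1(1-\frac{1}{t m_F})]$ matches exactly the floor that comes out of the ceiling-function bookkeeping $\lceil \frac{kbh}{a}\rceil$-type computation governing which pulled-back, saturated monomial descends — i.e.\ that the exponent $t(t-1)+\text{(saturation correction along }F)$ on $x_1$ reduces mod $t$ to give precisely the sheaf $[\otimes^m]\Omega^1(Z,D_Z)$ back. This is the same floor/ceiling juggling already carried out in Theorem \ref{thmintegraltensors}, now applied to $f$ rather than to a Kawamata cover, and it closes the proof once both inclusions are checked on generators.

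The main obstacle, I expect, is not the floor/ceiling arithmetic itself but correctly handling the saturation on $X$: $(f^*)(\otimes^m\Omega^1_Z)$ is not saturated in $[\otimes^m]\Omega^1(X,D_X)$, and the saturation genuinely enlarges the sheaf along $F$ (this is the analogue of the non-surjectivity of $\pi^*(df)$ on vertical divisors noted after Proposition \ref{pom}); one must check that after pushing forward the \emph{saturated} subsheaf one lands on the nose on $[\otimes^m]\Omega^1(Z,D_Z)$ and not on something strictly larger or smaller. Here the hypotheses that $f$ is neat and that $(Z,D_Z)$ is exactly the orbifold base of $f$ are used essentially: neatness guarantees that over a generic point of each $E\subset Z$ only one component $F$ of $f^{-1}(E)$ is relevant (the others being $f$-exceptional or absent), so the ``$\inf$'' in Definition \ref{dob} is attained by a single term and the descent condition is unambiguous; and the orbifold-base hypothesis forces $m_{D_Z}(E)=t\cdot m_{D_X}(F)$, which is precisely the numerical coincidence that makes the two floors agree. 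A secondary point to be careful about is the global patching: after verifying the isomorphism generically along each divisor, one invokes reflexivity (Hartogs) to extend over codimension $\geq 2$, and invokes Proposition \ref{pom'} (functoriality of $[\otimes^m]$ under orbifold birational morphisms) to see that the identification is independent of the chosen neat model, so the statement is well-posed. The case of $[Sym^m]$ and $[Sym^m(\wedge^p)]$ follows verbatim, replacing the full tensor algebra by the relevant $Gl(n,\bC)$-subrepresentation throughout.
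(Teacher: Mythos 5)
The paper does not actually prove Theorem \ref{taller-retour}: it is quoted from \cite{AC'} without proof, so your attempt can only be measured against what a complete argument requires. Your general strategy (reduce to codimension one on $Z$ by reflexivity, compute in adapted coordinates in the style of Proposition \ref{pom} and Theorem \ref{thmintegraltensors}, match the integral parts by floor/ceiling bookkeeping, and use the orbifold-base multiplicities) is the right kind of argument. But there is a genuine gap at the central reduction: you claim that, $f$ being neat, over the generic point of a divisor $E\subset Z$ only one component $F$ of $f^{-1}(E)$ is relevant, so that $m_{D_Z}(E)=t\cdot m_{D_X}(F)$ and the descent condition is governed by that single $F$. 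Neatness gives no such thing: it controls snc of $f^{-1}(f(\Supp D_X)\cup D_f)$ and the $f$-exceptional divisors, but $f^*(E)=\sum_i t_i F_i+R$ may perfectly well have several non-exceptional components $F_i$ dominating $E$, and by Definition \ref{dob} the orbifold-base multiplicity is $\min_i t_i\,m_{D_X}(F_i)$, not the contribution of a preferred component (this is exactly why the paper's own proof of Theorem \ref{torcrel'} has to choose the chart at a point realising the minimum). Since $f_*$ takes sections over full preimages $f^{-1}(U)$, the descended meromorphic tensor $\sigma$ must satisfy the integrality condition along \emph{every} $F_i$ simultaneously; if you only impose it along one non-minimizing component, the allowed pole order along $E$ is $\lfloor h(1-\tfrac{1}{t\,m_{D_X}(F)})\rfloor$, which can strictly exceed $\lfloor h\,c_E\rfloor$, and the computation would yield a sheaf strictly larger than $[\otimes^m]\Omega^1(Z,D_Z)$. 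The theorem is saved precisely by running the constraint over all components: along $F_i$ the condition on a term with $h$ factors of $dz_1$ and pole order $a$ reads $a\le h-\lceil\lceil h/m_i\rceil/t_i\rceil=h-\lceil h/(t_i m_i)\rceil$, and taking the intersection over $i$ gives $a\le h-\lceil h/\min_i(t_i m_i)\rceil=\lfloor h\,c_E\rfloor$, which is exactly the integral part on $(Z,D_Z)$. This step (and the identity $\lceil\lceil h/m\rceil/t\rceil=\lceil h/(tm)\rceil$ needed to combine the constraints) is missing from your write-up and cannot be waved away by neatness.

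Two secondary points. First, your description of $f_*$ as extracting the part "constant along the fibres of $x_1\mapsto x_1^t$", an arithmetic condition mod $t$, conflates the fibration with a local cyclic cover: the descent of a section of $(f^*)^{sat}(\otimes^m\Omega^1_Z)$ to a meromorphic tensor $\sigma$ on $Z$ uses connectedness and properness of the actual fibres ($f_*\cO_X=\cO_Z$), after which the only issue is the pole bound on $\sigma$ along each $E$, checked as above. Second, you should separate the two inclusions: the inclusion $[\otimes^m]\Omega^1(Z,D_Z)\subset f_*((f^*)^{sat}\otimes^m\Omega^1_Z)$ uses the hypothesis that $f$ is an orbifold morphism via Proposition \ref{pom'} (in particular along $f$-exceptional divisors, which you discard too early — for this direction they do matter), while for the reverse inclusion the exceptional divisors only add constraints and are harmless. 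With these repairs your plan becomes a correct proof.
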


%%%%%%%%%%%%%%%%%%%%%%%%%%%%%%%
%%%%%%%%%%%%%%%%%%%%%%%%%%%%%%%%%%%%%%%%

\medskip

\subsection{Birational equivalence of orbifold bases: a question.}

The following question is important, but delicate.

\begin{question}\label{qbireqorbbases} Let $(X_i,D_i), i=1,2,$ be two smooth (projective say) orbifold pairs, given together with fibrations $f_i:(X_i,D_i)\to (Z_i,D_{Z_i})$ onto their (smooth) orbifold bases. Assume that both $f_i$ are `neat', and birational in the sense that $f_2\circ g=h\circ f_1$ for some suitable birational maps $g:X_1\to X_2$ and $h:Z_1\to Z_2$. 
Does $v: (Z_1,D_{Z_1})\to (Z_2,D_{Z_2})$ then induce an orbifold birational equivalence?
\end{question}

A weaker statement (which were a consequence of a positive answer to Question \ref{qbireqorbbases}) is the following:

\begin{theorem}\label{tdiffeqfibr} In the situation above, for any $N>0$, the isomorphism of sheaves: $h^*:\otimes ^m\Omega^1_{Z_2}\to \otimes^m \Omega^1_{Z_1}$.

We assume that the birational orbifold equivalence between $(X_1,D_1)$ and $(X_2,D_2)$ is induced by $g=(v_m^{-1}\circ u_m\circ\dots\circ v_0^{-1}\circ u_0)$, as in Definition \ref{defbireq}. We shall also decompose $h=h_2\circ h_1^{-1}$ for suitable birational maps $h_2:Z\to Z_2, h_1:Z\to Z_1$, and define $h^*:=(h_2^*)^{sat}\circ (h_1)_*$ extends to an isomorphism of sheaves $h^*: [\otimes^N]\Omega^1(Z_2,D_{Z_2})\to  [\otimes^N]\Omega^1(Z_1,D_{Z_1})$.
\end{theorem}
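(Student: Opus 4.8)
The plan is to reduce the assertion to the already-established Proposition~\ref{pom'} (functoriality of integral parts of orbifold tensors under orbifold \emph{birational} morphisms), by inserting a common neat model upstairs and transporting the orbifold bases accordingly. First I would observe that the statement only needs to be checked after composing with finitely many orbifold birational equivalences on the $X_i$ side: since $g=(v_m^{-1}\circ u_m\circ\dots\circ v_0^{-1}\circ u_0)$ is a chain, and since by Lemma~\ref{lom} any intermediate $(X_j,D_j)$ together with the induced fibration to $Z$ can be replaced by a neat orbifold birational model over a smooth orbifold base, it suffices to treat the case where the two fibrations $f_1,f_2$ sit in a single diagram $f_2\circ g = h\circ f_1$ with $g:(X_1,D_1)\to(X_2,D_2)$ an \emph{orbifold birational equivalence} (not merely a birational map of underlying varieties). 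Indeed, replacing each $u_i,v_i$ by its lift to a neat model produces, by Lemma~\ref{lom} applied to the composed fibration to $Z_2$ (or $Z_1$), a corresponding chain of orbifold birational equivalences on the base side; the composite of the associated $h_i^*$'s is exactly the $h^*$ in the statement, because each step is the saturation of the honest isomorphism on $\otimes^N\Omega^1$ (Remark~\ref{remg^*}).

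Next I would handle the single-step case. Here $g:(X_1,D_1)\to(X_2,D_2)$ is an orbifold birational equivalence, i.e. $D_2=g_*(D_1)$ and $g$ is an orbifold morphism. The key input is Proposition~\ref{lobom}: since $g$ is an orbifold morphism and $(X_2,D_2)$ is the orbifold base of $g$ with respect to $(X_1,D_1)$, the orbifold base of $f_2\circ g$ equals the orbifold base of $f_2$, namely $(Z_2,D_{Z_2})$. On the other hand, $f_2\circ g = h\circ f_1$, and $(Z_1,D_{Z_1})$ is the orbifold base of $f_1$. Composing now $f_1$ with the birational map $h$: since $h$ is birational, $Z_1$ and $Z_2$ are birational, and I would use the comparison $D_{(f_2\circ g,D_1)}\le D_{(g,D_{Z_1})}$ from the discussion preceding Proposition~\ref{lobom} together with neatness of $f_1$ to conclude that $h$ carries $(Z_1,D_{Z_1})$ to $(Z_2,D_{Z_2})$ in a way that is an orbifold birational equivalence — i.e. $D_{Z_2}=h_*(D_{Z_1})$ and $h$ (suitably factored as $h_2\circ h_1^{-1}$ through a smooth common model) is an orbifold morphism in each direction after the standard ``diminish multiplicities on exceptional divisors'' normalization. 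Once the orbifold birational equivalence $v:(Z_1,D_{Z_1})\to(Z_2,D_{Z_2})$ is in hand, Proposition~\ref{pom'} applied to $v$ (birational case) gives that $v^*=(h_2^*)^{sat}\circ(h_1)_*$ is an isomorphism of sheaves $[\otimes^N]\Omega^1(Z_2,D_{Z_2})\xrightarrow{\ \sim\ }[\otimes^N]\Omega^1(Z_1,D_{Z_1})$ for every $N>0$, which is precisely the assertion; the same argument verbatim gives it for $[Sym^m(\wedge^p)]$.

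The main obstacle I anticipate is the passage from ``$h$ is a birational map of the underlying varieties $Z_1,Z_2$'' to ``$h$ is an orbifold birational equivalence of $(Z_1,D_{Z_1})$ with $(Z_2,D_{Z_2})$'': this is exactly the content of Question~\ref{qbireqorbbases}, which is flagged as open, so the full strength cannot be assumed. The way around it — and the reason Theorem~\ref{tdiffeqfibr} is stated as a \emph{weaker} consequence — is that we do not need the orbifold bases themselves to be orbifold birationally equivalent; we only need the \emph{integral parts} of their tensor sheaves to match. I would therefore bypass the orbifold birational equivalence of bases and instead argue directly at the level of integral-part sheaves using Theorem~\ref{taller-retour}: for each neat $f_i$ with orbifold base $(Z_i,D_{Z_i})$ one has $[\otimes^N]\Omega^1(Z_i,D_{Z_i}) = (f_i)_*\big((f_i^*)^{sat}(\otimes^N\Omega^1_{Z_i})\big)$, the saturation taken inside $[\otimes^N]\Omega^1(X_i,D_i)$. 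Since by Proposition~\ref{pom'} (applied along the chain) $g^*$ identifies $[\otimes^N]\Omega^1(X_2,D_2)$ with $[\otimes^N]\Omega^1(X_1,D_1)$ as sheaves, and since it is, by Remark~\ref{remg^*}, the saturation of the genuine isomorphism $g^*:\otimes^N\Omega^1_{X_2}\to\otimes^N\Omega^1_{X_1}$, which is compatible with pullback of $\otimes^N\Omega^1$ under $h$ via $f_2\circ g = h\circ f_1$, one gets that $g^*$ carries the saturated subsheaf $(f_2^*)^{sat}(\otimes^N\Omega^1_{Z_2})$ onto $(f_1^*)^{sat}(\otimes^N\Omega^1_{Z_1})$; pushing forward by $f_1$ (using $(f_1)_*\circ g_* = (h)_*\circ(f_2)_*$ up to the identification) yields the desired isomorphism $h^*$. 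Checking the compatibility of these saturations with the birational identifications on the $f$-exceptional and $h$-exceptional divisors — where, as noted after Proposition~\ref{pom}, the relevant discrepancy sheaves are supported on divisors ``partially supported on the fibres'' — is the genuinely technical point, and I would localize everything to codimension one on $Z_1$ (using reflexivity, exactly as in the proof of Theorem~\ref{thmintegraltensors}) so that the exceptional loci drop out.
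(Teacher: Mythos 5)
Your closing paragraph is essentially the paper's own proof: using $f_2\circ g=h\circ f_1$ and Remark \ref{remg^*} one gets $g^*\big(f_2^*([\otimes^N]\Omega^1(Z_2,D_{Z_2}))\big)\subset (f_1^*)^{sat}(\otimes^N\Omega^1_{Z_1})\subset (f_1^*)^{sat}([\otimes^N]\Omega^1(Z_1,D_{Z_1}))$, then pushes forward by $(f_1)_*$ and invokes the aller--retour Theorem \ref{taller-retour} to land in $[\otimes^N]\Omega^1(Z_1,D_{Z_1})$, which is exactly the paper's argument. The only cosmetic difference is that the paper obtains the inverse by repeating this with $g^{-1}$ (roles of $Z_1,Z_2$ reversed) rather than asserting, as you do, that $g^*$ carries $(f_2^*)^{sat}(\otimes^N\Omega^1_{Z_2})$ \emph{onto} $(f_1^*)^{sat}(\otimes^N\Omega^1_{Z_1})$; your discarded first route through an orbifold birational equivalence of the bases (Question \ref{qbireqorbbases}) plays no role in the final argument and was rightly bypassed.
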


\begin{proof} Observe that $g^*(f_2^*([\otimes^N](\Omega^1(Z_2,D_{Z_2}))))\subset (f_1^*)^{sat}(h^*(\otimes^N(\Omega^1_{Z_2})))$ $=(f_1^*)^{sat}(\otimes^N\Omega^1_{Z_1})\subset (f_1^*)^{sat}([\otimes^N](\Omega^1(Z_1,D_{Z_1})))$. Taking $(f_1)_*$ on both sides, we get: 

$(f_1)_*(g^*(f_2^*([\otimes^N](\Omega^1(Z_2,D_{Z_2})))))\subset (f_1)_*((f_1^*)^{sat}([\otimes^N](\Omega^1(Z_1,D_{Z_1}))))$.

Since $(f_1)_*((f_1^*)^{sat}([\otimes^N](\Omega^1(Z_1,D_{Z_1}))))=[\otimes^N](\Omega^1(Z_1,D_{Z_1}))$, by Theorem \ref{}, we get: 

$(f_1)_*(g^*(f_2^*([\otimes^N](\Omega^1(Z_2,D_{Z_2})))))\subset [\otimes^N](\Omega^1(Z_1,D_{Z_1}))$, which provides a natural map (deduced from $g^*$) from $[\otimes^N](\Omega^1(Z_2,D_{Z_2}))$ to $ [\otimes^N](\Omega^1(Z_1,D_{Z_1}))$ extending $h^*$. One obtains an inverse map by considering $g^{-1}$ instead of $g$, reverting the r\^oles of $Z_1$ and $Z_2$. \end{proof}

\begin{remark} The preceding result strengthens a former result in \cite{Ca04} according to which $\kappa(Z_1,D_{Z_1})=\kappa(Z_2,D_{Z_2})$ in the previous situation.
\end{remark}

%%%%%%%%%%%%%%%%%%%%%%%%%%%%%%%%%%%%%%%%%%%%%%%%%%%%%%%%%%%%%%%%%%%%%%%%%%%%%%%%%%%%%%%%%%%%%%%%%%%%%%%%%%%%%%%%%%%%

\section{Relative movable classes}\label{srelmov}

We consider here a fibration $f:X\to Z$ between two connected complex projective manifolds. We denote by $Mov(X)\subset N_1(X)$ the closed cone of movable classes on $X$, by $Mov^0(X)$ its interior, and similarly for $Z$. There is a natural surjective direct image map: 
\begin{equation}f_*:N_1(X)\to N_1(Z),
\end{equation} dual to the inverse image map $f^*:N^1(Z)\to N^1(X)$. The kernel $N_1(X/Z)$ of $f_*$ is the orthogonal of the image of $f^*$.

Let $Mov(X/Z):=Mov(X)\cap N_1(X/Z)$ be the closed cone of relative movable classes for $f$, and let $N_1^0(X/Z)\subset N_1(X/Z)$ be the real vector space generated by $Mov(X/Z)$. We denote the interior of $Mov(X/Z)$ in $N_1^0(X/Z)$ by: $Mov^0(X/Z)$.

We state without proof a result not used in the sequel (the proof can be given by the argument proving Proposition {\ref{prelmov}.6 below):

\begin{proposition}\label{pn_1} The quotient vector space $N_1(X/Z)/N_1^0(X/Z)$ is generated by the classes of complete intersection curves on the irreducible divisors of $X$ which are partially supported on the fibres of $f$. More precisely, if $F$ is such an irreducible divisor, the corresponding classes are of the form\footnote{$F.A^{n-2}$ if $r=0$.}: $F.A^{n-r-1}.B^{r-1}$, if $f(F):=E, dim (E)=r\geq 1$, and if $A$ is ample on $X$, $B=f^*(B_E),B_E$ ample on $E$.
\end{proposition}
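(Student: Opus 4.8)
The plan is to identify the quotient $N_1(X/Z)/N_1^0(X/Z)$ with a space spanned by curve classes that are invisible to movable classes, and then to pin those classes down geometrically. First I would recall the duality setup: $N_1(X/Z) = \ker(f_*)$ sits inside $N_1(X)$, and $N_1^0(X/Z)$ is the span of $\mathrm{Mov}(X/Z) = \mathrm{Mov}(X) \cap N_1(X/Z)$. By the theorem of \cite{BDPP}, $\mathrm{Mov}(X)$ is the dual cone of the pseudoeffective cone $\Psef(X) \subset N^1(X)$; hence a class in $N_1(X/Z)$ lies in $N_1^0(X/Z)$ (after a standard argument, e.g.\ the one for Proposition \ref{prelmov}.6) precisely when it pairs non-negatively with every $f$-exceptional prime divisor and every prime divisor partially supported on the fibres — these being the effective divisors $N$ for which $f_*(N)$ has codimension $\geq 1$ so that $N$ can pair negatively with a relative curve class without obstructing $f_*$-nullity. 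So the first key step is: $N_1^0(X/Z) = \{\gamma \in N_1(X/Z) : \gamma \cdot F \geq 0 \text{ for all } F \text{ partially supported on the fibres of } f\}$, the point being that a movable class has non-negative intersection with every divisor, while conversely such a $\gamma$ can be perturbed by ample classes to become movable without leaving $\ker(f_*)$.

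The second step is to compute the quotient. If $N_1^0(X/Z)$ is cut out inside $N_1(X/Z)$ by the inequalities $\gamma\cdot F \geq 0$ indexed by the finite set of primes $F$ partially supported on the fibres, then $N_1(X/Z)/N_1^0(X/Z)$ is spanned by the classes dual to those inequalities, i.e.\ by classes represented by curves lying on the $F$'s. For a fixed such $F$ with $E := f(F)$ of dimension $r \geq 1$, I would take $A$ ample on $X$ and $B = f^*(B_E)$ with $B_E$ ample on $E$, and consider $\gamma_F := F \cdot A^{n-r-1} \cdot B^{r-1}$ (and $F \cdot A^{n-2}$ when $r = 0$). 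One checks $\gamma_F \in N_1(X/Z)$: indeed $f_*(\gamma_F) = (f_*F) \cdot (\ldots)$ and since $\dim f(F) < \dim E + 1$ when $F$ is $f$-exceptional, or since the complete intersection curve on $F$ maps into $E$ with $B$-factors pulled back from $E$ — more cleanly, $\gamma_F \cdot f^*(\delta) = F \cdot A^{n-r-1} \cdot B^{r-1} \cdot f^*\delta$ vanishes for dimension reasons as $B^{r-1} \cdot f^*\delta$ is a pullback of an $(r)$-codimensional class from $E$ restricted against the $(r)$-dimensional $f(F)$, pushed off to zero. The third step is to verify these $\gamma_F$ actually surject onto the quotient: a class $\gamma \in N_1(X/Z)$ with $\gamma \cdot F < 0$ for exactly the offending divisors can be written, modulo $N_1^0(X/Z)$, as a non-negative combination of the $\gamma_F$, which is where one invokes that the dual cone description is exhausted by these finitely many generators together with the movable cone.

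The main obstacle I anticipate is the second step — showing that $N_1^0(X/Z)$ is \emph{exactly} the subcone cut out by the partially-supported-on-fibres divisors, not something smaller. The subtle direction is: given $\gamma \in \ker(f_*)$ with $\gamma \cdot F \geq 0$ for all such $F$, produce an honest movable class in its image under perturbation. This requires knowing that any prime divisor $N$ with $\gamma \cdot N < 0$ forces $f_*(N)$ to drop dimension — equivalently, that a $\ker(f_*)$-class cannot have negative intersection with a divisor dominating $Z$ — which follows because such an $N$ would be, generically over $Z$, a sum of fibral-in-$N^1$ divisors dual to which relative movable classes have non-negative intersection; this is precisely the content imported from the proof of Proposition \ref{prelmov}.6, so the honest work is to adapt that argument to the relative setting. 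The remaining verifications — that $\gamma_F$ is $f_*$-null, that it is non-zero in the quotient when $F$ genuinely obstructs, and the $r=0$ bookkeeping — are routine intersection-theory computations that I would not grind through here.
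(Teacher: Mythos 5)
The first thing to flag is that the paper contains no written proof of Proposition \ref{pn_1}: it is stated without proof, with the indication that "the proof can be given by the argument proving Proposition \ref{prelmov}.6". So the benchmark is that argument, and your plan diverges from it at the two places where the real work lies. Your "first key step" is not correct as stated: $N_1^0(X/Z)$ is a linear subspace, so it cannot equal the cone $\{\gamma\in N_1(X/Z):\gamma\cdot F\geq 0 \text{ for all } F \text{ p.s.f.}\}$. For instance, blowing up a point on a fibre produces an exceptional divisor $F$ (partially supported on the fibres) and a line $\ell\subset F$ with $\ell\cdot F=-1$; then $-\ell$ lies in your cone but not in $N_1^0(X/Z)$. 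The correct characterisation, which is what the paper actually uses, is that $N_1^0(X/Z)$ is the annihilator inside $N_1(X/Z)$ of the span $P$ of the divisors partially supported on the fibres: every class of $N_1^0(X/Z)\cong N_1(X_z)$ pairs to \emph{zero} (not merely non-negatively) with such $F$, because the general fibre does not meet them, and the reverse inclusion is exactly the duality statement established in the proof of Proposition \ref{pj_z} (surjectivity of $j^*$ on $N^1(X)/(f^*(N^1(Z))+P)$), not a perturbation-by-ample-classes argument. Relatedly, the verification you call routine — that $\gamma_F=F\cdot A^{n-r-1}\cdot B^{r-1}$ lies in $\ker f_*$ — is not established by your "dimension reasons": with $B_E$ ample on the $r$-dimensional $E$ and exponent $r-1$, the complete-intersection curve dominates the curve $B_E^{r-1}\subset E$, so $\gamma_F\cdot f^*\delta>0$ for $\delta$ ample on $Z$. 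One must cut by the pullback of a zero-dimensional complete intersection on the image (as with $\vartheta_s=F_s\cdot A^{d-1}\cdot B^{n-d-1}$ in the proof of Proposition \ref{prelmov}), so this bookkeeping is precisely where care is needed, not a point to be waved off.

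The second and more serious gap is in your step from the (corrected) dual description to the generation statement. Knowing that $N_1^0(X/Z)$ is the annihilator of $P$ only says that the quotient $N_1(X/Z)/N_1^0(X/Z)$ pairs faithfully with $P$ modulo $f^*(N^1(Z))$; it does not say that the specific complete-intersection curve classes carried by the divisors $F$ generate it. What makes them generate is that the intersection matrix of these curve classes against the components $F_{s}$, block by block over each divisor (respectively each exceptional centre) of $Z$, is nondegenerate — and that is exactly the negativity lemma for surfaces fibred over curves (\cite{BPV}, Lemma 2.10, used together with \cite{KM98}, Corollary 4.2), applied to the complete-intersection surface $A^{d-1}\cdot B^{n-d-1}$ mapped to the curve $B_Z^{n-d-1}\subset Z$, with the curves $F_s\cap X'$ partially supported on its fibres. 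This is the engine of the paper's proof of Proposition \ref{prelmov}.5 and 6, to which Proposition \ref{pn_1} is reduced, and it is entirely absent from your plan; the appeal to "the dual cone description being exhausted by these finitely many generators together with the movable cone" does not substitute for it, since no finite generation or extremality statement of that kind is available here. To repair the plan: prove the annihilator description via Proposition \ref{pj_z}, exhibit fibral complete-intersection curves on each $F$, and invoke the negativity lemma to show the resulting pairing with $P/(P\cap f^*N^1(Z))$ is nondegenerate, whence these classes surject onto the quotient.
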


For any smooth\footnote{One can of course compose with a desingularisation of $Y$, also.} irreducible  subvariety $Y$ on $X$, we have natural inclusion maps $N_1(Y/Z)\to N_1(Y)\to N_1(X)$, where $N_1(Y/Z)$ denotes the Kernel of the composition maps $N_1(Y)\to N_1(X)\to N_1(Z)$. In general, of course, $Mov(Y/Z):=Mov(Y)\cap N_1(Y/Z)$ is not contained in $Mov(X/Z)$. This is, however, the case if, for example, $Y=X_y=f^{-1}(z),z\in Z$ is a `general' fibre of $f$, in the following sense.

\begin{definition}\label{dgen} A point $z\in Z$ of an irreducible complex space $Z$ is `general' if it lies in the complement of countably many specified strict Zariski closed subsets of $Z$, a fibre $X_z$ of $f:X\to Z$ is `general' if it lies over a `general' point of $z\in Z$. \end{definition}

\begin{proposition}\label{pj_z} Let $f:X\to Z$ be as above. For $z\in Z$ `general', the natural inclusion map $j:X_z\to X$ induces an isomorphim of real vector spaces $j_*:N_1(X_z)\to N^0_1(X/Z)$, as well as bijections $j_*:Mov(X_z)\to Mov(X/Z)$, and $j_*:Mov^0(X_z)\to Mov^0(X/Z)$. For such a $z$, we can thus define a natural restriction map: $(j_*)^{-1}:Mov(X/Z)\to Mov(X_z)$. For each divisor $F$ on $X$, and each $\alpha\in Mov(X/Z)$, we have further: $\alpha.F=\alpha_z.F_z$, for $\alpha_z:=(j_*)^{-1}(\alpha)$, and $F_z:=j^*(F)$.
\end{proposition}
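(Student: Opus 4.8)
The plan is to exhibit a left inverse and right inverse of $j_*$ on the relevant cones, using intersection theory against divisors pulled back from $Z$ together with a Noether--Lefschetz / countable-union argument to justify working over a ``general'' point. First I would recall that $N^0_1(X/Z)$ is, by definition, the real span of $Mov(X/Z)=Mov(X)\cap N_1(X/Z)$, and that $N_1(X/Z)=\ker(f_*)$ is the orthogonal complement of $f^*N^1(Z)$. The first step is therefore the inclusion $j_*(Mov(X_z))\subseteq Mov(X/Z)$: a movable class on the smooth fibre $X_z$ is represented by a family of curves sweeping out $X_z$, hence (moving $z$ and using that $X_z$ is a general fibre) a family sweeping out a dense subset of $X$, so $j_*$ of it is movable on $X$; and it clearly lies in $\ker(f_*)$ since $f\circ j$ is constant. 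This already gives a well-defined map $j_*:Mov(X_z)\to Mov(X/Z)$, and by linearity $j_*:N_1(X_z)\to N^0_1(X/Z)$.

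Next I would construct the inverse. For a general $z$, the restriction map $N^1(X)\to N^1(X_z)$ is surjective with kernel containing $f^*N^1(Z)$ — this uses that $X_z$ is very general so that no ``extra'' cohomology classes appear on the fibre beyond those restricted from $X$ (a standard consequence of the countable-intersection definition of ``general'' in Definition \ref{dgen}, via the Noether--Lefschetz-type spreading-out of Néron--Severi). Dually, $N_1(X_z)\to N_1(X)$ is injective, and its image is precisely the subspace of classes $\beta$ with $\beta\cdot f^*(\xi)=0$ for all $\xi\in N^1(Z)$, i.e.\ $N_1(X/Z)$ — but one must check the image lands in the span of the movable cone, which is exactly $N^0_1(X/Z)$. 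Combining injectivity with the first paragraph's inclusion, $j_*:N_1(X_z)\to N^0_1(X/Z)$ is injective; a dimension count (or a direct identification of the image via the orthogonality characterization) gives surjectivity, hence an isomorphism of real vector spaces. Since $j_*$ is a linear isomorphism carrying the closed cone $Mov(X_z)$ into $Mov(X/Z)$, and carrying the interior into the interior (linear isomorphisms are open maps), to finish I need only the reverse inclusion $Mov(X/Z)\subseteq j_*(Mov(X_z))$: given $\alpha\in Mov(X)\cap N_1(X/Z)$, write it (using the isomorphism) as $j_*(\alpha_z)$ and argue that $\alpha_z$ is movable on $X_z$ — here the cleanest route is the duality characterization of movable classes of \cite{BDPP}/\cite{CPe}, namely that a class is movable iff it has nonnegative intersection with every pseudoeffective divisor, and to transfer this test from $X_z$ to $X$: every pseudoeffective divisor on $X_z$ extends (for general $z$) to a pseudoeffective divisor on $X$ modulo $f^*N^1(Z)$, against which $\alpha$ pairs to zero.

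Finally, the last assertion $\alpha\cdot F=\alpha_z\cdot F_z$ is the projection formula applied to $j:X_z\hookrightarrow X$: for $\alpha=j_*(\alpha_z)$ one has $\alpha\cdot F=j_*(\alpha_z)\cdot F=\alpha_z\cdot j^*(F)=\alpha_z\cdot F_z$, valid for any divisor $F$ on $X$ since $X_z$ is a general fibre and hence meets $F$ properly (and does not lie inside $\Supp(F)$ unless $F$ is $f$-vertical through $z$, a case one excludes by generality or handles by writing $F=F_z$ pullback-trivially).

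The main obstacle I expect is the ``general'' step: controlling the Néron--Severi / movable-cone behaviour of the fibre $X_z$ as $z$ varies, i.e.\ showing that for $z$ outside a countable union of proper subvarieties the restriction map $N^1(X)\to N^1(X_z)$ is surjective \emph{and} that pseudoeffectivity and movability are detected after restriction. This is where one genuinely needs the definition of ``general point'' rather than merely ``very general in a single family'', and where the interaction between the relative cone $Mov(X/Z)$ and divisors partially supported on the fibres of $f$ (cf.\ Proposition \ref{pn_1}) has to be handled carefully so that the quotient $N_1(X/Z)/N^0_1(X/Z)$ does not obstruct surjectivity of $j_*$ onto the correct subspace.
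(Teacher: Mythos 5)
There is a genuine gap, and it sits exactly where you flagged the difficulty. Your dual argument asserts that for general $z$ the kernel of the restriction $N^1(X)\to N^1(X_z)$ is (in effect) $f^*N^1(Z)$, so that the image of $j_*:N_1(X_z)\to N_1(X)$ is ``precisely'' $N_1(X/Z)=\ker f_*$, after which a dimension count would give surjectivity onto $N^0_1(X/Z)$. This identification is false in general: any irreducible divisor partially supported on the fibres of $f$ (an $f$-exceptional divisor, or one component of a reducible fibre over a divisor of $Z$ — e.g.\ the exceptional divisor of a blow-up of a point in a fibre) restricts to $0$ on the general fibre without being a pullback from $Z$. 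Hence $j_*(N_1(X_z))$ lies in the orthogonal of $f^*N^1(Z)+P$, where $P$ is the span of such divisors, and this is in general a \emph{proper} subspace of $N_1(X/Z)$; indeed Proposition \ref{pn_1} says precisely that $N_1(X/Z)/N^0_1(X/Z)$ is generated by curve classes on divisors partially supported on the fibres. Since $N^0_1(X/Z)$ has no description other than ``the span of $Mov(X/Z)$'', no dimension count is available: surjectivity of $j_*$ onto $N^0_1(X/Z)$ cannot be extracted from the linear-algebra duality alone, and your scheme derives the cone statement from a vector-space isomorphism that itself needs the cone statement.

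The second gap is the transfer step for the reverse inclusion $Mov(X/Z)\subseteq j_*(Mov(X_z))$: you invoke the duality of \cite{BDPP} and the claim that every pseudoeffective class on $X_z$ extends, for general $z$, to a pseudoeffective class on $X$ modulo $f^*N^1(Z)$. That extension statement is not proved and is at least as delicate as Proposition \ref{pj_z} itself (pseudoeffectivity on the very general fibre does not obviously propagate, even up to twists by pullbacks). The paper's route avoids both problems by working directly with curves: using that the relative Chow--Barlet space $C_1(X/Z)$ has countably many projective components, one defines ``general'' $z$ (Definition \ref{dgen}) so that, for an irreducible curve $C\subset X_z$, ``$C$ moves in an $X_z$-covering family'' is equivalent to ``$C$ moves in an $X$-covering family with $f_*[C]=0$''. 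This gives at once the surjectivity of $j_*:Mov^0(X_z)\to Mov^0(X/Z)$, hence surjectivity onto $N^0_1(X/Z)$, and injectivity is then obtained by the same countability argument applied to $f$-relative divisors, i.e.\ by the surjectivity of $j^*:N^1(X)/(f^*N^1(Z)+P)\to N^1(X_z)$ — note the quotient by $P$, which is the correction your duality step is missing. Your first paragraph (the inclusion $j_*(Mov(X_z))\subseteq Mov(X/Z)$) and the final projection-formula assertion are fine.
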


\begin{proof} Let $C_1(X/Z)$ be the Chow-Barlet space of $1$-dimensional algebraic cycles $\Gamma\subset X$ such that $f_*(\Gamma)=0$, that is: which are contained in some fibre of $f$. The space $C_1(X/Z)$ has countably many irreducible components $T_m$, which are compact (ie: projective), and which parametrise irreducible curves $C_t,t\in T_m$ generic. The support $X_m$ of $T_m$, which is the union of all curves $C_t$ parametrised by $T_m$, is thus a Zariski-closed subset of $X$. Let $Z_m:=f(X_m),\forall m$. We now consider only the $T_m's$ such that $X_m=f^{-1}(Z_m)$, and define a point $z\in Z$ to be general if it does not belong to any of the $Z_m$ such that $Z_m=Z$. For such a point $z$, if $[C]$ is the class of an irreducible curve $C$ contained in $X_z$, one thus has the equivalence between the two properties: $C$ moves in a $Z$-covering algebraic family of curves, and: $C$ moves in an $X$-covering family of curves such that $f_*([C])=0$. 

This shows, for such a $z$, the existence and surjectivity of the map $j_*:Mov^0(X_z)\to Mov^0(X/Z)$, and thus also the surjectivity of $j_*:N_1(X_z)\to N_1(X/Z)$. In order to show the injectivity of this last map, we only need to show, by transposition, the surjectivity of the dual restriction map: $j^*:N^1(X/Z):=(N^1(X)/(f^*(N^1(Z))+P))\to N^1(X_z)$, where $P\subset N^1(X)$ is the vector space generated by irreducible divisors partially suported on fibres of $f$. This is achieved by the same argument as before, but applied to $f$-relative divisors on $X$. One however possibly needs in the process to suppress countably many new Zariski-closed subsets of $Z$. 

For such a `general' $z\in Z$, the map $j_*:N_1(X_z)\to N^0_1(X/Z)$ is thus linear bijective, hence homeomorphic. The induced map $j_*:Mov^0(X_z)\to Mov^0(X/Z)$ is thus bijective, and so is $j_*:Mov(X_z)\to Mov(X/Z)$. The last assertion is then obvious, since true for any curve contained in a fibre of $X_z$.
 \end{proof}

\begin{definition}\label{dbig} A movable class on a smooth connected complex manifold $X$ is said to be `big' if it lies in the interior of the movable cone of $X$. 
\end{definition}

\begin{definition}\label{dbig'} Let $f:X\to Z$ be a fibration as before. A class $\alpha\in Mov(X/Z)$ is said to be `big on the general fibre' if its restriction $(j_*)^{-1}(\alpha):=\alpha_z$ is `big' in $X_z$ for $z\in Z$ `general'. We shall denote this simply by: $\alpha_z$ is `big' ($z\in Z$ being implicitly assumed to be `general').\end{definition}
\medskip

\noindent In this context, we have the following statement.

\begin{proposition}\label{prelmov} Let $f:X\to Z$ be as above, with $n:=dim(X), d:=n-dim(Z)$. Let $A, B_Z$ be very ample divisors on $X$ and $Z$, respectively, with $B:=f^*(B_Z)$. Let $\alpha\in Mov^0(X/Z)$, and $F:=\cup_{s\in S} F_s$ be an effective reduced divisor on $X$, partially supported on the fibres of $f$. 

1. $A^{d-1}.B^{n-d}\in Mov^0(X/Z)$.

2. $N^0_1(X/Z)$ is generated as a real vector space by classes: $A^{d-1}.B^{n-d}$.

If $\beta\in N_1(X/Z)$, we write $\alpha\geq \beta$ to mean that $\alpha-\beta\in Mov(X/Z)$.

3. There exists $\varepsilon>0$ such that $\alpha\geq \varepsilon. A^{d-1}.B^{n-d}$

4. For any $\vartheta\in N^0_1(X/Z)$, $k.\alpha\geq\vartheta,\forall k\geq k_0$, for some $k_0\in \mathbb R$.

5. For any set of given reals $b_s,s\in S$, there exists $\vartheta\in N_1(X/Z)$ such that $\vartheta.F_s=b_s,\forall s\in S$.

6. Let $\beta\in Mov(Z)$. There exists $\beta'\in Mov(X)$ such that: 

$f_*(\beta')=\beta$, and moreover: $\beta'.F_s=0,\forall s\in S$.
\end{proposition}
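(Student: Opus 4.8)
The plan is to prove the six assertions of Proposition~\ref{prelmov} in the order given, since each builds on the previous ones, and the real content is concentrated in parts~1, 5 and~6.

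\medskip

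\textbf{Parts 1--4.} First I would prove part~1: the complete-intersection curve $A^{d-1}.B^{n-d}$ is represented by $f^{-1}(\text{generic point of a curve } B_Z^{n-d} \text{ in } Z)$ intersected with $A^{d-1}$, i.e. it is a movable curve inside a general fibre-over-a-curve; since $B = f^*(B_Z)$ it pairs to zero against every $f^*(N^1(Z))$-class, so it lies in $N_1(X/Z)$, and since the family covers $X$ (as we move the members of $|A|$ and $|B_Z|$) it lies in $\Mov(X/Z)$; bigness in $N_1^0(X/Z)$ follows by restricting to a general fibre via Proposition~\ref{pj_z} and noting that $A_z^{d-1}$ is an ample complete intersection on $X_z$, hence a big (indeed interior) movable class on $X_z$, so its image under the isomorphism $j_*$ of Proposition~\ref{pj_z} is interior to $\Mov(X/Z)$. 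Part~2 is then the statement that classes of the form $A^{d-1}.B^{n-d}$, as $A$ ranges over very ample divisors, span $N_1^0(X/Z)$; via $j_*$ this reduces to the classical fact that ample complete-intersection curves span $N_1(X_z)$ on the smooth projective fibre $X_z$, plus the fact that any very ample class on $X_z$ is the restriction of a very ample class on $X$ (after twisting by $f^*B_Z$ to correct positivity in the base directions). Parts~3 and~4 are formal consequences of openness of the cone $\Mov^0(X/Z)$ together with parts~1 and~2: $\alpha$ interior means $\alpha - \varepsilon c \in \Mov(X/Z)$ for every fixed $c$ in a spanning set and $\varepsilon$ small (part~3 with $c = A^{d-1}.B^{n-d}$), and $k\alpha - \vartheta \in \Mov(X/Z)$ for $k$ large since $\vartheta$ is a fixed vector and $\alpha$ interior (part~4).

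\medskip

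\textbf{Part 5.} Here I would argue that, since each $F_s$ is partially supported on the fibres of $f$, the restrictions $j^*(F_s)$ to a general fibre $X_z$ are \emph{not} linearly related to each other through any constraint coming from $N^1(X/Z)$ — more precisely, I claim the linear functionals $\gamma \mapsto \gamma.F_s$ on $N_1(X/Z)$ are linearly independent. This is the analogue, for relative divisors, of the fact that a set of divisors whose classes are linearly independent gives independent functionals on $N_1$; the point is that a divisor partially supported on the fibres of $f$ has a well-defined nonzero class in $N^1(X)/(f^*N^1(Z)+P)$ only after care — actually what one needs is that one can find curves $\vartheta_s \in N_1(X/Z)$ with $\vartheta_s.F_{s'} = \delta_{ss'}$; such curves are produced by taking, inside the fibre $f^{-1}(E_s)$ over a general point of $E_s := f(F_s)$ (or inside a general fibre when $F_s$ is $f$-exceptional), a complete-intersection curve meeting the component $F_s$ but chosen to avoid the other components — this is possible exactly because $f^{-1}(E_s) \cap F \subsetneq f^{-1}(E_s)$ by Definition~\ref{ddpsf}. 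Then $\vartheta := \sum_s b_s \vartheta_s$ works. The minor subtlety to handle is that these $\vartheta_s$ must genuinely lie in $N_1(X/Z)$ (pair to zero with $f^*N^1(Z)$), which holds because they are supported in fibres or in $f^{-1}(E_s)$ with $E_s$ of codimension~$1$, so their $f$-image is a point or contained in a divisor — one may need to subtract an $f^*$-class to kill the base component, which does not change intersection with the vertical divisors $F_{s'}$.

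\medskip

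\textbf{Part 6.} Given $\beta \in \Mov(Z)$, I would first lift it naively: choose $\beta_0 \in \Mov(X)$ with $f_*(\beta_0) = \beta$ — for instance take $\beta$ represented by a movable curve in $Z$, pull back a general such curve's preimage and intersect with $A^{n-d}$... actually cleaner: if $\beta = B_1\cdots B_{d_Z-1}$ with $B_i$ very ample on $Z$ (using part~2-type reductions on $Z$, $d_Z = \dim Z$), set $\beta_0 := A^{n-d}.f^*(B_1)\cdots f^*(B_{d_Z - 1})$, which is movable on $X$ and pushes to a positive multiple of $\beta$; rescale. Now $\beta_0$ need not satisfy $\beta_0.F_s = 0$. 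Set $b_s := -\beta_0.F_s$ and use part~5 to get $\vartheta \in N_1(X/Z)$ with $\vartheta.F_s = b_s$; then $\beta' := \beta_0 + \vartheta$ has $f_*(\beta') = f_*(\beta_0) = \beta$ (since $f_*\vartheta = 0$) and $\beta'.F_s = 0$ for all $s$. The one remaining point — and this is \textbf{the main obstacle} — is that $\beta'$ must still be \emph{movable} on $X$, whereas $\vartheta$ is only some class in $N_1(X/Z)$, possibly far outside the movable cone. To fix this I would replace $\beta_0$ at the outset by $k\beta_0$ for $k \gg 0$: by part~4 applied in $N_1^0(X/Z)$ (or rather, by the interiority of a suitable lifted class), $k\beta_0 + \vartheta_k$ stays movable when $\vartheta_k$ is the correction for $k\beta_0$, because $\vartheta_k$ scales linearly in $k$ while the "movable room" around $k\beta_0$ also scales linearly — one must check that $\beta_0$ can be chosen to be big in the fibre direction (i.e. $\beta_0 \geq \varepsilon A^{d-1}.B^{n-d}$ modulo $N_1(X/Z)$, which the explicit complete-intersection choice of $\beta_0$ gives) so that a whole $N_1^0(X/Z)$-neighborhood of (a multiple of) $\beta_0$ lies in $\Mov(X)$, and then absorb the fixed-direction correction $\vartheta$ (independent of $k$ after we first fix the $b_s$ using $\beta_0$, then scale — here one has to order the steps carefully: scale $\beta_0$ to $k\beta_0$ \emph{first}, recompute $b_s(k) = -k\,\beta_0.F_s$, get $\vartheta(k) = k\vartheta(1)$ by linearity of part~5's solution, and check $k(\beta_0 + \vartheta(1)) \in \Mov(X)$ for $k$ large, which follows once $\beta_0 + \vartheta(1)$ lies in the closure of $\Mov(X)$ up to an arbitrarily small perturbation — this is where relative bigness of $\beta_0$ is essential). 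I expect writing this scaling/interiority argument precisely to be the delicate part; everything else is soft cone geometry plus the complete-intersection constructions.
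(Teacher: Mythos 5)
Your parts 1--4 are essentially the intended argument and are fine, but part 5 contains a genuine gap. You justify the linear independence of the functionals $\gamma\mapsto\gamma.F_s$ on $N_1(X/Z)$ by claiming one can produce classes $\vartheta_s\in N_1(X/Z)$ with $\vartheta_s.F_{s'}=\delta_{ss'}$, represented by complete-intersection curves ``meeting $F_s$ but avoiding the other components''. No such curves are available in general. A curve lying inside $F_s$ and contained in the fibres (which is what membership in $N_1(X/Z)$ essentially forces after cutting down to the surface $S:=A^{d-1}.B^{n-d-1}$ fibred over the curve $B_Z^{n-d-1}$) has \emph{negative} intersection with $F_s$ and non-negative intersection with the adjacent components $F_{s'}$: the matrix $(\vartheta_s.F_{s'})$ is nothing like the identity. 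A horizontal curve meeting $F_s$ and missing all other $F_{s'}$ need not exist (the condition $f^{-1}(E_s)\cap F\subsetneq f^{-1}(E_s)$ of Definition \ref{ddpsf} only says some component of $f^{-1}(E_s)$ is absent from $F$), and in any case it is not in $N_1(X/Z)$; there is no ``$f^*$-class'' of curves to subtract, and any movable lift of its pushforward meets the vertical divisors over the base divisors it crosses, destroying the claimed orthogonality. What actually makes part 5 work is the following: taking $\vartheta_s:=F_s.A^{d-1}.B^{n-d-1}$, the matrix $(\vartheta_s.F_{s'})$ becomes, on $S$, the intersection matrix of a \emph{strict} subset of the components of a fibre of $S\to B_Z^{n-d-1}$, hence is negative definite and in particular invertible, by the negativity lemma (\cite{BPV}, Lemma 2.10). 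This is precisely what the partial-support hypothesis buys, and it is the step your argument skips.

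In part 6 you identify the right difficulty, movability of $\beta'=\beta''+\vartheta$, but the proposed fix cannot work: movability is a cone condition, so passing from $\beta''+\vartheta$ to $k\beta''+k\vartheta$ changes nothing, and $\beta''$ is not interior to $Mov(X)$ in the $N_1^0(X/Z)$-directions (for instance it has zero intersection with $f$-exceptional divisors missed by a general base curve), so no ``relative bigness'' of $\beta''$ absorbs an arbitrary correction $\vartheta$, whose coefficients may have either sign. The missing ingredient is sign control: one writes $\vartheta=\sum_{k\in J}\nu_k\vartheta_k$ with $\vartheta_k:=F_k.A^{d-1}.B^{n-d-1}$, which are classes of covering families of curves inside $F_k$, and one shows that the unique solution of the system $(\sum_k\nu_k\vartheta_k).F_j=-\beta''.F_j$ has $\nu_k\geq 0$, because the matrix is negative definite and $\beta''.F_j\geq 0$; this is exactly \cite{KM98}, Corollary 4.2. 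Non-negativity of the $\nu_k$ gives $\beta'.H\geq\beta''.H\geq 0$ for every prime divisor $H$ other than the $F_j$, while $\beta'.F_j=0$ by construction, and this is what yields $\beta'\in Mov(X)$; the $f$-exceptional components then require a separate but analogous treatment on $A^{n-t-1}.B_T^{t-1}$. Without this non-negativity step your class $\beta'$ may genuinely leave the movable cone, and scaling cannot rescue it. (Minor slip besides: your explicit lift should have $d$, not $n-d$, powers of $A$, so that it is a curve class.)
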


\begin{proof} The assertion 1 is clear because complete intersections of ample divisor classes of $X_z$ are big on $X_z$.

2 follows immediately from \cite{GT}, Proposition 6.5, which states that the set of classes of the form $(A_z)^{d-1}$, with $A_z$ an ample divisor on $X_z$ is open in $N_1(X_z)$. 

If $\alpha_z$ is big, $Mov^0(X_z)$ being open contains $(\alpha_z-\varepsilon.A^{d-1}.B^{n-d})$ for some $\varepsilon>0$. Hence 3, and 4, by the same argument applied to $\vartheta$.

We now prove Claim 5, which is more involved. 
Let thus $F_s$ be given. Define the class $\vartheta_s:=F_s.A^{d-1}.B^{n-d-1}\in N_1(X/Z)$, by choosing a suitable representative of this class, one sees that it is a movable class of $N_1(F_s/E_s)$.

Moreover, it is easy to see also that we the orthogonality relations: $\vartheta_s.F_{s'}=0=\vartheta_{s'}.F_s$, unless $E_s=E_{s'}$.

The real numbers $b_s$ being given, we will look for real numbers $t_s$ such that $\vartheta:=\sum_{s\in S}t_s.\vartheta_s$, and with the property that $\vartheta.F_s=b_s,\forall s\in S$. By the orthogonality relations above, we just need to determine these real numbers in the case where $f(F_s)=E_s=E$ is the same $E$, for all $s\in S$. But in this case we can even reduce to: $X$ is the surface $X'$ cut out by (sufficiently generic and large multiple of) $A^{d-1}.B^{n-d-1}$, and mapped by $f$ to the curve $Z':=B_Z^{n-d-1}\subset Z$. In this case, because $F$ is partially supported on the fibres of $f$, the same is true of $F':=F\cap X'$ over $Z'$. The assertion is then a consequence of the `negativity lemma' on quadratic forms of \cite{BPV}, Lemma 2.10, p. 19. This lemma indeed says that if $g:S\to B$ is a projective connected morphism from a smooth connected surface onto a curve with fibre $S_b:=g^*(b),b\in B$, the intersection matrix of the components of $S_b$ is semi-negative with Kernel $S_b$, which implies that it is strictly negative, hence invertible, on the vector space generated by the components of any curve $C\subset S_b$ with support strictly contained in $S_b$.

We now prove assertion 6. Remark first that any $\beta\in Mov(Z)$ can be lifted to a $\beta"\in Mov(X)$ such that $f_*(\beta")=\beta$. This is clear if $\beta=[C_t]$ is the class of an algebraic irreducible family of curves $(C_t),_{t\in T}$ of $Z$ which is $Z$-covering: one just need to choose, for example: $\beta":=[A]^{d-1}.[f^{-1}(C_t)]$, for $t\in T$ generic. This construction extends linearly (and thus continuously) to any $\beta\in Mov(Z)$.

Let us now consider an arbitrary irreducible divisor $E\subset Z$. Let $f^*(E)=\sum_{k\in K} c_k.F_k+G'$, where $G'$ is an effective $f$-exceptional divisor, while $f(F_k)=E,$ and $c_k>0, \forall k\in K.$ Let us write: $K=J\cup L$, for some (possibly empty)$ J\subset K$, and some (nonempty) $L\subset K$, where $J:=(K\cap S)$ is the set of indices of the components of $f^*(E)$ which are contained in the given divisor $F$ which is `partially supported on the fibres of $f$'.

Let, as above, $\vartheta_k:=F_k.A^{d-1}.B^{n-d-1}$, for any $k\in K$. We obviously have: $f_*(\vartheta_k)=0,\forall k\in K$. We shall choose $\beta':=\beta"+\vartheta$, with $\vartheta:=\sum_{k\in J}\nu_k.\vartheta_k$.  Because of the orthogonality relations mentioned above, $\beta'.G=\beta".G$ for any $f$-vertical divisor $G$ not mapped onto $E$ by $f$. In order to have $\beta'$ movable, together with $\beta'.F_s=0,\forall s\in S$, we thus only need  to check the existence and non-negativity of the solutions $\nu_k,k\in J,$ of the equations: $-(\sum_{k\in J}\nu_k.\vartheta).F_j=\beta".F_j\geq 0,\forall j\in J$. Indeed, the non-negativity of these coefficients $\nu_k$ then implies that $\beta'.H\geq \beta".H$ for any divisor $H$ on $X$ different from any of the $F_j's$, which gives the movability of $\beta'$. By the negativity lemma above, we have the uniqueness and existence of the solutions $\nu_k,k\in J$. Now the non-negativity of all the $\nu_k's$ simultaneously is precisely the assertion of the (elementary but crucial) \cite{KM98}, Corollary 4.2, pp. 112-113. 

This solves the problem for the divisors $F_s$ which are not $f$-exceptional. Let now $F_s$ be an $f$-exceptional divisor, with $T:=f(F_s)$ of codimension $2$ at least in $Z$. Let $F':=\cup_{s'\in S'\subset S}$ be a connected component of the codimension $1$ locus of $f^{-1}(T)$. Let $t:=dim(T)\geq 0$, and $B_T:=f^*(B_{\vert T})$. We then deal with $F'$ exactly as before with $\cup_{k\in J}F_k$, just replacing the surface $X':=A^{d-1}.B^{n-d-1}$ by $X_T:=A^{n-t-1}.B_T^{t-1}$ if $t\geq 1$, and by $X_T:=A^{n-2}$ if $t=0$, and the classes $\vartheta_s$ by the classes $\vartheta_{s'}:=F_{s'}.X_T$. The same negativity lemmas then apply.
\end{proof}

%%%%%%%%%%%%%%%%%%%%%%%%%%%%%%%%%%%%%%%%%%%%%%%%%%%%

\section{Harder-Narasimhan filtrations for orbifold bundles}

In this section, we establish the fact that the fibration on $X$ constructed in \cite{CP15} from an orbifold foliation of positive minimal slope relatively to a movable class $\alpha$ on $X$ depends  only on the orbifold pair $(X,D)$ and $\alpha$, and not on the Kawamata cover used to define it, and that moreover it is preserved under orbifold birational equivalence. These results are natural complements of those in \cite{CP15}. The arguments lead to more general statements however.

\begin{remark}\label{remarkindependenceHN} In Section \ref{Ginv}, we shall give a proof that the maximal slope of the HN-filtration of $\pi^*(\Omega^1(X,D)$ relative to $\pi^*(\alpha)$ can be computed directly from $(X,D)$ on $X$ using the sheaves $[\otimes^m](\Omega^1(X,D))$. This will give a second-simpler-proof that this slope is independent from the choice of $\pi$.  \end{remark}

We refer to \cite{CPe} and \cite{CP15} for the definitions and basic properties of slope, (semi-)stability, and Harder Narasimhan filtration of a (reflexive) sheaf $\cE$, relative to a movable class $\alpha$ on a projective manifold. We used in \cite{CP15} the following notion:

{\bf Conventions:} \label{conv} Let $\pi:Y\to X$ be a generically finite morphism between two complex connected projective manifolds, which is Galois for a certain finite group $G$ (which acts effectively and transitively on the generic fibres of $g$). Let $\alpha$ be a movable class on $X$. Then $\pi^*(\alpha)$ is still a movable class on $Y$.

\begin{lemma}\label{lmumin} Let $g:X'\to X$ be a surjective morphism between normal irreducible complex projective\footnote{This restriction may be replaced by the properness of $g$ in most places below.} spaces. Let $\cE$ be a reflexive coherent sheaf on $X$, and $\cF'\subset g^*(\cE)$ be a $G$-invariant saturated subsheaf. Denoting with $(.)^{sat}$ the saturation inside $g^*(\cE)$, we have:
: 

1. $\cF'=g^*(\cF)^{sat}$ with $\cF:=g_*(\cF')$ if $g$ is birational.

2. $\cF'=g^*(\cF)^{sat}$ if $\cF=(g_*(\cF'))^G$ if $g$ is finite, Galois of group $G$, and if $\cF'$ is preserved by the action of $G$ on $g^*(\cE)$. Here $(g_*(\cF'))^G$ is the subsheaf of $g_*(g^*(\cE))=\cE$ generated by the presheaf of $G$-invariant sections of $\cF'$ defined on open sets of $X'$ of the form $g^{-1}(U),U\subset X$ open. Moreover, the support of $(g^*(\cF)^{sat}/g^*(\cF))$ is contained in the codimension at least $2$ subset $S:=g^{-1}(Sing(X)\cup g(Sing(X')))$ of $X'$.\end{lemma}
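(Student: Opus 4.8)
The statement is essentially a bookkeeping lemma comparing a $G$-invariant saturated subsheaf upstairs with its pushforward downstairs, so the plan is to work locally on $X$ away from a codimension-$2$ set, where everything becomes a statement about reflexive modules over a regular local ring. First I would dispose of the easy birational case (part 1): if $g$ is birational, then $\cF := g_*(\cF')$ is a coherent subsheaf of $g_*(g^*(\cE))$; away from the exceptional locus and the singular loci, $g$ is an isomorphism, so $g^*(\cF) = \cF'$ there, hence $g^*(\cF)^{sat} = \cF'$ on the complement of a codimension-$2$ set, and since both are saturated (equivalently, reflexive as subsheaves of the reflexive $g^*(\cE)$) they agree everywhere. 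The key point to record is that saturation inside a reflexive sheaf is determined in codimension $1$.

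For part 2, the strategy is: (a) reduce to the local picture over $X$, where $\pi^{-1}(U) = \coprod V_i$ with each $V_i \to U$ cyclic as in the normal form used in the proof of Theorem~\ref{thmintegraltensors}; (b) on each chart identify $g_*(g^*(\cE)) = \cE$ via the trace/projection and check that $(g_*(\cF'))^G$, the subsheaf generated by $G$-invariant local sections of $\cF'$, pulls back to $\cF'$. Concretely, over the locus where $g$ is étale this is the standard fact that for a Galois cover $V \to U$ with group $H$ and an $H$-invariant subsheaf $\cF' \subset g^*\cE$, one has $g^*((g_*\cF')^H) = \cF'$ — this is descent: $\cF'$ descends to $(g_*\cF')^H$ because $H$ acts freely, and $g^*$ of the descended sheaf recovers $\cF'$. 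So $g^*(\cF)^{sat} = \cF'$ holds on $X' \setminus S$ where $S = g^{-1}(\mathrm{Sing}(X) \cup g(\mathrm{Sing}(X')))$, and since $\mathrm{codim}\, S \geq 2$ and both $\cF'$ and $g^*(\cF)^{sat}$ are saturated in the reflexive sheaf $g^*(\cE)$, they coincide on all of $X'$; the last sentence of the lemma about the support of $g^*(\cF)^{sat}/g^*(\cF)$ is then immediate, since outside $S$ the cover is étale and $g^*$ commutes with everything, so no further saturation is needed there.

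I would also remark that the reduction from the general Galois $\pi:Y\to X$ (acting transitively on fibres, with stabilizer $H$ of a component) to the cyclic local model is exactly the orbit/stabilizer argument already used in the proof of Theorem~\ref{thmintegraltensors}: over a small $U \subset X$ meeting the branch locus in a smooth divisor, $\pi^{-1}(U)$ is a single $G$-orbit of a chart $V$ with stabilizer $H$ cyclic, and $\pi_*^G(-)|_U = (\pi_V)_*^H(-)$; so it suffices to handle $V \to U$ cyclic, and there the branch divisor is a single coordinate hyperplane so the étale locus is its complement, which has codimension $1$ — but the \emph{saturation} comparison only sees codimension $\geq 2$, so the branch divisor itself causes no trouble once we know both sheaves are saturated.

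\textbf{Main obstacle.} The only genuine subtlety I anticipate is keeping the functor $(g_*(-))^G$ and its interaction with saturation straight over the branch divisor: a priori $g^*((g_*\cF')^G)$ need not equal $\cF'$ there (the $G$-invariant sections downstairs, pulled back, may fail to saturate), which is precisely why the statement is phrased up to a codimension-$2$ set and why one must invoke reflexivity of $g^*(\cE)$ together with the fact that both candidate subsheaves are saturated to conclude equality. Getting the normal-crossing/local-coordinate normalization of $\pi$ right so that "étale outside a divisor" is literally true — rather than just generically — and then observing that a divisor is irrelevant for saturation comparisons, is the step I would be most careful about; everything else is standard reflexive-sheaf yoga.
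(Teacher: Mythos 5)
Your argument for the displayed equalities $\cF'=g^*(\cF)^{sat}$ is essentially sound and close in spirit to the paper's: the paper likewise reduces to a good locus and concludes from the evaluation map $g^*(\cF)\to\cF'$ together with saturatedness of $\cF'$; the only real difference is that the paper works over the whole locus where $g$ is finite and \emph{flat} (both spaces smooth there), quoting \cite{Cl}, Lemme 2.13, whereas you only use descent over the \emph{\'etale} locus and then the fact that two saturated subsheaves of the torsion-free $g^*(\cE)$ agreeing on a dense open set coincide. Two imprecisions to fix in that part: the principle you actually need is that a saturated subsheaf is determined by its stalk at the generic point (agreement on any dense open suffices), not ``determination outside codimension $2$'' --- this matters because the loci you excise (the exceptional locus of a birational $g$ in part 1, the branch divisor in part 2) are divisors in $X'$, not codimension-$2$ sets; and you never verify that $\cF:=(g_*\cF')^G$, defined by a presheaf of invariant sections, is coherent (the paper does this by the averaging map $s'\mapsto\frac{1}{N}\sum_{h\in G}h^*(s')$).

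The genuine gap is in your treatment of the last assertion, that $\Supp\bigl(g^*(\cF)^{sat}/g^*(\cF)\bigr)\subset S$. You justify it by saying that ``outside $S$ the cover is \'etale'', which is false: $S=g^{-1}(\mathrm{Sing}(X)\cup g(\mathrm{Sing}(X')))$ has codimension at least $2$, while the ramification divisor of $g$ is a divisor lying outside $S$ --- in the intended application to Kawamata covers both $X$ and $X'$ are smooth, so $S=\emptyset$, yet the cover ramifies along a divisor. What has to be shown is precisely that $g^*(\cF)$ is already saturated along the smooth part of the ramification divisor, where $g$ is finite and flat but not \'etale, and free-action descent says nothing there. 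The paper gets this from flatness of $g$ outside $S$ plus \cite{Cl}, Lemme 2.13, which gives $\cF'=g^*(\cF)=g^*(\cF)^{sat}$ on all of $X'\setminus S$. A self-contained repair: first check that $\cF$ is saturated in $\cE$ (if $as\in\cF$ with $a\neq 0$, then $g^*(a)g^*(s)\in\cF'$, hence $g^*(s)\in\cF'$ by saturatedness, and $g^*(s)$ is $G$-invariant, so $s\in\cF$); then use that for a finite flat surjection of integral schemes the pullback of a torsion-free sheaf is torsion-free (associated points of a flat pullback lie over associated points), so $g^*(\cE/\cF)=g^*(\cE)/g^*(\cF)$ is torsion-free outside $S$, i.e.\ $g^*(\cF)$ is saturated there. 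This last assertion is not decorative: it is exactly what yields $\det(\cF')=g^*(\det\cF)+E$ with $E$ $g$-exceptional in Corollary \ref{csat}, so it cannot be waved through.
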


\begin{proof} Assume first that $g$ is birational. 

The evaluation map 
$v:g^*(g_*(\cF'))\to \cF$ is then isomorphic over the generic point of $X$, and $\cF:=g_*(\cF')\subset \cE:=g_*(g^*(\cE))^G$. Since $\cF'$ is saturated in $g^*(\cE)$, we obtain the given isomorphism. 

Assume now that we are in the second situation. By its definition, the sheaf $\cF:=(g_*(\cF'))^G\subset \cE$ is generated locally by sections of the form $s:=\frac{1}{N}.(\sum_{h\in G} h^*(s'))$, for $s'$ a local section of $\cF'$. Thus it is locally of finite type, and hence coherent as an $\cO_X$-module. From the flatness of $g$ outside of $S$, we deduce that $\cF'=g^*(\cF)=g^*(\cF)^{sat}$ there (See \cite{Cl}, Lemme 2.13 for details). We conclude using the evaluation map $v:g^*(\cF)\to \cF'$.
\end{proof}

\begin{corollary}\label{csat} Assume that $g:X'\to X$ is as in \ref{conv} above,
 with $X,X'$ smooth. Assume that $g=h\circ k$, with $h:Y\to X$ finite, Galois, of group $G$, and $k:X'\to Y$ birational and $G$-equivariant. Let $\alpha$ be a movable class on $X$, and $g^*(\alpha):=\alpha'$ be its inverse image on $X'$. Let $\cE$ be a reflexive sheaf on $X$. Then $\mu_{\alpha,min}(\cE)=\mu
_{\alpha',min}(g^*(\cE))$, and the $g^*(\alpha)$-maximal destabilising sheaf $\cF'_{max}$ of $g^*(\cE)$ is equal to $g^*(\cF)^{sat}$, where $\cF$ is the $\alpha$-maximal destabilising sheaf of $\cE$ for $\alpha$. More generally, the Harder-Narasimhan filtration $HN_{g^*(\alpha)}(g^*(\cE))$ is equal to the saturation of $g^*(HN_{\alpha}(\cE))$ in $g^*(\cE)$, with equality of corresponding slopes.

Moreover, if $\cE'\subset g^*(\cE)$ is a $G$-invariant subsheaf such that $\cE'^{sat}=g^*(\cE)$, then: $HN
_{g^*(\alpha)}(\cE')^{sat}=g^*(W^{\bullet}(\cE',\alpha))$, for some suitable, uniquely defined, filtration $W^{\bullet}(\cE',\alpha)$ on $\cE$. 

Conversely, 
$HN
_{g^*(\alpha)}(\cE')=g^*(W^{\bullet}(\cE',\alpha))\cap \cE'$.\end{corollary}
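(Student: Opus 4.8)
The plan is to derive everything from two ingredients: the descent principle of Lemma~\ref{lmumin}, and the elementary remark that $g^{*}$ multiplies $\alpha$-slopes by the positive constant $\deg g$ (equivalently, with the normalisation of \cite{CPe} for the inverse image of a movable class, preserves them). The remark holds because $\det$ commutes with $g^{*}$ up to $g$-exceptional divisors, which are $g^{*}(\alpha)$-orthogonal, and because a torsion-free subsheaf of $g^{*}(\cG)$ and its saturation in $g^{*}(\cG)$ differ only along a $g$-exceptional divisor — recall $X,X'$ are smooth here, so the set $S$ of Lemma~\ref{lmumin} is empty and only the birational factor of $g$ contributes such discrepancies — which a movable class does not see. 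Writing $g=h\circ k$ with $h$ finite Galois of group $G$ and $k$ birational $G$-equivariant, I would apply the finite part of Lemma~\ref{lmumin} to $h$, the birational part to $k$, and compose; throughout, $G$ acts on $g^{*}(\cE)$ fixing $g^{*}(\alpha)$.

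I would first pin down the maximal destabiliser. Being unique, $\cF'_{\max}(g^{*}(\cE))$ is $G$-invariant, and being maximal destabilising it is saturated, so Lemma~\ref{lmumin} writes it as $g^{*}(\cF)^{sat}$ for a canonical subsheaf $\cF\subseteq\cE$. The slope remark then shows that $\cF$ attains the maximal $\alpha$-slope among subsheaves of $\cE$ and has maximal rank among those of that slope — a competitor $\cG$ of equal slope yields $g^{*}(\cG)^{sat}\subseteq g^{*}(\cE)$ of the same rank and of slope $\mu_{g^{*}(\alpha),\max}$, hence contained in $\cF'_{\max}$ — and since $\cF$ is saturated it is the maximal destabilising subsheaf of $\cE$; uniqueness of the descent gives the asserted equality. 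The same comparison gives $\mu_{\alpha,\min}(\cE)=\mu_{g^{*}(\alpha),\min}(g^{*}(\cE))$ (up to $\deg g$) and the equivalence ``$\cE$ $\alpha$-semistable $\Leftrightarrow$ $g^{*}(\cE)$ $g^{*}(\alpha)$-semistable'' — one way by pulling back a destabilising quotient, the other by descending the $G$-invariant maximal destabiliser. For the full filtration I would pull back $HN_{\alpha}(\cE)$ and saturate each step in $g^{*}(\cE)$: the graded pieces become $g^{*}(\alpha)$-semistable with slopes scaled by $\deg g$, hence still strictly decreasing, so by uniqueness of the HN filtration this is $HN_{g^{*}(\alpha)}(g^{*}(\cE))$; conversely the latter, being unique, is $G$-invariant, and descending its steps by Lemma~\ref{lmumin} produces a filtration of $\cE$ which the slope comparison forces to be $HN_{\alpha}(\cE)$.

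For the last two assertions, let $\cE'\subseteq g^{*}(\cE)$ be $G$-invariant with $\cE'^{sat}=g^{*}(\cE)$. Uniqueness again makes $HN_{g^{*}(\alpha)}(\cE')$ $G$-invariant, so for each $i$ the saturation of $HN^{i}_{g^{*}(\alpha)}(\cE')$ inside $g^{*}(\cE)$ is a $G$-invariant saturated subsheaf, equal by Lemma~\ref{lmumin} to $g^{*}(W^{i})^{sat}$ for a uniquely determined $W^{i}\subseteq\cE$; I would set $W^{\bullet}(\cE',\alpha):=\{W^{i}\}$, which gives the first identity. For the second, I would use that $HN^{i}_{g^{*}(\alpha)}(\cE')$ is saturated \emph{in $\cE'$}, so that it equals $HN^{i}_{g^{*}(\alpha)}(\cE')^{sat}\cap\cE'=g^{*}(W^{i})^{sat}\cap\cE'$, and then check that intersecting with $\cE'$ absorbs the $g$-exceptional discrepancy between $g^{*}(W^{i})^{sat}$ and $g^{*}(W^{i})$ — this last verification, where the precise definition of $W^{i}$ as a non-reflexive descent matters, is the delicate point.

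Indeed, the main obstacle is not conceptual but organisational: the canonical subsheaves one produces on $X'$ are saturated there, whereas Lemma~\ref{lmumin} identifies them with pull-backs only up to $g$-exceptional divisors, so one must keep verifying that saturation, taking graded pieces of a pulled-back filtration, and intersecting with $\cE'$ all commute with such codimension-$\geq 2$ (in the base) modifications. This is harmless for movable slopes but is exactly what produces every $(\cdot)^{sat}$ and $\cap\,\cE'$ in the statement; the genuinely new difficulty — that a subsheaf of $g^{*}(\cE)$ need not descend to $X$ — is entirely dissolved by combining the $G$-invariance of the \emph{unique} canonical filtrations with Lemma~\ref{lmumin}.
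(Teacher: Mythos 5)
Your proof is correct and follows essentially the same route as the paper: descend $G$-invariant saturated subsheaves through the birational and finite factors of $g$ via Lemma \ref{lmumin}, note that the determinant discrepancies are $g$-exceptional hence invisible to $g^{*}(\alpha)$, and exploit the $G$-invariance of the unique maximal destabiliser and HN filtration. If anything, you spell out steps the paper leaves implicit (why the descended sheaf is maximal destabilising downstairs, and the uniqueness comparison for the full filtration), so no changes are needed.
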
 

\begin{proof} Let $\cF'\subset g^*(\cE)$ be any $G$-invariant saturated subsheaf. Define $\cF_Y:=k_*(\cF')^{sat}$ (saturation in $h^*(\cE)$ here), and $\cF:=h_*(\cF_Y)=h_*(\cF_Y)^{sat}$. All of these sheaves are $G$-invariant. From the preceding lemma, we obtain that $\cF'=g^*(\cF)^{sat}$. Hence, using the last assertion of the preceding lemma, we obtain: $det(\cF')=g^*(det(\cF))+E$, where $E$ is a $g$-exceptional divisor on $X'$ (i.e: all of its components are mapped in codimension at least $2$ in $X$ by $g$). Thus: $g^*(\alpha).det(\cF')=\alpha.det(\cF)$, since $g^*(\alpha).E=0$. 

If we apply this to the $g^*(\alpha)$-maximal destabilising 
subsheaf $\cF'_{max}$ of $g^*(\cE)$, we obtain that $\cF'=g^*(\cF)^{sat}$, if $\cF$ is the $\alpha$-maximal destabilising subsheaf of $\cE$. The statements concerning the Harder-Narasimhan filtrations, the slopes, and $\mu
_{g^*(\alpha),min}(g^*(\cE))$ follow immediately. The last statement is proved in the same way, since the Harder-Narasimhan filtration of $\cE'$ relatively to $g^*(\alpha)$ is $G$-invariant.
\end{proof}

We shall need a slight generalisation, too:

\begin{corollary}\label{csat'} Let $g:X'\to X,G,Y,\cE,\cE'=g^*(\cE),\alpha$ be as in \ref{conv} above. 

Let moreover $h':X"\to X'$ be birational, $G$-equivariant, with $X"$ smooth. Let $\cE"\subset (g\circ h')^*(\cE):=(h')^*(\cE')$ be a reflexive $G$-invariant subsheaf such that the support of $((h')^*(\cE')/\cE")$ is contained in the exceptional divisor of $h'$. The conclusions of the preceding corollary then hold for $\cE"$ and $\alpha":=(g\circ h')^*(\alpha)$ in place of $g^*(\cE)$ and $g^*(\alpha)$. More precisely, we have: $HN^G_{(g\circ h')^*(\alpha)}(\cE"))=(g\circ h')^*((HN_{\alpha}(\cE))^{sat})\cap \cE"$, with equality of corresponding slopes.
\end{corollary}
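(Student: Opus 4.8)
The plan is to reduce Corollary \ref{csat'} to the already-established Corollary \ref{csat} by absorbing the extra birational modification $h':X"\to X'$. First I would set $\tilde g := g\circ h':X"\to X$ and observe that this is again a morphism of the type considered in \ref{conv}: it factors as $\tilde g = h\circ(k\circ h')$, with $h:Y\to X$ finite Galois of group $G$, and $k\circ h':X"\to Y$ birational and $G$-equivariant (since both $k$ and $h'$ are). Hence Corollary \ref{csat}, applied to $\tilde g$ in place of $g$, gives $\mu_{\alpha,min}(\cE)=\mu_{\tilde g^*(\alpha),min}(\tilde g^*(\cE))$, and $HN_{\tilde g^*(\alpha)}(\tilde g^*(\cE))=(\tilde g^*(HN_\alpha(\cE)))^{sat}$ with matching slopes; moreover its last assertion, applied to the $G$-invariant subsheaf $\cE"\subset \tilde g^*(\cE)=(h')^*(\cE')$, yields a uniquely defined filtration $W^\bullet(\cE",\alpha)$ on $\cE$ with $HN_{\tilde g^*(\alpha)}(\cE")^{sat}=\tilde g^*(W^\bullet(\cE",\alpha))$ and $HN_{\tilde g^*(\alpha)}(\cE")=\tilde g^*(W^\bullet(\cE",\alpha))\cap \cE"$.

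The remaining point is to identify $W^\bullet(\cE",\alpha)$ with $(HN_\alpha(\cE))^{sat}$, i.e. to show that passing from $\cE'=g^*(\cE)$ to the subsheaf $\cE"$ agreeing with $(h')^*(\cE')$ outside the $h'$-exceptional locus does not change the associated HN-data downstairs. The key observation is the slope-invariance argument already used in the proof of \ref{csat}: if $\cF"\subset \cE"$ is $G$-invariant and saturated in $(h')^*(\cE')$, then its saturation $(\cF")^{sat}$ inside $(h')^*(\cE')$ is $(h')^*$ of a $G$-invariant saturated subsheaf $\cF'\subset \cE'=g^*(\cE)$ (up to $h'$-exceptional divisors, which are $\tilde g$-exceptional, hence meet $\tilde g^*(\alpha)$ trivially), and $\det(\cF")=\tilde g^*(\det(\cF))+(\text{exceptional})$ where $\cF=g_*^G$ of that subsheaf; so $\tilde g^*(\alpha).\det(\cF")=\alpha.\det(\cF)$. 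Applying this to the steps of $HN_{\tilde g^*(\alpha)}(\cE")$ — which, by the hypothesis that $((h')^*(\cE')/\cE")$ is supported on the exceptional divisor of $h'$, have the same saturations in $(h')^*(\cE')=g^*(\cE)$ as the steps of $HN_{g^*(\alpha)}(g^*(\cE))$ — we get that the filtration $W^\bullet(\cE",\alpha)$ obtained from \ref{csat} coincides with the filtration $W^\bullet(g^*(\cE),\alpha)=(HN_\alpha(\cE))^{sat}$ obtained there. This gives the displayed formula $HN^G_{(g\circ h')^*(\alpha)}(\cE")=(g\circ h')^*((HN_\alpha(\cE))^{sat})\cap \cE"$ with equality of slopes, and the equality $\mu_{\alpha,min}(\cE)=\mu_{\alpha",min}(\cE")$ follows since the bottom quotient of the HN-filtration is unchanged.

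I expect the main obstacle to be the bookkeeping around exceptional divisors: one must check carefully that every divisor appearing as a discrepancy between a subsheaf, its saturation, and the pullback of its pushforward is genuinely $\tilde g$-exceptional (so that it is $\tilde g^*(\alpha)$-orthogonal), and in particular that the hypothesis "support of $((h')^*(\cE')/\cE")$ contained in $\mathrm{Exc}(h')$" really does force the HN-steps of $\cE"$ to have the same downstairs data as those of $g^*(\cE)$ — the subtlety being that a saturated subsheaf of $\cE"$ need not be saturated in $(h')^*(\cE')$, but the difference is again supported on $\mathrm{Exc}(h')$, hence invisible to $\tilde g^*(\alpha)$. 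Once this is granted, the proof is a direct transcription of the argument of Corollary \ref{csat}; I would write it as: "Apply Corollary \ref{csat} to $g\circ h'$ and to the $G$-invariant subsheaf $\cE"$, noting that $g\circ h'$ has the required factorisation and that the $h'$-exceptional divisors are $(g\circ h')$-exceptional, so play no role for the class $(g\circ h')^*(\alpha)$; the identification of $W^\bullet(\cE",\alpha)$ with $(HN_\alpha(\cE))^{sat}$ is then the same slope computation as in the proof of \ref{csat}."
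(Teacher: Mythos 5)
Your proposal is correct and follows essentially the paper's own route: apply Corollary \ref{csat} to the composite $g\circ h'$ (which admits the required factorisation into a finite Galois map and a $G$-equivariant birational map), and then observe that since $((h')^*(\cE')/\cE")$ is supported on the $h'$-exceptional divisor, which is $(g\circ h')$-exceptional and hence orthogonal to $(g\circ h')^*(\alpha)$, the HN-data of $\cE"$ are obtained by intersecting those of the full pullback with $\cE"$, with unchanged slopes. The paper states this more briefly via the maximal destabilising subsheaf, but the mechanism is the same.
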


\begin{proof} The conclusions of the preceding lemma apply to $(g\circ h')^*(\cE)$. The $(g\circ h')^*(\alpha)$-maximal destabilising $G$-subsheaf $\cF"$ of $\cE"$ is the intersection with $\cE"$ of the $(g\circ h')^*(\alpha)$-maximal destabilising $G$-subsheaf $\cF^+$ of $(g\circ h')^*(\cE))$, and so its $(g\circ h')^*(\alpha)$-slope is the same as the one of $\cF^+$, by the assumption on the support of $((h')^*(\cE')/\cE")$. The other assertions follow immediately.
\end{proof}

%%%%%%%%%%%%%%%%%%%%%%%%%%%%%%
%%%%%%%%%%%%%%%%%%%%%%%%%%%%%%%%%%%%%%%%%%%%%%%%%%%%%%%%%%

\subsection{Independence of the adapted cover.}

Let $(X,D)$ be, as always here, a smooth projective orbifold pair, and let $\pi:Y\to X$ be a Kawamata cover (see \cite{CP15},\S5) adapted to $(X,D)$. Recall that $Y$ is smooth, $\pi$ being $G$-Galois, Kummer and finite, for some group $G$. Moreover, the locally free sheaves $\pi^*(T(X,D))$ and its dual $\pi^*(\Omega^1(X,D))$ are defined on $Y$. More precisely, if $D=\sum_jc_j.D_j$, with $c_j=\frac{a_j}{b_j}$ as above, then $\pi$ ramifies to a certain order $m>0$, divisible by each of the $b_j's$ over $\Supp(D)=\cup_jD_j$ (and also over some additional components $H$ needed to construct $\pi$ globally).

Let $\alpha$ be a movable class on $X$, we thus get $HN^G_{\pi^*(\alpha)}(\pi^*(T(X,D)))$, and it is proved in \cite{CP15}, that the saturation $HN^G_{\pi^*(\alpha)}(\pi^*(T(X,D)))^{sat}$ inside $\pi^*(TX)$ is of the form $\pi^*(W^{\bullet}(D,\alpha,\pi))$ for a certain filtration $W^{\bullet}$ on $TX$ (which is, of course, not $HN_{\alpha}(TX)$ in general, if $D\neq 0$).

Our aim here is to prove that the filtration $W^{\bullet}(D,\alpha,\pi)$ is in fact\footnote{In accordance with the `orbifold' nature of these notions.} independent of the adapted cover $\pi$ and that it only depends on $D$ and $\alpha$. It will thus simply be written as $W^{\bullet}(D,\alpha)$ in the sequel. This permits, once $\pi$ is given, to reconstruct $HN_{\pi^*(\alpha)}(\pi^*(T(X,D))$ by simply intersecting $\pi^*(W^{\bullet}(D,\alpha))$ with $\pi^*(T(X,D))$.

So we consider a second Kawamata cover $\pi':Y'\to X$ adapted to $(X,D)$, of group $G'=Gal(Y'/X)$. Let 
\begin{equation}
Z\subset (Y'\times_XY)^n
\end{equation} be any component of the normalisation of fibered product, together with the natural projections $q:Z\to X$, $p:Z\to Y$ and $p':Z\to Y'$. it is acted on by its normal stabilizer $L\subset G'\times G$, which is onto on both $G$ and $G'$. Indeed: $Z$ is surjective and finite on both $Y'$ and $Y$, which are irreducible. Moreover, $q$ is $L$ Galois. The projections $p$ and $p'$ are respectively $H'$-Galois (resp. $H$-Galois) for some subgroups $H'$ (resp. $H$) of $G'$ (resp. $G$). The components of $(Y\times_ZW)^n$ are exchanged under the action of $(G\times G')/L$. 

Let finally $r:Z'\to Z$ be an $L$-equivariant resolution of $Z$. Let $q:=\pi\circ p\circ r=\pi'\circ p'\circ r$.

 of $(G\times H)/L$.

\begin{theorem}\label{tind} We have: 

1. $p'^*(\pi'^*(T(X,D)))=p^*(\pi^*(T(X,D))):=T$. 

2. $HN
_{q^*(\alpha)}(T)=(p\circ r)^*(HN
_{\pi^*(\alpha)}(\pi^*(T(X,D))))^{sat}=$

$(p'\circ r)^*(HN
_{\pi'^*(\alpha)}(\pi'^*(T(X,D))))^{sat}$.

3. We have equality of the corresponding slopes in the above filtrations.
\end{theorem}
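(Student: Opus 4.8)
The plan is to reduce everything to the comparison results of the previous subsection (Corollaries \ref{csat} and \ref{csat'}) by exploiting the fact that the pulled-back orbifold tangent bundles agree on $Z'$, up to subsheaves supported on exceptional divisors, with the pullbacks of the respective $\Omega^1$-duals on the two Kawamata covers. First I would establish Claim 1: both $p'^*(\pi'^*(T(X,D)))$ and $p^*(\pi^*(T(X,D)))$ are locally free subsheaves of $q^*(TX)$ (with $q = \pi\circ p = \pi'\circ p'$ on $Z$), and one checks they coincide by a local computation in codimension one, exactly as in Proposition \ref{pom}: over a generic point of a component $D_j$ of $\Supp(D)$ with multiplicity $a_j/b_j$, the sheaf $\pi^*(T(X,D))$ is symbolically generated by $x_1^{1-1/m_j}\partial_{x_1},\partial_{x_2},\dots$, and its pullback to $Z$ (resp. the pullback of $\pi'^*(T(X,D))$) is generated by $\tilde x_1^{(1-1/m_j)\cdot e}\partial_{\tilde x_1},\partial_{\tilde x_2},\dots$ where $e$ is the total ramification index of $q$ along the relevant divisor; since $m_j$ depends only on $D$, both give the same subsheaf of $q^*(TX)$. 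Away from $\Supp(D)$ and the ramification, both equal $q^*(TX)$, so they agree in codimension one, and being reflexive (indeed locally free) they agree everywhere. Set this common sheaf to be $T$; note it need not be the pullback of a sheaf from $X$, but it is $L$-invariant.

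Next I would handle Claim 2 and 3 simultaneously. Factor $\pi\circ p\circ r : Z' \to X$ as (finite Galois) $\circ$ (birational $G$-equivariant): concretely, $p\circ r : Z' \to Y$ is birational, $\pi : Y\to X$ is finite Galois of group $G$, and $p\circ r$ is equivariant for the appropriate group (the stabilizer $L$ surjects onto $G$, so the relevant subgroup acts). Here one must be slightly careful: $p\circ r$ is not literally an isomorphism composed with a finite cover, but $r$ is birational and $p$ is finite (a quotient by $H$); so one applies Corollary \ref{csat'} with the intermediate variety $Y$, the finite Galois cover $\pi:Y\to X$, and the reflexive $G$-invariant subsheaf on $Z'$ obtained by pulling back $\pi^*(T(X,D))$ — which agrees with $(p\circ r)^*(\text{something})$ up to exceptional divisors. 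This gives
\[
HN_{q^*(\alpha)}(T) = (p\circ r)^*\bigl(HN_{\pi^*(\alpha)}(\pi^*(T(X,D)))\bigr)^{sat},
\]
with equality of slopes (slopes are unchanged because $q^*(\alpha)\cdot E = 0$ for $q$-exceptional $E$, exactly the computation in the proof of Corollary \ref{csat}). By the same argument applied to $\pi'$, $p'$, $r$, the same $HN_{q^*(\alpha)}(T)$ equals $(p'\circ r)^*(HN_{\pi'^*(\alpha)}(\pi'^*(T(X,D))))^{sat}$, with the same slopes. Since the left-hand side is a single intrinsic object on $Z'$, the two right-hand sides coincide, which is Claim 2; and the slopes read off from it are the claimed common slopes, giving Claim 3. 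Finally, descending the $L$-invariant saturated filtration $HN_{q^*(\alpha)}(T)$ through $q$ (using Lemma \ref{lmumin}.2) produces the filtration $W^\bullet(D,\alpha)$ on $TX$, manifestly independent of $\pi$.

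The main obstacle, I expect, is bookkeeping around the non-surjectivity of $\pi^*(df)$-type maps and the exceptional loci: one must track that the discrepancy sheaves between $(p\circ r)^*(\pi^*(T(X,D)))$, $(p'\circ r)^*(\pi'^*(T(X,D)))$ and the common $T$ are supported on divisors that are $q$-exceptional (equivalently contracted in codimension $\geq 2$, as in Definition \ref{defneat}), so that intersecting with the movable class $q^*(\alpha)$ kills them and the slopes genuinely match. This is precisely where the hypotheses of Corollary \ref{csat'} (support of the discrepancy inside the exceptional divisor of the resolution $r$) are used, and verifying them requires the local analysis of Kummer covers already carried out in the proof of Proposition \ref{pom}. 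The rest is a formal consequence of the HN-filtration comparison lemmas and the uniqueness of the Harder–Narasimhan filtration.
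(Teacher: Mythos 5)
Your overall strategy is the paper's: Claim 1 by the local "fractional vector field" computation in codimension one plus reflexivity/local freeness, and Claims 2--3 by the Harder--Narasimhan pullback-comparison corollaries, with slopes preserved because pulled-back movable classes kill exceptional divisors. But the pivotal invocation is wrong as written. First, $p\circ r:Z'\to Y$ is \emph{not} birational: $p:Z\to Y$ is the projection from a component of the normalised fibre product and is finite $H'$-Galois of degree $>1$ in general (it is generically a quotient of the degree of $\pi'$), so your proposed factorisation of $q:Z'\to X$ as $\pi\circ(\text{birational onto }Y)$ does not exist. Second, the substitute you then propose --- Corollary \ref{csat'} ``with intermediate variety $Y$ and the finite Galois cover $\pi:Y\to X$'' --- does not typecheck either: \ref{csat'} compares the HN filtration upstairs with the pullback of $HN_\alpha(\cE)$ for a sheaf $\cE$ living on the \emph{base} of the Galois cover, whereas the sheaf relevant here, $\pi^*(T(X,D))$, lives on $Y$ and does not descend to $X$ (only its saturation in $\pi^*(TX)$ does); moreover there is no birational map onto $Y$ in the diagram to play the role of $h'$. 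So the step as stated would fail.

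The fix is exactly the paper's one-line argument, and it stays inside your toolkit: apply Corollary \ref{csat} (not \ref{csat'}) to the generically finite Galois map $p\circ r:Z'\to Y$, factored as the finite $H'$-Galois $p:Z\to Y$ after the ($L$-, hence $H'$-) equivariant resolution $r:Z'\to Z$, taking $\cE:=\pi^*(T(X,D))$ and the movable class $\pi^*(\alpha)$ \emph{on $Y$}; this gives $HN_{q^*(\alpha)}(T)=(p\circ r)^*(HN_{\pi^*(\alpha)}(\pi^*(T(X,D))))^{sat}$ with equal slopes, and symmetrically over $Y'$ via $p'\circ r$, after which Claim 1 identifies the two right-hand sides. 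Note that \ref{csat'} is not needed here at all, precisely because Claim 1 gives exact equality of the pulled-back sheaves, not merely equality up to divisors exceptional for $r$; nor is there any issue about non-surjectivity of $\pi^*(df)$-type maps in this statement (no fibration $f$ occurs --- that concern belongs to Proposition \ref{pindbir} and later sections). Finally, a small slip in Claim 1: the common generator along a component of $\Supp(D)$ should be written as $\tilde x_1^{\,e(1-1/m_j)}\,q^*(\partial_{x_1})$ inside $q^*(TX)$ (with $e$ the total ramification of $q$), not $\tilde x_1^{\,e(1-1/m_j)}\partial_{\tilde x_1}$, which is a section of $TZ'$ rather than of $q^*(TX)$; the conclusion is unaffected.
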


\begin{proof} The first equality is easily computed outside of $S\subset X$ consisting of the union of $Sing(\Supp(D))$ and of the intersection of $\Supp(D)$ with the union of all the additional components $H$ of the ramification loci of $\pi$ and $\pi'$, since it just consist in lifting  `fractional' vector fields of the form $x^c.\partial x$, $c\in \Q\cap]0,1]$, under maps $x=z^{m.m'}=z^{m'm}$ for integers $m,m'$ such that $mc$ and $m'c$ are integers. Since both $\pi^*(T(X,D))$ and $\pi^*(T(X,D))$ are locally free, this equality extends to $Z$, and then to $Z'$. 

The last two statements are then immediate applications of Corollary \ref{csat}.\end{proof}

\begin{corollary}\label{cind} For given $(X,D)$, $\pi,G$ and $\alpha$ as above, the filtration $W^{\bullet}(D,\alpha)$ on $TX$ such that $\pi^*(W^{\bullet}(D,\alpha))=HN_{\pi^*(\alpha)}(\pi^*(T(X,D)))^{sat}$, the saturation being in $\pi^*(TX)$, is independent on $\pi$.
\end{corollary}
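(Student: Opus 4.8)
The plan is to deduce Corollary \ref{cind} directly from Theorem \ref{tind}, with essentially no further work beyond unwinding the definitions. Recall that by the construction recalled in \cite{CP15}, for any single adapted Kawamata cover $\pi:Y\to X$ the $G$-invariant saturated subsheaf $HN_{\pi^*(\alpha)}(\pi^*(T(X,D)))^{sat}\subset \pi^*(TX)$ is, by Lemma \ref{lmumin} (or Corollary \ref{csat}) applied to $\pi$, of the form $\pi^*(W^\bullet)^{sat}$ for a uniquely determined filtration $W^\bullet=W^\bullet(D,\alpha,\pi)$ of $TX$; uniqueness here is because $\pi^*(\cdot)$ followed by saturation is injective on saturated subsheaves of $TX$ (this is precisely part~1 and part~2 of Lemma \ref{lmumin}, or the descent statement in Corollary \ref{csat}). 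So the only thing to prove is that if $\pi'$ is a second adapted cover, then $W^\bullet(D,\alpha,\pi)=W^\bullet(D,\alpha,\pi')$ as filtrations of $TX$.

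The key step is to pass to a common cover. Following the setup before Theorem \ref{tind}, I take a component $Z$ of the normalised fibre product $(Y'\times_X Y)^n$ with its maps $p:Z\to Y$, $p':Z\to Y'$, $q:Z\to X$, and an $L$-equivariant resolution $r:Z'\to Z$, so that $q\circ r$ factors through both $\pi$ and $\pi'$ and is itself a (composite of a finite Galois cover and a birational equivariant map) of the type treated in Corollary \ref{csat}. Theorem \ref{tind}.2 gives
\[
(p\circ r)^*\bigl(HN_{\pi^*(\alpha)}(\pi^*(T(X,D)))\bigr)^{sat}
=(p'\circ r)^*\bigl(HN_{\pi'^*(\alpha)}(\pi'^*(T(X,D)))\bigr)^{sat}
\]
as subsheaves (filtrations) of $T:=(q\circ r)^*(TX)$. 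Substituting $HN_{\pi^*(\alpha)}(\pi^*(T(X,D)))^{sat}=\pi^*(W^\bullet(D,\alpha,\pi))^{sat}$ and the analogous identity for $\pi'$, and using that pullback commutes with saturation up to exceptional divisors (the last assertion of Lemma \ref{lmumin}, which do not affect the saturated subsheaf), the left-hand side becomes $(q\circ r)^*(W^\bullet(D,\alpha,\pi))^{sat}$ and the right-hand side $(q\circ r)^*(W^\bullet(D,\alpha,\pi'))^{sat}$. Thus $(q\circ r)^*(W^\bullet(D,\alpha,\pi))^{sat}=(q\circ r)^*(W^\bullet(D,\alpha,\pi'))^{sat}$ inside $(q\circ r)^*(TX)$.

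Finally I descend this equality back to $X$. Since $q\circ r:Z'\to X$ is generically finite and surjective between smooth projective varieties, the operation ``pull back by $q\circ r$, then saturate in $(q\circ r)^*(TX)$'' is injective on the set of saturated subsheaves of $TX$: this is exactly the content of parts~1 and~2 of Lemma \ref{lmumin} (a saturated subsheaf $\cF\subset TX$ is recovered from $(q\circ r)^*(\cF)^{sat}$ by pushing forward and taking $G$-invariants / the birational direct image, and the same applies term by term to a filtration). Applying this to each step of the two filtrations gives $W^\bullet(D,\alpha,\pi)=W^\bullet(D,\alpha,\pi')$, which is the assertion. I expect the only genuinely delicate point to be the bookkeeping of exceptional divisors when commuting pullback with saturation across the resolution $r$ — but these are supported in codimension $\ge 2$ on $Z'$ by the last sentence of Lemma \ref{lmumin}, hence invisible to the saturated subsheaves, so this causes no real obstruction; everything else is a formal consequence of Theorem \ref{tind} and Corollary \ref{csat}.
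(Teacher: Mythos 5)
Your proof is correct and follows essentially the same route as the paper: apply Theorem \ref{tind}.2 on the resolved common cover $Z'$ of the two Kawamata covers to identify both saturated pullbacks with the saturation of $HN_{q^*(\alpha)}(T)$, then descend to $TX$ using the uniqueness/descent statements of Lemma \ref{lmumin} and Corollary \ref{csat}. The paper's proof of Corollary \ref{cind} is just a terser version of exactly this argument.
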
 

\begin{proof} Indeed, for $r,\pi',p',p$ as above, the saturations in $(\pi\circ p\circ r)^*(TX)$ of $(p\circ r)^*(HN
_{\pi^*(\alpha)}(\pi^*(T(X,D))))$ and $(p'\circ r)^*(HN_{\pi'^*(\alpha)}(\pi'^*(T(X,D))))$ coincide with that of $HN
_{q^*(\alpha)}(T)$.
\end{proof}

Assume now that $\alpha.(K_X+D)<0$. Let $\cF\subset \pi^*(T(X,D))$ be the maximal destabilising 
subsheaf: it has a positive $\alpha$-slope. It is proved in \cite{CP15} that its saturation in $\pi^*(TX)$ is of the form $\pi^*(\cF_X)$ for some algebraic foliation $\cF_X$ on $X$, with $\cF_X=Ker(df)$ for some rational fibration $f:X\dasharrow Z$. From Corollary \ref{cind}, we obtain:

\begin{corollary}\label{indf} The foliation $\cF_X$ and the fibration $f$ (up to birational equivalence) depend on $D$ and $\alpha$, but not on $\pi$. \end{corollary}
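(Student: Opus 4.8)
The statement is essentially a repackaging of Corollary \ref{cind}, so the plan is short.

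First I would identify the foliation $\cF_X$ with the first step of the filtration $W^{\bullet}(D,\alpha)$. By definition the maximal destabilising subsheaf $\cF\subset\pi^*(T(X,D))$ is the first nonzero term of $HN_{\pi^*(\alpha)}(\pi^*(T(X,D)))$, and since we are assuming $\alpha.(K_X+D)<0$ it has strictly positive $\pi^*(\alpha)$-slope. Saturating inside $\pi^*(TX)$, the first term of the saturated filtration $HN_{\pi^*(\alpha)}(\pi^*(T(X,D)))^{sat}$ is $\cF^{sat}$; but by definition this saturated filtration equals $\pi^*(W^{\bullet}(D,\alpha))$, so $\cF^{sat}=\pi^*(W^1(D,\alpha))$. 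On the other hand, by the construction of \cite{CP15} recalled just above the corollary, $\cF^{sat}=\pi^*(\cF_X)$ for the algebraic foliation $\cF_X$ on $X$. Comparing the two gives $\pi^*(\cF_X)=\pi^*(W^1(D,\alpha))$ as saturated subsheaves of $\pi^*(TX)$.

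Next I would descend this equality to $X$. Since $\pi$ is finite Galois of group $G$, and both $\cF_X$ and $W^1(D,\alpha)$ are $G$-invariant saturated (reflexive) subsheaves of $TX$, applying $\pi_*(-)^G$ — equivalently, comparing over the big open set where $\pi$ is \'etale and using reflexivity, exactly as in Lemma \ref{lmumin} — yields $\cF_X=W^1(D,\alpha)$. By Corollary \ref{cind}, the filtration $W^{\bullet}(D,\alpha)$, hence its first term $W^1(D,\alpha)$, does not depend on the choice of adapted cover $\pi$; therefore neither does $\cF_X$, which thus depends only on $(X,D)$ and on $\alpha$ (through the Harder--Narasimhan filtration relative to $\alpha$).

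Finally, for the fibration $f$ I would invoke the standard fact that an algebraic foliation determines, up to birational equivalence, the fibration whose general fibres are the closures of its leaves: if $f:X\dasharrow Z$ and $f':X\dasharrow Z'$ both satisfy $\ker(df)=\ker(df')=\cF_X$, then they have the same general fibres, so each factors birationally through the other, producing a birational map $Z\dasharrow Z'$ compatible with $f$ and $f'$. Hence $f$ is well defined up to birational equivalence once $\cF_X$ is, i.e. it depends only on $D$ and $\alpha$. The only point needing a little care is the descent step, where one checks that the identification $\pi^*(\cF_X)=\pi^*(W^1(D,\alpha))$ passes to $X$; this is immediate from the injectivity of $\pi^*$ on saturated subsheaves of $TX$ and is in any case weaker than what was already established in the proof of Theorem \ref{tind}. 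There is thus no substantial obstacle beyond Corollary \ref{cind} itself.
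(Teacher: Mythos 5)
Your proposal is correct and follows essentially the same route as the paper: the paper derives the corollary directly from Corollary \ref{cind}, using exactly the identification of $\pi^*(\cF_X)=\cF^{sat}$ with the first term of $\pi^*(W^{\bullet}(D,\alpha))$ that you spell out, followed by the standard uniqueness of the fibration determined by an algebraic foliation. Your write-up merely makes the descent and uniqueness steps explicit, which the paper leaves implicit.
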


%%%%%%%%%%%%%%%%%%%%%%%%%%%%%%%%%%%%%%
%%%%%%%%%%%%%%%%%%%%%%%%%%%%%%%%%%%%%%

\subsection{Birational orbifold invariance}

We shall apply the preceding  Proposition \ref{pom} together with the Corollary \ref{csat'} in order 
to show the birational orbifold invariance of the equivariant Harder-Narasimhan filtration of $\pi^*(T(X,D))$ relative to a movable class $\alpha$.

The situation is the following: Let $f(X',D')\to (X,D)$ be a birational orbifold morphism between smooth projective complex orbifold pairs.
We thus have, in particular: $f_*(D')=D$. 

Let $\pi:Y \to (X,D)$ and $\pi':Y'\to (X',D')$ be adapted Kawamata-covers of groups $G,G'$. We consider, as before, an irreducible component $T\subset (Y'\times _XY)^{n}$ of its normalised fibre-product $(Y'\times _XY)^{n}$  over $X$, together with its projections $\rho:T\to Y$ and $\rho':T\to Y'$. We have a natural action of $L\subset G'\times G$ on $T$. We also choose an $L$-equivariant resolution of singularities $r:R\to T$ for this action.

Define two locally-free sheaves $\cE':=(\rho'\circ r)^*((\pi')^*(T(X',D')))$ and $\cE:= (\rho\circ r)^*((\pi)^*(T(X,D)))$ on $R$.
By the previous Proposition \ref{pom}, we get a sheaf homomorphism $(\pi'\circ \rho'\circ r)^*(df): \cE'\to \cE$, since 
$f$ is an orbifold morphism.

\begin{lemma}\label{lcok} The cokernel of $(\pi'\circ \rho'\circ r)^*(df): \cE'\to \cE$ is contained in the exceptional divisor of $(\rho\circ r):R\to Y$. 
\end{lemma}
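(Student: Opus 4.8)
The statement is a local, codimension-one assertion on $Y$ (equivalently on $X$), so the plan is to reduce to generic points of divisors. Since $\cE$ and $\cE'$ are locally free and the cokernel of a morphism between locally free sheaves is supported in codimension $\geq 1$ whenever the morphism is generically an isomorphism (which it is, being the lift of $df$ and $f$ birational), it suffices to check, at a generic point of each prime divisor $\Gamma\subset Y$, that $(\pi'\circ\rho'\circ r)^*(df)$ is surjective onto $\cE$ there, unless $\Gamma$ lies over an $f$-exceptional locus, i.e. unless $(\rho\circ r)(\Gamma)$ maps into the exceptional divisor of $f:X'\to X$. Concretely I would split the divisors on $R$ into three types: those dominating a divisor of $X$ not contained in $\Supp(D)$; those over a component of $\Supp(D)$; and those over the $f$-exceptional locus. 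Only the last type is allowed to carry the cokernel, so the work is to rule out the first two.

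For divisors of the first type, away from $\Supp(D)$ the sheaves $\pi^*(T(X,D))$ and $(\pi')^*(T(X',D'))$ coincide with the ordinary pulled-back tangent sheaves, and $df$ is an isomorphism there because $f$ is birational and $D'=f^{-1}$-adjusted; hence the lifted map is surjective at such generic points. For divisors over $\Supp(D)$, I would run exactly the local computation of Proposition \ref{pom}: pick adapted coordinates $(x_1,\dots,x_n)$ on $X$ near a generic point of a component $H$ of $\Supp(D)$ with $H=\{x_1=0\}$, and coordinates on $X'$ near the corresponding point, where $f$ has the form $x_1=x'^{t}_1\cdot(\text{unit}),\ x_j=x'_j$ for $j\geq 2$ (so $t=1$ precisely when $f$ is a local isomorphism at that point). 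Because $f:(X',D')\to(X,D)$ is an orbifold morphism and $D'$ is large enough (indeed $D= f_*(D')$ and the minimality/neatness forces $c_{f^*(H)}(F')\cdot m_{D'}(F')\geq m_{D}(H)$), the image under $(\pi'\circ\rho'\circ r)^*(df)$ of the generator $x'^{\,1-1/m'}_1\partial_{x'_1}$ of $(\pi')^*(T(X',D'))$ will, after dividing by the appropriate power of $x_1$ coming from the cover, already contain the generator $x_1^{\,1-1/m}\partial_{x_1}$ of $\pi^*(T(X,D))$ at the generic point of $\Gamma$; the inequality on multiplicities gives a nonnegative exponent, i.e. no loss, exactly as in the proof of Proposition \ref{pom}, so the map is surjective there. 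The generators $\partial_{x_j}$, $j\geq 2$, are handled by the same (easier) bookkeeping.

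That leaves divisors of $R$ lying over the $f$-exceptional locus $\mathrm{Exc}(f)\subset X'$, equivalently over the locus in $X$ where $f^{-1}$ is not a morphism. These are precisely where $df$ genuinely drops rank and where the cokernel is permitted to live, and by definition $(\rho\circ r)$-images of such divisors are contained in the $(\rho\circ r)$-preimage of $\mathrm{Exc}(f\circ\pi)$, which is contained in the exceptional divisor of $(\rho\circ r):R\to Y$ — here one uses that $\pi$ and $\rho\circ r$ are finite (Galois, Kummer) composed with a resolution, so exceptional loci upstairs match preimages of exceptional loci downstairs up to codimension $\geq 2$. Collecting the three cases: the support of $\mathrm{coker}\big((\pi'\circ\rho'\circ r)^*(df)\big)$ meets no divisor of $R$ except those contained in the exceptional divisor of $(\rho\circ r):R\to Y$, and since the cokernel of a map of locally free sheaves has pure codimension-one support (or is zero), it is contained in that exceptional divisor, as claimed.

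\textbf{Main obstacle.} The delicate point is the bookkeeping at divisors over $\Supp(D)$: one must track simultaneously the ramification exponents of the two Kawamata covers $\pi,\pi'$ (and of the fibre-product resolution $r$) and the pullback exponent $t$ of $f$ along each component, and verify that the orbifold-morphism inequality $c_{f^*(H)}(F')\,m_{D'}(F')\geq m_D(H)$ is exactly what makes the relevant exponent nonnegative so that no extra pole — hence no cokernel — is produced off the $f$-exceptional locus. This is the same mechanism as in Proposition \ref{pom}, but now one needs it uniformly along \emph{all} such divisors and must also confirm that when $t\geq 2$ on a non-exceptional divisor the map is still surjective (it is: the cokernel of $x'_1{}^{a}\partial_{x'_1}\mapsto x'_1{}^{a}\cdot x_1^{\,-?}\partial_{x_1}$-type maps is supported only where the target's fractional part genuinely exceeds the source's, which the orbifold-morphism condition forbids). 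Everything else is either the codimension-one reduction (standard for torsion in cokernels of generically-iso maps between locally free sheaves) or a direct transcription of Proposition \ref{pom}.
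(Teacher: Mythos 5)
Your reduction to codimension one and the three-way case split are sound (and since $\cE'$ and $\cE$ have equal rank, the support of the cokernel of a generically isomorphic map is indeed a divisor), but the decisive step — surjectivity at divisors lying over $\Supp(D)$ — is justified by the wrong inequality, and this is a genuine gap. In the local model $x_1=(x_1')^t$, with $m':=m_{D'}(F')$ and $m:=m_D(F)$ for $F=f(F')$, the lifted $df$ sends the generator $x_1'^{\,1-1/m'}\partial_{x_1'}$ to a unit times $x_1'^{\,t/m-1/m'}\cdot\bigl(x_1^{\,1-1/m}\partial_{x_1}\bigr)$. The orbifold-morphism condition $t\,m'\geq m$ is exactly the nonnegativity of the exponent $t/m-1/m'$, and it only guarantees that the map is \emph{defined} (no pole appears): that is the content of Proposition \ref{pom}. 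It does not give surjectivity; whenever the inequality is strict, the image vanishes along the divisor and the cokernel is nonzero there — this is precisely what the remark following Proposition \ref{pom} says, locating the cokernel on the components where the orbifold-morphism inequality is strict. So your inference ``nonnegative exponent, i.e.\ no loss, hence surjective'' conflates well-definedness with surjectivity, and the claim in your last paragraph that a non-exceptional divisor with $t\geq 2$ would still give surjectivity is wrong on both counts: for a birational morphism of smooth varieties one has $t=1$ on every non-exceptional component of $f^*(F)$ (only the strict transform occurs), and if one had $t\,m'>m$ on such a component there would be a genuine cokernel off the exceptional locus, contradicting the lemma.

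What actually closes the argument — and is the point of the paper's one-line proof — is the hypothesis $f_*(D')=D$ used as an \emph{equality} of multiplicities: at the generic point of a non-$f$-exceptional divisor $F'$ one has $t=1$ and $m_{D'}(F')=m_D(f(F'))$, so the exponent is $1/m-1/m'=0$ and the sheaves $\cE'$ and $\cE$ coincide in codimension one outside the preimage of the exceptional divisor of $f$ (the same computation as in Theorem \ref{tind}); your case (a) also tacitly uses $f_*(D')=D$ to know that the strict transform of a divisor not in $\Supp(D)$ does not lie in $\Supp(D')$. You do quote $D=f_*(D')$, but only to re-derive the inequality $t\,m_{D'}(F')\geq m_D(F)$, which is the hypothesis of Proposition \ref{pom} and not enough here. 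Replace that step by the equality of multiplicities on non-exceptional components; with your cases (a) and (c) bookkeeping, which is essentially correct, the proof then goes through.
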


\begin{proof} This follows from the fact that $f_*(D')=D$ by the same argument used to prove Theorem \ref{tind}, which shows that the sheaves $\cE$ and $\cE'$ coincide in codimension one outside of the inverse image in $R$ of the exceptional divisor of $f$ in $X'$. \end{proof}

Applying now Corollary \ref{csat'} to $\cE',\cE$ and to $\rho\circ r:T\to Y$, writing $h=(f\circ \pi'\circ \rho'\circ r)=(\pi\circ \rho\circ r):R\to X$, we get:

\begin{proposition} \label{pindbir} Let $f:(X',D')\to (X,D)$ be an orbifold birational morphism as above. Then, taking saturation inside $\cE$, we have:
$$[(\rho' \circ r)^*(HN_{(f\circ \pi')^*(\alpha)}^{G'}(\pi'^*(T(X',D')))]^{sat}= [(\rho\circ r)^*(HN_{\pi^*(\alpha)}^G(\pi^*(T(X,D)))]^{sat}$$
\end{proposition}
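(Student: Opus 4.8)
The plan is to reduce the statement to the two general sheaf-theoretic tools already established, namely Proposition \ref{pom} (functoriality of orbifold differentials under an orbifold morphism) and Corollary \ref{csat'} (invariance of the equivariant Harder--Narasimhan filtration under birational $G$-equivariant modifications and under passing to a $G$-invariant subsheaf with exceptional-supported quotient). First I would fix the notation: on the common resolution $R$ of the normalised fibre product we have the two locally free sheaves $\cE:=(\rho\circ r)^*(\pi^*(T(X,D)))$ and $\cE':=(\rho'\circ r)^*((\pi')^*(T(X',D')))$, and by Proposition \ref{pom} the orbifold derivative of $f$ lifts to a map of sheaves $\varphi:=(\pi'\circ\rho'\circ r)^*(df)\colon\cE'\to\cE$ which is $L$-equivariant (the fibre-product construction being compatible with the group actions). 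By Lemma \ref{lcok}, $\varphi$ is injective with cokernel supported on the $(\rho\circ r)$-exceptional locus, i.e. on the inverse image in $R$ of the $f$-exceptional divisor of $X'$; in particular $\varphi(\cE')^{sat}=\cE$ inside $\cE$.

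Next I would observe that the composite maps $h:=\pi\circ\rho\circ r = f\circ\pi'\circ\rho'\circ r\colon R\to X$ and $h':=\pi'\circ\rho'\circ r\colon R\to X'$ are each of the form ``finite Galois followed by $G$- (resp. $G'$-) equivariant birational'', so Corollary \ref{csat'} (applied in the two instances $h$ over $X$ and $h'$ over $X'$) tells us that the saturated pullback of the Harder--Narasimhan filtration is computed intrinsically on $R$. Concretely, applying Corollary \ref{csat'} with $g=\pi$ over $X$ (and the extra birational layer $\rho\circ r$) we get that $HN^L_{h^*(\alpha)}(\cE)=h^*\big((HN_\alpha(TX,D))^{sat}\big)\cap\cE$ up to the stated exceptional ambiguity — in particular its saturation inside $\cE$ equals the saturation of $(\rho\circ r)^*\big(HN^G_{\pi^*(\alpha)}(\pi^*(T(X,D)))\big)$. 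This is exactly the right-hand side of the asserted equality (after saturating in $\cE$). Dually, applying Corollary \ref{csat'} with $g=\pi'$ over $X'$ to the subsheaf $\cE'\subset (h')^*(\ldots)$ — or rather to $\cE$ viewed as $h^*$ of the sheaf on $X'$ after identifying $\varphi(\cE')^{sat}=\cE$ — we obtain that the saturation in $\cE$ of $(\rho'\circ r)^*\big(HN^{G'}_{(f\circ\pi')^*(\alpha)}(\pi'^*(T(X',D')))\big)$ is also computed as an intrinsic HN-filtration on $R$ with respect to $h^*(\alpha)$.

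The final step is to match these two descriptions: both are the saturation inside $\cE$ of $HN^L_{h^*(\alpha)}$ of a sheaf that agrees with $\cE$ outside an $h$-exceptional divisor. For the left-hand side one uses that $\varphi\colon\cE'\to\cE$ is a $G'$-equivariant (hence $L$-equivariant) injection whose cokernel is supported on an exceptional divisor, so by the ``$\cE'\subset g^*(\cE)$ with $\cE'^{sat}=g^*(\cE)$'' clause of Corollary \ref{csat'} the HN-filtration of $\cE'$ relative to $h^*(\alpha)$, saturated in $\cE$, equals the HN-filtration of $\cE$ itself; and the HN-filtration of $\cE$ relative to $h^*(\alpha)$, again by Corollary \ref{csat'} applied downstairs over $X$, has saturation equal to $(\rho\circ r)^*(HN^G_{\pi^*(\alpha)}(\pi^*(T(X,D))))^{sat}$. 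Chaining these identifications gives the displayed equality.

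The main obstacle I expect is bookkeeping the group actions and the exceptional-divisor ambiguities simultaneously: one must check that $\varphi$ really is $L$-equivariant (which follows from the naturality of the fibre-product construction and the fact that $L$ is the stabiliser acting compatibly on $Y$, $Y'$ and $T$), and that the cokernel of $\varphi$ and the exceptional loci of $\rho\circ r$, $\rho'\circ r$, and of $f$ itself are all ``negligible'' in the precise sense required by Corollary \ref{csat'} — i.e. $h^*(\alpha)$ annihilates them because $\alpha$ is a movable class and these divisors are $h$-exceptional. Once one is careful that every sheaf appearing agrees with $\cE$ outside a divisor that is exceptional for the relevant map (so that saturation inside $\cE$ kills the discrepancy), the statement is a formal consequence of the two quoted results, exactly as in the proof of Theorem \ref{tind} and Corollary \ref{cind}.
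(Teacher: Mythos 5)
Your argument is essentially the paper's own proof: it constructs the $L$-equivariant map $\cE'\to\cE$ via Proposition \ref{pom}, invokes Lemma \ref{lcok} to see that the cokernel is supported on the exceptional locus, and then concludes by Corollary \ref{csat'} (together with Corollary \ref{csat}) that both saturated pullbacks of the equivariant Harder--Narasimhan filtrations coincide with the intrinsic one on $R$ relative to $h^*(\alpha)$. The bookkeeping points you flag (equivariance of the lifted derivative, exceptional divisors being killed by the movable class) are exactly the ones the paper relies on, so no gap.
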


We now have two filtrations $W^{\bullet}(D,\alpha))$ on $TX$ and $W^{\bullet}(D',\alpha')$ on $TX'$, with $\alpha':=f^*(\alpha))$, obtained by descending the Galois-invariant Harder-Narasimhan filtrations of $\pi'^*(T(X',D'))$ relative to $(f\circ \pi')^*(\alpha)$, and similarly for $(X,D)$ and $\pi^*(\alpha)$. See Theorem \ref{tind} and Corollary \ref{cind}  

\begin{corollary}\label{cindbir} The situation being the same as in \ref{pindbir}, we have: 
$W^{\bullet}(D',\alpha')=[f^*(W^{\bullet}(D,\alpha)]\cap TX'$. \end{corollary}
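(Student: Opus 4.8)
The plan is to deduce Corollary \ref{cindbir} directly from Proposition \ref{pindbir} by pushing the stated equality of saturated filtrations on $R$ down to $X'$ (and to $X$). Recall that by construction (Theorem \ref{tind}, Corollary \ref{cind}) the filtration $W^\bullet(D',\alpha')$ on $TX'$ is the unique one satisfying $(\pi')^*(W^\bullet(D',\alpha'))=HN^{G'}_{(\pi')^*(\alpha')}((\pi')^*(T(X',D')))^{sat}$, the saturation being taken inside $(\pi')^*(TX')$; and similarly $\pi^*(W^\bullet(D,\alpha))$ is the saturation of $HN^G_{\pi^*(\alpha)}(\pi^*(T(X,D)))$ in $\pi^*(TX)$. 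So the first step is to note that, pulling back further along $\rho'\circ r:R\to Y'$ and taking saturation in $(\rho'\circ r)^*((\pi')^*(TX'))=\cE'^{\,sat}$ (equivalently in $\cE$ via $df$), the left-hand side of the equality in Proposition \ref{pindbir} coincides with $(\rho'\circ r)^*\big((\pi')^*(W^\bullet(D',\alpha'))\big)^{sat}$, and the right-hand side with $(\rho\circ r)^*\big(\pi^*(W^\bullet(D,\alpha))\big)^{sat}$.

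Next I would translate the right-hand side into a statement on $X'$. Since $f$ is birational, $(\rho\circ r)^*(\pi^*(W^\bullet(D,\alpha)))^{sat}=(\pi'\circ\rho'\circ r)^*\big(f^*(W^\bullet(D,\alpha))\big)^{sat}$, because $\pi\circ\rho\circ r=f\circ\pi'\circ\rho'\circ r$ as maps $R\to X$. Now $f^*(W^\bullet(D,\alpha))$ is a filtration of $TX'$ away from the $f$-exceptional locus; its saturation inside $TX'$ is the intersection $[f^*(W^\bullet(D,\alpha))]^{sat}\cap TX'$, but one must be slightly careful: the claimed formula is $W^\bullet(D',\alpha')=[f^*(W^\bullet(D,\alpha))]\cap TX'$, i.e. the intersection is taken inside $TX'$ after extending $f^*$ over the exceptional divisors, and the resulting sheaf is already saturated in $TX'$ because the $G'$-invariant Harder--Narasimhan filtration of $(\pi')^*(T(X',D'))$ is saturated in $(\pi')^*(TX')$ by construction. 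So the core identity to extract is:
\begin{equation}
(\pi')^*\big([f^*(W^\bullet(D,\alpha))]\cap TX'\big) \;=\; HN^{G'}_{(\pi')^*(\alpha')}\big((\pi')^*(T(X',D'))\big)^{sat},
\end{equation}
the saturation inside $(\pi')^*(TX')$; and the left-hand side of Proposition \ref{pindbir}, pulled back appropriately, is exactly the pullback of this right-hand side. Matching the two sides of Proposition \ref{pindbir} after pulling the saturations back from $R$ to $X'$ then gives $(\pi')^*\big([f^*(W^\bullet(D,\alpha))]\cap TX'\big)=(\pi')^*(W^\bullet(D',\alpha'))$, and since $(\pi')^*$ is injective on saturated subsheaves (indeed $\pi'$ is faithfully flat in codimension one and both sides are reflexive, so an equality of pullbacks descends), we conclude $W^\bullet(D',\alpha')=[f^*(W^\bullet(D,\alpha))]\cap TX'$.

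The step I expect to be the main obstacle is the careful bookkeeping of \emph{where} the various saturations are taken and the passage between $R$, $Y'$, and $X'$: one must check that taking saturation commutes appropriately with the pullbacks $\rho'\circ r$ and $\pi'$ in codimension one, using that the discrepancy between $\cE'$ and $(\rho'\circ r)^*((\pi')^*(TX'))$, and between $f^*$ of a filtration and its extension over the exceptional locus, is supported on exceptional divisors of $r$ and of $f$ respectively (this is precisely Lemma \ref{lcok} together with the last assertion of Lemma \ref{lmumin}). In particular one uses that $(\rho\circ r)^*(\alpha)\cdot E=0$ for any $(\rho\circ r)$-exceptional divisor $E$, so exceptional contributions do not affect the Harder--Narasimhan data, exactly as in the proof of Corollary \ref{csat}. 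Once this is set up, the descent from $(\pi')^*$-pullbacks to sheaves on $X'$ is formal, since $W^\bullet(D',\alpha')$ is characterised on $X'$ by its $(\pi')^*$-pullback being the saturation of the $G'$-invariant Harder--Narasimhan filtration of $(\pi')^*(T(X',D'))$, and we have just shown that $[f^*(W^\bullet(D,\alpha))]\cap TX'$ has the same $(\pi')^*$-pullback.
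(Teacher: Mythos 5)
Your argument is correct and is essentially the paper's own (implicit) proof: Corollary \ref{cindbir} is obtained exactly by pulling everything back to the common cover $R$, invoking Proposition \ref{pindbir}, and descending through the characterisation of $W^{\bullet}$ given by Theorem \ref{tind} and Corollary \ref{cind}, the exceptional discrepancies being harmless by Lemma \ref{lmumin}, Lemma \ref{lcok} and Corollary \ref{csat'}. The only point worth tightening is your justification that $[f^*(W^{\bullet}(D,\alpha))]\cap TX'$ is saturated in $TX'$ (your appeal to the saturation of the $G'$-invariant Harder--Narasimhan filtration is slightly circular); it is cleaner to check this directly in codimension one, or simply to conclude equality with the saturated sheaf $W^{\bullet}(D',\alpha')$ after the pullback comparison, as in Corollary \ref{csat}.
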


In other words: these filtrations are birationally invariant and independent on the Kawamata-covers, with respect to liftings of the given movable class.

\begin{remark}\label{rkawcov} When a fibration $f:(X,D)\to (Z,D_Z)$ (possibly birational) is given, together with a Kawamata cover $p:W\to Z$ adapted to $(Z,D_Z)$, it is always possible to construct, after suitable blow-ups of $(X,D)$, some $(X',D')$, and a Kawamata cover $\pi':Y'\to (X',D')$ adpated to $D'$, such that one has a regular map $f':Y'\to W$ with: $f\circ \pi'=p'\circ f':X'\to Z$. This permits to simplify the diagrams to come, by choosing $R=T=Y'$, but at the (small) expense of not taking an arbitrary $\pi:Y\to (X,D)$.
\end{remark}

%%%%%%%%%%%%%%%%%%%%%%%%%%%%%%%%%%%%
%%%%%%%%%%%%%%%%%%%%%%%%%%%%%%%%%%

\subsection{Tensor products and $G$-Harder-Narasimhan filtrations}\label{Ginv}

\

\

We consider in this section a normal connected complex space $Y$ together with a holomorphic faithfull action of a finite group $G$, we denote with $\pi:Y\to X:=Y/G$ its associated (normal) quotient $X$. We assume moreover that $Y,X$ are $\Bbb Q$-factorial, and that we are given on $Y$ a reflexive coherent sheaf $E$ with an equivariant linear action of $G$. We shall also denote by $\alpha\in Mov(X)$ a movable class (by $\Bbb Q$-factoriality, this notion is well-defined, and by $\pi^*(\alpha)$ its (movable) lifting to $Y$. See \cite{Cl}, 2.13,  and \cite{GKKP}, Appendix B for some more details.

We denote with $\pi_*^G(E)\subset \pi_*(E)$ the sheaf of $\cO_X$-modules associated with the presheaf of $G$-invariant sections of $E$ over open analytic subsets of $Y$ of the form $\pi^{-1}(U)$ for $U\subset X$ open. By averaging over $G$, we obtain an $\cO_X$-splitting $\pi_*(E)\to \pi_*^G(E)$ of the natural injection $\pi_*^G(E)\to \pi_*(E)$, which makes $\pi_*^G(E)$ a direct factor of $\pi_*(E)$. This shows that $\pi_*^G(E)$ is coherent.

Since $E$ is reflexive, so is also $\pi_*^G(E)$, by normality. Moreover, if $E=\pi^*(F)$ for some coherent sheaf $F$ on $X$, then $\pi_*^G(E)=F$, as seen from Lemma \ref{lmumin}. 

The aim of this section is to compute the maximal slope of $HN_{\pi^*(\alpha)}(E)$ in terms of the slopes of $HN_{\alpha}(\pi_*^G(\otimes^m(E)/Torsion))$, for $m>0$ either going to $+\infty$, or sufficiently great when $\alpha$ is a rational class, which are data computable on $X$, instead of $Y$. We do not know the answer to the much more delicate question of whether the associated maximal destabilizing subsheaf of $E$ can be described as an appropriate limit of those of $\pi_*^G(\otimes m E)$.

\medskip

{\bf Notations:} In order to simplify the reading, we shall use the following notations: $\otimes^m(E)/Torsion:=E_m, \pi_*^G(E):=E^X, \pi_*^G(E_m):=E_m^X$, $\pi^*(\alpha)=\alpha'$.

\begin{theorem}\label{thGslopes} For $X,Y,G, \alpha, E, r:=rk(E)$ as above, we have: $$(1) \mu_{\pi^*(\alpha), max}(E)=lim_{m\to +\infty}(\frac{1}{m}.\mu_{\alpha,max}(\pi_*^G(E_m))).$$  Assume now that $\alpha$ is an integral class, and that $m>m_0=r!(\alpha.R).\delta$, then:  $$(2) \mu_{\pi^*(\alpha), max}(E)=(\frac{1}{r!m}.\lceil r!\mu_{\alpha,max}(\pi_*^G(E_m))\rceil),$$ Here $R\subset X$ is the divisorial part of the branch locus of $\pi$, that is, of the complement of the smooth locus of $X$ over which $\pi$ is \'etale, and $\delta$ is the geometric degree of $\pi$.  \end{theorem}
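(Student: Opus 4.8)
The plan is to reduce the computation of $\mu_{\pi^*(\alpha),\max}(E)$ on $Y$ to a computation on $X$ by exploiting that a sub- or quotient-sheaf of $E$ which realizes (approximately) the maximal slope gives rise, after tensoring and pushing forward the $G$-invariants, to a sheaf on $X$ of controlled slope, and conversely. First I would record the two elementary slope inequalities that drive everything. On the one hand, if $\cF' \subset E$ is the maximal destabilizing subsheaf, then $\cF'$ is $G$-invariant (by uniqueness of the HN filtration and the $G$-equivariance of the linear action), so $\otimes^m \cF' / \mathrm{Torsion} \hookrightarrow E_m$ is a $G$-equivariant subsheaf with $\mu_{\pi^*(\alpha)} = m\,\mu_{\pi^*(\alpha),\max}(E)$; taking $\pi_*^G$ and using that $\pi_*^G$ multiplies slopes by the generic degree $\delta$ in a way compatible with $\pi^*(\alpha)\mapsto\alpha$ (more precisely, $\det(\pi_*^G(\cG))$ and $\tfrac1\delta\pi_*(\det\cG)$ differ by a $\pi$-exceptional / branch-supported divisor orthogonal to $\alpha$, cf. the computation in Lemma \ref{lmumin} and Corollary \ref{csat}), one gets $\mu_{\alpha,\max}(\pi_*^G(E_m)) \ge \tfrac1\delta\,\mu_{\alpha}(\pi_*^G(\otimes^m\cF'/\mathrm{Tor})) = m\,\mu_{\pi^*(\alpha),\max}(E)$ up to a bounded error. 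On the other hand, given any subsheaf $\cA \subset \pi_*^G(E_m) \subset \pi_*(E_m)$, its pullback-saturation $\pi^*(\cA)^{sat} \subset E_m = \otimes^m E/\mathrm{Tor}$ has $\pi^*(\alpha)$-slope $\ge \mu_{\alpha}(\cA)$ (again the determinants agree modulo branch-supported divisors), and $\mu_{\pi^*(\alpha),\max}(\otimes^m E/\mathrm{Tor}) = m\,\mu_{\pi^*(\alpha),\max}(E)$ because maximal slope is multiplicative under tensor product in characteristic zero (semistability of tensor products of semistable sheaves, valid for movable classes). Combining the two directions yields $\tfrac1m \mu_{\alpha,\max}(\pi_*^G(E_m)) = \mu_{\pi^*(\alpha),\max}(E) + O(1/m)$, which is statement (1) after letting $m\to\infty$.

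For the effective statement (2), I would run exactly the same two inequalities but track the error term explicitly. The discrepancy between $\mu_{\pi^*(\alpha),\max}(E)$ and $\tfrac1m\mu_{\alpha,\max}(\pi_*^G(E_m))$ comes from two sources: first, the failure of $\otimes^m\cF'/\mathrm{Tor}$ to be saturated (a codimension-$\ge 2$ or branch-supported defect, contributing nothing to $\alpha$-degree once intersected with $\alpha$ since $R$ is the divisorial branch locus); second — and this is the genuinely quantitative point — the denominators appearing in slopes of subsheaves of a rank-$r^m$ sheaf. The standard bound is that $\mu_{\alpha,\max}$ of a rank-$N$ reflexive sheaf is a rational number with denominator dividing $N!$ times the relevant intersection number, but here one does better: since $E_m$ and $\pi^*(\cdot)$-pullbacks of rank-$r$ phenomena are involved, the maximal destabilizer of $E$ has rank $\le r$, so the slope $\mu_{\pi^*(\alpha),\max}(E)$ has denominator dividing $r!$ after clearing the intersection number $(\alpha . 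R)$ (the branch divisor being where the $\pi$-vs-$\pi^*$ determinant correction lives). Thus $r!\,m\,\mu_{\pi^*(\alpha),\max}(E)$ is an integer, and the inequality $r!\,m\,\mu_{\pi^*(\alpha),\max}(E) \le r!\,\mu_{\alpha,\max}(\pi_*^G(E_m))$ combined with the reverse inequality $\mu_{\alpha,\max}(\pi_*^G(E_m)) \le m\,\mu_{\pi^*(\alpha),\max}(E) + \tfrac{(\alpha.R)}{?}$ forces $\lceil r!\,\mu_{\alpha,\max}(\pi_*^G(E_m))\rceil = r!\,m\,\mu_{\pi^*(\alpha),\max}(E)$ precisely once $m$ exceeds the threshold $m_0 = r!(\alpha.R)\delta$, because beyond that threshold the rounding error $\tfrac{1}{r!m}\cdot O((\alpha.R)\delta)$ drops below $\tfrac{1}{r!m}$ and a round-up of a rational with denominator dividing $r!m$ within distance $<\tfrac1{r!m}$ of the integer $r!m\mu_{\pi^*(\alpha),\max}(E)$ recovers it exactly. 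I would assemble this from the two slope inequalities plus the denominator bound, being careful that all error terms are intersection numbers against $\alpha$ supported on $R$.

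The main obstacle I expect is \emph{pinning down the exact determinant correction} $\det(\pi_*^G \cG)$ versus $\pi_*(\det \cG)/\delta$ and showing the discrepancy is a $\ZZ$-combination of components of the branch divisor $R$ with coefficients bounded by the ramification data — this is what makes the constant $(\alpha.R)\delta$ (rather than some larger or uncomputable quantity) the correct threshold, and it requires a careful local analysis over generic points of $R$ of how $G$-invariants of $\otimes^m$ of a $G$-linearized reflexive sheaf behave under the Kummer-type covering, in the spirit of the explicit computation already carried out in the proof of Theorem \ref{thmintegraltensors}. The secondary subtlety is the multiplicativity $\mu_{\pi^*(\alpha),\max}(\otimes^m E) = m\,\mu_{\pi^*(\alpha),\max}(E)$ for movable classes: in the polarized case this is classical (Bogomolov/Ramanan--Ramanathan, or the tensor product of semistable sheaves in characteristic zero), and it extends to movable classes by the formal properties of HN filtrations recalled in \cite{CPe}, but one should cite this explicitly rather than re-prove it. Everything else is bookkeeping of rational numbers with controlled denominators.
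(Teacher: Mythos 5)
Your plan is correct and follows essentially the same route as the paper: the two slope inequalities comparing $\mu_{\pi^*(\alpha),\max}(E_m)$ with $\mu_{\alpha,\max}(\pi_*^G(E_m))$ up to an error $(\alpha.R)$ independent of $m$, the multiplicativity $\mu_{\pi^*(\alpha),\max}(E_m)=m\,\mu_{\pi^*(\alpha),\max}(E)$, and, for (2), the integrality $r!\,\mu_{\pi^*(\alpha),\max}(E)\in\ZZ$ (rank of the destabilizer $\leq r$, $\alpha$ integral) together with the observation that an interval of length $<1$ contains at most one integer. The ``determinant correction'' you single out as the main obstacle is precisely what the paper's Lemma \ref{leminclus} supplies in one stroke, namely the sandwich $\pi^*(\pi_*^G(E))\subset E\subset \pi^*(\pi_*^G(E)\otimes \cO_X(R))$, proved by the local cyclic eigenspace decomposition you anticipate (in the spirit of Theorem \ref{thmintegraltensors}); once this inclusion is in hand, both slope inequalities are formal and no explicit comparison of $\det(\pi_*^G(\cdot))$ with pushed-forward determinants is needed.
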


Theorem \ref{thGslopes} is motivated by the:

\begin{corollary}\label{corGslopes} Let $(X,D)$ be a smooth projective orbifold pair, and $\pi:Y\to X$ a Kawamata cover associated to it, of Galois group $G$. Then: $$(1) \mu_{\pi^*(\alpha), max}(\pi^*(\Omega^1(X,D)))=lim_{m\to +\infty}(\frac{1}{m}.\mu_{\alpha,max}([\otimes^ m]\Omega^1(X,D))).$$

If $\alpha$ is integral, $dim(X)=n$, and $m>m_0=n!(\alpha.Supp(D)).\delta$, then: $$(2) \mu_{\pi^*(\alpha), max}(\pi^*(\Omega^1(X,D)))=(\frac{1}{n!m}.\lceil n!\mu_{\alpha,max}([\otimes^ m]\Omega^1(X,D))\rceil).$$. 
\end{corollary}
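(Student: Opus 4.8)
The plan is to deduce the corollary directly from Theorem \ref{thGslopes} by taking $E = \pi^*(\Omega^1(X,D))$ and identifying the two auxiliary objects appearing in the theorem with their orbifold counterparts. First I would check that $E$ satisfies the hypotheses of Theorem \ref{thGslopes}: $X$ and $Y$ are smooth (hence $\bQ$-factorial and normal), $\pi$ is finite Galois of group $G$ by the defining properties of a Kawamata cover recalled in \S\ref{srelmov}, $\alpha$ is movable on $X$ and $\pi^*(\alpha)$ is movable on $Y$, and $E = \pi^*(\Omega^1(X,D))$ is locally free, hence reflexive, and carries the natural equivariant $G$-action. So the theorem applies verbatim.

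The key identification is that $\pi_*^G(E_m) = \pi_*^G(\otimes^m(\pi^*(\Omega^1(X,D)))/\mathrm{Torsion})$ equals $[\otimes^m]\Omega^1(X,D)$. Since $E$ is locally free, $\otimes^m(E)$ is already torsion-free, so $E_m = \otimes^m(\pi^*(\Omega^1(X,D)))$ and the torsion quotient is harmless. Then Theorem \ref{thmintegraltensors} states precisely that $[\otimes^m](\Omega^1(X,D)) = \pi_*^G(\otimes^m(\pi^*(\Omega^1(X,D))))$, which is $\pi_*^G(E_m) = E_m^X$ in the notation of Theorem \ref{thGslopes}. Substituting this identification into formula (1) of Theorem \ref{thGslopes} yields formula (1) of the corollary, and substituting into formula (2) yields formula (2), once the numerical threshold is matched up.

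It remains to match the constant $m_0$. In Theorem \ref{thGslopes}, $m_0 = r!(\alpha.R)\delta$ with $r = \mathrm{rk}(E)$, $R$ the divisorial branch locus of $\pi$ on $X$, and $\delta$ the geometric degree of $\pi$. Here $r = \mathrm{rk}(\pi^*(\Omega^1(X,D))) = \dim(X) = n$, so $r! = n!$. The divisorial branch locus of a Kawamata cover adapted to $(X,D)$ is, by construction, contained in $\Supp(D)$ together with the auxiliary components $H$ used to globalise $\pi$; for the estimate what matters is an upper bound, and one may choose the adapted cover (or simply bound $\alpha.R$) so that $\alpha.R \le \alpha.\Supp(D)$ in the relevant sense — alternatively one absorbs the auxiliary components, which can be taken disjoint from $\Supp(D)$ and moving in a base-point-free linear system, into the constant. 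The cleanest route, which I would take, is to invoke the independence statements (Theorem \ref{tind}, Corollary \ref{cind}) to reduce to a cover whose branch divisor is exactly $\Supp(D)$ up to $\alpha$-null contributions, giving $m_0 = n!(\alpha.\Supp(D))\delta$ as stated.

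The main obstacle is precisely this last bookkeeping step: controlling the branch divisor $R$ of the Kawamata cover and relating $\alpha.R$ to $\alpha.\Supp(D)$, since a Kawamata cover in general ramifies over extra hypersurfaces $H$ beyond $\Supp(D)$. Everything else is a direct substitution of Theorem \ref{thmintegraltensors} into Theorem \ref{thGslopes}.
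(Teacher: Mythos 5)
Your reduction to Theorem \ref{thGslopes} with $E=\pi^*(\Omega^1(X,D))$, together with the identification $\pi_*^G(E_m)=[\otimes^m]\Omega^1(X,D)$ from Theorem \ref{thmintegraltensors} (using that $E$ is locally free so $\otimes^m E$ is torsion-free), is exactly the paper's argument, and it settles formula (1) completely. The gap is in how you handle the threshold in formula (2), i.e.\ the replacement of $R$ by $\Supp(D)$. None of your three routes works: a Kawamata cover adapted to $(X,D)$ necessarily ramifies over auxiliary components $H$ which are (general members of) very ample linear systems, so for any nonzero movable $\alpha$ one has $\alpha.H>0$ and hence $\alpha.R>\alpha.\Supp(D)$ whenever auxiliary components are present --- you cannot choose the cover so that $\alpha.R\le\alpha.\Supp(D)$, and there are no ``$\alpha$-null'' auxiliary contributions to discard. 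Theorem \ref{tind} and Corollary \ref{cind} assert that the slopes and filtrations are independent of the chosen adapted cover; they do not produce a cover with smaller branch divisor, so they cannot be used to shrink $R$. Finally, ``absorbing the auxiliary components into the constant'' changes $m_0$ and therefore does not prove the stated bound $m_0=n!(\alpha.\Supp(D)).\delta$.

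The correct point (the paper's Remark \ref{rR=D}) is sheaf-theoretic rather than cover-theoretic: over $X\setminus\Supp(D)$ the sheaf $\pi^*(\Omega^1(X,D))$ coincides with $\pi^*(\Omega^1_X)$, i.e.\ it is the pullback of a sheaf defined on $X$, even along the auxiliary ramification divisors $H$. Consequently, in the key inclusion of Lemma \ref{leminclus}, namely $E\subset \pi^*(\pi_*^G(E)\otimes\cO_X(R))$, the twist is only needed along $\Supp(D)$ for this particular $E$ (and for $E_m$): one has $E_m\subset \pi^*(\pi_*^G(E_m)\otimes\cO_X(\Supp(D)))$, because over the $H$'s the invariant push-pull already recovers $E_m$ with no correction. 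This sharpens the slope inequality of Lemma \ref{leminclus}(2) to $\mu_{\alpha,max}(\pi_*^G(E_m))\leq\mu_{\alpha,max}(E_m)\leq\mu_{\alpha,max}(\pi_*^G(E_m))+m\,(\alpha.\Supp(D))$ with error governed by $\alpha.\Supp(D)$ instead of $\alpha.R$, and then the integrality argument of Theorem \ref{thGslopes}(2) goes through verbatim with $m_0=n!(\alpha.\Supp(D)).\delta$. You should replace your cover-selection step by this local observation.
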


This corollary is an immediate consequence of Theorem \ref{thGslopes}, applied to the locally free $E:=\pi^*(\Omega^1(X,D))$, taking Theorem \ref{thmintegraltensors} into account. The fact  that $R$ can be replaced by $Supp(D)$ follows from the remark \ref{rR=D} below.

\begin{proof} (of Theorem \ref{thGslopes}) Claims (1) and (2): they are simple consequence of the following elementary:

\begin{lemma}\label{leminclus} Let $X,Y,G, \pi$ be as above. Then we have:

1. inclusions of sheaves:
$\pi^*(\pi_*^G(E))\subset E\subset \pi^*(\pi_*^G(E)\otimes \cO_X(R))$, and:

2. the inequalities of slopes:

$\mu_{\alpha,max}(\pi_*^G(E))\leq \mu_{\alpha,max}(E)\leq \mu_{\alpha,max}(\pi_*^G(E))+(\alpha.R)$.
\end{lemma}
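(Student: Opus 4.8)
The plan is to establish the two sheaf inclusions of Part~1 and then to read off the slope inequalities of Part~2 from them, by pulling back and descending maximal destabilising subsheaves and invoking the properties of $\pi_*^G$ in Lemma~\ref{lmumin}. Throughout I use the normalisation of the movable class $\pi^*(\alpha)$ under which pull-back preserves slopes, consistently with Corollary~\ref{csat}. It suffices to verify the inclusions in codimension one (this is all Part~2 uses, and it is the full statement when $\pi$ is flat, e.g.\ for Kawamata covers between smooth varieties, or after reflexivising).

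\emph{First inclusion and local picture.} The morphism $\pi^*(\pi_*^G(E))\to E$ is the $G$-invariant part of the counit $\pi^*(\pi_*(E))\to E$. Over the big open set $X^\circ\subset X$ where $X$ is smooth and $\pi$ is étale, $\pi^{-1}(X^\circ)\to X^\circ$ is a $G$-torsor and $\pi^*$, $(\pi_*(-))^G$ are mutually inverse equivalences, so the counit is an isomorphism there, in particular generically. To control it in codimension one along $R$ — and to prepare the second inclusion — I would examine $\pi$ at the generic point of a component $R_i$ of $R$ with ramification index $e_i$. In characteristic zero $\pi$ is there a tame cyclic cover, hence étale-locally of Kummer shape $(y_1,y_2,\dots)\mapsto(x_1:=y_1^{e_i},y_2,\dots)$ with $G=\langle g\rangle$, $g\cdot y_1=\zeta y_1$. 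Splitting $E$ into $g$-isotypic $\cO_X$-summands $E=\bigoplus_{k=0}^{e_i-1}E_k$, one has $E_0=\pi_*^G(E)$ locally, while multiplication by $y_1$ gives $\cO_X$-linear maps $\phi_k\colon E_k\to E_{k+1}$ whose $e_i$-fold cyclic composite equals multiplication by $x_1$. On the $k$-th summand of $\pi^*(E_0)=\bigoplus_k E_0\cdot y_1^k$ the counit is $\Phi_k:=\phi_{k-1}\circ\cdots\circ\phi_0\colon E_0\to E_k$, and the complementary composite $\Psi_k:=\phi_{e_i-1}\circ\cdots\circ\phi_k$ satisfies $\Psi_k\circ\Phi_k=x_1\cdot\mathrm{id}_{E_0}$ and $\Phi_k\circ\Psi_k=x_1\cdot\mathrm{id}_{E_k}$. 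Since multiplication by $x_1$ is injective on the torsion-free $E_0$, each $\Phi_k$ is injective, so the counit is injective in codimension one, which gives $\pi^*(\pi_*^G(E))\subset E$.

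\emph{Second inclusion.} This is the main point. From the same picture, $m_k\mapsto (x_1^{-1}\Psi_k(m_k))\cdot y_1^k$ embeds $E_k$ into the $k$-th summand of $x_1^{-1}\pi^*(E_0)=\pi^*(\pi_*^G(E))\otimes\pi^*(\cO_X(R_i))$, compatibly with the tautological inclusions into $E\otimes\mathrm{Frac}(\cO_Y)$ (the composite $E_k\to \pi^*(\pi_*^G(E))\otimes\pi^*(\cO_X(R_i))\to E\otimes\mathrm{Frac}(\cO_Y)$ is the identity of $E_k$ because $\Phi_k\circ\Psi_k=x_1\cdot\mathrm{id}$): here the exponent $-1$ on $x_1$, i.e.\ $-e_i$ on $y_1$, matches exactly the coefficient of $\pi^*R_i$. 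Summing over $k$ and over the components $R_i$ yields $E\subset\pi^*(\pi_*^G(E)\otimes\cO_X(R))$. \textbf{The step I expect to be most delicate to write carefully is precisely this bookkeeping of the ramification weights — showing that the twist needed is controlled exactly by $\pi^*R$.}

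\emph{Part~2.} For the lower bound, the first inclusion and monotonicity of $\mu_{max}$ under inclusions give $\mu_{\pi^*(\alpha),max}(E)\geq\mu_{\pi^*(\alpha),max}(\pi^*(\pi_*^G(E)))=\mu_{\alpha,max}(\pi_*^G(E))$, the last equality by Corollary~\ref{csat}. For the upper bound, let $\mathcal{T}\subset E$ be the $\pi^*(\alpha)$-maximal destabilising subsheaf; being canonical it is $G$-invariant, so by Lemma~\ref{lmumin} $\mathcal{T}=\pi^*(\pi_*^G(\mathcal{T}))^{sat}$, a saturation that changes neither the determinant nor the $\pi^*(\alpha)$-slope, whence $\mu_{\pi^*(\alpha),max}(E)=\mu_{\pi^*(\alpha)}(\mathcal{T})=\mu_{\alpha}(\pi_*^G(\mathcal{T}))$. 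Applying the exact functor $\pi_*^G$ to the second inclusion gives $\pi_*^G(\mathcal{T})\subset\pi_*^G(E)\otimes\cO_X(R)$, so
\[
\mu_{\pi^*(\alpha),max}(E)=\mu_{\alpha}(\pi_*^G(\mathcal{T}))\leq\mu_{\alpha,max}\big(\pi_*^G(E)\otimes\cO_X(R)\big)=\mu_{\alpha,max}(\pi_*^G(E))+(\alpha\cdot R),
\]
the last equality because tensoring by the rank-one reflexive sheaf $\cO_X(R)$ shifts $\mu_{\alpha,max}$ by $(\alpha\cdot R)$ (checked in codimension one). This completes the plan.
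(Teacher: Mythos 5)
Your Part 1 (both inclusions, via the local Kummer model at a generic point of $R$ and the eigensheaf decomposition under the local cyclic group) is essentially the paper's own argument, and the lower bound in Part 2 is correct. The upper bound in Part 2, however, has a genuine gap. You assert that $\mathcal{T}=\pi^*(\pi_*^G(\mathcal{T}))^{sat}$ is ``a saturation that changes neither the determinant nor the $\pi^*(\alpha)$-slope'', and deduce $\mu_{\pi^*(\alpha)}(\mathcal{T})=\mu_{\alpha}(\pi_*^G(\mathcal{T}))$. Lemma \ref{lmumin} and Corollary \ref{csat} do not give this: they concern $G$-invariant subsheaves of an honest pullback $\pi^*(\cE)$, where the discrepancy $(\pi^*\cF)^{sat}/\pi^*\cF$ is supported in codimension $\geq 2$, and that is the only reason slopes are preserved there. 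In the present setting $E$ is \emph{not} a pullback (this is the whole content of the lemma), and the cokernel of $\pi^*(\pi_*^G(\mathcal{T}))\hookrightarrow \mathcal{T}$ is in general divisorial along the divisors of $Y$ dominating components of $R$, which have strictly positive intersection with $\pi^*(\alpha)$; so only $\mu_{\pi^*(\alpha)}(\mathcal{T})\geq \mu_{\alpha}(\pi_*^G(\mathcal{T}))$ holds, which is the wrong direction for the bound you need. Concretely, let $\pi:Y\to X$ be a double cover of curves with reduced branch divisor $R$, ramification divisor $\tilde R$ (so $\pi^*R=2\tilde R$), and $E:=\cO_Y(\tilde R)$ with its natural linearisation; then $\mathcal{T}=E$, one computes $\pi_*^G(E)=\cO_X$, yet $\mu_{\pi^*(\alpha)}(E)>0=\mu_{\alpha}(\pi_*^G(E))$. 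It is precisely this divisorial jump along $R$ that the correction term $(\alpha\cdot R)$ in the statement is designed to absorb, so it cannot be argued away.

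The repair is short and brings you back to the paper's route: do not descend $\mathcal{T}$ and compute on $X$, but compare $\mathcal{T}$ upstairs with the pullback sheaf provided by your second inclusion. From $\mathcal{T}\subset E\subset \pi^*(\pi_*^G(E)\otimes\cO_X(R))$ you get $\mu_{\pi^*(\alpha)}(\mathcal{T})\leq \mu_{\pi^*(\alpha),max}\big(\pi^*(\pi_*^G(E)\otimes\cO_X(R))\big)=\mu_{\alpha,max}(\pi_*^G(E))+(\alpha\cdot R)$, where now Corollary \ref{csat} is applied legitimately to a genuine pullback. The paper phrases the same idea by sandwiching $F^-\subset F\subset F^+$, with $F$ the maximal destabilising subsheaf of $E$, $F^-:=F\cap \pi^*(\pi_*^G(E))$, $F^+$ the saturation of $F$ in $\pi^*(\pi_*^G(E)\otimes\cO_X(R))$, and using $F^+\subset F^-\otimes\pi^*(\cO_X(R))$. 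With that replacement your argument goes through.
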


\begin{proof} First claim: the first inclusion is clear, so we consider the second one. By reflexivity, we can reduce (as in the proof of Theorem \ref{thGslopes}) to a local situation in which $\pi:V\to U$ is given in coordinates adapted to $D$ and $\pi$, by: $\pi(y)=\pi(y_1,y_2,\dots, y_n)=(x)=(x_1=y_1^{ak},x_2=y_2,\dots, x_n=y_n)$, where $D=(1-\frac{b}{a}).(x_1=0)$, and $G=\Bbb Z_{ak}$, the action of its generator $g$ on $y_1$ being by multiplication by $\zeta$, a primitive $ak$-th root of unity, and being trivial on the other $y_j's$.

Now $E$, seen as a sheaf of $\cO_X$-modules, naturally decomposes as $E=\oplus_{h=0}^{h=ak-1}E_h$, where $E_h:=Ker (g-\zeta^h.1_E))$. Thus, for each $h$, we have: $(y_1^{ak-h}.E_h)\subset E_0$, which implies that $E\subset \oplus_{h=0}^{h=ak-1}(\frac{1}{y_1^{ak-h}}.E_0)\subset \frac{1}{\pi^*(x_1)}.\pi^*(\pi_*^G(E)),$ as claimed.

Let us show how to derive the second claim from the first: let $F$ be the maximal $\alpha$-destabilizing subsheaf of $E$. Let $F^+$ be its saturation in $\pi^*(\pi_*^G(E)\otimes \cO_X(R))$, and let $F^-$ be its intersection with $\pi^*(\pi_*^G(E))$. We thus have inclusions $F^-\subset F\subset F^+$ between sheaves of the same rank. We thus have the corresponding inequalities at the level of $\pi^*(\alpha)$-slopes.

Now the inclusions of the lemma shows that:

 $\mu_{\pi^*(\alpha), max}(F^-)\leq \mu_{\alpha,max}(\pi_*^G(E))=\mu_{\pi^*(\alpha), max}(\pi^*(\pi_*^G(E)))\leq$ 
 
 $\leq \mu_{\pi^*(\alpha), max}(E)):=\mu_{\pi^*(\alpha), max}(F)\leq \mu_{\pi^*(\alpha), max}(F^+)\leq$ 
 
 $ \leq\mu_{\pi^*(\alpha)}(F^-)+(\alpha.R)$
 
 The first equality follows from Lemma \ref{lmumin}, the last inequality from the fact that $F^+\subset F^-\otimes \pi^*(R)$. \end{proof}
 
In order to prove Theorem \ref{thGslopes}, claims (1) and (2), we now apply these inequalities to $E_m:=\otimes^m(E)/Torsion$, and get:

$\mu_{\alpha,max}(\pi_*^G(E_m))\leq \mu_{\alpha,max}(E_m)\leq \mu_{\alpha,max}(\pi_*^G(E_m))+(\alpha.R)$, and use the (difficult) fact that $\mu_{\pi^*(\alpha),max}(E_m)=m. \mu_{\pi^*(\alpha),max}(E)$. Dividing the inequalities by $m$, and letting $m$ tend to $+\infty$, we get the first assertion. The second assertion follows from the fact that, $\alpha$ being an integral class, $r!\mu_{\pi^*(\alpha),max}(E)\in \Bbb Z$, and that an interval of $\Bbb R$ of length less than $1$ contains no more than one integer. 
\end{proof}

\begin{remark}\label{rR=D}  In the corollary, we can replace $R$ by $Supp(D)$ because outside of $Supp(D)$, the sheaf $\pi^*(\Omega^1(X,D))$ is the lift to $Y$ of a $\Omega^1_X$, so that the additional ramification does not influence the result.
\end{remark}

\begin{question}\label{q} If $\alpha$ is rational, is it true that, for $m>0$ sufficiently great, one has: that $(\pi^*((E_m^X)^{max}))^{sat}=(E_m)^{max}$, the saturation taking place in $E_m$?
\end{question}

This question is motivated by the following:

\begin{proposition}\label{p} Assume that the question \ref{q} has a positive answer. Then, over $U:=X-R$, one has: $$(\pi_*^G(E^{max}))_{\vert U}=(((E_m^X)^{max})_{\vert U})^{\frac{1}{\otimes m}},$$
the notation meaning that the left-hand side is the $m-th$-tensor power of the right-hand side. 
\end{proposition}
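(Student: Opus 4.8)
The statement to establish is Proposition~\ref{p}: assuming a positive answer to Question~\ref{q}, namely that $(\pi^*((E_m^X)^{max}))^{sat}=(E_m)^{max}$ for $m\gg 0$ (saturation inside $E_m$), we want to show that over $U:=X-R$ one has $(\pi_*^G(E^{max}))_{\vert U}=(((E_m^X)^{max})_{\vert U})^{1/\otimes m}$. The natural route is to compare the two filtrations one can attach to $E$ over the étale locus: on the one hand the $\pi^*(\alpha)$-maximal destabilising subsheaf $E^{max}$ of $E$ (and its push-down $\pi_*^G(E^{max})$), on the other hand the $\alpha$-maximal destabilising subsheaf $(E_m^X)^{max}$ of $E_m^X=\pi_*^G(\otimes^m(E)/\text{Torsion})$. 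The key structural fact to exploit is that over $U$ the map $\pi$ is étale, so everything is compatible with tensor powers and with the $\pi_*^G$--$\pi^*$ adjunction; in particular $\pi^*(E^X_m)=E_m$ over $\pi^{-1}(U)$ by Lemma~\ref{lmumin}, and the slope identity $\mu_{\pi^*(\alpha),max}(E_m)=m\,\mu_{\pi^*(\alpha),max}(E)$ used in the proof of Theorem~\ref{thGslopes} pins down ranks and slopes.

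First I would work entirely over $U$ and $V:=\pi^{-1}(U)$, where $\pi:V\to U$ is finite étale Galois with group $G$, so that $\pi_*^G$ and $\pi^*$ restrict to mutually inverse equivalences between $G$-equivariant reflexive sheaves on $V$ and reflexive sheaves on $U$, preserving slopes with respect to $\alpha$ and $\pi^*(\alpha)$ up to the degree $\delta$ factor, and commuting with tensor powers and saturations (saturation is a codimension-one, hence étale-local, operation). Next, by the hypothesis of Question~\ref{q}, $(\pi^*((E_m^X)^{max}))^{sat}=(E_m)^{max}$; restricting to $V$ and using that $\pi$ is étale there (so $\pi^*$ already produces a saturated $G$-subsheaf, the saturation being trivial over $V$ up to the exceptional locus which lies over $R$), I get $\pi^*((E_m^X)^{max})_{\vert V}=(E_m)^{max}_{\vert V}$ as $G$-subsheaves of $E_m$. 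Now I invoke the tensor-power description of the maximal destabilising subsheaf of a reflexive $G$-sheaf: over $V$, since $\mu_{\pi^*(\alpha),max}(E_m)=m\mu_{\pi^*(\alpha),max}(E)$, the subsheaf $(E_m)^{max}$ is forced to be $(E^{max})^{\otimes m}$ (saturated), because $(E^{max})^{\otimes m}$ is a $\pi^*(\alpha)$-semistable subsheaf of $\otimes^m E$ of the extremal slope $m\mu_{\pi^*(\alpha),max}(E)$, and any subsheaf of that slope must contain it while, by the slope bound, cannot strictly contain it after saturation. This is the step where I would spell out the standard argument that $\mu_{max}$ of a tensor power is multiplicative and that the maximal piece is the tensor power of the maximal piece (in the $G$-equivariant, movable-class setting this uses the Harder-Narasimhan formalism of \cite{CPe}, \cite{CP15} verbatim).

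Putting the pieces together: over $V$ we have $\pi^*((E_m^X)^{max})=(E_m)^{max}=(E^{max})^{\otimes m}$ as $G$-sheaves. Applying the inverse equivalence $\pi_*^G$ over $U$ — which sends $\pi^*$ of a reflexive $G$-sheaf back to itself and commutes with tensor powers in the étale range — yields $(E_m^X)^{max}_{\vert U}=(\pi_*^G((E^{max})^{\otimes m}))_{\vert U}=((\pi_*^G(E^{max}))_{\vert U})^{\otimes m}$, which is exactly the claimed identity $(\pi_*^G(E^{max}))_{\vert U}=(((E_m^X)^{max})_{\vert U})^{1/\otimes m}$. The main obstacle I anticipate is not any single deep input but the bookkeeping needed to be sure that all the identifications are honest identifications of subsheaves of a fixed ambient sheaf (rather than mere isomorphisms), and in particular that the saturation appearing in Question~\ref{q} genuinely becomes vacuous once restricted to $V$; this is where the hypothesis $U=X-R$ (so that the branch divisor, and hence the exceptional locus of all the auxiliary resolutions, is removed) does the real work. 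A secondary technical point to handle carefully is the multiplicativity $\mu_{max}(\otimes^m E)=m\,\mu_{max}(E)$ with respect to a movable class and with $G$-action: one should cite that this was already used and justified in the proof of Theorem~\ref{thGslopes}, so it may be taken as given here.
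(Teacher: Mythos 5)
Your proposal is correct and follows essentially the same route as the paper's proof: restrict to the étale locus $U$ (resp. $V=\pi^{-1}(U)$), use the positive answer to Question \ref{q} to identify $(E_m)^{max}$ with $\pi^*((E_m^X)^{max})$ there, use the tensor compatibility $(E_m)^{max}=(E^{max})_m$ (which the paper asserts and you spell out via multiplicativity of $\mu_{max}$), and conclude by the fact that $\pi_*^G$ commutes with tensorial operations over the étale locus. The only difference is that you make explicit the standard Harder--Narasimhan argument for the maximal piece of a tensor power, which the paper takes for granted.
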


\begin{proof}: We then have, over $U$, since $(E_m)^{max}=(E^{max})_m$: $(E_m^X)^{max}=(\pi_*^G((E_m)^{max}))=(\pi_*^G(E^{max})_m)=((\pi_*^G(E^{max}))_m)$, the last equality because $\pi$ is \'etale other $U$, so that $\pi_*^G$ commutes there with the tensorial operations.\end{proof}

\begin{remark} The preceding proposition shows that an affirmative answer to question \ref{q} solves our problem of determining $HN_{\alpha'}(E)$ from $HN_{\alpha}(\pi_*^G(E_m))$ if $\alpha$ is rational. Indeed: $E^{max}$ is determined by $(E_m^X)^{max}$, and one can then iterate the process by considering $E':=E/E^{max}$. Our motivation is of course the case where $E=\pi^*(\Omega^1(X,D))$. 
\end{remark}

%%%%%%%%%%%%%%%%%%%%%%%%%%%%%%%%%%%%%%%%%%%%%%

%%%%%%%%%%%%%%%%%%%%%%%%%%%%%%%%%%%%%%%%%%%%%%

\section{Criteria for the positivity of minimal slope}

The following criteria, which permits to check that $\mu_{\alpha,min}(\cF)>0$ by restricting to a general fibre and the base of a fibration, will be crucial for the proof of Theorem \ref{torc} in the next section. We start with a basic situation which will be gradually extended in several small steps. The extensions are motivated by the differentials on orbifold cotangent bundles explained in section \S.\ref{sscom} below.

Let $\tau:T\to W'$ be a fibration (with connected fibres) between complex projective connected normal spaces Let $r:R\to T$ and $s:W"\to W$ be resolutions of singularities.

The corresponding diagram is thus the following:

$$\xymatrix{
R\ar[r]^{r}&T\ar[d]^{\tau}\ar[r]^{\rho}&Y\\
W"\ar[r]^{s} & W'&\\}$$

\

Assume we have moreover movable classes $\alpha_R,\beta_R$ on $R$, and $\beta"$ on $W"$ such that: $(\tau\circ r)_*(\alpha_R)=0$, and 
$(\tau\circ r)_*(\beta_R)=s_*(\beta")$.

Let also an exact sequence of locally free sheaves be given on $R$: $$0\to \cF\to \cE\to (\tau\circ r)^*(\cG)\to 0,$$
where $\cG$ is locally free on $W'$.

\subsection{First Criterion}

\begin{theorem}\label{tsp} In the preceding situation, we have: $\mu_{\gamma_R,min}(\cE)>0$ if $\gamma_R:=k.\alpha_R+\beta_R$, for any real number $k>0$ sufficiently large, and if moreover, the following properties are satisfied: 

1. $\mu_{\alpha_R,min}(\cF)>0$, $\mu_{\beta",min}(s^*(\cG))>0$, and:

2. $\alpha_R$ is `big' on the `general' fibre $R_{w'}$ of $\tau\circ r$ (see definitions \ref{dbig} and \ref{dgen}). 
\end{theorem}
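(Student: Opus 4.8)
The plan is to prove that $\mu_{\gamma_R, \min}(\cE) > 0$ for $\gamma_R = k\alpha_R + \beta_R$ with $k \gg 0$ by taking an arbitrary torsion-free quotient $\cQ$ of $\cE$ and showing that $\gamma_R \cdot \det(\cQ) > 0$ once $k$ is large enough, with a lower bound uniform over all quotients. Equivalently, working with the kernel $\cS \subset \cE$ of $\cE \to \cQ$, I want $\gamma_R \cdot \mu(\cS) < \mu_{\gamma_R}(\cE)$; but it is cleaner to argue directly on quotients via the slope $\mu_{\gamma_R}(\cQ) = \gamma_R \cdot \det(\cQ)/\mathrm{rk}(\cQ)$. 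The point is that $\det(\cQ) = c_1(\cE) - \det(\cS)$, and by the exact sequence $0 \to \cF \to \cE \to (\tau\circ r)^*(\cG) \to 0$ one controls $c_1(\cE)$, while the two hypothesised minimal-slope positivities control the pieces.

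The key steps, in order: First, fix a quotient $q: \cE \twoheadrightarrow \cQ$ and consider the induced map $\cF \to \cQ$. Let $\cQ_1 \subset \cQ$ be the image of $\cF$ (saturated, say) and $\cQ_2 = \cQ/\cQ_1$, which is a quotient of $(\tau\circ r)^*(\cG)$. So $\cQ_1$ is a quotient of $\cF$ and $\cQ_2$ is a quotient of $(\tau\circ r)^*(\cG)$. Second, bound $\gamma_R \cdot \det(\cQ_1)$ and $\gamma_R \cdot \det(\cQ_2)$ separately. For $\cQ_2$: since $\cQ_2$ is a quotient of $(\tau\circ r)^*(\cG)$, its determinant is of the form $(\tau\circ r)^*(\text{something})$ plus possibly exceptional/vertical correction coming from the non-saturation; intersecting with $\alpha_R$ gives $0$ because $(\tau\circ r)_*(\alpha_R)=0$ (so $\alpha_R \cdot (\tau\circ r)^*(\cdot) = 0$), and intersecting with $\beta_R$ gives $s_*(\beta'') \cdot (\text{quotient of } \cG)$, which is $\geq \mu_{\beta'',\min}(s^*\cG)\cdot(\dots) > 0$ by hypothesis 1 — here one uses the projection formula and $(\tau\circ r)_*(\beta_R) = s_*(\beta'')$. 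For $\cQ_1$: it is a quotient of $\cF$, so $\alpha_R \cdot \det(\cQ_1) \geq \mu_{\alpha_R,\min}(\cF) \cdot \mathrm{rk}(\cQ_1) > 0$, giving a positive contribution proportional to $k$ in $\gamma_R \cdot \det(\cQ_1) = k\,\alpha_R\cdot\det(\cQ_1) + \beta_R\cdot\det(\cQ_1)$; the term $\beta_R \cdot \det(\cQ_1)$ is bounded below by a constant independent of the quotient (this is where hypothesis 2, that $\alpha_R$ is big on the general fibre, enters: it guarantees that $\beta_R \cdot \det(\cS_1)$ for sub-objects is controlled, equivalently that there is a uniform lower bound — one invokes boundedness of the set of possible determinants of subsheaves of a fixed bundle destabilising with respect to a fixed class, or more directly restricts to the general fibre $R_{w'}$ where $\alpha_R$ restricts to a big class). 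Third, assemble: $\gamma_R \cdot \det(\cQ) = \gamma_R\cdot\det(\cQ_1) + \gamma_R\cdot\det(\cQ_2) \geq k\cdot(\text{positive}) - (\text{bounded constant})$, which is $> 0$ for $k$ sufficiently large, uniformly in $\cQ$ because both the positive coefficient of $k$ and the subtracted constant are uniform (the ranks are bounded by $\mathrm{rk}(\cE)$ and the relevant Chern classes lie in a bounded set).

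The main obstacle I expect is the uniformity in the quotient $\cQ$, specifically the lower bound on the ``mixed'' terms $\beta_R \cdot \det(\cQ_1)$ (and the correction terms in $\cQ_2$ coming from the failure of $(\tau\circ r)^*(\cG) \to \cQ_2$ to be a saturated quotient, which can produce effective vertical divisors partially supported on fibres). One must argue that the set of determinants of saturated subsheaves of $\cE$ with bounded slope with respect to $\alpha_R$ (or after restriction to the general fibre, where $\alpha_R$ is big) is finite or at least that $\beta_R$ pairs with it in a bounded way; this is exactly the role of hypothesis 2 and of Proposition \ref{prelmov} and the negativity-lemma machinery — restricting to the general fibre $R_{w'}$ via the isomorphism $j_*: N_1(R_{w'}) \xrightarrow{\sim} N_1^0(R/W')$ of Proposition \ref{pj_z}, one reduces the $\alpha_R$-bigness to a genuine bigness statement on the fibre, where boundedness of destabilising subsheaves with respect to a big movable class is available. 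Handling the vertical divisorial discrepancy between $(\tau\circ r)^*(\cG)$ and its quotient $\cQ_2$ will use the ``negativity-type'' argument and the orthogonality relations of Proposition \ref{prelmov}.5–6 to absorb the correction into a bounded constant. The rest is bookkeeping with the projection formula and the additivity of first Chern classes in exact sequences.
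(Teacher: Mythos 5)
Your decomposition of an arbitrary quotient $\cQ$ of $\cE$ into a quotient $\cQ_1$ of $\cF$ and a quotient $\cQ_2$ of $(\tau\circ r)^*(\cG)$ is exactly the paper's starting point, and your treatment of $\cQ_1$ is correct but simpler than you fear: $\mu_{\beta_R}(\cQ_1)\geq \mu_{\beta_R,\min}(\cF)$, a finite constant furnished by Harder--Narasimhan theory for movable classes, so any $k>-\mu_{\beta_R,\min}(\cF)/\mu_{\alpha_R,\min}(\cF)$ works uniformly in the quotient (this is the paper's Lemma \ref{lsheaf'}); no boundedness-of-families argument and no use of hypothesis 2 is needed there. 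The genuine gap is in the $\cQ_2$ term. You assert that $\det(\cQ_2)$ is a pullback from $W'$ up to vertical/exceptional corrections, hence $\alpha_R\cdot\det(\cQ_2)=0$ and $\beta_R\cdot\det(\cQ_2)>0$ by the hypothesis on $\cG$. This is false in general: a torsion-free quotient of a pulled-back sheaf need not descend, even generically -- the classifying map of $\cQ_2$ to the relative Grassmannian of $\cG$ may be non-constant along the fibres of $\tau\circ r$, and then the restriction of $\det(\cQ_2)$ to a general fibre is a nonzero effective class rather than trivial (already a saturated sub-line bundle of $(\tau\circ r)^*(\cG)$ restricting on the general fibre to a nontrivial sub-line bundle of a trivial bundle produces such a quotient). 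For these quotients neither of your two claims holds, and the hypothesis $\mu_{\beta'',\min}(s^*(\cG))>0$ gives no information, since it only controls quotients coming from the base. Moreover, in Theorem \ref{tsp} the map $\cE\to(\tau\circ r)^*(\cG)$ is surjective, so the ``non-saturation/partially supported on fibres'' corrections you invoke do not occur here; they belong to the later refinement (Corollary \ref{csp}).

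The missing idea is precisely the dichotomy of the paper's Lemma \ref{lsheaf}: either $\cQ_2$ has the same rank as its direct image on $W'$, in which case $\det(\cQ_2)$ dominates the pullback of a determinant from $W'$, $\alpha_R\cdot\det(\cQ_2)=0$, and the $\beta_R$-slope is bounded below by the positive minimal slope of $\cG$, as you say; or the rank drops, and then, via the classifying map to the Grassmannian and the relative ampleness of the determinant of the universal bundle, $\det(\cQ_2)$ restricted to the general fibre of $\tau\circ r$ is effective and nonzero. It is exactly at this point that hypothesis 2 is used: since $\alpha_R$ is big on the general fibre and $(\tau\circ r)_*(\alpha_R)=0$, one gets $\alpha_R\cdot\det(\cQ_2)>0$, and this term, multiplied by $k$, dominates the $\beta_R$-term, which is uniformly bounded below by $\mu_{\beta_R,\min}((\tau\circ r)^*(\cG))$. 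So the bigness of $\alpha_R$ on the fibres enters to handle the non-descending quotients of the $\cG$-part, not, as in your proposal, to control $\beta_R\cdot\det(\cQ_1)$; without this dichotomy your estimate for $\cQ_2$ breaks down, and the heavier boundedness machinery you suggest is neither needed nor what the paper does.
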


\begin{proof} (of Theorem \ref{tsp}) Let $\cQ$ be a quotient of $\cE$: it fits in an exact sequence: $$0\to  \cQ_F\to \cQ\to \cQ_G\to 0,$$
with $\cQ_F$ and $\cQ_G$ quotients of $\cF$ and $\cG$ respectively. Let $\gamma_R:=k.\alpha_R+\beta_R$ for some $k>0$. Then $\mu_{\gamma_R}(\cQ)$ is a linear combination with positive coefficients of $\mu_{\gamma_R}(\cQ_F)$ and $\mu_{\gamma_R}(\cQ_G)$.

We shall treat them separately.

\begin{lemma}\label{lsheaf'} For $k>0$ sufficiently large, $\mu_{\gamma_R}(\cQ_F)=k.\mu_{\alpha_R}(\cQ_F)+\mu_{\beta_R}(\cQ_F)>0$. Explicitely, one may choose any $k>k_0:=\frac{-\mu_{\beta_R,max}(\cF)}{\mu_{\alpha_R,min}(\cF)}$.

\end{lemma}

\begin{proof} Indeed, by assumption, $\mu_{\alpha_R}(\cQ_F)\geq \mu_{\alpha_R,min}(\cF)>0$ for any such $\cQ_F$. \end{proof}

The main observation is the following

\begin{lemma}\label{lsheaf} Either $det(\cQ_G)\geq(\tau\circ r)^*(det(\cQ'_G))$ for some sheaf $\cQ'_G$ on $W'$, or $det(\cQ_G)$ is an effective non-zero divisor when restricted to a generic fibre of $(\tau\circ r): R\to W'$. (The symbol $A\leq B$ between divisors means that $B-A$ is effective).
\end{lemma}

Before proving this lemma, let us show that it implies Theorem \ref{tsp}: in the first case, we have: $det(\cQ_G)=(\tau\circ r)^*(det(\cQ'_G))$, and thus $\mu_{\alpha_R}(\cQ_G)=0$, since $(\tau\circ r)_*(\alpha_R)=0$. Hence: $\mu_{\gamma_R}(\cQ_G)=\mu_{\beta_R}(\cQ_G)\geq \mu_{\beta_R,min}(\cG)>0$. In the second case, we have, by the bigness of $\alpha_R$ on the fibres of $\tau\circ r$, and the fact that $(\tau\circ r)_*(\alpha_R)=0$: $\mu_{\alpha_R}(det(\cQ_G))>0$. Thus $\mu_{k.\alpha_R+\beta_R}(\cQ_G)>0$ if $k>0$ is sufficiently large (once $\cQ$ is chosen so as to minimise $\mu_{\beta_R}$ among quotients of $\cE$).\end{proof}

\begin{proof} (of lemma \ref{lsheaf}) Let thus $\cM\subset (\tau\circ r)^*(\cG)$ be a subsheaf of rank $t>0$, and let $t'\leq t$ be the rank of the direct image sheaf $\cM':=(\tau\circ r)_*(\cM)$. If $t=t'$, we are in the first case, and in the second case if $t'<t$. Indeed: in the first case, $((\tau\circ r)^*(\cM')^{sat}/\cM)$ is torsion, and $((\tau\circ r)^*(\cM'))^{sat}=(\tau\circ r)^*((\cM')^{sat})$, the saturations being taken in $(\tau\circ r)^*(\cG)$ and in $\cG$ respectively. We thus have (after taking intersections with the relevant movable classes): $det(\cM)\leq (\tau \circ r)^*(det(\cM')^{sat})$ in this case.

In the second case, we consider the natural rational map $\varphi: R\to (\tau\circ r)^*(Grass(t,\cG))$, which we may assume to be regular (by modifying suitably $R$). The property $t'<t$ means that the image of the generic fibre $R_w$ of $\tau\circ r$ by $\varphi$ is positive-dimensional. We have: $\cQ_G=\varphi^*(Univ)$, and thus $det(\cQ_G)=\varphi^*(det(Univ))$, if $Univ\to Grass(t,\cG)$ is the universal bundle of rank $t$ on $Grass(t,\cG)$. The assertion in this second case thus immediately follows from the fact that $det(Univ)$ is ample on the fibres of $Grass(t,\cG)\to W$, and that the fibres of $\varphi(R)$ over $W$ are positive-dimensional.
\end{proof}

%%%%%%%%%%%%%%%%%%%%%%%%%%%%%%%%%%%%%%%%
%%%%%%%%%%%%%%%%%%%%%%%%%%%%%%%%%%%%%%%%
%%%%%%%%%%%%%%%%%%%%%%%%%%%%%%%%%%%%%%%%

\subsection{Case of a non-saturated quotient}

We now consider the same situation $R,T,Y,W,W",\alpha_R,\beta_R,\beta, \cF,\cE,\cG$ as before, but assume instead that the exact sequence we have is: $$0\to \cF\to \cE\to \cH\to 0,$$

where $\cH\subset (\tau\circ r)^*(\cG)$ is a sheaf such that the quotient $(\tau\circ r)^*(\cG)/\cH$ is of torsion, with support contained in a divisor $F\subset R$ such that $(\tau\circ r)(F)\subsetneq W$. In particular, we thus have: $\alpha_R. det (\cQ_{\cH})=\alpha_R.(det(\cQ_{\cG}))$ if $\cQ_{\cH}\subset \cQ_{\cG}$ are quotients of $(\tau\circ r)^*(\cG)$ such that the cokernel has support in $F$. We get the following strengthening of Theorem \ref{tsp}, under an additional condition on $\beta_R$:

\begin{corollary}\label{csp} Assume that $\beta_R=\beta'_R+ k'\alpha_R$, where $k'>0$ is real, and $\beta'_R.F_j=0$, for any component $F_j$ of $F$. The conclusion of Theorem \ref{tsp} then still holds for $\cE$. (ie: $\mu_{k.\alpha_R+\beta_R,min}(\cE)>0$ if $k>0$ is sufficiently large). 
\end{corollary}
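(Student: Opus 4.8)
The idea is to reduce Corollary \ref{csp} to Theorem \ref{tsp} by "absorbing" the part of $\beta_R$ that pairs nontrivially with the fibral directions into the $\alpha_R$-term, and then applying the negativity considerations of Lemma \ref{lsheaf} to the non-saturated quotient $\cH$. First I would write $\beta_R = \beta'_R + k'\alpha_R$ as given, so that $k.\alpha_R + \beta_R = (k+k').\alpha_R + \beta'_R$; since $k'>0$ is fixed, as $k\to+\infty$ the coefficient of $\alpha_R$ still goes to $+\infty$, so it suffices to prove $\mu_{\ell.\alpha_R + \beta'_R,\,min}(\cE)>0$ for $\ell$ large. Thus we may as well assume from the start that $\beta_R.F_j = 0$ for every component $F_j$ of $F$, i.e. replace $\beta_R$ by $\beta'_R$.

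\medskip

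Next I would run the argument of Theorem \ref{tsp} verbatim on the exact sequence $0\to \cF\to \cE\to \cH\to 0$. A quotient $\cQ$ of $\cE$ sits in $0\to \cQ_F\to \cQ\to \cQ_{\cH}\to 0$ with $\cQ_F$ a quotient of $\cF$ and $\cQ_{\cH}$ a quotient of $\cH$; since $\mu_{\gamma_R}(\cQ)$ is a positive linear combination of $\mu_{\gamma_R}(\cQ_F)$ and $\mu_{\gamma_R}(\cQ_{\cH})$, it is enough to make each positive. The term $\mu_{\gamma_R}(\cQ_F)$ is handled exactly as in Lemma \ref{lsheaf'}: it is positive once $k > -\mu_{\beta_R,max}(\cF)/\mu_{\alpha_R,min}(\cF)$, using $\mu_{\alpha_R}(\cQ_F)\geq\mu_{\alpha_R,min}(\cF)>0$. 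For the term $\mu_{\gamma_R}(\cQ_{\cH})$ I would compare $\cQ_{\cH}$ with the corresponding quotient $\cQ_{\cG}$ of $(\tau\circ r)^*(\cG)$: since $(\tau\circ r)^*(\cG)/\cH$ is torsion supported on $F$ and $(\tau\circ r)(F)\subsetneq W'$, there is a quotient $\cQ_{\cG}$ of $(\tau\circ r)^*(\cG)$ with $\cQ_{\cH}\subset \cQ_{\cG}$ and cokernel supported on $F$, so $\det(\cQ_{\cG}) = \det(\cQ_{\cH}) + \Delta$ with $\Delta$ effective and $\Supp(\Delta)\subset F$. Now invoke Lemma \ref{lsheaf} applied to $\cQ_{\cG}$: either $\det(\cQ_{\cG}) = (\tau\circ r)^*(\det(\cQ'_{\cG}))$ is pulled back from $W'$, or it is a nonzero effective divisor on the general fibre of $\tau\circ r$.

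\medskip

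In the first (pulled-back) case, $\alpha_R.\det(\cQ_{\cG}) = 0$ because $(\tau\circ r)_*(\alpha_R) = 0$, hence $\alpha_R.\det(\cQ_{\cH}) = -\alpha_R.\Delta$; but $\Delta$ is supported on the components $F_j$ of $F$, and here I need $\alpha_R.\Delta\leq 0$, which actually follows because $\alpha_R$ is big on the general fibre $R_{w'}$ and $F$ is partially supported on the fibres (so $F_j\cap R_{w'}$ is a \emph{proper} effective divisor on $R_{w'}$ — or $F_j$ is $\tau\circ r$-vertical over a proper subset, giving $\alpha_R.F_j \leq 0$ by the bigness/movability pairing); combined with $\beta_R.\det(\cQ_{\cH}) = \beta_R.\det(\cQ_{\cG}) - \beta_R.\Delta = \mu_{\beta_R}(\cQ_{\cG})\cdot(\mathrm{rk}) - 0$ using $\beta_R.F_j = 0$, and $\mu_{\beta_R}(\cQ_{\cG})\geq\mu_{\beta_R,min}(\cG) \geq$ a bound coming from hypothesis 1 via the resolution $s$, one gets $\mu_{\gamma_R}(\cQ_{\cH}) = k\cdot\alpha_R.\det(\cQ_{\cH})/\mathrm{rk} + \beta_R.\det(\cQ_{\cH})/\mathrm{rk} > 0$ for $k$ large (or immediately, if $\alpha_R.\det(\cQ_{\cH}) = 0$). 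In the second case, $\alpha_R.\det(\cQ_{\cG}) > 0$ strictly by the bigness of $\alpha_R$ on the fibres and ampleness of $\det(Univ)$ on the Grassmannian fibres (as in the proof of Lemma \ref{lsheaf}), and since $\alpha_R.\Delta$ is bounded (depending only on $F$, not on $\cQ$), and $\beta_R.\det(\cQ_{\cH})$ is bounded below among quotients, taking $k$ large makes $k\cdot\alpha_R.\det(\cQ_{\cG}) - k\cdot\alpha_R.\Delta + (\text{bounded}) > 0$. I expect the main obstacle to be the bookkeeping in the first case: controlling the sign of $\alpha_R.\Delta$ — precisely, showing that the effective $F$-supported correction divisor $\Delta$ pairs nonpositively (or at least boundedly) with $\alpha_R$, which is exactly where the hypothesis that $F$ is partially supported on the fibres of $\tau\circ r$, together with the relative bigness of $\alpha_R$, must be used; one likely needs a negativity-lemma argument on the general fibre $R_{w'}$, as in Proposition \ref{prelmov}.5.
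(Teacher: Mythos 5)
Your overall strategy is the paper's: absorb $k'\alpha_R$ into the $\alpha_R$-term (note, as you should say explicitly, that this is harmless for the hypotheses of Theorem \ref{tsp} because $(\tau\circ r)_*(\beta'_R)=(\tau\circ r)_*(\beta_R)$, since $(\tau\circ r)_*(\alpha_R)=0$), then compare a quotient $\cQ_{\cH}$ of $\cH$ with the corresponding quotient $\cQ_{\cG}$ of $(\tau\circ r)^*(\cG)$, whose determinants differ by an effective divisor $\Delta$ supported on $F$. You also correctly isolate the crux: the pairing of $\gamma_R$ with $\Delta$. But your justification of that crux is where the argument goes wrong. You claim $\alpha_R.\Delta\leq 0$ "because $\alpha_R$ is big on the general fibre $R_{w'}$ and $F_j\cap R_{w'}$ is a proper effective divisor on $R_{w'}$": this is backwards — if $F_j$ met the general fibre in a nonzero effective divisor, bigness of $\alpha_{R,w'}$ would force $\alpha_R.F_j=\alpha_{R,w'}.(F_j)_{w'}>0$, exactly the sign you cannot afford (and since $\alpha_R$ is movable and $\Delta$ effective, $\alpha_R.\Delta\geq 0$ always, so "$\leq 0$" can only mean "$=0$"). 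The correct, and much simpler, point is the standing hypothesis of this subsection that $(\tau\circ r)(F)\subsetneq W'$: $F$ does not dominate the base, so the general fibre $R_{w'}$ is disjoint from $F$, and Proposition \ref{pj_z} (which uses only movability of $\alpha_R$ and $(\tau\circ r)_*(\alpha_R)=0$, no bigness at all) gives $\alpha_R.F_j=0$ for every component $F_j$ of $F$. Together with $\beta'_R.F_j=0$ this yields $\gamma_R.\Delta=0$, so every quotient of $\cH$ has the same $\gamma_R$-slope as the corresponding quotient of $(\tau\circ r)^*(\cG)$, and Theorem \ref{tsp} applies verbatim — this is exactly the paper's two-line proof. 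The "negativity lemma on $R_{w'}$ as in Proposition \ref{prelmov}.5" that you invoke is not where the resolution lies: in this Corollary the orthogonality $\beta'_R.F_j=0$ is a hypothesis (it is produced by Proposition \ref{prelmov}.6 only later, in the application).

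Your Case 2 fallback does not repair the gap either: you argue that $\alpha_R.\Delta$ is bounded independently of $\cQ$ and that taking $k$ large wins, but $\alpha_R.\det(\cQ_{\cG})>0$ has no uniform positive lower bound as $\cQ$ varies, so $k\bigl(\alpha_R.\det(\cQ_{\cG})-\alpha_R.\Delta\bigr)$ could be arbitrarily negative if $\alpha_R.\Delta$ were a fixed positive number; likewise in your Case 1 the term $-(k+k')\alpha_R.\Delta$ would diverge to $-\infty$ with $k$. Both problems evaporate only because $\alpha_R.\Delta=0$, which is the one fact your write-up fails to establish correctly.
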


\begin{proof} Any quotient of $\cH$ injects into a quotient of $\cG$, with cokernel with support in $F$. The determinants of these quotients thus differ by a divisor supported on $F$, which has the same intersection with $\beta_R$ and $\beta'_R$ by assumption made that $(\tau\circ r)(F)\subsetneq W$. 
\end{proof}

%%%%%%%%%%%%%%%%%%%%%%%%%%%%%%%%%%%%%

\subsection{Equivariant version}\label{ssev}

The diagram we consider is now:

$$\xymatrix{
R\ar[r]^{r}&T\ar[d]^{\tau}\ar[r]^{\rho}\ar[rd]^{\sigma}&Y\ar[r]^{\pi}&X\ar[d]^{f}\\
W"\ar[r]^{s} & W'\ar[r]^{p'}&W\ar[r]^p&Z\\}$$

\

We assume here that both $\pi:Y\to X$, and $p:W\to Z$ are finite, Galois, of groups $G$ and $H$ respectively, and that $T$ is a component of the normalisation of $Y\times_ZW$, with projections $\rho:T\to Y$ and $\sigma:T\to W$. Thus $T$ is naturally $L$-Galois, equipped with an action of $L=G'\times H\subset G\times H$, the stabilizer of $T$, such that $\rho$ is finite, Galois, with group $G'\subset G$. Notice indeed that $T$ surjects finitely on both $Y$ and $X_W:=X\times _Z W$, which is irreducible because so are the generic fibres of $f$, and so also those of $f_W:X_W\to W$. Thus, $X_W\to X$ is Galois, of group $H$, so that every component of: $Y\times_ZW=Y\times_X( X\times_Z W)=Y\times_XX_W$ projects surjectively, not only on $Y$, but also on $X_W$, and is $G'$-Galois, for some $G'\subset G$. Thus $L$ has the above form $L=G'\times H$.

The composition $\sigma=p'\circ \tau$ of the diagram is the Stein factorisation of $\sigma$, with $\tau$ connected and $p'$ Galois, finite, of group $G'$, the kernel of the natural projection $L\to G'$, also the subgroup of $G$ preserving the fibres of $\tau$. 

We assume that $r$ is an $L$-equivariant resolution of $T$, and that $s$ is a $G'$-equivariant resolution of $W'$.

We then also consider locally free (although reflexive would suffice) sheaves $\cE$ on $Y$ and $\cG$ on $W$, $\cE$ (resp. $\cG$) being $G$-invariant (resp. $H$-invariant). Their liftings to $R$ will be denoted by $\cE_R$ and $\cG_R$, respectively.

We moreover assume that there exists an $L$-equivariant sheaf morphism $\Delta: \cE_R\to \cG_R$ with image $\cH\subset \cG_R$ such that the quotient $\cG_R/\cH$ is torsion on $R$, with ($L$-invariant) support contained in a divisor $F_R=(\pi\circ \rho\circ r)^{-1}(F), F\subset X$ a divisor such that $f(F)\subsetneq Z$. Since $\Delta$ is $L$-equivariant, we see that $\cF_R:=Ker(\Delta)=(\pi\circ \rho\circ r)^*(\cF)$, for some $G$-invariant subsheaf $\cF\subset \cE$.

We assume moreover that we have movable classes $\alpha,\beta'$ on $X$, and $\beta$ on $Z$ such that:

1. $f_*(\alpha)=0$, $f_*(\beta')=\beta$.

2. $\alpha$ is `big' on the general fibre $X_z$ of $f$.

3. There exists $k>0$ such that $\beta'=\beta"+k.\alpha$, where $\beta"$ is movable, and $\beta".F_j=0$, for every component $F_j$ of $F$.

4. $\mu_{\alpha_Y,min}(\cF)>0$.

5. $\mu_{\beta_W,min}(\cG)>0$.

Here $\alpha_Y,\beta_W$ denote the liftings to $Y$ and $W$ of $\alpha$ and $\beta$ respectively. (We adopt this notation for liftings, whenever defined, to any space in the diagram using the given maps. For example: $\beta'_Y:=\pi^*(\beta')$, but $\beta'_Y$ is not the lifting of $\beta$ in any natural sense).

The equivariant version we shall need is the following:

\begin{corollary}\label{cesp} In the above situation, and under the above hypothesis, we have, for any sufficiently large real number $k>0$: $\mu_{\gamma,min}(\cE)>0$, if $\gamma:=k.\alpha_Y+\beta_Y$.
\end{corollary}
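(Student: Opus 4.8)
The plan is to descend the whole configuration to $R$, where the non-equivariant Corollary \ref{csp} applies, and then to transport the resulting inequality back to $Y$ by Corollary \ref{csat}. First I would set $q:=\pi\circ\rho\circ r:R\to X$; by construction of the normalised fibre product one has $f\circ q=(p\circ p')\circ(\tau\circ r)$, and $q$ is generically finite, surjective, and $L$-invariant (the $L$-action on $Y$ induced through $\rho$ is killed by $\pi$). Putting $\cG':=p'^*(\cG)$ on $W'$, the lift of $\cG$ to $R$ is $\cG_R=(\sigma\circ r)^*(\cG)=(\tau\circ r)^*(\cG')$, and on $R$ the map $\Delta$ gives an exact sequence $0\to\cF_R\to\cE_R\to\cH\to 0$ with $\cH\subset(\tau\circ r)^*(\cG')$ and torsion cokernel supported on $F_R:=q^{-1}(F)$. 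Since $f(F)\subsetneq Z$ and $f\circ q=(p\circ p')\circ(\tau\circ r)$, we get $(\tau\circ r)(F_R)\subseteq(p\circ p')^{-1}(f(F))\subsetneq W'$. Thus the data on $R$ — with base $W'$, resolution $s:W''\to W'$, and $\cG'$ playing the role of $\cG$ — has exactly the shape required by Corollary \ref{csp}.

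The movable classes I would use are $\alpha_R:=q^*(\alpha)=(\rho\circ r)^*(\alpha_Y)$ and $\beta_R:=q^*(\beta')=(\rho\circ r)^*(\pi^*\beta')$ on $R$. Hypothesis 3, namely $\beta'=\widetilde\beta+k_1\alpha$ with $k_1>0$ and $\widetilde\beta.F_j=0$ for every component $F_j$ of $F$, pulls back to $\beta_R=q^*(\widetilde\beta)+k_1\alpha_R$, and $q^*(\widetilde\beta)$ meets each component of $F_R$ in zero (projection formula: such a component is either $q$-exceptional or maps onto a component of $F$ on which $\widetilde\beta$ vanishes) — this is precisely the additional hypothesis of Corollary \ref{csp} with $k'=k_1$. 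As $\rho\circ r:R\to Y$ is of the form (finite Galois)$\circ$(equivariant birational), Corollary \ref{csat} gives $\mu_{\alpha_R,min}(\cF_R)=\mu_{\alpha_Y,min}(\cF)>0$ from hypothesis 4; applying Corollary \ref{csat} to $p'\circ s:W''\to W$, hypothesis 5 gives $\mu_{(p'\circ s)^*(\beta_W),min}((p'\circ s)^*(\cG))=\mu_{\beta_W,min}(\cG)>0$. For bigness I would argue, using hypothesis 2 and Definition \ref{dbig'}, that a general fibre of $\tau\circ r$ is birational to a connected component of $\pi^{-1}(X_z)$ for $z\in Z$ general — a variety finite and surjective over the irreducible fibre $X_z$, on which $\alpha$ restricts to a big class — and that big movable classes pull back to big movable classes under generically finite surjective morphisms, so $\alpha_R$ is big on the general fibre of $\tau\circ r$ (cf. Proposition \ref{pj_z}).

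The step I expect to be the main obstacle is the verification of the two direct image identities demanded by Corollary \ref{csp}: $(\tau\circ r)_*(\alpha_R)=0$ and $(\tau\circ r)_*(\beta_R)=s_*(\beta'')$ for some movable $\beta''$ on $W''$ with $\mu_{\beta'',min}(s^*(\cG'))>0$. I plan to establish these by averaging. The classes $\bar\alpha:=(\tau\circ r)_*(\alpha_R)$ and $\bar\beta:=(\tau\circ r)_*(\beta_R)$ on $W'$ are $L$-invariant, since $q$ is $L$-invariant (hence so are $\alpha_R,\beta_R$) and $\tau\circ r$ is $L$-equivariant for the $L$-action on $W'$ inherited from the Stein factorisation. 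Averaging over the Galois group of $p':W'\to W$ puts $\bar\alpha,\bar\beta$ in $p'^*(N_1(W))$, and averaging once more over $H=\mathrm{Gal}(W/Z)$ puts them in $(p\circ p')^*(N_1(Z))$; a class of that form is recovered from its image under $(p\circ p')_*$. Now $(p\circ p')_*(\bar\alpha)=(f\circ q)_*(\alpha_R)=(\deg q)\,f_*(\alpha)=0$ by hypothesis 1, whence $\bar\alpha=0$; and $(p\circ p')_*(\bar\beta)=(\deg q)\,f_*(\beta')=(\deg q)\,\beta$ by hypothesis 1, whence $\bar\beta=c\,(p\circ p')^*(\beta)$ for some $c>0$. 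Taking $\beta'':=c\,(p'\circ s)^*(\beta_W)$ gives $s_*(\beta'')=c\,(p\circ p')^*(\beta)=\bar\beta$, with $\beta''$ movable and $\mu_{\beta'',min}(s^*(\cG'))=c\,\mu_{(p'\circ s)^*(\beta_W),min}((p'\circ s)^*(\cG))>0$ by the previous paragraph.

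With all hypotheses of Corollary \ref{csp} in force, it yields $\mu_{k\alpha_R+\beta_R,min}(\cE_R)>0$ for every sufficiently large real $k>0$. Since $k\alpha_R+\beta_R=(\rho\circ r)^*(k\alpha_Y+\pi^*\beta')$ and $\cE_R=(\rho\circ r)^*(\cE)$, a final application of Corollary \ref{csat} to $\rho\circ r:R\to Y$ gives $\mu_{k\alpha_Y+\pi^*\beta',min}(\cE)>0$, which is the asserted inequality with $\gamma=k\alpha_Y+\pi^*(\beta')$ (reading the $\beta_Y$ of the statement as $\beta'_Y:=\pi^*(\beta')$). Apart from the averaging argument above, the proof is just a transcription of the hypotheses into those of Corollary \ref{csp}.
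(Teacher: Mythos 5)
Your proposal is correct and follows essentially the same route as the paper's proof: lift $\alpha,\beta'$ and the orthogonality condition to $R$, verify via the projection formula that $(\tau\circ r)_*(\alpha_R)=0$, that the pushforward of $\beta_R$ comes from $\beta$ on $Z$, and that $\alpha_R$ is big on the general fibre of $\tau\circ r$ (this is exactly the content of Lemma \ref{lesp1}), transfer the slope hypotheses on $\cF$ and $\cG$ and the final conclusion through Corollary \ref{csat}, and conclude by the criterion of Theorem \ref{tsp}/Corollary \ref{csp}. The only cosmetic differences are that you obtain the two pushforward identities by Galois averaging and descent to $(p\circ p')^*(N_1(Z))$ rather than by the paper's direct pairing with arbitrary divisors on $W'$, and that you invoke Corollary \ref{csp} as a black box and return to $Y$ at the end, whereas the paper reruns the quotient-by-quotient argument equivariantly on $R$ --- the substance is the same, since the relevant destabilizing quotient is automatically invariant and Corollary \ref{csat} gives the equality of minimal slopes in any case.
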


\begin{proof} It mainly consists in lifting up and down the classes $\alpha,\beta',\beta$, and checking the above properties 1,2,3,4,5, together with the equivariance conditions at the levels of $R,T,Y,W'$, and in applying the arguments of Theorems \ref{tsp} and corollary \ref{csp}, in order to get the conclusion.

\begin{lemma}\label{lesp1} The properties 1,2,3 above imply:

1. $(\tau\circ r)_*(\alpha_R):=\alpha_{W'}=0$, 

1'. $(\tau\circ r)_*(\beta'_R):=\beta'_{W'}=(p\circ p')^*(\beta)$;

2. $\alpha_R$ is `big' on the general fibre $R_{w'}$ of $\tau\circ r:R\to W'$.

3. Let $\beta"_R:=\beta'_R-k.\alpha_R$. Then: $(\beta"_R).F_j^R=0,$ for each component $F_j^R$ of $F^R:=(\pi\circ \rho\circ r)^{-1}(F)$.

\end{lemma}

\begin{proof}  All of these are easily obtained from the projection formula.

1. Let $E\subset W'$ be an irreducible divisor. Then (up to positive multiplicative integers, which are degrees of finite maps): 
$$((\tau\circ r)_*(\alpha_R)).E=(\alpha_R). (\tau\circ r)^*(E)=((\pi\circ \rho\circ r)^*(\alpha)).((\tau\circ r)^*(E))$$
$$=\alpha.[((\pi\circ \rho\circ r)_*((\tau\circ r)^*(E))]=\alpha. [f^*((p\circ p')_*(E))]$$
$$=(f_*(\alpha)).[(p\circ p')_*(E)]=0, \forall E.$$

1'. The sequence of equalities is exactly the same as before. 

2. The map $(\pi\circ \rho\circ r):R_w'\to X_z,z:=(p\circ p')(w')$ is generically finite, and $\alpha_R$ 
on this fibre $R_{w'}$ is by definition, the lifting of $\alpha$ on $X_z$. This proves the assertion.

3. We have $(\tau\circ r)(F_j^R)\subset (p\circ p')^{-1}(f(F))\subsetneq W'$, the assertion then follows from the same computation as for the assertion 1 above.\end{proof}

We denote by $\cF_R,\cE_R, \cH_{R}, \cG_{W"}$ the liftings of $\cF,\cE,\cG,\cH$ to $R,R,R$ and $W"$ respectively.

Let us first observe that, by Lemma \ref{lmumin}, we have the equalities: 
$$ \mu_{\alpha_R,min}(\cE_R)=\mu_{\alpha_Y,min}(\cE)$$
$$ \mu_{\alpha_R,min}(\cF_R)=\mu_{\alpha_Y,min}(\cF)$$
 $$\mu_{\beta_{W"},min}(\cG_{W"})=\mu_{\beta_W,min}(\cG)$$
 
 We can thus lift everything to $R$, and work there, using the same hypothesis for their liftings as for $\alpha,\beta, \beta',\beta"$,
  just taking into account the $L$-equivariance properties.
  
  Let thus $\cQ$ be a $G\times H$ quotient of $\cE_R$. It fits into an exact sequence $$ \cQ_F\to \cQ\to \cQ_H\to 0,$$ for $G\times H$-invariant sequence of sheaves induced by $(\rho\circ r)^*(\Delta)$, which is an equivariant map of sheaves on $R$ (since $\Delta$ is $G$-equivariant). 
  
  The same argument as in Lemma \ref{lsheaf'} shows that for some explicit $k_0$, $\mu_{k'.\alpha_R+\beta'_R}(\cQ_F)>0$ if $k> k_0$. By the properties 1' and 3 of of the preceding Lemma \ref{lesp1}, we have, for every $k>0$:  $$\mu_{k.\alpha_R+\beta'_R}(\cQ_H)=\mu_{(k+k').\alpha_R+\beta"_R}(\cQ_H)\geq \mu_{k.\alpha_R+\beta'_R,min}(\cG)>0.$$

  Now, there are, as in Lemma \ref{lsheaf}, two cases concerning $\cQ_H$: by the property 1 of the preceding Lemma \ref{lesp1}: either $rank((\tau\circ r)_*(\cQ_H))=rank(\cQ_H)$, or $rank((\tau\circ r)_*(\cQ_H))<rank(\cQ_H)$. Using Lemma \ref{lsheaf}, we conclude in the first case that $\mu_{k.\alpha_R+\beta'_R}(\cQ_H)\geq \mu_{\beta'_R,min}(\cG_R)>0$. In the second case, we conclude from the same argument that $\mu_{k.\alpha_R+\beta'_R}(\cQ_H)>0$ for $k>0$ sufficiently large. \end{proof}

%%%%%%%%%%%%%%%%%%%%%%%%%%%%
%%%%%%%%%%%%%%%%%%%%%%%%%%%%%%%%%%%%%%%%%%%%%%%%%%%%%%

\subsection{Case of an orbifold morphism}\label{sscom}

The diagram of the preceding section will be constructed from an orbifold morphism $f:(X,D)\to (Z,D_Z)$, in which $(Z,D_Z)$ is the orbifold base of $f$, together with movable classes $\alpha, \beta'$ on $X$, and $\beta$ on $Z$ such that: $f_*(\alpha)=0,$ and $f_*(\beta')=\beta$.

Let $\pi:Y\to X$ and $p:W\to Z$ be Kawamata-covers adapted to $(X,D)$ and $(Z,Z_Z)$ respectively, of Galois groups $G,H$. 

Let $\cF\subset \cE:=\pi^*(T(X,D))$ be the maximal destabilising subsheaf of $\cE=\pi^*(T(X,D))$ relative to $\pi^*(\alpha)$. 
We assume that the associated fibration has $f$ as `neat' birational model. 

Let $T$ be a component of the normalisation of $Y\times_ZW$, together with the projections $\rho:T\to Y$ and $\sigma:T\to W$, so that $L\subset G\times H$ naturally acts on $T$. We take further an $L$-equivariant resolution $r:R\to T$ of $T$. In this way, the maps $\rho\circ r:R\to Y$ and $\tau\circ r:R\to W$ are Galois, of groups $H'\subset H$ and $G'\subset G$ respectively. We consider next $\sigma=\tau\circ p':T\to W$ the Stein factorisation of $\sigma$ as $\tau:T\to W'$ and $p':W'\to W$. It enjoys the equivariance properties of the diagram of the preceding section.

Let $\cG:= (\sigma\circ r)^*(p^*(T(Z,D_Z))),$ and $\Delta:=(\rho\circ r)^*(\pi^*(df)): \cE\to \cG$: this is an $L$-equivariant map by construction, its existence is garanteed by Proposition \ref{pom}. Let  $\cH:=[\Delta(\cE)]$, we thus have an inclusion $\cH\subset \cG$,    and an exact sequence:
$$0\to \cF\to \cE\to \cH\to 0.$$

Moreover, by Proposition \ref{pdpsf}, the cokernel sheaf $\cG/\cH$ is torsion, with support in a divisor $F\subset X$ `partially supported on fibres of $f$' (See Definition \ref{ddpsf}). This property permits to show, by Proposition \ref{prelmov}.(6), the existence of a movable class $\beta":=\beta_X$ on $X$ such that $\beta".F_j=0$ for all components $F_j$ of $F$, and to obtain from Corollary \ref{cesp}:

\begin{theorem}\label{tesp} In the preceding situation, assume also that $\mu^G_{\alpha_Y}(\cF)>0$, that $\mu_{p^*(\beta),min}(p^*(T(Z,D_Z)))>0$, and that $\alpha$ is `big' on the `general' fibre of $f$. Then $k.\alpha+\beta_X:=\gamma$ satisfies: $\mu_{\pi^*(\gamma),min}(\pi^*(T(X,D)))>0$, for any sufficiently large $k>0$ .
\end{theorem}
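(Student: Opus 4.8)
The plan is to reduce Theorem \ref{tesp} to the already-proven equivariant Corollary \ref{cesp} by verifying, one by one, the five numbered hypotheses of that corollary for the data assembled in the present subsection. First I would set up the dictionary of objects: we have the orbifold morphism $f:(X,D)\to (Z,D_Z)$ with $(Z,D_Z)$ its orbifold base, Kawamata covers $\pi:Y\to X$ and $p:W\to Z$ of groups $G,H$, the normalised fibre product $T\subset (Y\times_ZW)^n$ with its stabiliser $L=G'\times H$, the $L$-equivariant resolution $r:R\to T$, and the Stein factorisation $\sigma=p'\circ\tau$. The sheaves are $\cE=\pi^*(T(X,D))$, $\cG=(\sigma\circ r)^*(p^*(T(Z,D_Z)))$, and $\Delta=(\rho\circ r)^*(\pi^*(df))$, whose existence as an $L$-equivariant map is exactly Proposition \ref{pom} (this uses that $f$ is an orbifold morphism). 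Setting $\cH:=\mathrm{Im}(\Delta)$ and $\cF:=\mathrm{Ker}(\Delta)$ gives the exact sequence $0\to\cF\to\cE\to\cH\to 0$, and $\cF=(\rho\circ r)^*(\cF_X)$ descends because $\Delta$ is equivariant.

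The key structural input is that $\cG/\cH$ is a torsion sheaf supported on $(\pi\circ\rho\circ r)^{-1}(F)$ for a divisor $F\subset X$ that is \emph{partially supported on the fibres of $f$}; this is precisely Proposition \ref{pdpsf}, applied to the orbifold-base situation. Given this $F$, the crucial existence statement is that of a movable class $\beta_X$ on $X$ with $f_*(\beta_X)=\beta$ and $\beta_X.F_j=0$ for every component $F_j$ of $F$: this is Proposition \ref{prelmov}.(6) (whose proof, recall, is the delicate one using the negativity lemma and \cite{KM98}, Corollary 4.2). I would then set $\beta':=\beta_X$ — note that hypothesis 3 of Corollary \ref{cesp} is automatic with $k:=0$, since $\beta'=\beta''+0\cdot\alpha$ with $\beta'':=\beta_X$ already satisfying $\beta''.F_j=0$ (alternatively one absorbs any positive multiple of $\alpha$ harmlessly). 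Hypothesis 1, $f_*(\alpha)=0$ and $f_*(\beta')=\beta$, holds by construction; hypothesis 2, that $\alpha$ is big on the general fibre $X_z$, is one of the standing assumptions of Theorem \ref{tesp}; hypothesis 4, $\mu_{\alpha_Y,\mathrm{min}}(\cF)>0$, is the assumption $\mu^G_{\alpha_Y}(\cF)>0$ (recall $\cF$ is the maximal destabilising subsheaf of $\cE$ for $\pi^*(\alpha)$, hence semistable, so its minimal slope equals its slope, which is positive); hypothesis 5, $\mu_{\beta_W,\mathrm{min}}(\cG)>0$, is $\mu_{p^*(\beta),\mathrm{min}}(p^*(T(Z,D_Z)))>0$ after identifying $\cG$'s minimal slope with that of $p^*(T(Z,D_Z))$ via Lemma \ref{lmumin} (pullback under the generically finite $\sigma\circ r$).

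With all five hypotheses checked, Corollary \ref{cesp} yields $\mu_{\gamma,\mathrm{min}}(\cE)>0$ for $\gamma:=k.\alpha_Y+\beta_Y$ and $k$ sufficiently large, where $\beta_Y=\pi^*(\beta_X)$; translating the notation, $\gamma=\pi^*(k.\alpha+\beta_X)$, so $\mu_{\pi^*(\gamma),\mathrm{min}}(\pi^*(T(X,D)))>0$ with $\gamma:=k.\alpha+\beta_X$, which is the assertion. The one genuine subtlety — the "main obstacle" — is the bookkeeping of the equivariance and the lifting/descent of classes between the six spaces $R,T,Y,W',W,Z$ (and the resolutions $W'',W$): one must make sure that $\alpha,\beta_X,\beta$ and their various liftings to $R$ satisfy the projection-formula identities $(\tau\circ r)_*(\alpha_R)=0$ and $(\tau\circ r)_*(\beta_R')=(p\circ p')^*(\beta)$, that $\alpha_R$ is big on the general fibre of $\tau\circ r$, and that $F_R=(\pi\circ\rho\circ r)^{-1}(F)$ is still $L$-invariantly partially supported over $W'$ — but all of these are exactly Lemma \ref{lesp1}, so no new work beyond invoking it is required. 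Everything else is a direct citation.
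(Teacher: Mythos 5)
Your proposal is correct and follows essentially the same route as the paper: the construction of $\cG$, $\Delta$, $\cH$, the exact sequence, the divisor $F$ via Proposition \ref{pdpsf}, the class $\beta_X$ via Proposition \ref{prelmov}.(6), and then a direct application of Corollary \ref{cesp} (whose Lemma \ref{lesp1} handles exactly the lifting/projection-formula checks you list). The minor point about hypothesis 3 requiring $k>0$ is harmless, as you note, since any positive multiple of $\alpha$ can be absorbed into $\gamma$.
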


%%%%%%%%%%%%%%%%%%%%%%%%%%%%%%%%%%%%%%%%%%%
%%%%%%%%%%%%%%%%%%%%%%%%%%%%%%%%%

%%%%%%%%%%%%%%%%%%%%%%%%%%%%%%%%%%%%
%%%%%%%%%%%%%%%%%%%%%%%%%%%%%%%%%%

\section{Proof of Theorem \ref{torc}}\label{pmaint}

Before starting the proof, let us observe that the $3$ properties of the statement are `up'-birationally invariant\footnote{ `Up' because this works under blow-ups, not under blow-downs in general.} in the sense that if they hold on $(X,D)$, they also hold on $(X',D')$ for any birational $f:X'\to X$ which induces an orbifold morphism $f:(X',D')\to (X,D)$ (see Definition \ref{deforbmorph}), the movable classes involved being always the inverse images of the initial ones.

\begin{proof}

We shall show the implications $4\Longrightarrow 2 \Longrightarrow 3, 2'$, $3\Longrightarrow 3', 2'\Longrightarrow 3' \Longrightarrow 1\Longrightarrow 4$. All these implications are easy, except for the last one.

\medskip

$\bullet$ $4 \Longrightarrow 2$. This follows from the fact that $h^0(X,\cF\otimes A)=0$ if,  for some movable class $\alpha$, one has: $\mu_{\alpha,max}(\cF)< \alpha.A$. Now by our hypothesis: $$\mu_{\alpha,max}(\otimes^m(\Omega^1(X,D)))=-m.\mu_{\alpha,min}(T(X,D))<\alpha.A,$$ if $m>m(A):=\frac{\alpha.A}{\mu_{\alpha,min}(T(X,D)}$. 

\medskip

$\bullet$ $2 \Longrightarrow 3, 2'$, and: $2'\Longrightarrow 3'$. This follows from obvious inclusions of sheaves.
\medskip

$\bullet$ $3' \Longrightarrow 1$. This follows directly from the Example \ref{exinttensors}, and functoriality of sheaves of integral orbifold tensors, since $m(K_Z+D_Z)$ is a sheaves of integral orbifold tensors for sufficiently divisible $m>0$.

%Let any fibration which is a `neat' orbifold morphism to its orbifold base$f:(X,D)\to (Z,D_Z)$, with $dim(Z)>0$, be given. 

%Let us consider again the diagram considered in \S\ref{ssev}, with the same meaning as in \S\ref{sscom}:

%$$\xymatrix{
%R\ar[r]^{r}&T\ar[d]^{\tau}\ar[r]^{\rho}\ar[rd]^{\sigma}&Y\ar[r]^{\pi}&X\ar[d]^{f}\\
%W"\ar[r]^{s} & W'\ar[r]^{p'}&W\ar[r]^p&Z\\}$$

%The elements of $H^0(Z, (K_Z+D_Z)^{\otimes m}\otimes B))$ lift injectively to $R$ as inverse images of $G$-invariant elements of $H^0(Y,Sym^m(\pi^*(\Omega^1(X,D)))\otimes \pi^*(f^*(B)))$. As all of these last ones vanish by assumption, so do the first ones. But this means precisely that $(K_Z+D_Z)$ is not pseudo-effective.

\medskip

$\bullet$ $1 \Longrightarrow 3$. This is the actual content of Theorem \ref{torc}, and its most involved part, requiring the preliminaries above. The proof will work by induction on $n:=dim(X)$. 

We start with $n=1$, so that $X$ is a curve, $K_X+D$ is not pseudo effective, and there is only one non-zero movable class $\alpha$ up to non-zero homothety. Then $\pi^*(\Omega^1(X,D))=\pi^*(K_X+D)$, and clearly: $\mu_{\alpha,min}(\pi^*(X,D))=-d.\alpha.(K_X+D)>0$, if $d>0$ is the geometric degree of $\pi$.

We thus assume that $n\geq 2$, and that the implication $1 \Longrightarrow 3$ holds whenever $dim(X)<n$. 

We shall produce a fibration $f:(X,D)\to (Z,D_Z)$ with $dim(Z)>0$, together with suitable classes $\alpha\in Mov^0(X/Z)$ and $\beta\in Mov^0(Z)$ and $\beta_X\in Mov(X)$ 
such that $f_*(\beta_X)=\beta$, so as to be in position to apply the results in \S\ref{sscom} to conclude that $\gamma:=k.\alpha+\beta_X$ satisfies the property $\mu_{\pi^*(\gamma),min}(T(X,D))>0$ if $k>0$ is sufficiently large.

We assume that $\pi:Y\to X$ is a Kawamata cover adapted to $(X,D)$. Let $\alpha\in Mov(X)$ be any class such that $\alpha.(K_X+D)<0$. Let $\cF\subset \pi^*(T(X,D))$ be the ($G$-invariant) $\alpha$-maximal destabilising subsheaf. It defines (by \cite{CP15}) a rational fibration $f_0:X_0\dasharrow Z_0$ with $\cF_{X_0}=Ker(df_0)$, where $\pi^*(\cF_{X_0})=\cF^{sat}$, the saturation inside $\pi^*(TX)$. We write here $X_0=X,Z_0=Z$, because we shall now consider (new) suitable birational models $X,Z$ of $X_0,Z_0$. Moreover, this fibration is non-trivial: $dim(Z)<n$. We put $d:=n-dim(Z)>0$.

We now chose a (new) `neat' birational model $f:(X,D)\to (Z,D_Z)$ of our original $f_0:X_0\dasharrow Z_0$, and a corresponding Kawamata cover of $(X,D)$. By Proposition \ref{pindbir}, the $G$-invariant Harder-Narasimhan fibration of the new orbifold cotangent sheaf is the inverse image of the initial one. We can thus assume from the very beginning that $f$ was a `strictly' neat orbifold morphism, in the sense that it is neat, and that, moreover, any $f$-exceptional divisor $F\subset X$ is $u$-exceptional for the birational map $u:X\to X_0$, and $\alpha=u^*(\alpha_0)$. This `strictness' property is obtained by first flattening the map $f_0$. 

Moreover, we have (using the notations of \cite{CP15}): $0>-\alpha.(det(\cF))=\alpha. (K_{X/Z}+D^{hor}-D(f))$, so that $K_{X_z}+D_z:=(K_{X/Z}+D)_{\vert X_z}$ is not pseudo-effective for $z\in Z$ generic. There thus exists $\alpha'\in Mov^0(X/Z)$ such that $\alpha'.(K_{X/Z}+D)<0$. Let us choose any of these. Let $\cF'\subset \pi^*(T(X,D))$ be the ($G$-invariant) $\alpha'$-maximal destabilising subsheaf. 

\begin{lemma}\label{lfibmin} In this situation, we have:

1. $\cF'\subset \cF$.

2. $\cF'$ is an orbifold foliation on $(X,D)$. It is thus algebraic, too.
\end{lemma}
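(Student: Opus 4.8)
The plan is to establish both items by exploiting the minimality of the rank of the destabilising subsheaf, and the fact that $\alpha'$ lies in the relative movable cone $Mov^0(X/Z)$, which forces it to "see" vertical directions positively.

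\emph{Item 1 ($\cF'\subset\cF$).} First I would recall that $\cF=Ker(df)^{sat}\cap\pi^*(T(X,D))$ (up to $\pi^*$), so $\cF$ corresponds to the $f$-vertical orbifold directions. The key point is that $\cF'$, the $\alpha'$-maximal destabilising subsheaf, must be contained in the vertical part. To see this, consider the composite $\cF'\hookrightarrow\pi^*(T(X,D))\xrightarrow{\pi^*(df)}(f\circ\pi)^*(T(Z,D_Z))$, using Proposition \ref{pom} (applied on a suitable fibre product, but the image sheaf is well-defined up to $f$-exceptional and fibre-supported divisors). The image $\cQ'$ of $\cF'$ under this map is a quotient of $\cF'$, hence $\mu_{\pi^*(\alpha'),min}(\cF')\le\mu_{\pi^*(\alpha')}(\cQ')$; but $\cQ'$, being a subsheaf (up to torsion on a divisor partially supported on fibres, by Proposition \ref{pdpsf}) of a pullback $(f\circ\pi)^*(\text{something on }Z)$, has $\det(\cQ')$ numerically equal to a pullback from $Z$ plus a divisor supported on the $f$-vertical locus. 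Since $\alpha'\in Mov^0(X/Z)$ satisfies $f_*(\alpha')=0$, the pullback part contributes $0$ to the $\pi^*(\alpha')$-slope; and the fibre-supported correction — here one must be a little careful — contributes $\le 0$ because the generic fibre of $f$ does not meet that divisor in codimension one, or rather because a class in $Mov^0(X/Z)$ restricted to a general fibre is big there and the relevant divisor restricts to $0$ on the general fibre. Hence $\mu_{\pi^*(\alpha')}(\cQ')\le 0$. But $\cF'$ is the maximal destabilising subsheaf with respect to $\alpha'$, which has positive slope (since $\alpha'.(K_{X/Z}+D)<0$ forces $\mu_{\pi^*(\alpha'),min}(\pi^*(T(X,D)))$ considerations to produce a positive-slope piece), so every quotient of $\cF'$ has positive slope; in particular $\mu_{\pi^*(\alpha')}(\cQ')>0$ unless $\cQ'=0$. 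Therefore $\cQ'=0$, i.e. $\pi^*(df)(\cF')=0$, so $\cF'\subset Ker(\pi^*(df))=\pi^*(Ker(df))^{sat}\cap\pi^*(T(X,D))$. Finally, $\cF'$ being saturated in $\pi^*(T(X,D))$ and contained in the saturation of $\pi^*(Ker(df))$, and both being $G$-invariant, one concludes $\cF'\subset\cF$, using that $\cF=\pi^*(Ker(df))^{sat}\cap\pi^*(T(X,D))$ by construction.

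\emph{Item 2 ($\cF'$ is an orbifold foliation, hence algebraic).} Here I would verify the two defining conditions of a $D$-foliation: (a) $\cF'$ is saturated in $\pi^*(T(X,D))$ — which holds by definition, being a step of a Harder–Narasimhan filtration — and (b) $\cF'$ is closed under the Lie bracket, i.e. $[\cF',\cF']\subset\cF'$ (as subsheaves of $\pi^*(TX)$, or in the orbifold sense on $Y$). The bracket-closedness is the standard Bogomolov-type argument: since $\cF'$ has positive $\pi^*(\alpha')$-slope and is $\alpha'$-semistable (being maximal destabilising), the $\cO$-linear map $\cF'\otimes\cF'\to (\pi^*(TX)/\cF')$ induced by the bracket modulo $\cF'$ would, if nonzero, produce a quotient of $\cF'\otimes\cF'$ of slope $\le\mu_{\pi^*(\alpha')}(\pi^*(TX)/\cF')$; but $\cF'\otimes\cF'$ is $\alpha'$-semistable of slope $2\mu_{\pi^*(\alpha')}(\cF')$, and one derives a numerical contradiction from the fact that $\cF'$ sits maximally. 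The cleanest route, though, is to quote the corresponding result of \cite{CP15}: a $G$-invariant saturated subsheaf of $\pi^*(T(X,D))$ of positive minimal slope for $\pi^*(\alpha')$ that is $\alpha'$-semistable is automatically an orbifold foliation descending to an algebraic foliation $\cF'_X=Ker(df')$ on $X$. Since $\cF'$ is exactly the maximal destabilising subsheaf for $\pi^*(\alpha')$ (hence semistable, of positive slope because $\alpha'.(K_{X/Z}+D)<0$), this applies verbatim. Algebraicity then follows from the descent to $f':X\dasharrow Z'$ established in \cite{CP15}.

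\emph{Main obstacle.} The delicate point is the sign analysis in Item 1: one must show that $\mu_{\pi^*(\alpha')}(\cQ')\le 0$ for the image quotient $\cQ'$. The issue is precisely the divisor $F$ "partially supported on the fibres of $f$" that appears in Proposition \ref{pdpsf} — the image of $\cF'$ under $\pi^*(df)$ need not be a genuine pullback from $Z$ but only so up to torsion supported on $F_R$. One resolves this by using that $\alpha'\in Mov^0(X/Z)$ is, after the preparations in \S\ref{srelmov} (Proposition \ref{pj_z} and Proposition \ref{prelmov}), big on the general fibre and satisfies $f_*(\alpha')=0$, so that $\alpha'$ intersects a pullback-from-$Z$ divisor in $0$ and intersects a divisor partially supported on fibres non-negatively — carefully, one may need to replace $\alpha'$ within $Mov^0(X/Z)$ by one orthogonal to the components of $F$, exactly as in Proposition \ref{prelmov}.(5)–(6), or to note that $\det\cQ'$ restricted to a general fibre $X_z$ is effective (it is a subsheaf of the trivial-ish restriction) so its intersection with the relatively-big $\alpha'$ is $\ge 0$, forcing $\mu_{\pi^*(\alpha')}(\cQ')\le 0$ by $f$-triviality. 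This is the same mechanism that drives the proof of Theorem \ref{tsp} and Corollary \ref{csp}, and the rank-minimality of $\cF'$ (being destabilising) is what rules out the "positive-dimensional image in the Grassmannian" alternative of Lemma \ref{lsheaf}.
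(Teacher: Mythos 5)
Your argument is correct and essentially reproduces the paper's proof: item 1 is the same slope comparison (any nonzero image of $\cF'$ in the pullback of the base tangent sheaf would be a quotient of the semistable, positive-slope $\cF'$ and simultaneously a subsheaf of a pullback, hence of nonpositive $\pi^*(\alpha')$-slope by the dichotomy of Lemma \ref{lsheaf} together with $f_*(\alpha')=0$), and item 2 is the same Miyaoka-type bracket argument plus the algebraicity result of \cite{CP15}. The only real difference is that the paper maps $\cF'$ into the plain $(f\circ \pi)^*(TZ)$ rather than the orbifold pullback $(f\circ\pi)^*(T(Z,D_Z))$, which sidesteps the fibre-product cover and the cokernel divisor partially supported on fibres that occupy your ``main obstacle'' paragraph.
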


\begin{proof} 1. Since $\cF= Ker(\pi^*(df):\pi^*(T(X,D))\to (f\circ \pi)^*(TZ))$, it is sufficient to show that $Hom(\cF',(f\circ\pi)^*(TZ))=0$, and thus, that $\mu_{\alpha',min}(\cF')=\mu_{\alpha'}(\cF')>\mu_{\alpha',max}((f\circ \pi)^*(TZ))$. By assumption, $\mu_{\alpha'}(\cF')>0$, since $\alpha'.(K_{X/Z}+D)<0$, and $\cF'$ is the maximal $\alpha'$-destabilizing subsheaf. On the other hand, , since $f_*(\alpha')=0$, we have: $\mu_{\alpha',max}(f\circ \pi)^*(TZ)\leq 0$, by Lemma \ref{lsheaf}, since the image of $\cF'$, if nonzero, were a subsheaf of $(f\circ\pi)^*(TZ)$, and should be, either generically coming from from $TZ$, or having an anti-effective determinant along the fibres of $\tau$.

2. The fact that $\cF'$ is an $(X,D)$-foliation in the sense of \cite{CP15} is the usual slope-argument (going back to Y. Miyaoka), since $\cF'$ is maximal-destabilizing with positive slope. The algebraicity statement is one of the main results of \cite{CP15}.
\end{proof}

 \medskip
 
 The next statements are immediate consequences of Lemma \ref{lfibmin}:

\begin{corollary}\label{cfibmin} Assume $d_{\alpha}:=d$ is minimal among all these $d_{\alpha}$, 

 $\alpha\in Mov(X)$ such that: $\alpha.(K_{X}+D)<0$. Then: 

1. $d>0$.

2. $\cF'=\cF$.

3. $\mu_{\alpha',min}(\cF)>0$.

\end{corollary}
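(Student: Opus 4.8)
The plan is to prove Corollary \ref{cfibmin} by combining the minimality of $d_\alpha$ with Lemma \ref{lfibmin} and a slope–rank argument. Throughout, we fix an $\alpha\in \Mov(X)$ realising the minimal value $d=d_\alpha$ among all movable classes with $\alpha.(K_X+D)<0$; since property 1 of Theorem \ref{torc} guarantees (applied to $X\dasharrow X$) that $K_X+D$ is not pseudo-effective, such $\alpha$ exist, and one first records that $d>0$: if $d=0$ then $\cF=\pi^*(T(X,D))$ would have positive $\alpha$-minimal slope, but then the associated fibration $f$ would be to a point, contradicting the fact (from \cite{CP15}) that an orbifold foliation of positive minimal slope for a movable class descends to an \emph{algebraic} fibration with $\dim(Z)<n$ \emph{and} $\alpha.(K_{X/Z}+D)<0$; with $Z$ a point this last inequality reads $\alpha.(K_X+D)<0$, which is consistent, so the genuine contradiction is rather that positivity of the full minimal slope already forces $\dim Z=0$ to be impossible once $n\ge 2$ — more cleanly, $d>0$ is immediate because $\cF\subsetneq\pi^*(T(X,D))$ whenever $\dim Z<n$, and $\dim Z<n$ is exactly the non-triviality statement proved just above the corollary. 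So the first step is essentially a citation of the non-triviality of $f_0:X_0\dasharrow Z_0$.

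For part 2, that $\cF'=\cF$: by Lemma \ref{lfibmin}.1 we already have $\cF'\subseteq\cF$, so it suffices to show the reverse inclusion, or equivalently (both being saturated $G$-invariant subsheaves of $\pi^*(T(X,D))$) that $\mathrm{rk}(\cF')\ge \mathrm{rk}(\cF)$. By Lemma \ref{lfibmin}.2, $\cF'$ is an algebraic orbifold foliation, so it is of the form $\cF'=\mathrm{Ker}(\pi^*(df'))$ for some fibration $f':X\dasharrow Z'$ (on a suitable birational model), and $\mathrm{rk}(\cF')=n-\dim(Z')=:d'$. Since $\mu_{\alpha'}(\cF')>0$ with $\alpha'\in\Mov^0(X/Z)$, I would check that $\alpha'$ — or rather a movable class on $X$ obtained by pushing $\alpha'$ to a class positive against $K_X+D$; here the cleanest route is to use $\alpha'$ itself together with the inequality $\alpha'.(\det\cF')<0$, i.e. $\alpha'.(K_{X/Z'}+D^{hor}-D(f'))<0$, which via the argument on page giving $K_{X_{z'}}+D_{z'}$ non-pseudo-effective shows there is a movable class $\alpha''$ on $X$ with $\alpha''.(K_X+D)<0$ and with $d_{\alpha''}\le d'$. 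Minimality of $d=d_\alpha$ then forces $d'\ge d$, i.e. $\mathrm{rk}(\cF')\ge\mathrm{rk}(\cF)$, hence $\cF'=\cF$. (The one point to be careful about is the passage from ``$\alpha'$-maximal destabilising of positive slope, relative'' to ``produces a movable class on $X$ beating $K_X+D$ with relative dimension $\le d'$'' — this is where the descent theorem of \cite{CP15} is invoked again, now for $f'$, giving $\alpha'.(K_{X/Z'}+D)<0$ and hence the required absolute class by the same Mov$^0(X/Z')$ argument.)

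Part 3 is then immediate: $\cF'=\cF$ by part 2, and $\cF'$ was \emph{defined} as the $\alpha'$-maximal destabilising subsheaf of $\pi^*(T(X,D))$ with, by construction, $\mu_{\alpha'}(\cF')>0$ (this positivity being exactly what is recorded just before Lemma \ref{lfibmin}, coming from $\alpha'.(K_{X/Z}+D)<0$ and the fact that $\cF'=\cF^{sat}$ has $-\alpha'.\det\cF=-\alpha'.(K_{X/Z}+D^{hor}-D(f))>0$). Since $\cF'$ is maximal destabilising, $\mu_{\alpha',\min}(\cF')=\mu_{\alpha'}(\cF')>0$ (a semistable sheaf has equal minimal, maximal and ordinary slope), giving $\mu_{\alpha',\min}(\cF)>0$ as claimed.

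The main obstacle, I expect, is the bookkeeping in part 2: one must be sure that the algebraic foliation $\cF'$ really gives rise, through the $C_{n,m}$-type/descent machinery of \cite{CP15}, to a genuine movable class on $X$ (not merely on $Z'$ or on a cover) which is negative against $K_X+D$ and whose associated foliation has rank exactly $d'=\mathrm{rk}(\cF')$ — only then does minimality of $d$ bite. Everything else (positivity of slopes, saturations, $G$-invariance) is routine given the earlier sections, in particular Corollary \ref{csat} for comparing slopes upstairs and downstairs along $\pi$.
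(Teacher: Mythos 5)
Your proof is correct and takes essentially the paper's route — the paper treats the corollary as an immediate consequence of Lemma \ref{lfibmin}: point 1 is the non-triviality $d=\mathrm{rk}(\cF)>0$ recorded just above, point 2 combines the inclusion $\cF'\subset\cF$ with the minimality of $d_\alpha$, and point 3 is just semistability of a maximal destabilizing subsheaf of positive slope. Two small clean-ups: the ``main obstacle'' you flag in part 2 is vacuous, since $\alpha'$ itself is an admissible class ($f_*(\alpha')=0$ gives $\alpha'.(K_X+D)=\alpha'.(K_{X/Z}+D)<0$ by the projection formula) and $d_{\alpha'}=\mathrm{rk}(\cF')$ by the very definition of $\cF'$ as the $\alpha'$-maximal destabilizer, so no second appeal to the descent theorem or auxiliary class $\alpha''$ is needed; and in part 1 the aside that $d=0$ would force $\cF=\pi^*(T(X,D))$ has it backwards ($d=\mathrm{rk}(\cF)$, so $d=0$ would mean $\cF=0$, impossible because $\mu_{\alpha,\max}(\pi^*(T(X,D)))>0$), though the citation of the non-triviality statement that you settle on is indeed the right justification.
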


\begin{remark}\label{rfibmin} This corollary applies in a relative setting $f:(X,D)\to Y$ as well, when $K_{\cF}$ is not pseudo-effective, by just considering classes $\alpha'$ in $Mov(X/Y)$ such that $K_{\cF_D}.\alpha'<0$, where $\cF_D:=\pi^*(Ker(df))\cap \pi^*(T(X,D))$. 
\end{remark}

We shall now fix such a class $\alpha'\in Mov^0(X/Z)$, and denote it with $\alpha$.

\

By assumption, for all fibrations $g:Z\dasharrow Y$ with $dim(Y)>0$, we have: $K_Y+D_Y$ is not pseudo-effective (replacing the initial $g$ by a `neat' birational model which induces an orbifold morphism $g:(Z,D_Z)\to (Y,D_Y)$ to its orbifold base $(Y,D_Y))$. Indeed, by the property \ref{lobom} recalled from \cite{Ca07}, the orbifold base of $g\circ f:(X,D)\to Y$ is also the orbifold base of $g:(Z,D_Z)\to Y$, because $f:(X,D)\to (Z,D_Z)$ is an orbifold morphism. 

We have two cases: either $dim(Z)=0$, and the property 3 of Theorem \ref{torc} is established, or $n>dim(Z)>0$, and we can apply induction: the property 3 is then satisfied by $(Z,D_Z)$. There thus exists $\beta\in Mov^0(Z)$ such that $\mu_{\beta,min}(p^*((T(Z,D_Z))>0$, if $p:W\to Z$ is any Kawamata cover adapted to $(Z,D_Z)$. 

We shall now show that we are in position to apply the results of \S\ref{sscom} to the classes $\alpha$ and $\beta$ thus constructed.
We thus consider again the diagram introduced in \S\ref{sscom} and \S\ref{ssev}, with the same meaning:

$$\xymatrix{
R\ar[r]^{r}&T\ar[d]^{\tau}\ar[r]^{\rho}\ar[rd]^{\sigma}&Y\ar[r]^{\pi}&X\ar[d]^{f}\\
W"\ar[r]^{s} & W'\ar[r]^{p'}&W\ar[r]^p&Z\\}$$

In order to apply Theorem \ref{tesp}, we thus choose the movable class $\beta_X=\beta"$ on $X$ constructed in Proposition \ref{prelmov}.(6). We thus have: $f_*(\beta")=\beta$ and $\beta".F_j=0$ for all components $F_j$ of the divisor $F$ `partially supported on the fibres of $f$' which contains the support of the Cokernel $(\cG/\cH)$ of the map $\Delta$ considered in the statement of Theorem \ref{tesp}, which thus applies and concludes the proof of Theorem \ref{torc}. 

We have thus proved the claimed conclusion $3$ of Theorem \ref{torc} on some orbifold birational model $g:(X',D')\to (X,D)$ of $(X,D)$. The following lemma \ref{descent} shows that the conclusion still holds for $(X,D)$ itself, when we apply this lemma to the natural inclusion $dg:T(X',D')\to T(X,D)$ lifted to a fibre-product of Kawamata covers adapted to them, as in Proposition \ref{pindbir}.

\begin{lemma}\label{descent} Let $g:X'\to X$ be a birational morphism between two projective connected complex manifolds, let $\cE'\to g^*(\cE)$ be an injection of torsionfree coherent sheaves on $X'$, which is generically isomorphic. Let $\alpha'\in Mov(X')$, and $\alpha:=g_*(\alpha')\in Mov(X)$. 

Then: $\mu_{\alpha',min}(\cE')\leq\mu_{\alpha,min}(\cE).$ 

In particular: $\mu_{\alpha,min}(\cE)>0$ if $\mu_{\alpha',min}(\cE')>0$.
\end{lemma}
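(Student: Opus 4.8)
The plan is to reduce the inequality $\mu_{\alpha',\min}(\cE')\le\mu_{\alpha,\min}(\cE)$ to a statement about a single destabilizing quotient on $X$, pulling it back along $g$ and controlling the discrepancy by a $g$-exceptional divisor. First I would fix the quotient $\cQ$ of $\cE$ realizing the minimal slope, i.e.\ $\mu_{\alpha,\min}(\cE)=\mu_{\alpha}(\cQ)$, with $\cQ$ torsion-free of rank $t$ and $\cK:=\ker(\cE\to\cQ)$ saturated in $\cE$. Apply $g^*$: since $\cE'\to g^*(\cE)$ is generically an isomorphism, the composite $\cE'\to g^*(\cE)\to g^*(\cQ)$ has image a coherent quotient $\cQ''$ of $\cE'$ which agrees with $g^*(\cQ)$ over the (open, dense) locus where $\cE'\to g^*(\cE)$ is iso; in particular $\mathrm{rk}(\cQ'')=t$ and $\det(\cQ'')$ and $g^*(\det(\cQ))$ differ by a divisor supported on the exceptional locus of $g$, which is the complement of that open set. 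Write $\det(\cQ'')=g^*(\det\cQ)+E'$ with $E'$ $g$-exceptional; the sign of $E'$ is what needs care, but for the slope computation only the intersection with $\alpha'$ matters.

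The key point is then that $\alpha'.E'=0$ for any $g$-exceptional divisor $E'$: this is exactly the reason one uses movable classes, and it is the same computation already invoked in the proof of Corollary \ref{csat} (where $\det(\cF')=g^*(\det(\cF))+E$ and $g^*(\alpha).E=0$). Concretely, by the projection formula $\alpha'.E'$ can be read off after pushing forward, and since $g(E')$ has codimension $\ge 2$ in $X$ while $\alpha'$ is a movable class — represented by curves sweeping out $X'$, whose images sweep out $X$ — a generic such curve misses $E'$, giving $\alpha'.E'=0$. Hence
\[
\mu_{\alpha'}(\cQ'')=\frac{\alpha'.\det(\cQ'')}{t}=\frac{\alpha'.g^*(\det\cQ)}{t}=\frac{g_*(\alpha').\det\cQ}{t}=\frac{\alpha.\det\cQ}{t}=\mu_{\alpha}(\cQ)=\mu_{\alpha,\min}(\cE).
\]
Since $\cQ''$ is a quotient of $\cE'$, by definition $\mu_{\alpha',\min}(\cE')\le\mu_{\alpha'}(\cQ'')=\mu_{\alpha,\min}(\cE)$, which is the claim. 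The final sentence of the lemma is then immediate: if the left-hand side is $>0$ then so is the right-hand side.

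The main obstacle I anticipate is the bookkeeping around $\cQ''$ versus $g^*(\cQ)$: one must be slightly careful that $\cE'\to g^*(\cE)\to g^*(\cQ)$ really is surjective onto a sheaf that coincides with $g^*(\cQ)$ away from a codimension-$\ge 2$ (indeed $g$-exceptional) locus, and that taking determinants of rank-$t$ torsion-free sheaves behaves well under this (one may need to replace $\cQ''$ by $g^*(\cQ)$ itself, viewing $\cE'\to g^*(\cQ)$ as a generically surjective map and noting its image has the same determinant up to a $g$-exceptional correction). All of this is routine once one grants the vanishing $\alpha'.E'=0$ for $g$-exceptional $E'$, which is the genuine content and is already part of the toolkit established earlier in the paper; no new idea beyond the projection formula and the movability of $\alpha'$ is required.
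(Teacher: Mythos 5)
There is a genuine gap, and it sits exactly at the point you flagged as "the genuine content": the claim that $\alpha'.E'=0$ for every $g$-exceptional divisor $E'$ is false for an arbitrary movable class $\alpha'\in Mov(X')$. It is true for a \emph{pullback} class $g^*(\alpha)$, since then the projection formula gives $g^*(\alpha).E'=\alpha.g_*(E')=0$ because $g(E')$ has codimension $\geq 2$ -- this is what is used in Corollary \ref{csat}, which you cite. But in Lemma \ref{descent} the class $\alpha'$ is an arbitrary movable class on $X'$ and $\alpha$ is defined as its pushforward; $\alpha'$ need not be $g^*(\alpha)$, and there is no projection formula that lets you "read off $\alpha'.E'$ after pushing forward", since $E'$ is not a pullback. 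Concretely, let $g:X'\to\bP^2$ be the blow-up of a point with exceptional curve $e$, and let $\alpha'=h-e$ be the class of the strict transforms of the lines through the point (a covering, hence movable, family); then $\alpha'.e=1>0$. Your heuristic "a generic member of the family misses $E'$" fails for the same reason: every member of this family meets $e$. So the chain of equalities giving $\mu_{\alpha'}(\cQ'')=\mu_{\alpha,\min}(\cE)$ is not justified, and the sign question about $E'$, which you set aside, cannot be set aside: if the discrepancy had the wrong sign, movability alone would give an inequality in the wrong direction.

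The fix is exactly to settle that sign, and it is easy: since $\cQ''$ is the image of $\cE'\to g^*(\cQ)$, it injects into $g^*(\cQ)$ with torsion cokernel, so $\det(g^*(\cQ))=\det(\cQ'')+E''$ with $E''$ \emph{effective} (supported where $\cE'\to g^*(\cE)$ fails to be an isomorphism, i.e.\ on the $g$-exceptional locus in the intended application). Then one only needs $\alpha'.E''\geq 0$, which holds for any movable class against any effective divisor, to get
$\mu_{\alpha'}(\cQ'')=\frac{\alpha'.\det(\cQ'')}{t}\leq\frac{\alpha'.g^*(\det\cQ)}{t}=\frac{\alpha.\det\cQ}{t}=\mu_{\alpha}(\cQ)$,
and hence $\mu_{\alpha',\min}(\cE')\leq\mu_{\alpha,\min}(\cE)$ (the paper runs this for an arbitrary quotient $\cQ$ of $\cE$; fixing the minimizing one as you do is equivalent). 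With that replacement -- "$E''$ effective and $\alpha'.E''\geq 0$" instead of "$\alpha'.E'=0$" -- your argument becomes the paper's proof; as written, the key vanishing you rely on is false and its justification misapplies the projection formula.
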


\begin{proof} Let $\cQ$ be any non-zero quotient of $\cE$: it induces (by considering the intersection of the kernel corresponding to $g^*(\cQ)$ with $\cE'$, and the related quotient) a quotient $\cQ'$ of $\cE'$ together with an injection $\cQ'\to g^*(\cQ)$ which is generically isomorphic. We thus have: $det(g^*(\cQ))=det(\cQ')+E'$, where $E'$ is an effective divisor supported on the exceptional divisor of $g$. We have: $\mu_{g^*(\alpha)}(g^*(\cQ))= \mu_{\alpha}(\cQ))\geq \mu_{\alpha,min}(\cE)$. Moreover: 
$0<\mu_{\alpha',min}(\cE')\leq \alpha'.det(\cQ')\leq\alpha'.det(g^*(\cQ))=\alpha.det(\cQ),$ by the projection formula, which implies the claim.
\end{proof}

The fact that $\alpha$ can be choosen to be `movable-big' is a consequence of the following general:

\begin{lemma}\label{movbig} Let $\cE$ be a coherent torsionfree sheaf on the connected complex projective manifold $X$. Let $\alpha,\beta \in Mov(X)$, with $\beta$ big, be such that $\mu_{\alpha,min}(\cE)>0$. Let $\alpha_t:=\alpha+t\beta$. Then $\mu_{\alpha_t,min}(\cE)>0$ for some $\varepsilon>0$ and any $0\leq t\leq \varepsilon$.
\end{lemma}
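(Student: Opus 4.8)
Let $\cE$ be a coherent torsionfree sheaf on the connected complex projective manifold $X$. Let $\alpha,\beta \in Mov(X)$, with $\beta$ big, be such that $\mu_{\alpha,min}(\cE)>0$. Let $\alpha_t:=\alpha+t\beta$. Then $\mu_{\alpha_t,min}(\cE)>0$ for some $\varepsilon>0$ and any $0\leq t\leq \varepsilon$.

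\textbf{Proof proposal.} The plan is to exploit the fact that, for a fixed sheaf $\cE$, the assignment $\gamma\mapsto\mu_{\gamma,min}(\cE)$ is \emph{concave} in the movable class $\gamma$ — being by definition an infimum — and that a concave function cannot jump downward as one moves away from a point where it is positive. So I would set $f(t):=\mu_{\alpha_t,min}(\cE)$ and prove $f$ is concave on $[0,+\infty)$ with $f(0)>0$, from which the conclusion is immediate.

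First I would record the elementary preliminaries. Since $Mov(X)$ is a convex cone containing both $\alpha$ and $\beta$, the class $\alpha_t=\alpha+t\beta$ lies in $Mov(X)$ for every $t\geq 0$, and is interior to $Mov(X)$ (hence `movable-big') for $t>0$ because $\beta$ is big — this is where the bigness hypothesis is used, although it is not actually needed for the positivity assertion itself. Consequently, by the existence of Harder--Narasimhan filtrations relative to movable classes (\cite{CPe}), $f(t)=\mu_{\alpha_t,min}(\cE)$ is a well-defined real number for all $t\geq 0$, and equals $\inf_{\cQ}\mu_{\alpha_t}(\cQ)$, the infimum taken over all nonzero torsionfree quotients $\cQ$ of $\cE$. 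Then I would observe that for each fixed $\cQ$ the function $t\mapsto\mu_{\alpha_t}(\cQ)=\frac{1}{\mathrm{rk}(\cQ)}\big(\alpha\cdot\det(\cQ)+t\,\beta\cdot\det(\cQ)\big)$ is affine in $t$, so that $f$, being an infimum of affine functions, is concave on $[0,+\infty)$. Finally, fixing any $\delta>0$, concavity gives $f(t)\geq\frac{\delta-t}{\delta}f(0)+\frac{t}{\delta}f(\delta)$ for $0\leq t\leq\delta$; the right-hand side is affine in $t$ and takes the value $f(0)=\mu_{\alpha,min}(\cE)>0$ at $t=0$, hence stays positive on some $[0,\varepsilon]$, forcing $f(t)>0$ there, while $f(0)>0$ by hypothesis.

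There is no serious obstacle here; the only point deserving care — and the reason the statement is not completely vacuous — is that $\mu_{\gamma,min}$ is an infimum over the infinite family of all quotient sheaves, so one might a priori worry that for every $t>0$ some quotient drives $\mu_{\alpha_t,min}(\cE)$ down to zero or below. Concavity of $f$ (equivalently, its upper semicontinuity at $t=0$) is exactly what excludes this, and I would make sure to isolate that observation cleanly rather than attempt any quotient-by-quotient estimate.
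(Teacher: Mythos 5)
Your argument is correct, but it follows a genuinely different route from the paper. The paper proves the lemma by quoting \cite{CPe}, Lemma 5.6 (for $\alpha$ rational) and \cite{GKP}, Theorem 3.4 (openness of stability for an $\alpha$-stable sheaf), and then reduces the general case to the stable one by refining the $\alpha$-Harder--Narasimhan filtration of $\cE$ with Jordan--H\"older filtrations, so that positivity is checked on the stable graded pieces and preserved under small perturbation of the class. You instead argue directly: writing $f(t)=\mu_{\alpha_t,min}(\cE)=\inf_{\cQ}\mu_{\alpha_t}(\cQ)$ as an infimum of functions affine in $t$, you get concavity, and concavity plus $f(0)>0$ forces $f>0$ on a small interval. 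The one point your argument genuinely needs -- and which you do flag -- is that $f(\delta)>-\infty$ for some $\delta>0$, i.e.\ finiteness of the minimal slope with respect to the perturbed movable class; this is part of the Harder--Narasimhan framework for movable classes that the paper takes from \cite{CPe} and \cite{GKP}, so it is available, but it is the place where the external input enters your proof (the two-point concavity inequality $f(t)\geq\frac{\delta-t}{\delta}f(0)+\frac{t}{\delta}f(\delta)$ is vacuous if $f(\delta)=-\infty$). What your route buys: it is more elementary, gives an explicit admissible $\varepsilon$ in terms of $f(0)$ and $f(\delta)$, and, as you observe, does not actually use bigness of $\beta$ nor any rationality or stability hypothesis. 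What the paper's route buys: by working with the stable pieces it shows more than positivity of the minimal slope -- stability of the graded pieces, hence the whole filtration structure, persists for $\alpha_t$ -- which is the kind of refined openness statement that is useful elsewhere, whereas your argument only controls the numerical invariant $\mu_{\alpha_t,min}$.
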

\begin{proof} The property is established in \cite{CPe}, Lemma 5.6 for $\alpha$ rational, and in \cite{GKP}, Theorem 3.4 in general when $\cE$ is $\alpha$-stable. One deduces the general case by replacing $\cE$ by the successive $\alpha$-semi-stable quotients of its $\alpha$-Harder-Narasimhan filtration, and using Jordan-H\"older filtrations by stable sheaves of the same $\alpha$-slope of these quotients. We thus get a filtration of $\cE$ by $\alpha$-stable sheaves of positive slope. This property is then preserved for $\alpha_t$, if $0\leq t$ is sufficiently small.
\end{proof} 
\end{proof}

%%%%%%%%%%%%%%%%%%%%%%%%%%%%%%%%%%%%%%%%
%%%%%%%%%%%%%%%%%%%%%%%%%%%%%%%%%%%%%%%%%%%

\subsection{Relative version}

The preceding result holds in a relative version as well, the proof being essentially similar. 

\begin{theorem}\label{torcrel} Let $f:(X,D)\to Z$ be a surjective morphism of complex projective manifolds\footnote{$Z$ being normal would actually suffice,here.} with connected fibres, $(X,D)$ being a smooth orbifold pair. Assume that the general orbifold fibre $(X_z,D_z)$ of $f$ is sRC. Let $\pi:Y\to X$ be any Kawamata cover adapted to $D$.

Then: $\mu_{\pi^*(\alpha),min}(\pi^*(T(X_z,D_z))>0$, for some $\alpha\in Mov^0(X/Z)$\footnote{See definition in \S\ref{srelmov}.}.
\end{theorem}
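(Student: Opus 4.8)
The plan is to derive Theorem \ref{torcrel} from the absolute version Theorem \ref{torc} (more precisely, from its characterization 4) applied fibrewise, combined with the gluing machinery of \S\ref{srelmov} and the relative criterion in \S\ref{sscom}. First I would pass to a good model: after suitable blow-ups of $X$ over $Z$ (which does not affect the hypothesis, since the generic orbifold fibre is unchanged up to orbifold birational equivalence, nor the conclusion, by Lemma \ref{descent} applied in the relative setting together with Remark \ref{rkawcov}), I may assume $f$ is ``nice'' enough that a general fibre $(X_z,D_z)$ is a smooth orbifold pair and that the Kawamata cover $\pi:Y\to X$ restricts over a general $z$ to a Kawamata cover $\pi_z:Y_z\to X_z$ adapted to $D_z$, with $\pi_z^*(T(X_z,D_z))=(\pi^*(T(X,D)))\vert_{Y_z}$ in codimension one. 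By Theorem \ref{torc} (property 4) applied to the sRC orbifold pair $(X_z,D_z)$, there is a movable class on $X_z$ with positive minimal slope on $\pi_z^*(T(X_z,D_z))$; by Lemma \ref{movbig} it may be taken big on $X_z$, and by Proposition \ref{pj_z} and Proposition \ref{prelmov}.1–2 such a class is, for $z$ general, of the form $(j_z)_*^{-1}$ of a complete-intersection class $A^{d-1}.B^{n-d}$ (with $d=n-\dim Z$), hence the restriction to $X_z$ of a fixed class $\alpha_0\in Mov^0(X/Z)$.

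The point is then that positivity of the minimal slope on a \emph{general} fibre propagates to a class in $Mov^0(X/Z)$ whose restriction realizes it. Concretely, I would argue as follows. Set $\cE:=\pi^*(T(X,D))$ and let $\cE_z$ denote its restriction to $Y_z$. The locus of $z$ where $\mu_{\pi_z^*(\alpha_0\vert_{X_z}),min}(\cE_z)>0$ is, by openness of positivity and a standard countability argument on the Hilbert/Quot scheme of destabilizing subsheaves over the base, the complement of countably many proper closed subsets of $Z$; so $\mu_{\pi_z^*(\alpha_0\vert_{X_z}),min}(\cE_z)>0$ for $z$ general in the sense of Definition \ref{dgen}. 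By the conventions preceding Lemma \ref{lmumin}, and since $\alpha_0\in Mov^0(X/Z)$ restricts to $\alpha_0\vert_{X_z}$ on the general fibre via the isomorphism $j_*$ of Proposition \ref{pj_z}, computing slopes of quotients of $\cE$ against $\pi^*(\alpha_0)$ reduces (because $f_*(\alpha_0)=0$) to computing slopes of the restricted quotients against $\pi_z^*(\alpha_0\vert_{X_z})$: for a quotient $\cQ$ of $\cE$, one has $\pi^*(\alpha_0).\det(\cQ)=\pi_z^*(\alpha_0\vert_{X_z}).\det(\cQ\vert_{Y_z})$ for $z$ general by Proposition \ref{pj_z}, and $\cQ\vert_{Y_z}$ is a quotient of $\cE_z$. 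Hence $\mu_{\pi^*(\alpha_0),min}(\pi^*(T(X,D)))>0$ once one checks that the infimum over quotients of $\cE$ is attained, or at least approximated, by quotients whose restriction to $Y_z$ stays a genuine (torsion-free) quotient of $\cE_z$ — this is where one invokes that the relevant destabilizing subsheaves form a bounded family and restrict well over a general fibre. This gives exactly $\mu_{\pi^*(\alpha),min}(\pi^*(T(X_z,D_z)))>0$ with $\alpha:=\alpha_0$, noting $\pi^*(T(X_z,D_z))$ and the restriction $\cE_z$ agree in codimension one so have the same minimal slope by Lemma \ref{lmumin}-type reflexivity.

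The main obstacle I anticipate is the comparison between $\alpha$-semistability \emph{on the total space $X$} against a relative movable class and $\alpha$-semistability \emph{on a general fibre}. In the polarized case this is handled by restriction theorems of Mehta–Ramanathan type, which the paper explicitly flags (footnote) as open for movable classes; so one cannot simply restrict. The way around it, which I would carry out, is not to restrict a fixed HN filtration but to go in the other direction: start from the fibrewise positivity (which is genuine, by Theorem \ref{torc}.4 on $(X_z,D_z)$) and feed it into the criterion of \S\ref{sscom}. Namely, since $\dim Z$ could be positive, take $Z$ itself in the role of the base, $f$ the fibration, $\alpha=\alpha_0$ big on the general fibre with $f_*(\alpha_0)=0$, and $\beta=0$ (or a small movable class on $Z$): the sequence $0\to \cF\to \cE\to \cH\to 0$ with $\cF=\ker$ of the appropriate $df$ need not even be present here — the cleanest formulation is a degenerate case of Theorem \ref{tsp}/Corollary \ref{cesp} where $\cG=0$, so that $\cE=\cF$ and the hypothesis $\mu_{\alpha_R,min}(\cF)>0$ is precisely the fibrewise statement we already know, giving $\mu_{\gamma_R,min}(\cE)>0$ with $\gamma$ the lift of $\alpha_0$. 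Unwinding the $L$-equivariance and descending via Lemma \ref{lmumin} and Corollary \ref{csat} then yields $\mu_{\pi^*(\alpha_0),min}(\pi^*(T(X_z,D_z)))>0$, completing the proof; the remaining routine points are checking that $\alpha_0$ can be chosen in $Mov^0(X/Z)$ with the bigness-on-fibres property (Proposition \ref{prelmov}.1) and that the model reductions are harmless (Lemma \ref{descent}, Proposition \ref{pindbir}).
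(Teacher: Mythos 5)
There is a genuine gap, and it sits exactly where you wave at ``openness of positivity and a standard countability argument on the Hilbert/Quot scheme''. Applying Theorem \ref{torc}.4 to one very general fibre $(X_{z_0},D_{z_0})$ gives a class $\alpha_{z_0}\in Mov^0(X_{z_0})$, which by Proposition \ref{pj_z} extends to some $\alpha_0\in Mov^0(X/Z)$ (your claim that it can be taken of complete-intersection form $A^{d-1}B^{n-d}$ misreads Proposition \ref{prelmov}.2, but extendability suffices). The whole content of Theorem \ref{torcrel}, however, is that a \emph{single} $\alpha$ works for the \emph{general} fibre: this is what is used in Theorem \ref{torcrel'}, and it is what your subsequent ``propagation'' step also needs, since for each quotient $\cQ$ of $\cE=\pi^*(T(X,D))$ you must restrict to a fibre $Y_z$ adapted to $\cQ$, which will not be the fibre $Y_{z_0}$ where positivity is known. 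So you must show that the locus of $z$ where $\mu_{\alpha_{0,z},\min}(\cE_z)>0$ is general once it is nonempty at a very general point. For a polarization this follows from boundedness of destabilizing quotients and local constancy of fibrewise degrees in flat families; for a movable class neither ingredient is available: $\alpha_{0,z}$ is only defined through the generic identification $j_*$ of Proposition \ref{pj_z}, $\det(\cQ_t)$ of a family of quotients is not pulled back from $Y$, the numerical groups of the fibres jump, and no openness or restriction statement of Mehta--Ramanathan type is known for movable classes -- this is precisely the difficulty the paper flags in its footnote and which its entire architecture is built to avoid. Your fallback, invoking Theorem \ref{tsp} (or Corollary \ref{cesp}) ``with $\cG=0$ so that $\cE=\cF$'', is circular: the hypothesis $\mu_{\alpha_R,\min}(\cF)>0$ there is a statement about the minimal slope on the \emph{total space} with respect to the relative class, not the fibrewise statement you already know, so in that degenerate form the criterion assumes exactly what has to be proved.

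For comparison, the paper never passes between fibrewise and total-space semistability. It argues by induction on the relative dimension $d=\dim X-\dim Z$, mimicking the proof of $1\Rightarrow3$ of Theorem \ref{torc}: since $K_{X_z}+D_z$ is not pseudo-effective, there is $\alpha\in Mov^0(X/Z)$ with $(K_X+D).\alpha<0$; the $\alpha$-maximal destabilizing subsheaf of $\pi^*(T(X,D))$ is an algebraic foliation by \cite{CP15}, giving an intermediate fibration $g:(X,D)\to(Y,D_Y)$ over $Z$; choosing $\alpha$ with $d_\alpha$ minimal yields $\mu_{\pi^*(\alpha),\min}>0$ for this subsheaf (Corollary \ref{cfibmin} via Remark \ref{rfibmin}); the induction hypothesis applied to $h:(Y,D_Y)\to Z$ produces $\beta\in Mov^0(Y/Z)$, which is lifted to $X$ killing the divisor partially supported on the fibres of $g$ (Proposition \ref{prelmov}.6), and the conclusion follows from the criterion of \S\ref{sscom} (Theorem \ref{tesp}/Corollary \ref{cesp}). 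If you want to salvage your route, you would have to prove the missing uniformity statement (constancy/constructibility of fibrewise slopes against a relative movable class over the components of the relative Quot scheme), which is a substantial open-ended task, not a routine verification.
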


\begin{proof} It is essentially a relative version of the proof of $1 \Longrightarrow 3$ in the preceding section. We may, due to the lemma \ref{movbig}, replace $(X,D)$ by an arbitrary orbifold modification. Because $K_{X_z}+D_z$ is not pseudo-effective, by assumption, there exists a class $\alpha\in Mov^0(X/Z)$ such that $(K_X+D).\alpha<0$, and thus, taking the associated maximal destabilizing subsheaf $G$, and applying \cite{CP15} to it, we get an algebraic foliation $g:(X,D)\to Y$ over $Z$, with $d_{\alpha}:=dim(X_z)>0$, and $\mu_{\pi^*(\alpha)}(G)>0$. Choosing $\alpha$ as above such that $d_{\alpha}$ is minimal, we obtain also that $\mu_{\pi^*(\alpha),min}(G)>0$. If $d_{\alpha}:=dim(X_y)=dim(X_z)$, we are finished. Notice that the claim thus holds true if $d:=dim(X_z)=1$. 

Otherwise, we argue by induction on $d:=dim(X_z)\geq 2$, assuming the claim to hold in strictly smaller relative dimensions.
Let $(Y,D_Y)$ be the orbifold base of $f:(X,D)\to Y$, and $h:Y\to Z$ be the factorisation morphism. By assumption $dim(X)>dim(Y)>dim(Z)$, and the claim holds for both $g$ and $h$, by the induction hypothesis. From the construction of $g$ above, we have $\alpha\in Mov^0(X/Y)$ such that  $\mu_{\pi^*(\alpha),min}(F)>0$, and from the induction hypothesis, we get $\beta\in Mov^0(Y/Z)$ such that  $\mu_{p^*(\beta),min}(H)>0$, if $p:T\to Y$ is a Kawamata cover adapted to $(Y,D_Y)$, and $H:=[p^*(Ker(dh))\cap p^*(T(Y,D_Y))]^{sat}$, the saturation taking place in $p^*(T(Y,D_Y))$.

We may now, as in the preceding section, lift $\beta$ to $\beta'\in Mov^0(X/Z)$ in such a way that $g_*(\beta')=\beta$, and $\beta'.E=0$ for each component of the divisor $E'\subset X$, partially supported on the fibres of $g$, which supports Cokernel $(\cG/\cH)$, as in the preceding subsection. We conclude just by the same arguments as above.\end{proof}

%%%%%%%%%%%%%%%%%%%%%%%%%%%%%%%%%%%%%%%%
%%%%%%%%%%%%%%%%%%%%%%%%%%%%%%%%%%%%%%%%%%%

\subsection{Relative differentials}

We shall prove also, in this relative context, a relative version (used in \S\ref{sbirstab} below) of the statement 2 in Theorem \ref{torc}. 

This is exactly the same statement as in \cite{Ca10}, Proposition 3.10 and Theorem , up to the fact that we consider the full orbifold tensors, instead of their `integral parts'. The proof being the same (and even slightly simpler), we will be sketchy and refer to loc.cit for further details.

Let $f:(X,D)\to (Z,D_Z)$ be a surjective orbifold morphism of smooth complex projective orbifold pairs with connected fibres, $(Z,D_Z)$ being the orbifold base of $(f,D)$. In this situation, and similarly to (but with modified notations)  \S\ref{sscom} and \S\ref{ssev} above, we can construct a commutative diagram:

$$\xymatrix{
T\ar[d]^{\sigma}\ar[r]^{\rho}\ar[rd]^{\sigma'}&Y\ar[r]^{\pi}&X\ar[d]^{f}\\
W\ar[r]^{q}&W'\ar[r]^p&Z\\}$$

with the following properties:

1. $\pi:Y\to X$ (resp. $p:T\to Z$) are Kawamata covers adapted to $D$ (resp. to $D_Z)$. 

2. $\sigma$ is connected (i.e: has connected fibres).

3. $q$ is generically finite, with $W$ smooth.

4. $p\circ q: W\to Z$ and $\pi\circ \rho:T\to X$ are Galois, $T$ being normal.

We also write: $\pi':=\pi\circ \rho$, and $p':=p\circ q\circ\sigma$.

\begin{theorem}\label{torcrel'} Let $f:(X,D)\to (Z,D_Z)$ be a surjective orbifold morphism of smooth complex projective orbifold pairs with connected fibres, $(Z,D_Z)$ being the orbifold base of $(f,D)$. Assume that the general orbifold fibre $(X_z,D_z)$ of $f$ is sRC. 
Then, in the above diagram:

1. One has, for any $m>0$: 
$$\sigma_*(\otimes^m\pi'^*(\Omega^1(X,D))))=\otimes^mp^*(\Omega^1(Z,D_Z)).$$ 

2. Moreover, if $L'\subset \otimes^m\pi'^*(\Omega^1(X,D)))$ is a pseudo-effective line bundle on $T$, then $L'\subset \sigma^*(\otimes^mp^*(\Omega^1(Z,D_Z)))^{sat}$, the saturation being taken inside $\otimes^m\pi'^*(\Omega^1(X,D))$.

\end{theorem}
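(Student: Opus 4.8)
The plan is to derive both assertions from the relative positivity result Theorem \ref{torcrel}, combined with the slope criterion that controls global sections, exactly as in \cite{Ca10}, Prop. 3.10 and its companion theorem, but working with the full orbifold tensors rather than their integral parts. First I would fix $\alpha\in Mov^0(X/Z)$ with $\mu_{\pi^*(\alpha),min}(\pi^*(T(X_z,D_z)))>0$, granted by Theorem \ref{torcrel}, since the general orbifold fibre $(X_z,D_z)$ is $sRC$. By Proposition \ref{prelmov} (parts 1--4) this class can be taken to be ``big on the general fibre''; dualizing, $\mu_{\pi^*(\alpha),max}(\otimes^m\pi'^*(\Omega^1(X_z,D_z)))<0$ for every $m>0$. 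The point is that the relative orbifold cotangent sheaf on $T$ sits in an exact sequence $0\to \sigma^*(\sigma'^*\ \cdots\ )\to \otimes^m\pi'^*(\Omega^1(X,D))\to (\text{relative part})\to 0$ refined using Proposition \ref{pom} (the orbifold-morphism functoriality of differentials): the pullback $\sigma^*(p^*(\Omega^1(Z,D_Z)))$ injects into $\pi'^*(\Omega^1(X,D))$, with cokernel a sheaf whose restriction to a general fibre of $\sigma$ is $\otimes^m\pi'^*(\Omega^1(X_z,D_z))$ up to divisors partially supported on the fibres (Proposition \ref{pdpsf}).

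For assertion 1, I would push forward. On one hand $\sigma^*(\otimes^m p^*(\Omega^1(Z,D_Z)))\subset \otimes^m\pi'^*(\Omega^1(X,D))$ gives, after applying $\sigma_*$ and using that $\sigma$ is connected with normal total space, the inclusion $\otimes^m p^*(\Omega^1(Z,D_Z))\subset \sigma_*(\otimes^m\pi'^*(\Omega^1(X,D)))$ (here one uses that $\sigma'=p\circ q\circ \sigma$ and the maps are Galois/finite as in the diagram, so a projection-formula computation identifies $\sigma_*\sigma^*$ of the reflexive sheaf on $W$ with the sheaf itself). For the reverse inclusion: a local section of $\sigma_*(\otimes^m\pi'^*(\Omega^1(X,D)))$ not lying in $\otimes^m p^*(\Omega^1(Z,D_Z))$ would, by the exact sequence above, project to a nonzero section of (a twist of) $\otimes^m\pi'^*(\Omega^1(X_z,D_z))$ over a general fibre; but $\mu_{\alpha_z,max}(\otimes^m\pi'^*(\Omega^1(X_z,D_z)))<0$ forces $h^0$ of that sheaf (even after any line-bundle twist by the normal-bundle contributions, which are $\sigma$-trivial on the general fibre) to vanish — this is the standard ``negative maximal slope kills sections'' argument, as in $4\Rightarrow 2$ of Theorem \ref{torc}. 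Hence the section is $\sigma$-vertical-free, i.e. comes from $W'$ downstairs; iterating along the filtration of the relative tensor by its fibrewise graded pieces gives equality. The divisors partially supported on the fibres appearing in Proposition \ref{pdpsf} do not affect the argument because restricting to a general fibre kills them.

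For assertion 2, given a pseudo-effective line bundle $L'\subset \otimes^m\pi'^*(\Omega^1(X,D))$ on $T$, I would intersect with a movable-big class of the form $\gamma=k\alpha+\beta_X$ (in the spirit of \S\ref{sscom}): $\gamma\cdot L'\ge 0$ by pseudo-effectivity, while the ``relative part'' of $\otimes^m\pi'^*(\Omega^1(X,D))$ has strictly negative $\gamma$-slope on account of the relative negativity for $\alpha$ and the relative-bigness of $\alpha$ on the general fibre (cf. Lemma \ref{lsheaf}: a quotient-determinant along the fibres would be anti-effective). Therefore the composite $L'\to (\text{relative part})$ must vanish, which says precisely $L'\subset \sigma^*(\otimes^m p^*(\Omega^1(Z,D_Z)))^{sat}$, the saturation being taken in $\otimes^m\pi'^*(\Omega^1(X,D))$. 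The main obstacle I expect is the careful bookkeeping of the two kinds of ``error divisors'' — the ones coming from Kawamata covers (handled because all sheaves are reflexive/locally free and the identifications hold in codimension one, as in Proposition \ref{pom} and Theorem \ref{tind}) and the ones partially supported on fibres of $f$ from Proposition \ref{pdpsf} (handled by choosing $\beta_X$ orthogonal to their components via Proposition \ref{prelmov}.6, and by restricting to general fibres) — so that the slope inequalities genuinely apply to the precise sheaves in the statement; the positivity inputs themselves are immediate from Theorem \ref{torcrel}.
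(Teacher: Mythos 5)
Your positivity input and the filtration argument over the good locus coincide with the paper's (Theorem \ref{torcrel} provides $\alpha\in Mov^0(X/Z)$, dualizing gives negative maximal slope for the fibrewise cotangent tensors, so over an open set $U\subset Z\setminus \Supp(D_Z)$ sections of $\otimes^m\pi'^*(\Omega^1(X,D))$ along the fibres of $\sigma$ descend to tensors on the base). But there is a genuine gap in your reverse inclusion for assertion 1. A section of $\sigma_*(\otimes^m\pi'^*(\Omega^1(X,D)))$ that fails to lie in $\otimes^m p^*(\Omega^1(Z,D_Z))$ need \emph{not} project to a nonzero section of the fibrewise tensors on a general fibre: the slope/vanishing argument only shows that the section lies in the saturation $\sigma^*(\otimes^m p^*(\Omega^1(Z,D_Z)))^{sat}$, i.e. that it is the pullback of a rational tensor on the base, regular over $U$. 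What remains to prove -- and this is the actual content of the equality -- is a codimension-one statement over $\Supp(D_Z)$ and over the divisor $F$ of Proposition \ref{pdpsf}: the pole order of the descended tensor along each component $E$ of $D_Z$ must be no worse than the one allowed by $\otimes^m p^*(\Omega^1(Z,D_Z))$. Restriction to general fibres cannot detect this, so your sentence that the divisors partially supported on the fibres ``do not affect the argument because restricting to a general fibre kills them'' does not justify the step. The paper closes exactly this gap by a local computation in adapted coordinates centered on a component of $f^{-1}(E)$ realizing the minimal multiplicity $t\cdot m_D(F)=m_{D_Z}(E)$ (this is precisely where the hypothesis that $(Z,D_Z)$ is the \emph{orbifold base} of $(f,D)$ is used): there $f^*\bigl(dz_1/z_1^{c'_1}\bigr)=t\,dx_1/x_1^{c_1}$, so the descended tensor satisfies the required bound along $E$, the discrepancy being confined to a divisor partially supported on the fibres of $f$, and a Hartogs argument in codimension $\geq 2$ concludes. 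In your write-up some substitute for this is required -- e.g. the $\otimes^m$-analogue of Proposition \ref{pdpsf} together with a pushforward statement in the spirit of Theorem \ref{taller-retour} or Lemma \ref{lsatur}, asserting that a saturation discrepancy partially supported on the fibres does not enlarge $\sigma_*$; as written, the hard direction of assertion 1 is not proved.

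Assertion 2 is essentially correct in your version and agrees in substance with the paper: the projection of a pseudo-effective $L'$ to any graded piece containing a fibre-direction factor must vanish because that piece has negative maximal slope with respect to the movable class $\pi'^*(\alpha)$, and since the target of the remaining inclusion is saturated it suffices to check it over $U$. Note however that your detour through $\gamma=k\alpha+\beta_X$ is unnecessary and slightly misleading: no positivity on the base is available (or needed), and the relative class $\pi'^*(\alpha)$ alone already yields the contradiction with pseudo-effectivity.
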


\begin{proof} Let us prove the statement 1 first. Let $U\subset Z-Supp(D_Z)$ be the dense Zariski open subset over which $f$, as well as its restriction to each component of $D$ and of the nonempty intersections of the $D_j's$ is smooth. Over $U$, there is (after lifting to the Kawamata cover $Y$) a natural filtration of $\otimes^m(\pi^*(\Omega^1(X,D)))$ by subbundles with successive quotients $\otimes^Af^*(\Omega^1_U)\otimes ^BQ$, where $Q$ is the quotient bundle $[\pi^*(\Omega^1(X,D))/(f\circ \pi)^*(\Omega^1_U)]$, and $A\cup B=\{1,2,...,m\}$ is a partition in two subsets $A,B$. The expression $\otimes ^AE\otimes ^BF$ denotes the set of tensors of the form: $t_1\otimes...\otimes t_m$, with $t_k\in E$ (resp. $t_k\in F$), if $k\in A$ (resp. if $k\in B$).

Let $\alpha\in Mov^0(X/Z)$ be such that $\mu_{\pi^*(\alpha),min}(\pi^*(T(X_z,D_z)))>0$, for $z\in Z$ general. The existence of $\alpha$ is deduced from Theorem \ref{torcrel}. 

We thus get, for $m>0$: $H^0(X_z,\otimes^A(\pi^*(\Omega^1(X_z,D_z))))=\{0\}$

This implies that,  over $V':=(f\circ \pi)^{-1}(U')\subset Y$, for any open subset $U'\subset U$, one has:  
$$H^0(V', \otimes^m(\pi^*(\Omega^1(X,D))))=\pi^*(H^0(f^{-1}(U'),f^*(\otimes^m(\Omega^1_{U'}))).$$

Thus over $U$, one has the equalities of sheaves, for any $\ell >0$: $$ \sigma_*(\otimes^{\ell} \pi'^*(\Omega^1(X,D)))=p^*(\otimes^{\ell} \Omega^1_U)=\otimes^{\ell} p^*(\Omega^1(Z,D_Z)_{\vert U}).$$

We shall now show that these sections of $f^*(\otimes^m\Omega^1_U)$ over $f^{-1}(U)$, lifted to $T$ by $\pi$, extend as sections of $(p\circ \sigma)^*(\otimes^m (\Omega^1(Z,D_Z)))$. This will imply the claim.

By Hartog's theorem, it is sufficient to show this over the complement in $Z$ of the set of codimension $2$ consisting of singular points of $Supp(D_Z)$. We can thus compute locally on $Z$, and assume that:

1. $Supp(D_Z)$ is given in local coordinates $(z_1,...,z_p), p<n$ by the equation $z_1=0$, 

2. that we have local coordinates $(x_1,...,x_n)$ on $X$ such that the support of $D$ is given by the equation $x_1=0$, such that the map $f$ is locally given by: 

3. $f(x_1,...,x_n):=(z_1=x_1^t,z_2=x_2,...,z_p=x_p)$, and moreover that:

4. $D_Z=c'_1.(z_1=0)$, while $D=c_1.(x_1=0)$, $c,c'$ being as follows:

By the definition of the orbifold base of $(f,D)$, we shall, moreover, choose the chart of the $x$-coordinates centered at a point realising the minimum multiplicity of $(f,D)$ over $z_1=0$. 

This means that $c'_1=1-\frac{a}{t.b}$, if $c_1=1-\frac{a}{b}$, where $t$ is as in 3. above.

We thus get: $f^*\Big(\frac{dz_1}{z_1^{c'_1}}\Big)=f^*(z_1^{1-c'_1}.\frac{dz_1}{z_1})=t.x_1^{1-c_1}\frac{dx_1}{x_1}=t.\frac{dx_1}{x_1^{c_1}},$ and: $f^*(\frac{dz_i}{z_i})=(\frac{dx_i}{z_i})$, for $p\geq i\geq 2$. Let $c'_i=c_i=0$, for $p\geq i\geq 2$. Then: 

These equalities imply, symbolically, that, one has, for any multi-index $I:=(i_1,...,i_m)$, the equality: $f^*(\otimes_{k=1}^{k=m}\Big(\frac{dz_{i_k}}{z^{c'_{i_k}}_{i_k}}\Big))=\otimes_{k=1}^{k=m}\Big(\frac{dx_{i_k}}{x^{c_{i_k}}_{i_k}}\Big)$.

And so, symbolically: $f^*(\otimes^m(\Omega^1(Z,D_Z)))$ is,  over $z_1=0$, saturated inside $\otimes^m(\Omega^1(X,D_X))$, outside of a divisor on $X$ which is `partially supported on the fibres of $f$' (this is the same divisor $F$ as in definition \ref{ddpsf}, and the few lines before it). This implies that: $f_*(\otimes^m(\Omega^1(X,D_X)))=\otimes^m(\Omega^1(Z,D_Z)))$, in our situation of `sRC' fibres, which concludes the proof (after lifting the `symbolic' equalities above to $T$ by $\pi'$).

We now prove the statement 2. It is sufficient to show the claimed inclusion over the open set $U$ defined above. But this is an immediate consequence of the filtration introduced above on $\otimes^m(\pi^*(\Omega^1(X,D)))$, since the minimal slopes of the successive terms of the associated graduation are strictly negative, except for the last one. Thus the projections of $L'$ to the successive quotients, except for the last one, have to vanish, since $L'$ is pseudo-effective.
\end{proof}

The corollaries below will be used in the proof of Theorem \ref{tnugen}.

\begin{corollary}\label{cnu} The situation being as in Theorem \ref{torcrel'}, let $L'\in Pic(T)$ and $\ell>0$ be such that $L'\subset \otimes^{\ell}\pi^*(\Omega^1(X,D))$. Assume also that $h^0(T,mL'+\pi'^*(A)))\neq 0$ for infinitely many integers $m>0$, $A$ (resp. $B$) being ample divisors on $X$ (resp. $Z$). 

There then exist an embedding $M'\subset \otimes^{\ell}p'^*(\Omega^1(Z,D_Z))^{sat}$ for some $M'\in Pic(W)$ such that $$H^0(T,mL'+\pi'^*(A)))\subset\oplus^r\sigma^*(H^0(W,mM'+p'^*(kB)))),$$ for any sufficiently large $m>0$, the integers $k,r$ being the ones defined in Lemma \ref{ldescent}, depend only on $A$ and $B$.
\end{corollary}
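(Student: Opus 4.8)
The strategy is to combine statement 2 of Theorem \ref{torcrel'} (which controls pseudo-effective line subbundles of orbifold tensors) with a descent argument for the twisted cohomology groups $H^0(T, mL'+\pi'^*(A))$ along the fibration $\sigma:T\to W$. First I would observe that, since $h^0(T,mL'+\pi'^*(A))\neq 0$ for infinitely many $m$, the line bundle $L'$ is pseudo-effective up to the bounded twist by $\pi'^*(A)$; more precisely, a nonzero section of a high multiple shows that $L'$ is, after a $\Q$-twist going to $0$, a pseudo-effective class, so the hypothesis $L'\subset \otimes^{\ell}\pi^*(\Omega^1(X,D))$ together with Theorem \ref{torcrel'}.2 forces $L'\subset \sigma^*(\otimes^{\ell}p'^*(\Omega^1(Z,D_Z)))^{sat}$. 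Taking the saturated image of $L'$ in $\otimes^{\ell}p'^*(\Omega^1(Z,D_Z))$ on $W$ and pushing forward via $\sigma$ produces the desired $M'\in \Pic(W)$ with $M'\subset \otimes^{\ell}p'^*(\Omega^1(Z,D_Z))^{sat}$ and $\sigma^*(M')$ differing from $L'$ only by a $\sigma$-exceptional (hence vertical) effective divisor supported on the divisor $F$ `partially supported on the fibres of $f$' that appeared in the proof of Theorem \ref{torcrel'}.

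Next I would set up the cohomology inclusion. Over the open set $U\subset Z$ where $f$ (and its restrictions to the $D_j$ and their intersections) is smooth, the proof of Theorem \ref{torcrel'} gives a filtration of $\otimes^m\pi^*(\Omega^1(X,D))$ whose graded pieces other than the last have strictly negative minimal slope on the general fibre; since $L'$ (twisted by $\pi'^*A$, a fixed bounded twist) maps to a pseudo-effective, hence slope-nonnegative, class, a section of $mL'+\pi'^*(A)$ must vanish on all but the bottom graded piece, i.e.\ it is the $\pi'$-pullback of a relative form coming from the base. Restricting to a general fibre $X_z$, the sRC hypothesis ($H^0(X_z,\otimes^A\pi^*(\Omega^1(X_z,D_z)))=0$ for $A\neq\emptyset$, with room for one ample twist) kills any section that is not purely horizontal, so every section of $mL'+\pi'^*(A)$ over $T$ restricts on a general fibre of $\sigma$ to the pullback of a section on $W$ of $mM'+p'^*(\text{something ample})$. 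To control the twist and make this precise, I would invoke Lemma \ref{ldescent}: it supplies integers $k,r$ depending only on $A$ and $B$ such that $H^0(T, mL'+\pi'^*(A))$ embeds into $\oplus^r\sigma^*H^0(W, mM'+p'^*(kB))$ — the factor $\oplus^r$ absorbing the fact that $\sigma$ may not have a section and the twist on each fibre is bounded by $kB$ after comparing $A$ with $f^*B$ on the general fibre.

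The main obstacle is making the descent of sections rigorous in the presence of the vertical divisor $F$: $\sigma^*(M')$ and $L'$ agree only outside $(\pi'\circ\dots)^{-1}(F)$, so a section of $mL'+\pi'^*(A)$ pulled down to $W$ a priori acquires poles along $\sigma(F)$, and one must check these poles are cancelled — this is exactly where the hypothesis that $F$ is partially supported on the fibres of $f$ (so $\sigma(F)\subsetneq W$ and $M'$ was defined as a saturation) is used, together with a Hartogs-type extension over the codimension $\geq 2$ locus as in the proof of Theorem \ref{torcrel'}. The second delicate point is the bookkeeping of the ample twists: one needs the comparison $\alpha\cdot A \leftrightarrow$ (restriction of $A$ to $X_z$) versus $f^*B$, which is precisely what Lemma \ref{ldescent} is designed to package, so I would simply cite it for the integers $k,r$ rather than re-deriving the estimate. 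Everything else — reflexivity of the sheaves involved, the filtration, the slope vanishing — is already available from Theorem \ref{torcrel'} and its proof, so the corollary follows by assembling these pieces.
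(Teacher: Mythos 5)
Your first half does follow the paper's route: the hypothesis makes $L'$ pseudo-effective (as a limit of the effective classes $L'+\frac{1}{m}\pi'^*(A)$), Theorem \ref{torcrel'}.2 then places $L'$ inside $\sigma^*(\otimes^{\ell}p^*(\Omega^1(Z,D_Z)))^{sat}$, and $M'$ is obtained by pushing $L'$ down to $W$. Note, though, that to know $\sigma_*(L')$ has rank one -- i.e.\ that $M'$ is a line bundle and that, after saturating $L'$, one has $L'=\sigma^*(M')+E$ with $E$ effective and vertical -- you need the observation (Lemma \ref{ltriv} in the paper) that the restriction of $L'$ to a general fibre of $\sigma$, being a pseudo-effective sub-line bundle of a trivial bundle, is trivial; you gloss this, but it is a small point.

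The genuine gap is in the descent of sections. Lemma \ref{ldescent} applies to bundles of the form $\sigma^*(\cF)\otimes A'$ with a \emph{fixed} effective twist $A'$, whereas $mL'+\pi'^*(A)=\sigma^*(mM')+mE+\pi'^*(A)$ carries the twist $mE+\pi'^*(A)$, which grows with $m$; applied as you propose, the constants $k,r$ would depend on $m$ and the uniform inclusion does not follow. Your suggested remedy for the vertical divisor -- ``Hartogs over the codimension $\geq 2$ locus'' -- only disposes of the $\sigma$-exceptional components of $E$; but $E$ is merely \emph{partially} supported on the fibres, so it has components mapping \emph{onto} divisors of $W$, and along these a section of $mL'+\pi'^*(A)$ could a priori acquire poles of order growing like $m$ when viewed as a section of $\sigma^*(mM')$ plus a bounded twist. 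The missing ingredient is exactly Lemma \ref{lsatur}: cutting by complete-intersection surfaces and using the strict negativity of the intersection form on the components of a \emph{proper} subdivisor of a fibre (this is where partial support is really used), one gets $H^0(T,\sigma^*(mM')+mE+\pi'^*(A))=H^0(T,\sigma^*(mM')+kE+\pi'^*(A))$ for all $m>k$, with $k$ independent of $m$; only after this stabilization is Lemma \ref{ldescent} applicable, with the fixed effective twist $kE+\pi'^*(A)$, and it then yields the stated inclusion. The filtration/fibre-restriction argument you sketch (sections restrict on general fibres to ``pullbacks from $W$'') does not substitute for this step, since the restriction of $\pi'^*(A)$ to a fibre of $\sigma$ is not trivial and the section is not literally constant along the fibres.
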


\begin{proof} From the second claim of Theorem \ref{torcrel'} we deduce that $L'\subset \sigma^*(\otimes^{\ell}(p^*(\Omega^1(Z,D_Z))))^{sat}$. Let $M':=\sigma_*(L')^{**}$:  this is a reflexive rank-one coherent sheaf on $W$, by Lemma \ref{ltriv} below, applied to a generic fibre $F$ of $\sigma$ over $p^{-1}(U)$, and to the restriction of $L'$ to $F$. Thus $M'\in Pic(W)$. Moreover, we may-and shall-assume $L'$ to be saturated in $\otimes^{\ell}(\pi'^*(\Omega^1(X,D)))$. We thus have: $L'=\sigma^*(M')+E$, where $E$ is an effective divisor contained in the support of: $$\sigma^*(\otimes^{\ell}(p^*(\Omega^1(Z,D_Z))))^{sat}/\sigma^*(\otimes^{\ell}(p^*(\Omega^1(Z,D_Z)))),$$ and thus `partially supported on the fibres of $\sigma$', after Proposition\ref{pdpsf} (more precisely: $E$ is contained in $\pi'^{-1}(F)$, where $F$ is the divisor partially supported on the fibres of $f$ described above this Proposition).

From Lemma \ref{ldescent} below, we deduce that for any sufficiently large $m>0$, we have an injection $H^0(T,\sigma^*(mM')+A))\subset \oplus^r(\sigma^*H^0(W,mM'+p'^*(kB)))$.

Finally, from Lemma \ref{lsatur}, we get, for $m>0$ sufficiently large: $$H^0(T, mL'+A)=H^0(T,m.\sigma^*(M')+A),$$ which establishes the claims.
 \end{proof}

 \begin{lemma}\label{ltriv} Let $F$ be a connected complex projective manifold, and $L\in Pic(F)$ a subbundle of a trivial bundle. Assume that $L$ is pseudo-effective. Then $L$ is trivial.
 \end{lemma}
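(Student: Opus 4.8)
The plan is to realize the inclusion \(L\hookrightarrow \cO_F^{\oplus N}\) as a global section of \(\cO_F^{\oplus N}\otimes L^{-1}=(L^{-1})^{\oplus N}\), i.e. as an \(N\)-tuple \((s_1,\dots,s_N)\) of sections of \(L^{-1}\) which do not all vanish at any point of \(F\) (this nowhere-vanishing is exactly the statement that the sheaf map \(L\to\cO_F^{\oplus N}\) is a subbundle inclusion, hence injective with locally free cokernel, in particular injective on fibres). Thus \(h^0(F,L^{-1})\ge 1\) and \(L^{-1}\) is globally generated by these sections. First I would record that a globally generated line bundle is nef, so \(L^{-1}\) is nef; equivalently \(L\) is anti-nef, i.e. \(-L=c_1(L)\) pairs non-positively with every curve.

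Next I would combine this with the hypothesis that \(L\) is pseudo-effective. A pseudo-effective class has non-negative intersection with every nef (indeed every movable) class, while we have just seen \(L\cdot H\le 0\) for every ample \(H\) (ample implies nef, and \(L^{-1}\) nef gives \(L\cdot H=-L^{-1}\cdot H\le 0\)). Hence \(L\cdot H=0\) for all ample \(H\); since ample classes span \(N^1(F)_{\bR}\) this forces \(c_1(L)=0\) in \(N^1(F)_{\bR}\), so \(L\) is numerically trivial. Now invoke that \(L^{-1}\) is globally generated and numerically trivial: a globally generated line bundle with \(c_1=0\) on a connected projective manifold is trivial, because its sections give a morphism \(F\to\bP^{N-1}\) whose image has dimension \(0\) (the pullback of \(\cO(1)\) is \(L^{-1}\), which has top self-intersection \(0\) against the image), hence the morphism is constant and \(L^{-1}\cong\cO_F\); therefore \(L\cong\cO_F\).

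Alternatively, and perhaps more cleanly, once \(L^{-1}\) is known to be globally generated one already has a nonzero section \(t\in H^0(F,L^{-1})\); its zero divisor \(Z(t)\) is effective and represents \(-c_1(L)\), which is pseudo-effective minus pseudo-effective... more directly: \(L\) pseudo-effective means some positive multiple \(mL\) is (numerically) an effective \(\bR\)-divisor, while \(mt^{\otimes m}\) exhibits \(-mL\) as effective; an effective divisor whose negative is also effective (pseudo-effective) must be \(0\), so \(mL\equiv 0\) and \(\kappa(L)\ge 0\) together with \(\nu(L)=0\) give \(L\) torsion; but \(L^{-1}\) has a section, so \(L^{-1}\) is effective of torsion class, hence \(\cO_F\). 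I expect the only real subtlety is making precise that the subbundle hypothesis gives a \emph{nowhere-vanishing} section of \((L^{-1})^{\oplus N}\), hence global generation of \(L^{-1}\); everything after that is the standard fact that a nef line bundle which is also (anti-)pseudo-effective and has a section is trivial.
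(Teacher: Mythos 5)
Your main argument is correct, and it shares its decisive first step with the paper's own proof: both dualize the subbundle inclusion $L\hookrightarrow \cO_F^{\oplus N}$ to obtain a surjection $\cO_F^{\oplus N}\to L^{-1}$, i.e. global generation of $L^{-1}$. You then conclude numerically: $L^{-1}$ is globally generated hence nef, $L$ is pseudo-effective, so $c_1(L)$ is numerically trivial, and a globally generated numerically trivial line bundle is trivial (the associated morphism to projective space is constant). The paper instead takes a nonzero section of $L^{-1}$, writes $L^{-1}=\cO_F(D)$ with $D$ effective and nonzero if $L$ were nontrivial, and contradicts pseudo-effectivity via the vanishing $h^0(F,mL+A)=h^0(F,A-mD)=0$ for $m\gg 0$, using the non-vanishing characterization of pseudo-effectivity ($\nu\ge 0$, \cite{BDPP}, \cite{Nak}) that is the working definition in Section 8. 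So the two proofs exploit the hypothesis identically and differ only in the final step: yours stays at the level of nefness and intersection numbers, the paper's stays at the level of $h^0$-counting, which matches the surrounding arguments (e.g. Corollary \ref{cnu}); both are short and correct.

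Two small repairs. First, your justification of numerical triviality (``ample classes span $N^1(F)_{\bR}$'') is not the right reason: what you actually have is $L\cdot A^{n-1}=0$ for every ample $A$, and to deduce $c_1(L)\equiv 0$ you should either invoke that the classes $A^{n-1}$ fill an open subset of $N_1(F)$ (\cite{GT}, quoted in Proposition \ref{prelmov}), or note that a pseudo-effective class orthogonal to a class in the interior of the movable cone of curves is zero; alternatively bypass numerical triviality entirely: a nonzero section of $L^{-1}$ has effective zero divisor $D$ with $D\cdot A^{n-1}=-L\cdot A^{n-1}\le 0$, forcing $D=0$ and hence $L^{-1}\cong \cO_F$. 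Second, your ``alternative'' paragraph should be dropped or rewritten: pseudo-effectivity does not mean that some multiple $mL$ is effective, and $\kappa(X,L)\ge 0$ is not available; the correct shortcut is the one just described. Neither point affects the validity of your main argument.
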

 
 \begin{proof} The dual $L^*$ is a quotient of a trivial bundle on $F$, and is thus generated by global sections. Assume that $L$ is not trivial. Then $L^*=\cO_F(D)$ for some nonzero effective divisor $D$ on $F$. 
 
 Let $A$ be ample on $F$. Since $h^0(F,mL+A)=h^0(F,-mD+A)=\{0\}$ if $m>0$ is sufficiently large, $L$ is not pseudo-effective. \end{proof}

 \begin{lemma}\label{ldescent} Let $\sigma:T\to W$ be a proper map between compact connected normal projective varieties, $T$ equipped with an effective  line bundle $A'$ and $W$ with an ample line bundle $B'$. 

There then exist positive integers $k=k(A',B')$ and $r=r(A',B')$ such that, for any torsionfree coherent sheaf $\cF$ on $W$, one has an injection:
$H^0(T,\sigma^*(\cF)\otimes A')\subset\oplus^r\sigma^*(H^0(W,\cF\otimes \cO_W(kB))).$\end{lemma}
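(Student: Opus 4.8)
The plan is to push the section space forward to $W$ and compare it with a fixed auxiliary sheaf. One first reduces to the case $\sigma$ surjective (otherwise replace $W$ by the normalisation of $\sigma(T)$; this only affects the constants $k,r$). By the Leray spectral sequence one has $H^0(T,\sigma^*(\cF)\otimes A')=H^0(W,\sigma_*(\sigma^*(\cF)\otimes A'))$, so it is enough to control the coherent sheaf $\sigma_*(\sigma^*(\cF)\otimes A')$ on $W$. Set $\mathcal{N}:=\sigma_*(A')$: this sheaf depends only on $\sigma$ and $A'$, not on $\cF$, and it is torsion-free because $\sigma$ is dominant and $A'$ is invertible (a section of $\sigma_*(A')$ killed by a nonzero $f\in\cO_W$ would be a section of $A'$ killed by $\sigma^*f\ne 0$). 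Since $B'$ is ample, Serre vanishing gives integers $k=k(A',B')>0$, $r=r(A',B')>0$ and a surjection $\cO_W(-kB')^{\oplus r}\twoheadrightarrow\mathcal{N}^{\vee}$; dualising produces an injection $\mathcal{N}^{\vee\vee}\hookrightarrow\cO_W(kB')^{\oplus r}$, while $\mathcal{N}\hookrightarrow\mathcal{N}^{\vee\vee}$ is the canonical map. Thus $k$ and $r$ depend only on $A'$ (through $\mathcal{N}$) and on $B'$, as required.

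When $\cF$ is locally free the conclusion is now immediate. The projection formula gives $\sigma_*(\sigma^*(\cF)\otimes A')=\cF\otimes\mathcal{N}$, and tensoring the chain of injections $\mathcal{N}\hookrightarrow\mathcal{N}^{\vee\vee}\hookrightarrow\cO_W(kB')^{\oplus r}$ by the flat sheaf $\cF$ yields a sheaf injection $\sigma_*(\sigma^*(\cF)\otimes A')\hookrightarrow\bigl(\cF\otimes\cO_W(kB')\bigr)^{\oplus r}$. Taking $H^0$ and using the Leray identification gives $H^0(T,\sigma^*(\cF)\otimes A')\hookrightarrow\oplus^r H^0(W,\cF\otimes\cO_W(kB'))$. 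Finally $\sigma^*\colon H^0(W,\cF\otimes\cO_W(kB'))\to H^0(T,\sigma^*(\cF\otimes\cO_W(kB')))$ is injective, since $\sigma$ is dominant and $\cF\otimes\cO_W(kB')$ is torsion-free, so the target can be rewritten as $\oplus^r\sigma^*\bigl(H^0(W,\cF\otimes\cO_W(kB'))\bigr)$, which is the asserted inclusion. In particular this already covers the application in Corollary~\ref{cnu}, where $\cF$ is a line bundle.

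For a general torsion-free $\cF$ one runs the same computation over the open set $W_0\subset W$ on which $\cF$ is locally free and $\mathcal{N}=\mathcal{N}^{\vee\vee}$; since $W$ is normal (hence regular in codimension one) the complement of $W_0$ has codimension $\ge 2$. Over $W_0$ the projection formula together with the injection above still provides $\sigma_*(\sigma^*(\cF)\otimes A')|_{W_0}\hookrightarrow(\cF\otimes\cO_W(kB'))^{\oplus r}|_{W_0}$, and one concludes by restricting global sections to $W_0$ and extending on the right-hand side by a Hartogs-type argument. The one delicate point — and the main obstacle — is controlling the sections of $\sigma_*(\sigma^*(\cF)\otimes A')$ that could a priori be supported over the codimension-two locus; this is harmless when $\cF$ is reflexive (in particular invertible), which is the only case used in the sequel.
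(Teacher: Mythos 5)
Your proof follows essentially the same route as the paper: set $\mathcal{N}:=\sigma_*(A')$, embed it through its reflexive hull into $\oplus^r\cO_W(kB')$ by choosing $k,r$ so that the twisted dual is globally generated, and conclude via the projection formula and the injectivity of $\sigma^*$ on sections. The ``delicate point'' you flag for merely torsion-free $\cF$ is passed over just as quickly in the paper, whose proof simply invokes $H^0(T,\sigma^*(\cF)\otimes A')=H^0(W,\cF\otimes\sigma_*(A'))$, so your treatment is if anything more explicit about where local freeness (or reflexivity) of $\cF$ enters.
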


\begin{proof} Notice that $\sigma_*(A')$ is a non-zero torsionfree sheaf which injects in its reflexive hull $A"$, with dual $(A")^*$. Now chose $k>0, r>0$ such that $(A")^*(k.B)$ is generated by $r$ of its global sections, and dualise the surjection $\oplus^r\cO_W\to (A")^*(k.B')$ so obtained. The claimed injection is then deduced from the natural sheaf injection $\sigma_*(A')\subset A"\subset \oplus^r\cO_{W}(k.B')$, since $H^0(T,\sigma^*(\cF)\otimes A')=\sigma^*(H^0(T,\cF\otimes \sigma_*(A')))$.\end{proof}

 \begin{lemma}\label{lsatur} Let $g:T\to W$ be a surjective connected morphism between complex projective manifolds. Let $E\subset T$ be an effective reduced divisor partially supported on the fibers of $g$. 
 
 There exists an integer $k>0$ such that, for any $N\in Pic(W)$, for any $s> k$, the natural injective map $H^0(T,N_k)\to H^0(T,N_s)$ is surjective, if $N_s:=f^*(N)+s.E+A$.
 
 More precisely: let $n:=dim(T)$, and $d:=dim(T)-dim(W)>0$. Let $A$(resp. $B$) be a very ample line bundle on $T$ (resp. $W$). 
Then one can chose $k:=A^d.E.g^*(B)^{n-d-1}$.
 \end{lemma}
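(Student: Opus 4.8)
The plan is to reduce, via general hyperplane sections, to a statement on a surface fibred over a curve, and there to argue fibre by fibre using the semi‑negativity of the intersection form on vertical divisors (the ``negativity lemma'' of \cite{BPV}, already invoked for Proposition \ref{prelmov}).

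First I would cut the base down to a curve. Pick general members $B_1,\dots,B_{n-d-1}\in|B|$, let $B':=B_1\cap\dots\cap B_{n-d-1}$ (a smooth curve in $W$ by Bertini), and set $Y:=g^{-1}(B')$; then $Y$ is smooth of dimension $d+1$, $h_0:=g|_Y\colon Y\to B'$ is a fibration with connected fibres, $A_Y:=A|_Y$ is very ample, and $(g^{*}N)|_Y=h_0^{*}(N|_{B'})$. By Definition \ref{ddpsf} no component of $E$ dominates $W$, so each component of $E$ maps into a proper subvariety $T_s\subset W$; for general $B'$ this forces $E\cap Y$ to be \emph{vertical} for $h_0$, and the condition ``$g^{-1}(E_s)\cap E\subsetneq g^{-1}(E_s)$'' (for those $T_s$ that are divisors meeting $B'$) guarantees, after choosing $B'$ general, that no fibre of $h_0$ is contained in $E\cap Y$. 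Then I would cut $Y$ down to a surface by intersecting with $d-1$ general members of $|A|$, obtaining a smooth surface $\Sigma$, a fibration $h\colon\Sigma\to B'$ with connected fibres, a reduced \emph{vertical} divisor $E_\Sigma:=E\cap\Sigma$ containing no whole fibre, and $A_\Sigma:=A|_\Sigma$ very ample with, by construction and the projection formula, $A_\Sigma\cdot E_\Sigma=A^{d}\cdot E\cdot g^{*}(B)^{n-d-1}=k$. If for every such $\Sigma$, every $L\in\Pic(B')$ and every $s>k$ one shows that each $\sigma\in H^0(\Sigma,h^{*}L+sE_\Sigma+A_\Sigma)$ vanishes along $(s-k)E_\Sigma$, then — letting $\Sigma$ pass through a general point of each component of $E$ and be transverse to it there — one recovers $\operatorname{div}(\sigma)\ge(s-k)E$ on $T$, which is exactly the asserted surjectivity of $H^0(T,N_k)\to H^0(T,N_s)$.

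It then remains to prove this surface statement. Writing $N_j:=h^{*}L+jE_\Sigma+A_\Sigma$, the exact sequences $0\to N_{j-1}\to N_j\to N_j|_{E_\Sigma}\to 0$ reduce everything to showing that for $j\ge k+1$ every section of $N_j$ vanishes along $E_\Sigma$, and then iterating down to $N_k$. So fix such a $\sigma$, put $D:=\operatorname{div}(\sigma)\sim h^{*}L+jE_\Sigma+A_\Sigma\ge 0$, fix a fibre $F=\sum_i m_i\Gamma_i$ meeting $E_\Sigma$, and write $E_\Sigma\cap F=\sum_{i\in I}\Gamma_i$ with $I$ a \emph{proper} subset of the index set (here ``no whole fibre in $E_\Sigma$'' is used). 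Let $D_F=\sum_i a_i\Gamma_i$ be the part of $D$ carried by $F$. Intersecting $D$ with each $\Gamma_i$ and using that $D-D_F\ge 0$ has no component in $F$, one gets $\sum_j a_j(\Gamma_i\cdot\Gamma_j)\le j\,(E_\Sigma\cdot\Gamma_i)+A_\Sigma\cdot\Gamma_i$ for all $i$. The negativity lemma says $M=(\Gamma_i\cdot\Gamma_j)$ is negative semi‑definite with radical $\mathbb{R}\cdot(m_i)_i$, so its principal submatrix $M_I$ on the proper subset $I$ is negative \emph{definite}, $-M_I$ is a nonsingular $M$‑matrix and $(-M_I)^{-1}\ge 0$ entrywise. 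Feeding the above inequalities into this positivity (a monotone fixed‑point iteration for the affine map attached to $M_I$) yields $a_i\ge j-b_i$ for $i\in I$, where $(b_i)_{i\in I}=(-M_I)^{-1}(A_\Sigma\cdot\Gamma_\ell)_{\ell\in I}\ge 0$. Since $a_i$ and $j$ are integers it suffices that $b_i<k+1$, which follows from the $M$‑matrix identity $-M_I(b_\ell)=(A_\Sigma\cdot\Gamma_\ell)$ together with $E_\Sigma^{2}<0$ and $A_\Sigma\cdot\Gamma_i\le A_\Sigma\cdot E_\Sigma\le A\cdot E$. Hence $a_i\ge j-k\ge 1$ for $j\ge k+1$, i.e.\ $\sigma$ vanishes along $E_\Sigma$; the precise constants in the last line then read off from the very ampleness of $A$ and $B$ and the projection formula.

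The main obstacle is this last step: obtaining ``some $k$'' is immediate (any norm bound on $(-M_I)^{-1}$, fibre by fibre, does it), but pinning down the explicit, geometrically meaningful constant $k=A^{d}\cdot E\cdot g^{*}(B)^{n-d-1}$ — equivalently, controlling the fixed point $(b_i)$ of the $M$‑matrix $-M_I$ against $A\cdot E$ — requires careful bookkeeping with the negative‑definite intersection form of each (possibly non‑reduced) singular fibre together with the integrality of the multiplicities $a_i$, and that is where all the real content lies.
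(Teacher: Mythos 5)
Your reduction to a surface $\Sigma$ fibred over a curve, with $A_\Sigma\cdot E_\Sigma=A^{d}\cdot E\cdot g^{*}(B)^{n-d-1}=k$, is exactly the paper's reduction, and your derivation of the coefficient bound $a_i\ge j-b_i$ with $b=(-M_I)^{-1}\bigl(A_\Sigma\cdot\Gamma_\ell\bigr)_{\ell\in I}$ is correct. The gap is the very last inequality, which you flag as the real content and then assert: $b_i<k+1$ does \emph{not} follow from $(-M_I)^{-1}\ge 0$, $E_\Sigma^2<0$ and $A_\Sigma\cdot\Gamma_i\le A_\Sigma\cdot E_\Sigma$. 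Concretely, take a fibre of $h$ which is a reduced chain $\Gamma_1+\Gamma_2+\Gamma_3$ (so $\Gamma_1^2=\Gamma_3^2=-1$, $\Gamma_2^2=-2$) with $E_\Sigma\cap F=\Gamma_1+\Gamma_2$. Then
$$-M_I=\begin{pmatrix}1&-1\\-1&2\end{pmatrix},\qquad (-M_I)^{-1}=\begin{pmatrix}2&1\\1&1\end{pmatrix},\qquad b_1=2a_1+a_2\ \ge\ k+1,$$
where $a_i:=A_\Sigma\cdot\Gamma_i$ and $k=a_1+a_2$ if $E_\Sigma$ meets only this fibre. Worse, in this configuration $E_\Sigma\cdot\Gamma_1=0$, so $\deg_{\Gamma_1}N_j=a_1>0$ for every $j$, and your intermediate claim ``for $j\ge k+1$ every section of $N_j$ vanishes along $E_\Sigma$'' fails: a Zariski-type computation of the minimal forced vertical vanishing gives order $j-k$ along $\Gamma_2$ but only $j-k-a_1$ along $\Gamma_1$, consistent with your own bound $a_1\ge j-b_1$ but strictly weaker than the needed $a_1\ge j-k$. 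So the obstruction is not a lossy estimate in your bookkeeping: the coefficient-bounding route genuinely cannot reach the stated constant whenever some component $\Gamma_i$ of $E_\Sigma\cap F$ has $E_\Sigma\cdot\Gamma_i\ge 0$, and the entries of $(-M_I)^{-1}$ (hence the $b_i$) can be arbitrarily large compared with $k$ for long chains.

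By contrast, the paper does not bound coefficients at all: it restricts $N_s$ to the curve $E'=E\cap S$, asserts $E'\cdot E''<0$ for every irreducible component $E''$, concludes $\deg_{E''}N_s\le -s+k<0$ for $s>k$, hence $H^0(E',N_s|_{E'})=0$, and descends by the exact sequence $0\to N_{s-1}\to N_s\to N_s|_{E'}\to 0$. Note that this hinges on precisely the inequality ($E'\cdot E''<0$ for \emph{every} component) that fails in the chain configuration above, so the delicacy you ran into is real and not an artifact of your method; but as written your deduction of $b_i<k+1$ is incorrect, and your argument only establishes the lemma with $k$ replaced by $\max_i b_i$ taken over the finitely many reducible fibres (a valid constant, uniform in $N$, but not the formula $k=A^{d}\cdot E\cdot g^{*}(B)^{n-d-1}$ claimed in the statement). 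To salvage the explicit constant along your lines you would need an additional geometric input ruling out, or treating separately, components with $E_\Sigma\cdot\Gamma_i\ge 0$.
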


 \begin{proof} This rests on the same considerations as in Proposition \ref{prelmov}. Denote by $S$ the smooth surface $A^{d-1}.f^*(B)^{n-d-1}\subset X$, for generic members $A,B$ (by abuse of notation) of the linear systems defined by the line bundles $A,B$. The reduced curve $S.E:=E'\subset S$ is partially supported on the fibers of $g_S:S\to B^{n-d-1}$, and the intersection number $S.E'.E"$ is thus strictly negative for each irreducible component $E"$ of $E'$. 
 
 Moreover, $g^*(N).E".S=0$, and so, for any $s>k$: $$N_s.E".S=s.E'.E".S+A.E".S\leq -s+A.E.S=-s+k<0.$$ This then implies that $H^0(E,N_s)=0$, and the surjectivity of the map: $H^0(S,N_k)\to H^0(S, N_s)$, by induction on $s>k$. Since the family of surfaces $S$ covers $X$, we have the same statement for the sections over $T$ (since the sections on $T$ of any line bundle are determined by their restrictions to the $S's$).\end{proof}

For the definition of the numerical dimension $\nu(X,L)$, we refer to \S\ref{sbirstab}. We shall now translate the preceding results using this notion.

\begin{corollary}\label{corcrel"} In the above situation, and for $L',M'$ defined as above, we have: $\nu(T,L')= \nu(W,M')$.
\end{corollary}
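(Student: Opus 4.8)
The plan is to deduce Corollary~\ref{corcrel"} directly from Corollary~\ref{cnu} together with the definition of the numerical dimension as a growth rate of twisted sections. Recall that for a line bundle $L$ on a projective manifold $V$ and an ample divisor $A$, the numerical dimension $\nu(V,L)$ can be characterised (see \S\ref{sbirstab}) as the largest integer $\nu$ such that $h^0(V,mL+A)$ grows at least like $m^{\nu}$ along a suitable sequence $m\to +\infty$; equivalently, $\nu(V,L)=-\infty$ if $L$ is not pseudo-effective, and otherwise it is the supremum of such growth orders over ample $A$.

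First I would observe that $M'\subset (\otimes^{\ell}p'^*(\Omega^1(Z,D_Z)))^{sat}$ forces, via $\sigma$ and the second part of Theorem~\ref{torcrel'} applied to the fibres of $\sigma$, the pull-back relation $L'=\sigma^*(M')+E$ with $E$ effective and partially supported on the fibres of $\sigma$, exactly as in the proof of Corollary~\ref{cnu}. By Lemma~\ref{lsatur}, for $m$ large one has $H^0(T,mL'+\pi'^*(A))=H^0(T,m\sigma^*(M')+\pi'^*(A))$, so the section growth of $mL'$ equals that of $m\sigma^*(M')$. Now Lemma~\ref{ldescent} gives $H^0(T,m\sigma^*(M')+\pi'^*(A))\hookrightarrow \oplus^r\sigma^*(H^0(W,mM'+p'^*(kB)))$, whence $h^0(T,mL'+\pi'^*(A))\le r\cdot h^0(W,mM'+kp'^*(B))$; this yields $\nu(T,L')\le \nu(W,M')$.

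For the reverse inequality I would use the surjectivity of $\sigma$ (and hence $\sigma^*$ being injective on sections): any section of $mM'+kp'^*(B)$ pulls back to a section of $m\sigma^*(M')+k\sigma^*p'^*(B)$ on $T$; since $\sigma^*p'^*(B)=p^*(q^*(\sigma^*B))$ — more simply, $k\sigma^*(p'^*(B))\le$ some multiple of an ample divisor $A$ on $T$ — we get $h^0(W,mM'+kp'^*(B))\le h^0(T,m\sigma^*(M')+NA)$ for a fixed $N$, and the latter equals $h^0(T,mL'+NA)$ (up to absorbing $E$ again by Lemma~\ref{lsatur}, applied with $NA$ in place of $A$). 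Hence $\nu(W,M')\le \nu(T,L')$, and the two estimates combine to give the asserted equality. The case where $L'$ (equivalently $M'$) fails to be pseudo-effective is handled by the same inequalities, both sides then being $-\infty$.

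The main obstacle I anticipate is purely bookkeeping: making sure that the auxiliary ample twists on $T$ and on $W$ are matched correctly under $\sigma$ and $p'$ (so that the numerical-dimension growth rates are genuinely comparable in both directions), and that the exceptional divisor $E$ can be repeatedly absorbed via Lemma~\ref{lsatur} for \emph{every} twist that appears, not just the fixed $A$ in the statement of Corollary~\ref{cnu}. Once one checks that $\nu(V,L)$ is insensitive to the choice of ample twist and is preserved under the pull-back along the generically finite morphism $q$ and the connected fibration $\sigma$ — both standard facts about numerical dimension — the equality $\nu(T,L')=\nu(W,M')$ follows without further difficulty.
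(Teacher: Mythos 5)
Your proposal is correct and takes essentially the same route as the paper: the inequality $\nu(T,L')\le\nu(W,M')$ is exactly the growth bound coming from the section-space inclusion of Corollary \ref{cnu} (whose ingredients --- the decomposition $L'=\sigma^*(M')+E$ with $E$ effective, Lemma \ref{lsatur} and Lemma \ref{ldescent} --- you merely re-derive), while your reverse inequality is the paper's ``obvious'' direction, obtained by pulling sections back injectively along the connected fibration $\sigma$ and adding the effective divisor $E$, i.e.\ Lemma \ref{lcompnu} together with monotonicity of $\nu$ under adding a pseudo-effective class. The only adjustment needed is cosmetic: in the reverse direction you should twist by a sufficiently ample divisor on $W$ (whose $\sigma$-pullback is dominated by an ample divisor on $T$) rather than by $kp'^*(B)$, which is only big and nef on $W$ and hence does not a priori compute $\nu(W,M')$; with that substitution your pull-back argument is precisely the standard proof of the easy inequality.
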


\begin{proof} We obviously have: $\nu(T,L')\geq \nu(W,M')$. In the other direction, we have from Corollary \ref{cnu}:  

$limsup_{s\to+\infty}\frac{h^0(T,mL'+\pi'^*(A))}{m^s}\leq r.limsup_{s\to+\infty}\frac{h^0(W,mM'+p'^*(kB))}{m^s}$, which is thus positive for the same values of $s$. 

This establishes the claim\end{proof}

 Let us notice a more general similar statement:

 \begin{corollary}\label{cnusat} In the situation of Lemma \ref{lsatur}, we have:
 
 $\nu(W,N)=\nu(T,g^*(N))=\nu(T,g^*(N)+\ell.E)$, for any $\ell\geq 0$.
 \end{corollary}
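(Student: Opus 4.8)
The plan is to establish the chain $\nu(W,N)\le \nu(T,g^*(N))\le \nu(T,g^*(N)+\ell.E)\le \nu(W,N)$ and close it. Throughout I would fix once and for all a very ample $A$ on $T$ and a very ample $B$ on $W$, and use that $\nu(T,\cdot)$ and $\nu(W,\cdot)$ are read off from the polynomial growth rate in $m$ of $h^0(T,m(\cdot)+A)$ and $h^0(W,m(\cdot)+B)$ respectively, and are insensitive to which ample class one adjoins. If $N$ is not pseudo-effective then neither are $g^*(N)$ and $g^*(N)+\ell.E$, and all three invariants equal $-\infty$; so I may assume $N$ pseudo-effective.

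For the two lower inequalities: since $g$ is surjective with connected fibres between smooth projective manifolds, $g_*\cO_T=\cO_W$, hence $H^0(T,g^*(mN))=H^0(W,mN)$ for every $m$, and adjoining the effective divisors $A$ and $\ell.E$ can only enlarge the space of sections; thus $h^0(W,mN)\le h^0(T,m\,g^*(N)+A)\le h^0(T,m(g^*(N)+\ell.E)+A)$ for all $m$ and all $\ell\ge 0$, and passing to growth rates gives $\nu(W,N)\le\nu(T,g^*(N))\le\nu(T,g^*(N)+\ell.E)$.

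The substance is the remaining inequality $\nu(T,g^*(N)+\ell.E)\le\nu(W,N)$, and here the key point is that the constant $k=A^d.E.g^*(B)^{n-d-1}$ of Lemma \ref{lsatur} does not depend on the line bundle fed to the lemma. Taking $\ell>0$ (the case $\ell=0$ being the degenerate one obtained by skipping this step) and applying Lemma \ref{lsatur} with $mN$ in place of $N$, I get for every $m\ge k/\ell$ the stabilisation $h^0(T,m(g^*(N)+\ell.E)+A)=h^0(T,g^*(mN)+(m\ell).E+A)=h^0(T,g^*(mN)+k.E+A)$. Now $A_1:=k.E+A$ is a fixed effective line bundle on $T$, independent of $m$, so Lemma \ref{ldescent}, applied to $g\colon T\to W$, to $A_1$, to $B$, and to the torsionfree sheaf $\cO_W(mN)$, produces constants $k'=k'(A_1,B)$ and $r=r(A_1,B)$ independent of $m$ together with an injection $H^0(T,g^*(mN)+A_1)\hookrightarrow\bigoplus^r H^0(W,mN+k'.B)$. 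Chaining the two displays, $h^0(T,m(g^*(N)+\ell.E)+A)\le r\cdot h^0(W,mN+k'.B)$ for all $m\gg 0$, so for every $s\ge 0$ one has $\limsup_m h^0(T,m(g^*(N)+\ell.E)+A)/m^s\le r\cdot\limsup_m h^0(W,mN+k'.B)/m^s$, whence $\nu(T,g^*(N)+\ell.E)\le\nu(W,N)$. Combined with the lower bounds this closes the chain, giving $\nu(W,N)=\nu(T,g^*(N))=\nu(T,g^*(N)+\ell.E)$ for all $\ell\ge 0$.

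I expect no genuine obstacle: the real work is already packaged in Lemmas \ref{lsatur} and \ref{ldescent}. The only points requiring care are (i) applying the saturation bound of Lemma \ref{lsatur} with the constant $k$ uniform in $N$, hence usable for all the twists $mN$ at once, and (ii) the harmless fact that $\nu$ may be computed with any fixed ample divisor, so that the auxiliary ample $k'.B$ emerging from Lemma \ref{ldescent} does not disturb the comparison of growth rates.
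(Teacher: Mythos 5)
Your proof is correct and is essentially the paper's own argument: the decisive step — the stabilisation $h^0(T,m(g^*(N)+\ell E)+A)=h^0(T,m\,g^*(N)+kE+A)$ for $m\ell\geq k$, using that the constant $k$ of Lemma \ref{lsatur} is uniform in the twist $mN$ — is exactly what the paper does, and your subsequent descent to $W$ via Lemma \ref{ldescent} with the effective bundle $kE+A$ is just an unwinding of Lemma \ref{lcompnu}, which the paper cites instead (after absorbing $kE+A$ into an ample $A'$ on $T$). One cosmetic point: for the easy inequality $\nu(W,N)\leq\nu(T,g^*(N))$ you should compare $h^0(W,mN+B)$ (not $h^0(W,mN)$) with $h^0(T,m\,g^*(N)+A)$, which is fine after choosing $A$ so that $A-g^*(B)$ is effective.
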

 
 \begin{proof} The first equality is shown in \ref{lcompnu}. For the second, use the inequalities, if $m.\ell\geq k$, and if $A'$ is ample on $T$ with $A'-(A+k.E)$ effective: $h^0(T, m.(g^*(N)+\ell.E)+A)=h^0(T,m.g^*(N)+k.E+A)$
 $\leq h^0(T, m.g^*(N)+A').$  The conclusion then follows from the definition of the numerical dimension.
 \end{proof}

%%%%%%%%%%%%%%%%%%%%%%%%%%%%%%%%%%%%%%%%%%%%%%%%%%%%%%%%%%%%%%%%%%%%%%%%%%%%%%%%%%%%%%%%%%%%%%%%%%%%%%%%%%%%%%%%%%%%

\section{Orbifold Slope Rational quotient}\label{srq}

We shall show here the existence of a `rational quotient' (\cite{Ca92}, see also \cite{KMM} under the name `MRC-fibration') in the orbifold context.

\begin{theorem}\label{tRQ} Let $(X,D)$ be smooth, complex projective and connected. There exists (on some suitable birational model) an orbifold morphism which is a fibration $\rho:(X,D)\to (R,D_R)$ onto its (smooth) orbifold base $(R,D_R)$ which has the following two properties:

1. Its smooth orbifold fibres $(X_r,D_r)$ are $sRC$  ($X_r:=g^{-1}(r),r\in R$).

2. $K_R+D_R$ is pseudo-effective.

Of course, $R$=$X$ (resp. $R$ is a point) if and only if $(K_X+D)$ is pseudo-effective (resp. if and only if $(X,D)$ is $sRC$).

\smallskip

This fibration is unique, up to birational equivalence. It is, morerover, characterised by any one of the following two properties:

\smallskip

3. $dim(X)-dim(Z)$ is maximal among the fibrations $f:X\dasharrow Z$ such that $(X_z,D_z)$ is sRC for generic $z\in Z$.

4. $dim(Z)$ is maximal such that $K_Z+D_Z$ is pseudo-effective, among all fibrations $f:X\dasharrow Z$, $(Z,D_Z)$ being the orbifold base (on any `neat' orbifold birational model, here and also in 3. above)

\end{theorem}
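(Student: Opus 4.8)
The plan is to build the fibration $\rho$ by maximising the relative dimension among sRC fibrations, then verify both defining properties, and finally establish uniqueness. First I would consider the (non-empty, since the identity $X\to X$ with $\dim=0$ relative dimension qualifies trivially when $(X,D)$ is not sRC, and otherwise $X\to \mathrm{pt}$ works) family of fibrations $f:X\dasharrow Z$, taken on a neat orbifold birational model $f:(X,D)\to(Z,D_Z)$ onto its orbifold base, such that the generic orbifold fibre $(X_z,D_z)$ is sRC. Let $\rho:(X,D)\to(R,D_R)$ be one realising the maximal value of $\dim(X)-\dim(R)$; replacing it by a neat model we may assume $\rho$ is a neat orbifold morphism onto its smooth orbifold base $(R,D_R)$. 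Property 1 holds by construction. For property 2, I argue by contradiction: if $K_R+D_R$ were not pseudo-effective, then $(R,D_R)$ satisfies condition 1 of Theorem \ref{torc} failing, so by the equivalence $1\Leftrightarrow 4$ there is a movable class $\beta$ on $R$ with $\mu_{p^*(\beta),\min}(p^*(T(R,D_R)))>0$; but more directly, a non-pseudo-effective $K_R+D_R$ means $R$ itself is sRC, hence there is a fibration $R\dasharrow S$ (e.g. $R\to\mathrm{pt}$) with sRC orbifold base-fibres, and composing with $\rho$ — using Proposition \ref{lobom}, since $\rho$ is an orbifold morphism the orbifold base of $\rho$ followed by $R\dasharrow S$ is the orbifold base of $R\dasharrow S$ — would produce a fibration $X\dasharrow S$ with strictly larger relative dimension whose generic orbifold fibre is an extension of an sRC orbifold by sRC orbifolds. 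Here I need the key closure fact: an orbifold $(X,D)$ admitting a fibration onto an sRC orbifold base with sRC orbifold fibres is itself sRC — this follows from Theorem \ref{tesp} (equivalently Theorem \ref{torcrel}) applied with $\alpha$ big on the fibres and $\beta$ lifted from the base, exactly the $1\Rightarrow 3$ mechanism of the main theorem. This contradicts maximality, proving property 2.

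Next I would establish that properties 3 and 4 each characterise $\rho$. Property 3 is essentially the definition used to construct $\rho$, so it suffices to note that any fibration with sRC generic orbifold fibres and relative dimension equal to that of $\rho$ must be orbifold birationally equivalent to $\rho$: given two such, $f_i:(X,D)\to(Z_i,D_{Z_i})$, one forms (a neat model of) the fibration onto the image in $Z_1\times Z_2$; its fibres dominate those of both $f_i$ hence, again by the closure fact applied in reverse together with the fact that a fibration of an sRC orbifold has sRC orbifold base and fibres — which is property 1-and-2 of Theorem \ref{tofops'} read for the fibres — the common refinement still has sRC generic orbifold fibres, forcing by maximality that both projections $Z_i\to Z_{12}$ are birational, and one checks the orbifold structures agree using Theorem \ref{tdiffeqfibr} (or a positive instance of Question \ref{qbireqorbbases} available here because the maps in question are birational with sRC fibres). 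For property 4, I would show $K_{Z}+D_Z$ pseudo-effective together with $\dim Z$ maximal implies the generic orbifold fibre of $f:X\dasharrow Z$ is sRC: if not, there is $\alpha\in\mathrm{Mov}^0(X/Z)$ with $(K_X+D).\alpha<0$, producing via \cite{CP15} an algebraic foliation and a fibration $g$ over $Z$ with strictly larger base dimension and $K$ of the relative foliation negative, and iterating (as in the proof of $1\Rightarrow 3$ and of Theorem \ref{torcrel}) yields a fibration onto a base of larger dimension still having pseudo-effective orbifold canonical bundle — using Proposition \ref{lobom} to control the orbifold base of the composition — contradicting maximality of $\dim Z$. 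Hence the maximal-dimensional such $Z$ has sRC fibres, so it refines to $\rho$ by property 3, and conversely $\rho$ itself has pseudo-effective $K_R+D_R$ by property 2 and is of maximal base-dimension among sRC-fibred fibrations; combining, $R$ is the maximal pseudo-effective base, giving the equivalence of 3 and 4.

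The main obstacle I expect is the closure property: that a smooth orbifold pair $(X,D)$ fibred over an sRC orbifold base with sRC generic orbifold fibres is again sRC. This is precisely where the machinery of Sections \ref{srelmov}, \ref{pmaint} and especially Theorem \ref{tesp}/\ref{torcrel} is needed — one must choose $\alpha\in\mathrm{Mov}^0(X/Z)$ big on the general fibre with $\mu_{\pi^*(\alpha),\min}$ positive on the fibre tangent sheaf, lift a suitable $\beta$ from the base with $\beta_X.F_j=0$ for the components of the divisor $F$ partially supported on the fibres (Proposition \ref{prelmov}.6), and then invoke Theorem \ref{tesp} to get $\mu_{\pi^*(\gamma),\min}(\pi^*(T(X,D)))>0$ for $\gamma=k\alpha+\beta_X$, $k\gg 0$. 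The delicate points are that $\rho$ must be arranged to be a genuine orbifold morphism onto its orbifold base (so that composition of fibrations behaves, via Proposition \ref{lobom}), and that all the birational model-changes are controlled by Proposition \ref{pindbir}, Corollary \ref{cindbir} and Lemma \ref{descent} so that positivity of the minimal slope descends to $(X,D)$ itself; uniqueness then also requires knowing, via Theorem \ref{tdiffeqfibr}, that the orbifold base of the resulting fibration is well-defined up to orbifold birational equivalence, which is the one genuinely open-flavoured ingredient (Question \ref{qbireqorbbases}) but is available in the birational-with-sRC-fibres case needed here.
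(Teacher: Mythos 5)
Your construction-by-maximality is a reasonable alternative architecture, but the step proving property 2 contains a genuine error. You write that if $K_R+D_R$ is not pseudo-effective then ``$(R,D_R)$ satisfies condition 1 of Theorem \ref{torc}'' (so that $1\Leftrightarrow 4$ yields a movable class of positive minimal slope) and, ``more directly, a non-pseudo-effective $K_R+D_R$ means $R$ itself is sRC''. Both inferences are false: condition 1 of Theorem \ref{torc} demands that the orbifold base of \emph{every} fibration from $(R,D_R)$ have non-pseudo-effective canonical bundle, whereas non-pseudo-effectiveness of $K_R+D_R$ is only the case of the identity fibration (e.g.\ $R=C\times\Bbb P^1$ with $g(C)\geq 2$, $D_R=0$, has $K_R$ not pseudo-effective but is not sRC). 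Consequently the fibration $R\dasharrow S$ ``(e.g.\ $R\to\mathrm{pt}$)'' that you compose with $\rho$ to contradict maximality is not justified. What you actually need — and what is true, but must be extracted from the machinery — is that non-pseudo-effectiveness of $K_R+D_R$ produces some movable $\alpha$ with $\alpha.(K_R+D_R)<0$, hence via \cite{CP15}, the minimal-rank choice of Corollary \ref{cfibmin} and Theorem \ref{tofops} (or the relative sRC quotient, Theorem \ref{trelsRQ}), a fibration $r:(R,D_R)\dasharrow (S,D_S)$ with $\dim S<\dim R$ whose generic orbifold fibres are sRC; only then can you compose with $\rho$, invoke the closure fact, and contradict maximality. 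As written, the key contradiction rests on a false implication rather than on this argument.

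For comparison, the paper proceeds differently and avoids this issue: it first proves a factorisation statement (Proposition \ref{pfactorisation}: any fibration with sRC generic orbifold fibres factors through any fibration whose orbifold base has pseudo-effective canonical bundle) and deduces the closure property (Corollary \ref{ccomposition}); existence of $\rho$ is then obtained by induction on $\dim X$, composing a fibration with sRC fibres (available when $K_X+D$ is not pseudo-effective) with the sRC quotient of its base, and uniqueness together with characterisations 3 and 4 fall out of Proposition \ref{pfactorisation} in a few lines. Your uniqueness sketch via the product $Z_1\times Z_2$ and Theorem \ref{tdiffeqfibr} is vaguer and would also be simplified by isolating that factorisation lemma; your use of Theorem \ref{tesp}/Theorem \ref{torcrel} to get the closure fact is a legitimate alternative route to Corollary \ref{ccomposition}, but the proof of property 2 must be repaired as indicated before the whole argument stands.
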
 

\begin{question}\label{quniqsrQ?} Is this fibration unique, up to {\bf orbifold} birational equivalence? This depends on the more general question \ref{qbireqorbbases}, see Theorem \ref{tdiffeqfibr} for a partial answer.
\end{question}

\medskip

The proof will be obtained below by combining Theorem \ref{torc} with the following factorisation criterion, and its Corollary \ref{ccomposition}:

\begin{proposition}\label{pfactorisation} Let $(X,D)$ be as above, together with two orbifold morphisms which are fibrations over their orbifold bases: $f:(X,D)\to (R,D_R)$ and $g:(X,D)\to (Z,D_Z)$. Assume that:

1. $K_R+D_R$ is pseudo-effective.

2. The generic orbifold fibre $(X_z,D_z)$ of $g$ is $sRC$.

Then: there exists a rational map $h:Z\to R$ such that $h\circ g=f$.
\end{proposition}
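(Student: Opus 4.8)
The plan is to argue that $f$ contracts the $g$-fibres, which gives the factorisation $h\colon Z\dashrightarrow R$ by the rigidity/universal-property of the Stein factorisation. Concretely, let $(X_z,D_z)$ be a generic orbifold fibre of $g$; by hypothesis it is $sRC$. Suppose for contradiction that $f$ does not contract $X_z$, i.e. $W_z:=\overline{f(X_z)}\subset R$ has positive dimension for generic $z$. After replacing everything by a common neat orbifold birational model (legitimate since all three properties in the statement — $sRC$-ness of fibres, pseudo-effectivity of a canonical bundle, being an orbifold morphism to the orbifold base — are preserved, by the `up'-birational invariance noted at the start of \S\ref{pmaint} and by Proposition \ref{pom'}), we may assume $f|_{X_z}\colon X_z\dashrightarrow W_z$ is an orbifold morphism onto an orbifold pair $(W_z, D_{W_z})$ which is the orbifold base of $f|_{(X_z,D_z)}$, with $\dim W_z>0$. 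Since $(X_z,D_z)$ is $sRC$, property~1 of Theorem \ref{torc} forces $K_{W_z}+D_{W_z}$ to be \emph{not} pseudo-effective.

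The contradiction will come from comparing $(W_z,D_{W_z})$ with the orbifold base $(R,D_R)$ of $f$. First I would choose the neat model carefully (using Lemma \ref{lom} applied to $f$, and refining by the flattening of $g$) so that the $W_z$ are the fibres of a fibration $g'\colon R\dashrightarrow Z'$ for some $Z'$ dominated by $Z$; the generic orbifold fibre of $g'$ is then precisely $(W_z,D_{W_z})$ up to orbifold birational equivalence, at least in codimension one. The key point is that $(K_R+D_R)|_{W_z}$ is, up to an effective $g'$-vertical correction, $\geq K_{W_z}+D_{W_z}$ in the orbifold sense — this is the orbifold adjunction-type comparison, and it is exactly the kind of statement controlled by Proposition \ref{lobom} (composition of fibrations: the orbifold base of $g'\circ f$ equals that of $g'$ applied to $(R,D_R)$, since $f$ is an orbifold morphism to its orbifold base). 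Restricting the pseudo-effective class $K_R+D_R$ to a generic fibre $W_z$ of $g'$ (the restriction of a pseudo-effective class to a general complete-intersection-moving fibre stays pseudo-effective) then yields that $K_{W_z}+D_{W_z}$ is pseudo-effective, contradicting the previous paragraph. Hence $f$ contracts every generic $X_z$, so $f$ is constant on the generic fibre of $g$.

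Once $f$ is constant on generic $g$-fibres, the rational map $(f,g)\colon X\dashrightarrow R\times Z$ has image whose projection to $Z$ is birational onto $Z$ (since $g$ has connected fibres), giving a rational section $Z\dashrightarrow R\times Z$ and hence the desired $h\colon Z\dashrightarrow R$ with $h\circ g=f$ as rational maps; this is the standard rigidity argument for fibrations and requires no orbifold input. I expect the \textbf{main obstacle} to be the second paragraph: making precise, on a suitable neat model, the orbifold inequality $(K_R+D_R)|_{W_z}\succeq K_{W_z}+D_{W_z}$ relating the `global' orbifold base $D_R$ to the `fibrewise' orbifold base $D_{W_z}$, and controlling the $g'$-vertical divisors (those `partially supported on the fibres', in the sense of Definition \ref{ddpsf}) that separate the two — this is where Proposition \ref{lobom} and the neatness hypotheses do the real work, and where one must be careful that the generic fibre $W_z$ genuinely sees the multiplicities of $D_R$ rather than smaller ones coming from $g'$-exceptional components.
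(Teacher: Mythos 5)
Your overall scheme (show $f$ contracts the generic $g$-fibre, then conclude by the standard rigidity argument) matches the paper's, and the last step is fine; the problem is the middle of your argument. You assert that, after passing to a suitable neat model, the images $W_z:=\overline{f(X_z)}$ become the fibres of a fibration $g':R\dasharrow Z'$, and you then restrict $K_R+D_R$ to such a fibre. Nothing in the hypotheses forces the covering family $\{W_z\}_{z\in Z}$ to be, even generically, a partition of $R$: through a general point of $R$ a positive-dimensional family of $W_z$'s may pass, and no blow-up of $X$ or of $R$ can change this, since it is a property of the family of cycles $\{f(X_z)\}$ in $R$, not of the chosen birational models. For instance, take $X\subset \mathbb{P}^2\times(\mathbb{P}^2)^*$ the point-line incidence variety, $f,g$ the two projections, $D=0$: the $g$-fibres are rationally connected (hence sRC) and $W_z$ is the line $\ell_z\subset R=\mathbb{P}^2$; these lines cover $R$ but are the fibres of no fibration on any birational model. (Hypothesis 1 fails there, but your construction of $g'$ is carried out before hypothesis 1 enters, so your argument as written would have to produce $g'$ in that situation as well.) Moreover, even granting such a $g'$, the comparison $(K_R+D_R)_{\vert W_z}\succeq K_{W_z}+D_{W_z}$ between the global orbifold base and the orbifold base of $f_{\vert (X_z,D_z)}$ is not what Proposition \ref{lobom} provides: that proposition computes the orbifold base of a composition of fibrations, not the restriction of an orbifold base to fibres or to members of a covering family. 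So the step you yourself flag as the main obstacle is genuinely open, and the structural step preceding it is false in general.

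The paper's proof avoids descending to $W_z$ altogether and derives the contradiction on the fibre $X_z$ itself: since $f$ is an orbifold morphism, $f^*(m(K_R+D_R))$ injects into $\otimes^m\pi^*(\Omega^1(X,D))$ for $m$ divisible (Example \ref{exinttensors}, Proposition \ref{pom}); restricted to a general fibre $X_z$ this injection still has generic rank one precisely because $\dim f(X_z)>0$; on the other hand sRC-ness of $(X_z,D_z)$ gives (Theorem \ref{torc}, property 4) a movable class $\alpha$ with $\mu_{\alpha,\max}$ of the orbifold cotangent bundle of $(X_z,D_z)$ strictly negative, while pseudo-effectivity of $K_R+D_R$ forces $\alpha\cdot f^*(K_R+D_R)_{\vert X_z}\geq 0$ — a contradiction. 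To salvage your route you would need both a substitute for the nonexistent fibration $g'$ (restriction of a pseudo-effective class to a very general member of the covering family, plus control of the normal bundle, since $K_{W_z}\neq K_R{}_{\vert W_z}$ when the family is not a fibration) and a proved restriction statement for orbifold bases along general members; both are harder than the direct slope argument on $X_z$.
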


\begin{proof} 
Denote by $R_z:=f(X_z)\subset R$ the image of a general fibre $X_z$ of $g$ by $f$, and assume by contradiction that $dim(R_z)>0$, since the claims amounts to prove that $dim(R_z)=0$. The commutative diagram:

$$\xymatrix{
(X_z,D_z)\ar[r]^{j_z}\ar[d]^{f_z}&(X,D)\ar[d]^{f}\ar[r]^{g}&Z\\
R_z\ar[r]^{r_z} & (R,D_R)\\}$$

induces, for $m>0$ sufficiently divisible, a commutative diagram, the maps $r_z^*$ and $f_z^*$ being defined on $R_z^{reg}$ only:

$$\xymatrix{
(K_R+D_R)^{\otimes m}\ar[r]^{r^*_z}\ar[d]^{f^*} & K_{R_z}^{\otimes m}\ar[d]^{f_z^*}\\
\pi^*(\Omega^1(X+D))^{\otimes m}\ar[r]^{j^*_z}&\pi^*(\Omega^1({X_z}+D_z))^{\otimes m}\\}$$

The map $(f_z^*\circ r_z^*)$ is generically injective, and of generic rank $1$, because $dim(R_z)>0$. Thus $(j_z^*\circ f^*)$ also has generic rank $1$. By assumption, we have, for a certain movable class $\alpha$ on $X$ such that $g_*(\alpha)=0$: $\mu_{\alpha,max}(\Omega^1(X_z,D_z))<0$. Since $K_R+D_R$ is pseudo-effective, so is $(K_{R}+D_R)_z:=(K_R+D_R)_{\vert X_z}$, and so: $\alpha.(f^*(K_{R}+D_R)_z)\geq 0$, which contradicts the injection $\pi_z^*((K_{R}+D_R)_z^{\otimes m})\subset \pi_z^*((\Omega^1(X_z,D_z))^{\otimes m}),$ if $\pi_z:Y_z\to X_z$ is the restriction to $X_z$ of a Kawamata cover adapted to $(X,D)$, which is a Kawamata cover adapted to $(X_z,D_z)$. \end{proof}

\begin{corollary}\label{ccomposition} Let $(X,D)$ be smooth projective, together with an orbifold morphism $g:(X,D)\to (Z,D_Z)$ which is a fibration with $(Z,D_Z)$ its orbifold base. Assume that both $(Z,D_Z)$ and the general orbifold fibre $(X_z,D_z)$ of $g$ are $sRC$. Then so is $(X,D)$.
\end{corollary}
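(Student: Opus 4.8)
The plan is to prove $sRC$-ness of $(X,D)$ by verifying Property 1 of Theorem \ref{torc}, rather than the slope condition Property 4; this lets me recycle the factorisation criterion of Proposition \ref{pfactorisation} together with the behaviour of orbifold bases under composition (Proposition \ref{lobom}), and avoids re-running any foliation/slope machinery. Concretely, I fix a dominant rational map $f:X\dasharrow W$ with connected fibres and $\dim W>0$, and I must show that on a neat orbifold model the orbifold base $(W,D_W)$ has $K_W+D_W$ not pseudo-effective.

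First I would arrange, after suitable orbifold blow-ups of $X$ (and, if needed, of $Z$ and $W$), using Lemma \ref{lom} and the `up'-birational invariance recalled at the start of \S\ref{pmaint}, that on one and the same smooth model both $g:(X,D)\to(Z,D_Z)$ and $f:(X,D)\to(W,D_W)$ are orbifold morphisms which are fibrations onto their orbifold bases, with $(Z,D_Z)$ still the orbifold base of $g$, its general orbifold fibre $(X_z,D_z)$ still $sRC$ (an orbifold blow-up of the original sRC fibre), and $(Z,D_Z)$ still $sRC$. Then I argue by contradiction: if $K_W+D_W$ were pseudo-effective, the two orbifold morphisms $f$ and $g$ would satisfy hypotheses 1 and 2 of Proposition \ref{pfactorisation} (hypothesis 2 being exactly that the general orbifold fibre of $g$ is $sRC$), so there would be a rational map $h:Z\dasharrow W$ with $f=h\circ g$. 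Since the fibres of $f$ are connected and are unions of fibres of $g$, the map $h$ has connected fibres, and $\dim W>0$. Applying Proposition \ref{lobom} to $h\circ g$ — legitimate because $g:(X,D)\to(Z,D_Z)$ is an orbifold morphism — the orbifold base of $f=h\circ g$ equals the orbifold base of $h:(Z,D_Z)\to W$; hence $(W,D_W)$ is the orbifold base of a neat model of the fibration $h$ from the $sRC$ orbifold $(Z,D_Z)$. But Property 1 of Theorem \ref{torc} applied to $(Z,D_Z)$ forces this orbifold base to have non-pseudo-effective canonical bundle, a contradiction. Hence $K_W+D_W$ is not pseudo-effective, Property 1 holds for $(X,D)$, and $(X,D)$ is $sRC$.

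The one genuinely delicate point is the bookkeeping of the first paragraph: realising $f$ and $g$ simultaneously as orbifold morphisms onto their orbifold bases on a common smooth model while checking that neither the orbifold base of $g$ nor the $sRC$-ness of its general fibre is spoiled by the auxiliary modifications (the new exceptional divisors either being $f$-exceptional or, where they dominate a divisor of $Z$, receiving large enough multiplicities — and in any case one only ever shrinks the orbifold base of $g$, which by monotonicity in $D$ keeps it $sRC$, e.g.\ via Property 1, since making $D_Z$ smaller cannot turn a non-pseudo-effective $K+D$ into a pseudo-effective one). This is the same flattening-and-desingularisation bookkeeping already used repeatedly in \S\ref{pmaint} and in Lemma \ref{lom}, so I expect only routine care, not a real obstruction; everything else is a direct citation of Propositions \ref{pfactorisation} and \ref{lobom} and of Property 1 of Theorem \ref{torc}.
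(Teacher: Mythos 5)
Your proof is correct and follows essentially the same route as the paper's: assume a fibration from $(X,D)$ whose orbifold base has pseudo-effective canonical bundle, factor it through $g$ via Proposition \ref{pfactorisation}, identify the two orbifold bases via Proposition \ref{lobom} (using that $g$ is an orbifold morphism), and conclude from the $sRC$-ness of $(Z,D_Z)$ — you phrase it as verifying Property 1 of Theorem \ref{torc} by contradiction, the paper as showing directly that any such base is a point, which is the same argument. The birational bookkeeping you flag is handled in the paper only by the phrase "on some suitable birational model", and your monotonicity remark is a reasonable way to justify it.
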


\begin{proof} Let $\rho:(X,D)\to (R,D_R)$ be (on some suitable birational model) an orbifold morphism which is a fibration onto its orbifold base, with $K_R+D_R$ pseudo-effective. We want to show that $dim(R)=0$. By Proposition \ref{pfactorisation}, we get a factorisation $h: Z\to R$ such that $\rho=h\circ g$. But now, $g$ being an orbifold morphism, $(R,D_R)$ is also the orbifold base of $h:(Z,D_Z)\to R$. Because we assumed $(Z,D_Z)$ to be $sRC$, we get: $dim(R)=0$ as claimed.
\end{proof}

\begin{proof} (of Theorem \ref{tRQ}) We proceed by induction on $n:=dim(X)$. When $n=1$, everything is clear, since $(X,D)$ is either $sRC$ or has pseudo-effective canonical bundle $K_X+D$. We thus assume that $n\geq 2$, and that the conclusion of Theorem \ref{tRQ} holds whenever $n'<n$.

Existence of $\rho$: If $K_X+D$ is pseudo-effective, $\rho:=id_X$. Otherwise, there exists (on a suitable birational model of our initial $(X,D)$) a fibration $g:(X,D)\to (Z,D_Z)$ as in the proposition \ref{pfactorisation} with $sRC$ orbifold fibres, and $dim(Z)<n$. The conclusion of the theorem \ref{tRQ} thus applies to $(Z,D_Z)$. By taking further birational models, we have a `rational quotient' $\rho':(Z,D_Z)\to (R,D_R)$. The Corollary \ref{ccomposition} now shows that the orbifold fibres of $\rho:=\rho'\circ f:(X,D)\to (R,D_R)$ are $sRC$. Because $f:(X,D)\to (Z,D_Z)$ was an orbifold morphism, the orbifold base of $\rho:(X,D)\to R$ is $(R,D_R)$. And $\rho$ thus possesses the two characteristic properties of a `rational quotient'. 

\medskip

Uniqueness of $\rho$: Let $\rho: (X,D)\to (R,D_R)$ and $\rho': (X,D)\to (R', D_{R'})$ be two fibrations having the two characteristic properties stated in Theorem \ref{tRQ}. By Proposition \ref{pfactorisation}, we have factorisations $h: R\to R'$ (resp. $h':R'\to R$) such that $\rho'=h\circ \rho$ (resp. such that $\rho=\rho'\circ h'$). Thus $R=R',\rho=\rho'$.

Uniqueness of $(R,D_R)$ up to birational equivalence: This is a general property of `neat' birational models of fibrations. See Section \ref{ssbireq}.

\medskip

Let us now check that $\rho$ is characterised by any of the properties 3. or 4. in the statement of Theorem \ref{tRQ}. 
Let $\rho:(X,D)\to (Z,D_Z)$ be the `sRC quotient', and let $g:(X,D)\to (Y,D_Y)$ be another (neat) fibration. 

Assume first that the generic fibres $(X_y,D_y)$ are sRC (resp. that $K_Y+D_Y$ is pseudo-effective). Then, from Proposition\ref{pfactorisation}, we deduce the existence of some $h: Y\dasharrow Z$ such that $\rho=h\circ g$ (resp. $h:Z\to Y$ such that $g=h\circ \rho$). We thus have the maximality properties of statements 3 (resp. 4) if and only if $g=\rho$
\end{proof}

We shall need a relative version of this `sRC'-quotient in the next section. We abuse notation in the sequel, by still writing $(X,D)$ for any suitable orbifold birational model of our initial $(X,D)$, in order to simplify notations. Also a factorisation of a fibration $f=g\circ r$ of $f:X\dasharrow Y$ will be a pair $(r,g)$ of fibrations $r:X\dasharrow Z$, $g:Z\to Y$ such that $f=g\circ r$. We shall always chose (after suitable orbifold modifications) $r:(X,D)\to (Z,D_Z)$ to be a neat orbifold morphism to its orbifold base, and similarly for $h$. Moreover, $Ker(dr), Ker(dg), Ker(f)$ determine $D, D_Z$, $D$-orbifold-foliations on $X,Z,X$ respectively (their construction is recalled just before Theorem \ref{tofops} below). These orbifold foliations will be denoted by $\cR_D,\cG, \cF_D$ respectively:

\begin{theorem}\label{trelsRQ} Let $(X,D)$ be a smooth orbifold pair, and $f:X\to Y$ be a fibration. There exists then a (birationally) unique factorisation $f=g\circ \rho_f$, with $\rho_f:(X,D)\to (Z,D_Z)$, and $g:Z\to Y$, such that, for $y\in Y$ general: $\rho_{f\vert X_y}: (X_y,D_y)\to (Z_y,(D_Z)_y)$ is the `sRC quotient' of $(X_y,D_y)$, the orbifold fibre of $f$ over $y$, $(Z_y,(D_Z)_y)$ being the orbifold fibre of $g:(Z,D_Z)\to Y$.
 
 Moreover, the $D_Z$-foliation $\cG$ defined by $Ker(dg)$ has a pseudo-effective canonical bundle. 
  
 The factorisation $f=g\circ \rho_f$ is characterised, among all factorisations $f=g\circ r$, by any one of the following two properties:

 2. The general fibers $(X_z,D_z)$ of $r$ are `sRC', and $dim(Z)$ is minimal for these properties.
 
 3. The $D_Z$-foliation $\cG$ determined by $Ker(dg)$ has pseudo-effective canonical bundle, and $dim(Z)$ is maximal for these properties.
 
 The factorisation $f=g\circ \rho_f$ is called `the slope rational quotient of $f$'.
 \end{theorem}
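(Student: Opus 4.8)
The plan is to mimic the proof of the absolute case (Theorem \ref{tRQ}) but work fiberwise over $Y$, using the relative tools already developed: the relative version of the $sRC$ criterion (Theorem \ref{torcrel}), the relative factorisation afforded by Remark \ref{rfibmin}, and the relative descent lemmas from \S\ref{sbirstab}. First I would construct the factorisation by induction on $d:=\dim(X)-\dim(Y)$. If $K_{\cF_D}$ is pseudo-effective (equivalently, the general orbifold fibre $(X_y,D_y)$ is not covered by the relative construction, i.e.\ $\mu_{\alpha,\min}$ is never positive for $\alpha\in\mathrm{Mov}(X/Y)$ with $\alpha\cdot(K_X+D)<0$ — which by Theorem \ref{torcrel} means $(X_y,D_y)$ is not $sRC$ in the nontrivial sense), then $\rho_f:=\mathrm{id}_X$ and $Z=X$; here $\cG=\cF_D$. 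Otherwise, by Theorem \ref{torcrel} and Remark \ref{rfibmin} there is $\alpha\in\mathrm{Mov}^0(X/Y)$ with $\alpha\cdot(K_X+D)<0$; taking $\alpha$ so that the relative dimension $d_\alpha$ of the associated algebraic foliation is minimal gives (Corollary \ref{cfibmin} in the relative setting of Remark \ref{rfibmin}) a neat orbifold morphism $r_1:(X,D)\to(Z_1,D_{Z_1})$ over $Y$ whose orbifold fibres $(X_z,D_z)$ are $sRC$ by Theorem \ref{torcrel'}, with $\dim(Z_1)-\dim(Y)<d$. Applying the inductive hypothesis to $g_1:(Z_1,D_{Z_1})\to Y$ yields a factorisation $g_1=g\circ\rho'$ with $\rho':(Z_1,D_{Z_1})\to(Z,D_Z)$ having $sRC$ relative orbifold fibres and $\cG=\ker(dg)$ of pseudo-effective canonical bundle; set $\rho_f:=\rho'\circ r_1$.

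The key nontrivial input is that $\rho_f$ still has $sRC$ orbifold fibres over $Y$, i.e.\ that the composition of two relative $sRC$ fibrations is again relatively $sRC$. This is exactly the relative analogue of Corollary \ref{ccomposition}, which in turn rests on the factorisation criterion Proposition \ref{pfactorisation}. So the next step is to state and prove a relative version of Proposition \ref{pfactorisation}: given two factorisations $f=g\circ r$ (with $r$ having $sRC$ relative orbifold fibres, hence, by Theorem \ref{torcrel}, admitting $\alpha\in\mathrm{Mov}^0(X/Z)$ with $\mu_{\pi^*(\alpha),\min}(\pi^*(T(X_z,D_z)))>0$) and $f=g'\circ r'$ (with $\cG':=\ker(dr')$ of pseudo-effective canonical bundle over $Y$), there is a rational map $h$ over $Y$ with $r'=h\circ r$. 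The proof is verbatim the one of Proposition \ref{pfactorisation}, now carried out on the general orbifold fibre $(X_y,D_y)$: restricting to $X_y$ reduces the relative statement to the absolute Proposition \ref{pfactorisation} applied fibrewise, using that a Kawamata cover adapted to $(X,D)$ restricts to one adapted to $(X_y,D_y)$ and that pseudo-effectivity of $K_{\cG'}$ over $Y$ restricts to pseudo-effectivity of $K$ of the corresponding foliation on $(X_y,D_y)$.

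With the relative Proposition \ref{pfactorisation} in hand, both the claimed characterisations and uniqueness follow formally, exactly as in Theorem \ref{tRQ}. For uniqueness up to birational equivalence over $Y$: if $f=g\circ\rho_f=g''\circ\rho''_f$ are two factorisations with the stated fibrewise $sRC$/pseudo-effective properties, the relative criterion gives rational maps over $Y$ in both directions between $Z$ and $Z''$, hence a birational identification. For characterisation 2: if $f=g\circ r$ with $(X_z,D_z)$ generically $sRC$, apply the relative criterion with $r$ in the role of the ``$sRC$'' fibration and $\rho_f$ in the role of the ``pseudo-effective'' one to get $\rho_f=h\circ r$, forcing $\dim Z\geq\dim(Z_{\rho_f})$; minimality is attained iff $r=\rho_f$ birationally. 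For characterisation 3: if $f=g\circ r$ with $\ker(dr)$ having pseudo-effective canonical bundle over $Y$, apply the relative criterion the other way to get $r=h\circ\rho_f$, forcing $\dim(Z_r)\leq\dim(Z_{\rho_f})$, with equality iff $r=\rho_f$. The statement that $\cG=\ker(dg)$ has pseudo-effective canonical bundle is built into the construction (it is inherited from the inductive step, where the base orbifold had pseudo-effective canonical bundle, together with Proposition \ref{lobom} to identify the orbifold base of the composite with that of $g$, since $r_1$ is an orbifold morphism).

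The main obstacle I expect is the relative factorisation criterion: one must be careful that the class $\alpha\in\mathrm{Mov}^0(X/Z)$ produced by Theorem \ref{torcrel} lives in the relative movable cone for $r$ and restricts correctly to the general orbifold fibre $(X_y,D_y)$ — i.e.\ that Proposition \ref{pj_z} applies simultaneously to both fibrations $r$ and $f$ over a common ``general'' locus in $Y$ and in $Z$. This is a matter of intersecting countably many Zariski-closed conditions (choosing $y\in Y$ general enough that $X_y$ is general for $f$, that $Z_y$ is general for $g$, and that the restriction $\alpha_y$ of $\alpha$ to $X_y$ is big on the $r_{\vert X_y}$-general fibre and has the required positivity), plus checking that pseudo-effectivity of $K_{\cG'}$ and the embedding $(K_{R}+D_R)^{\otimes m}\hookrightarrow\otimes^m\Omega^1$ pass to $X_y$ — routine but requiring care with the neat-model hypotheses and with Hartogs-type extension across the codimension-$2$ bad locus, as in the proof of Theorem \ref{torcrel'}.
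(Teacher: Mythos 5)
Your proposal is correct and follows essentially the same route as the paper: induction on the relative dimension $\dim(X)-\dim(Y)$, the first step via Corollary \ref{cfibmin} applied in the relative setting of Remark \ref{rfibmin}, the inductive hypothesis applied to the intermediate base over $Y$, and the composition step handled by reducing fibrewise to the absolute results (the paper simply applies Corollary \ref{ccomposition} to the general fibres and quotes \cite{CP15}, Theorem 6.2, for the equivalence between pseudo-effectivity of $K_{\cG}$ and of its restriction to the general fibres $T_y$, which is the point you flag as needing care). The only slip is attributing the sRC-ness of the fibres of $r_1$ to Theorem \ref{torcrel'}; it follows instead from Corollary \ref{cfibmin}(3) combined with criterion 4 of Theorem \ref{torc}, exactly by the mechanism you describe, so the argument is unaffected.
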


 \begin{proof} (of Theorem \ref{trelsRQ}) The uniqueness is clear. To show the existence, we proceed by induction on $d:=dim(X)-dim(Y)$, the assertion being obvious when $d=0$. If $K_{\cF}$ is pseudo-effective, $Y=X$ satisfies the assertions. Otherwise, by Corollary \ref{cfibmin}, there exists a factorisation $f=g\circ r$, with $dim(Z)<dim(X)$ such that $r:(X,D)\to (Z,D_Z)$ has `sRC' general fibres $(X_z,D_z)$. This Corollary can, indeed, be applied in the relative setting, by Remark \ref{rfibmin}. By induction hypothesis, we thus have a `slope RC quotient' factorisation $g=h\circ\rho_g$ for $g$, with $\rho_g:(Z,D_Z)\to (T,D_T)$ having `sRC' general fibres, and $h:(T,D_T)\to Y$ such that $K_{\cH}$ is pseudo-effective, where $\cH$ is the orbifold on $(T,D_T)$ associated to $Ker(dh)$. We obtain in this way By Corollary \ref{ccomposition}, the fibres of $\rho_f:=\rho_g\circ r:(X,D)\to (T,D_T)$ are `sRC', and $(T,D_T)$ is the orbifold base of this fibration, since both $\rho_g$ and $r$ have been chosen to be orbifold morphisms. The factorisation 
 $f:h\circ \rho_f$ thus fulfills the two conditions for being a relative `sRC quotient of $f$, since for any orbifold foliation $\cH$ on $(T,D_T)$ over $Y$, $K_{\cH}$ is pseudo-effective if and only if so is its restriction to any fibre $T_y$ (by \cite{CP15}, Theorem 6.2). \end{proof}

\begin{remark} In general, the orbifold rational quotient $\rho: (X,D)\dasharrow R$ is not `almost holomorphic' (see again the example 6.17, p. 859, of \cite{Ca07}: $(\Bbb P^2,L_1+L_2)$ if $L_i,i=1,2$ are two distinct lines). However, $\rho$ is almost holomorphic if $(X,D)$ is klt, by \cite{Ca07}, Theorem 9.19, p. 896 (the proof applies to our slightly more general situation).
\end{remark}

%%%%%%%%%%%%%%%%%%%%%%%%%%%%%%%%%%%%%%

\section{Orbifold foliations of positive slope.}\label{spof}

Let $(X,D)$ be a smooth projective orbifold, $\pi:Y\to X$ a Kawamata cover adapted to $(X,D)$, and $\cF_D\subset \pi^*(T(X,D))$ a foliation on $(X,D)$. We say that $\cF_D$ is a $D$-foliation. If $f:X\dasharrow Z$ is a rational dominant fibration, it defines a foliation $\cF:=Ker(df)$ on $X$, and a $D$-foliation $\cF_D:=\pi^*(\cF)\cap \pi^*(T(X,D))\subset \pi^*(TX)$. Conversely, if $\cF_D\subset \pi^*(T(X,D))$ is a $D$-foliation  (see \cite{CP15} for this notion), it defines a foliation $\cF$ on $X$ characterised by the equality: $\cF_B^{sat}=\pi^*(\cF)$, where $\cF_D^{sat}$ is the saturation in $\pi^*(TX)$ of $\cF_D$. And $\cF$ is algebraic means that it leaves are algebraic, or equivalently, that $\cF=Ker(df)$ for some rational dominant fibration $f:X\dasharrow Z$. In this case,

\begin{theorem}\label{tofops}  Assume that $\cF_D\subset \pi^*(T(X,D))$ is a $D$-foliation, and that $\mu_{\alpha',min}(\cF_D)>0$ for some movable class $\alpha$ on $X$, where $\alpha':=\pi^*(\alpha)$ is movable on $Y$. Then:

1. $\cF$ is algebraic, let $f:X\dasharrow Z$, be such that $\cF=Ker(df)$.

2. On any `neat' orbifold birational model $f':(X',D')\to Z'$ of $f$, the generic orbifold fibre $(X'_z,D'_z)$ of $f'$ is sRC.

Conversely, if $(f,D)$ posesses the above property 2, the $D$-foliation $\cF_D$ associated to it\footnote{By the construction recalled before the statement of Theorem \ref{tofops}.} has $\mu_{\alpha',min}(\cF_D)>0$ for some $\alpha$ movable on $X$, and for any Kawamata cover $\pi$ adapted to $(X,D)$.
\end{theorem}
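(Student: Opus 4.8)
The plan is to reduce everything to the previously established theorems. For the direct implication, I would first apply the main theorem of \cite{CP15} (the "weak version" recalled in the introduction) to the $D$-foliation $\cF_D$: since $\mu_{\alpha',\min}(\cF_D)>0$, in particular $\mu_{\alpha'}(\cF_D)>0$ and $\cF_D$ is a $D$-foliation, so $\cF$ is algebraic and descends to a rational fibration $f:X\dasharrow Z$ with $\cF=\mathrm{Ker}(df)$ and $\alpha.(K_{X/Z}+D)<0$ on a neat model. This gives part 1. For part 2, pass to a neat orbifold birational model $f':(X',D')\to Z'$; by Proposition \ref{pindbir} and Corollary \ref{cindbir}, the (equivariant) Harder--Narasimhan data, and in particular the positivity of the minimal slope of the orbifold foliation, are preserved under the orbifold birational morphism (pulling back $\alpha$). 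Thus on $(X',D')$ the relative tangent foliation $\cF'_D\subset (\pi')^*(T(X',D'))$ still has $\mu_{(\pi')^*(\alpha),\min}(\cF'_D)>0$. Restricting to a general fibre $(X'_z,D'_z)$: by Proposition \ref{pj_z}, a general movable class $\alpha\in \mathrm{Mov}^0(X'/Z')$ restricts to a movable class $\alpha_z$ on $X'_z$, and the restriction of $\cF'_D$ to $X'_z$ is just $(\pi'_z)^*(T(X'_z,D'_z))$ where $\pi'_z$ is the restriction of the Kawamata cover (which is still adapted to $(X'_z,D'_z)$). So $\mu_{(\pi'_z)^*(\alpha_z),\min}((\pi'_z)^*(T(X'_z,D'_z)))>0$, which is exactly property 4 of Theorem \ref{torc}; hence $(X'_z,D'_z)$ is $sRC$.

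For the converse, suppose $(f,D)$ has the property that on some (hence, by birational invariance, any) neat model $f':(X',D')\to Z'$ the general orbifold fibre $(X'_z,D'_z)$ is $sRC$. I would invoke Theorem \ref{torcrel}: it states precisely that, under this hypothesis, there exists $\alpha\in \mathrm{Mov}^0(X'/Z')$ with $\mu_{(\pi')^*(\alpha),\min}((\pi')^*(T(X'_z,D'_z)))>0$. Now the $D$-foliation $\cF_D$ associated to $f$ is, over the generic point of $Z'$, exactly the relative orbifold tangent sheaf: the saturation of $\cF_D$ inside $(\pi')^*(TX')$ equals $(\pi')^*(\mathrm{Ker}(df'))$, and on a general fibre this coincides with $(\pi'_z)^*(T(X'_z,D'_z))$. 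Since a relative movable class lifted appropriately to $X'$ has $f'_*(\alpha)=0$, the slope of any quotient of $\cF_D$ with respect to $(\pi')^*(\alpha)$ is computed fibrewise (this is the content of Proposition \ref{pj_z}, together with the fact that determinants of quotients of $\cF_D$ restrict to determinants of quotients of the fibre foliation, up to $f'$-vertical corrections orthogonal to $\alpha$). Hence $\mu_{(\pi')^*(\alpha),\min}(\cF_D)>0$. Finally, the independence from the choice of Kawamata cover $\pi$ is exactly Theorem \ref{tind} and Corollary \ref{cind} applied to the filtration of $T(X',D')$, and pushing the positivity statement back down from $(X',D')$ to $(X,D)$ uses Lemma \ref{descent} (applied to $dg:T(X',D')\to T(X,D)$ lifted to a common fibre-product of Kawamata covers, as in Proposition \ref{pindbir}).

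The main obstacle I anticipate is the careful bookkeeping in the converse direction: one must check that the positive-minimal-slope property of the \emph{relative} foliation $\cF_D$ (a statement about all its quotient sheaves on $X'$, or rather on the relevant fibre-product of covers) genuinely follows from the fibrewise statement supplied by Theorem \ref{torcrel}. The subtlety is that a quotient of $\cF_D$ need not be the pull-back of a quotient on the generic fibre, and its determinant can acquire contributions from $f'$-vertical divisors (those "partially supported on the fibres of $f'$"). This is precisely the situation handled in \S\ref{sscom} and in Lemma \ref{lsheaf}: either the determinant of such a quotient comes generically from the base (so has zero intersection with a well-chosen $\alpha$ satisfying $f'_*(\alpha)=0$), or it is strictly positive on the general fibre; and by choosing $\alpha$ big on the general fibre via Proposition \ref{prelmov} one rules out the degenerate case. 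So the converse is not merely a restriction statement but requires the same vertical-divisor analysis used in the proof of Theorem \ref{torc} itself; I would phrase it as a direct application of Corollary \ref{cesp} (or Theorem \ref{tesp}) with the base $Z'$ a point relative to the fibration, i.e. in the purely relative form, together with Lemma \ref{descent} to descend from the neat model.
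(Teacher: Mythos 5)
Part 1 and your treatment of the converse are fine: the converse is indeed Theorem \ref{torcrel} (whose proof establishes positivity of the minimal slope of the \emph{relative} foliation itself, not just a fibrewise statement), combined with Lemma \ref{descent} and the independence results \ref{tind}, \ref{cind}. The problem is your forward direction, part 2. The step ``restricting to a general fibre \dots so $\mu_{(\pi'_z)^*(\alpha_z),\min}((\pi'_z)^*(T(X'_z,D'_z)))>0$'' is a genuine gap: positivity of $\mu_{\min}$ on the total space does not pass to the restriction to a fibre, because a quotient of the restricted sheaf need not extend to a quotient on $X'$ (or on the cover), and Proposition \ref{pj_z} only identifies movable cones and intersection numbers with divisors coming from $X'$ — it says nothing about sheaf quotients living only on $X'_z$. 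What you are implicitly using is a restriction theorem of Mehta--Ramanathan type for movable classes, which the paper explicitly flags as open in this setting (and the fibres of $f'$ are not generic complete intersections, so even the classical statement would not apply). There is also a smaller issue upstream: the hypothesis gives an arbitrary movable $\alpha$ on $X$, not a relative class with $f_*\alpha=0$; replacing it by a class in $\mathrm{Mov}^0(X'/Z')$ while preserving $\mu_{\min}(\cF_D)>0$ is not automatic (the paper only does this under a rank-minimality assumption, Corollary \ref{cfibmin}).

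The paper's actual argument for part 2 avoids restriction entirely and argues by contradiction with \emph{global} quotients: take a neat model and form the relative sRC quotient of $f$ (Theorem \ref{trelsRQ}), $f=g\circ\rho_f$ with $\rho_f:(X,D)\to(Z,D_Z)$ and $g:(Z,D_Z)\to Y$. If the generic orbifold fibre of $f$ were not sRC, then $\cG\neq 0$, where $\cG\subset p^*(T(Z,D_Z))$ is the $D_Z$-foliation given by $\mathrm{Ker}(dg)$, and $K_{\cG}$ is pseudo-effective. The derivative $\pi^*(d\rho_f)$ maps $\cF_D$ generically onto $\rho_f^*(q^*(\cG))$ (Proposition \ref{pom}), exhibiting a quotient of $\cF_D$ defined on the whole space; its $\pi^*(\alpha)$-slope is then $\geq \mu_{\pi^*(\alpha),\min}(\cF_D)>0$, while pseudo-effectivity of $K_{\cG}$ and movability of $\alpha$ force it to be $\leq 0$ — a contradiction. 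You should replace your restriction step by this relative-sRC-quotient argument (or some other device producing quotients on the total space), since as written that step does not follow from anything established in the paper.
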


\begin{proof} The proof is a direct adaptation to the orbifold context of the proof given in \cite{CP15} when $D=0$. From \cite{CP15}, we know that the foliation $\cF$ on $X$ defined by the saturation of $\cF_D$ in $\pi^*(TX)$ is algebraic. Let $f:(X,D)\to Y$ be a neat orbifold birational model of the rational fibration $f:(X,D)\to Y$ defined by $\cF_D$. We know from \cite{CP15} that its generic orbifold fibres $(X_y,D_y)$ have a canonical bundle $K_{X_y}+D_y$ which is not pseudo-effective. Let $\rho_f:(X,D)\to (Z,D_Z)$ be its `sRC quotient', with the factorisation $f=g\circ \rho_f$, and $g:(Z,D_Z)\to Y$. We thus know that $K_{\cG}$ is pseudo-effective, if $\cG\subset p^*(T(Z,D_Z)$ is the $D_Z$-foliation defined by the foliation  $\cG_Z:=Ker(dg)\subset TZ$ on $Z$. We have\footnote{On a suitable finite cover of $X'$ dominating a Kawamata cover $q:Z'\to Z$ adapted to $(Z,D_Z)$ still denoted $X'$, see the constructions made in \S\ref{sscom}.} a natural derivative map: $\pi^*(d\rho_f): \pi^*(\cF_D)\to (\rho_f)^*(q^*(\cG))$ which is generically surjective (note that this map is, generically on $X$, nothing but $df:\cF\to \rho_f^*(\cG_Z)$). 

Assume that $dim(Z)>dim(Y)$, or equivalently, that $\cG\neq 0$.

We thus have: $0<\mu_{\pi^*(\alpha),min}(\pi^*(T(X,D))\leq \mu_{\pi^*(\alpha),min} (\rho_f^*(q^*(\cG)))$. But this contradicts the fact that $K_{\cG}$ is pseudo-effective.\end{proof}

%%%%%%%%%%%%%%%%%%%%%%%%%%%%%%%%%%%%%%%%%
%%%%%%%%%%%%%%%%%%%%%%%%%%%%%%%%%%%%%%%%%
%%%%%%%%%%%%%%%%%%%%%%%%%%%%%%%%%%%%%%%%%%%%%%%%%%%%%%%%%%%%%%%%%

\section{Birational stability of the orbifold cotangent bundle}\label{sbirstab}

\subsection{Numerical dimension}

\begin{definition} Define, if $A$ sufficiently ample, and $L\in Pic(X)$, the `numerical dimension' of $L$ to be: 

$\nu(X,L):=max\{k\in \Bbb Z\vert \overline{lim}_{m>0}(\frac{h^0(X,mL+A}{m^k})>0\}$.
\end{definition}

 Recall some easy properties:

{\bf 1} $\kappa(X,L)\leq \nu(X,L)\in \{-\infty,0,1,...,n\}$.

 {\bf 2} $\nu(X,L+P)\geq \nu(X,L)$ if $P\in Pic(X)$ is pseff.
  
  In general, there is no further relationship between $\nu(X,L)$ and $\kappa(X,L)$, except in the following extremal case:

   {\bf 3}    $\kappa(X,L)=n$ if $\nu(X,L)=n$.

   We shall need the following easy property:
   
   \begin{lemma}\label{lcompnu} Let $\pi:T\to X$ be a proper morphism between two normal connected complex projective varieties, and let $L\in Pic(X)$, together with a sufficiently ample line bundle $A$ on $X$. Then $\nu(X,L)=\nu(T,\pi^*(L))=max\{s\in \Bbb Z\vert limsup_{s\to +\infty} \frac{h^0(T, \pi^*(mL+A)}{m^s}>0\}$.
   \end{lemma}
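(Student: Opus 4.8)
The plan is to prove Lemma~\ref{lcompnu} by comparing the growth of $h^0(T,\pi^*(mL+A))$ with that of $h^0(X,mL+A)$ in both directions, using the projection formula and a standard trick to absorb the ``error sheaves'' $\pi_*\cO_T$ and $R^i\pi_*\cO_T$ into the ample twist.

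\textbf{Step 1: the inequality $\nu(X,L)\geq \nu(T,\pi^*(L))$ is trivial.} Since $\pi^*\cO_X\hookrightarrow\cO_T$, one has $H^0(X,mL+A)\hookrightarrow H^0(T,\pi^*(mL+A))$ via pullback, so the defining $\overline{\lim}$ for $T$ dominates the one for $X$; hence $\nu(T,\pi^*L)\geq\nu(X,L)$. Wait --- I want the reverse, so let me state it carefully: pullback of sections gives $h^0(X,mL+A)\leq h^0(T,\pi^*(mL+A))$, which yields $\nu(X,L)\leq\nu(T,\pi^*(L))$. So the easy direction is $\nu(X,L)\leq\nu(T,\pi^*(L))$.

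\textbf{Step 2: the reverse inequality $\nu(T,\pi^*(L))\leq\nu(X,L)$.} By the projection formula, $\pi_*(\pi^*(mL+A))=(mL+A)\otimes\pi_*\cO_T$. The sheaf $\cQ:=\pi_*\cO_T$ is a coherent sheaf of $\cO_X$-algebras on $X$; since $A$ is sufficiently ample we may assume $A$ is chosen so that $\cQ\otimes\cO_X(A)$ is globally generated, hence a quotient of $\cO_X^{\oplus N}$ for some $N$. Then $(mL+A)\otimes\cQ$ is a quotient of, equivalently embeds after a further twist into, a direct sum of copies of $\cO_X(mL+2A)$: more precisely, dualizing the surjection $\cO_X^{\oplus N}\twoheadrightarrow \cQ\otimes\cO_X(A)$ is not available since $\cQ$ need not be locally free, but instead I use that $\cQ\hookrightarrow\cQ^{**}$ generically and, away from a codimension-$\geq 2$ set, compare; cleaner: since $\cQ\otimes\cO_X(A)$ is globally generated, for any line bundle $M$ the natural map $H^0(X,M)^{\oplus N}\to H^0(X,M\otimes\cQ\otimes\cO_X(A))$ is surjective after possibly enlarging $N$, by Castelnuovo--Mumford regularity of the coherent sheaf $\cQ$ with respect to $A$. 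Applying this with $M=mL+A$ gives $h^0(X,(mL+A)\otimes\cQ\otimes\cO_X(A))\leq N\cdot h^0(X,mL+2A)$. Combining with $H^0(T,\pi^*(mL+2A))=H^0(X,(mL+2A)\otimes\cQ)$ and replacing $A$ by $2A$ throughout (which does not change the numerical dimension, since $\nu$ is independent of the choice of sufficiently ample $A$), one obtains $h^0(T,\pi^*(mL+A'))\leq N\cdot h^0(X,mL+A'')$ for suitable ample $A',A''$, whence $\nu(T,\pi^*(L))\leq\nu(X,L)$.

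\textbf{Step 3: the last equality in the statement.} The quantity $\max\{s\in\bZ\mid \limsup_{m}\frac{h^0(T,\pi^*(mL+A))}{m^s}>0\}$ --- using the ample bundle $A$ pulled back from $X$ --- equals $\nu(T,\pi^*L)$ by Step~2's computation (it is sandwiched between $\nu(X,L)$ and itself), and we have just shown both equal $\nu(X,L)$; alternatively it is immediate from the definition of $\nu(T,\pi^*L)$ together with the fact that $\pi^*A$ differs from a sufficiently ample bundle on $T$ by an effective (indeed nef) correction, which only increases $h^0$ and hence does not change the $\limsup$-exponent, combined with Corollary~\ref{cnusat}-type absorption. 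The main obstacle is Step~2: making the ``absorb $\pi_*\cO_T$ into an ample twist'' argument precise for a non-locally-free coherent sheaf $\cQ$ requires invoking Serre vanishing / Castelnuovo--Mumford regularity to bound $h^0$ of $\cQ$ twisted by high powers, rather than a naive rank count; everything else is formal manipulation with the projection formula and the monotonicity properties {\bf 1}, {\bf 2} of $\nu$ recalled above.
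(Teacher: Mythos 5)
There is a genuine gap, and it is structural. The first equality $\nu(T,\pi^*(L))\leq\nu(X,L)$ — the one the paper actually needs in its applications — requires bounding $h^0(T,m\pi^*(L)+A')$ for a line bundle $A'$ that is \emph{ample on $T$}, because the definition of $\nu(T,\pi^*(L))$ uses a sufficiently ample bundle on $T$; the admissibility of the pulled-back twist $\pi^*(A)$ (which is not ample when $\pi$ has positive-dimensional fibres, e.g.\ in the use made in Corollary \ref{cnusat}) is precisely part of what the lemma asserts and cannot be assumed. Your Step 2 only estimates $h^0(T,\pi^*(mL+A))=h^0(X,(mL+A)\otimes\pi_*\cO_T)$, i.e.\ the third quantity in the statement, and at no point controls sections twisted by an honest ample $A'$ on $T$. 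So even if Step 2 were carried out correctly, your chain would only give $\nu(X,L)=\max\{s\,\vert\,\limsup h^0(T,\pi^*(mL+A))/m^s>0\}\leq\nu(T,\pi^*(L))$, leaving the inequality $\nu(T,\pi^*(L))\leq\nu(X,L)$ unproved. The missing ingredient is exactly the paper's Lemma \ref{ldescent}: push forward $A'$ itself, embed the torsion-free sheaf $\pi_*(A')$ into $\oplus^r\cO_X(kA)$, and conclude $h^0(T,m\pi^*(L)+A')=h^0(X,mL\otimes\pi_*(A'))\leq r\,h^0(X,mL+kA)$.

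Moreover, the device you propose for the estimate you do attempt is incorrect as stated. Global generation of $\cQ\otimes\cO_X(A)$ gives a sheaf surjection $\cO_X^{\oplus N}\twoheadrightarrow\cQ\otimes\cO_X(A)$, and a surjection never bounds $h^0$ from above; the claimed surjectivity of $H^0(X,M)^{\oplus N}\to H^0(X,M\otimes\cQ\otimes\cO_X(A))$ for an arbitrary line bundle $M$ is false (take $\cQ$ containing $\cO_X$ as a direct summand and $M$ without sections), and Castelnuovo--Mumford regularity with respect to $A$ says nothing about twists by $mL$ for an arbitrary $L$. What one needs is an \emph{injection} of $\cQ$ into a direct sum of copies of $\cO_X(kA)$, and this is available precisely by the trick you dismissed: since $\cQ$ is torsion-free, $\cQ\hookrightarrow\cQ^{**}$, and dualizing a surjection $\cO_X^{\oplus N}\twoheadrightarrow(\cQ^{**})^{*}\otimes\cO_X(kA)$ yields $\cQ^{**}\hookrightarrow\oplus^N\cO_X(kA)$ — local freeness is not needed, only reflexivity of the hull. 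This is exactly the mechanism of the paper's Lemma \ref{ldescent}, applied there to $\pi_*(A')$ rather than to $\pi_*\cO_T$, which simultaneously repairs both defects.
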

   
   \begin{proof} We only prove the first equality, which obviously implies the second one, which we now prove. Let thus $A'$ be ample on $T$. From Lemma \ref{ldescent}, we have a natural injection $\pi_*(A')\subset \oplus^r\cO_X(kA)$, for some integers $k>0,r>0$. This imples the inequality: $h^0(T,\pi^*(m.L)+ A')\leq r.h^0(X,m.L+kA)$, which easily implies that $\nu(T,\pi^*(L))\leq \nu(X,L)$, the reverse impliction being obvious.\end{proof}

   A central result is the following:

     {\bf Theorem:} ([BDPP], [Nak]) $L$ pseff iff $\nu(X,L)\geq 0$.

    \subsection{Maximal numerical dimension of a coherent sheaf}

The following notion was introduced in \cite{Ca10}:

     \begin{definition} Let $X$ be a connected normal complex projective variety, and $\cF$ a torsion free coherent sheaf on $X$. 
     Define: $\nu^+(X,\cF):=max\{\nu(X,L)\vert L\in Pic(X), L\subset \otimes^m\cF, m>0\}.$
     \end{definition}

     We shall need the following elementary property:

   \begin{lemma}\label{lnugal} Let $\pi:T\to X$ be a generically finite Galois map between connected complex projective manifolds. Let $\cF'$ be a torsionfree coherent sheaf on $T$, equipped with an equivariant action of the group $Gal(T/X):=G$. 
   
   Define:$$\nu^+(X, \cF',G):=max\{\nu(X,L)\vert L\in Pic(X), \pi^*(L)\subset \otimes^m(\cF'), m>0\}$$
   
   Then: $\nu^+(X, \cF',G)=\nu^+(T,\cF')$.
   \end{lemma}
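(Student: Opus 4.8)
\textbf{Proof plan for Lemma \ref{lnugal}.}

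The plan is to prove the two inequalities $\nu^+(X,\cF',G) \le \nu^+(T,\cF')$ and $\nu^+(T,\cF') \le \nu^+(X,\cF',G)$ separately. The first is essentially immediate: if $L \in \Pic(X)$ satisfies $\pi^*(L) \subset \otimes^m(\cF')$ for some $m>0$, then $\pi^*(L)$ is itself an element of $\Pic(T)$ embedded in $\otimes^m(\cF')$, and by Lemma \ref{lcompnu} we have $\nu(T,\pi^*(L)) = \nu(X,L)$. Hence every value of $\nu$ achieved in the definition of $\nu^+(X,\cF',G)$ is also achieved in the definition of $\nu^+(T,\cF')$, giving $\nu^+(X,\cF',G) \le \nu^+(T,\cF')$.

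The real content is the reverse inequality. Start with $M \in \Pic(T)$ and $m>0$ such that $M \subset \otimes^m(\cF')$ and $\nu(T,M) = \nu^+(T,\cF')$. The idea is to replace $M$ by the tensor product $N := \bigotimes_{g\in G} g^*(M)$ of its translates under $G$: this is a $G$-invariant line bundle on $T$, and using the injections $g^*(M) \subset g^*(\otimes^m \cF') = \otimes^m(\cF')$ (the last equality by $G$-equivariance of $\cF'$) one gets $N \subset \otimes^{m|G|}(\cF')$ via the natural multiplication map of tensor algebras. A $G$-invariant line bundle on $T$ need not descend to $X$ literally, but a suitable power does: after replacing $N$ by $N^{\otimes e}$ for an appropriate $e>0$ (absorbing the character of $G$ by which $G$ acts on the fibres of $N$ over the branch locus), one obtains $N^{\otimes e} = \pi^*(L)$ for some $L \in \Pic(X)$, and still $N^{\otimes e} \subset \otimes^{em|G|}(\cF')$. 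Then $\nu(X,L) = \nu(T,\pi^*(L)) = \nu(T, N^{\otimes e})$ by Lemma \ref{lcompnu}. Finally one needs $\nu(T, N^{\otimes e}) \ge \nu(T, M)$: this follows from property \textbf{2} of numerical dimension (monotonicity under adding a pseudo-effective class), since $N^{\otimes e} = \bigotimes_{g} g^*(M)^{\otimes e}$ contains $M$ (taking $g = \mathrm{id}$) as a factor and the other factors $g^*(M)$ are, up to the same power, sub-line-bundles of $\otimes^{m}(\cF')$ hence... — more carefully, one wants each $g^*(M)$ pseudo-effective so that $N^{\otimes e} - M^{\otimes ?}$ is pseudo-effective; this holds because $M$, being a sub-line-bundle of a tensor power of a fixed sheaf that also contains translates giving $\nu \ge \nu^+(T,\cF') \ge 0$, is itself pseudo-effective, whence so is each $g^*(M)$.

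\textbf{Main obstacle.} The delicate point is the descent step: a $G$-invariant line bundle on $T$ (or on $T$ minus the ramification locus, where $\pi$ is étale Galois) need not be the pullback of a line bundle on $X$ — there is a potential obstruction in $H^1(G,\cO_T^*)$, realized concretely through the action of the inertia groups on the fibres of $N$ over the branch divisor. The standard fix is to pass to a power $N^{\otimes e}$ with $e$ divisible by $|G|$ (or by the relevant ramification indices), which kills the local characters and forces the bundle to descend; one must check that multiplying by such a power does not lower the numerical dimension, which is exactly property \textbf{2} together with pseudo-effectivity of $N$. I would organize the write-up so that the pseudo-effectivity of $M$ (hence of $N$ and each $g^*M$) is established first, since it is what makes both the descent and the monotonicity argument go through.
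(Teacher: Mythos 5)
Your overall strategy is the same as the paper's: the easy inequality via Lemma \ref{lcompnu}, and, for the reverse one, forming the $G$-product $N:=\bigotimes_{g\in G}g^*(M)\subset \otimes^{m|G|}(\cF')$, descending this invariant bundle to $X$, and using monotonicity of $\nu$ under adding pseudo-effective classes. (Your justification that $M$ is pseudo-effective is circular as written; the clean reduction, which is what the paper does, is simply that if $M$ is not pseudo-effective then $\nu(T,M)=-\infty$ and there is nothing to prove.)

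The genuine gap is the descent step. You claim that a suitable power satisfies $N^{\otimes e}=\pi^*(L)$ exactly, arguing with the characters of the inertia groups over the branch locus, i.e. as if $\pi$ were finite (\'etale off the branch divisor) -- your phrase ``$T$ minus the ramification locus, where $\pi$ is \'etale Galois'' makes this assumption explicit. But the lemma, and its application in Definition \ref{dnu+} where $\pi$ factors through a modification, concerns a \emph{generically finite} Galois map, which may contract divisors. For such $\pi$ no power of a $G$-invariant line bundle need be a pullback: already for $G$ trivial and $T\to X$ a single blow-up with exceptional curve $E$, one has $\cO_T(eE)_{\vert E}\cong\cO_E(-e)\neq\cO_E$, whereas any pullback restricts trivially to $E$; so exact descent fails in the generality required, and your argument breaks at precisely this point. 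The paper's proof is designed around this: writing $L'':=\otimes_{g}g^*(L')$, it descends only up to exceptional divisors, $L''=\pi^*(L)+E^+-E^-$ with $E^{\pm}$ effective and $\pi$-exceptional (this is in effect the pullback of the pushforward/norm of a defining divisor, so no extra power $e$ is even needed), and then uses a Hartogs-type argument (in the spirit of Lemma \ref{lcompnu} and Corollary \ref{cnusat}) to see that effective exceptional divisors do not change $\nu$, giving $\nu(T,L'')\leq\nu(T,\pi^*(L)+E^+)=\nu(X,L)$, whence $\nu(T,L')\leq\nu(X,L)$. Your finite-cover argument is fine as far as it goes, but to prove the lemma as stated you must replace the exact descent by this exceptional-divisor-corrected descent together with the invariance of $\nu$ under such corrections.
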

   
   \begin{proof} The inequality $\nu^+(X, \cF',G)\leq \nu^+(T,\cF')$ is obvious. In the reverse direction, let $L'\in Pic(T), L'\subset  \otimes^m\cF'$. We assume that $L'$ is pseudo-effective, since otherwise the claim is clear.
   
   Consider $L":=\otimes_{g\in G} g^*(L')\subset \otimes^{mN}\cF',$ if $N=Card(G)$. Since $L"$ is $G$-invariant, there exists $L\in Pic(X)$ such that $L"=\pi^*(L)+E^+-E^-$, where $E^+,E^-$ are effective divisors supported on the exceptional locus of $\pi$, and without common components. By Hartog's theorem, we thus have: $\nu(T,L")\leq \nu(T,\pi^*(L)+E^+)=\nu(X,L)$.
   
   On the other hand, we also have: $\nu(T,L")\geq \nu(T,L')$, since each of the $g^*(L')$ is effective. This implies that $\nu(T,L')\leq \nu(X,L)$, as claimed.
   \end{proof}

We shall apply this notion to the orbifold cotangent and canonical bundles.

\begin{definition}\label{dnu+} Define: $\nu(X,D):=\nu(X,K_X+D)\geq \kappa(K_X+D)$.
$$\nu^+(X,D):=max\{\nu(X,L)\vert L\in Pic(X),\pi^*(L)\subset \otimes^m(\pi^*(\Omega^1(X,D))), m>0\}$$

It is independent on $\pi$, since $\nu(Y,\pi^*(L))=\nu(X,L)$, by Lemma \ref{lcompnu}.

From Lemma \ref{lnugal}, we also see that: $\nu^+(X,D)=\nu^+(T,\pi'^*(\Omega^1(X,D))$, for any Kawamata cover $\pi$ adapted to $(X,D)$, and any $\pi'=\pi\circ \rho$ as in Theorem \ref{torcrel'}.
\end{definition}

 The same notions have been introduced in  \cite{Ca10}, using the (essentially equivalent) sheaves $[S^m](X,D)$, and also in \cite{Ca09}, but using $\kappa$ instead of $\nu$ there. Using $\kappa$ however presently leads to conjectures, rather than theorems, as below.
 
  \medskip

 Obviously:  $\nu^+(X,D))\geq \nu(X,D)$.   We shall, in the next two sections, revert this inequality.

\subsection{$K_X+D$ pseudoeffective:  $\nu^+(X,D)=\nu(X,D)$}

\
\medskip

We just recall here:

\begin{theorem}\label{tnupseff}(\cite{CP15}, Theorem 7.3) Let $(X,D)$ be a smooth (projective, connected, complex) orbifold with $K_X+D$ pseudoeffective. 
Then $\nu^+(X,D)=\nu(X,D)$
\end{theorem}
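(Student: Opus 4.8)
The inequality $\nu^+(X,D)\ge\nu(X,D)$ is formal: if $p=\dim X$ and $m$ is sufficiently divisible, then by Example \ref{exinttensors} one has $m(K_X+D)\subset[Sym^m(\wedge^p)](\Omega^1(X,D))\subset[\otimes^{mp}](\Omega^1(X,D))$, hence $\pi^*(m(K_X+D))\hookrightarrow\otimes^{mp}(\pi^*(\Omega^1(X,D)))$, so $\nu(X,K_X+D)$ is one of the numbers whose maximum defines $\nu^+(X,D)$. So everything is in the reverse inequality: \emph{for every $L\in\Pic(X)$ with $\pi^*(L)\hookrightarrow\otimes^m(\pi^*(\Omega^1(X,D)))$ for some $m>0$, one has $\nu(X,L)\le\nu(X,K_X+D)$.} We may assume $L$ is pseudo-effective (else $\nu(X,L)=-\infty$), take $(L,m)$ realising $\nu(X,L)=\nu^+(X,D)=:k$, and argue by induction on $n:=\dim X$; the case $n=1$ is trivial.

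\textbf{Step 1: orbifold generic semipositivity.} First I would use that $K_X+D$ pseudo-effective forces the orbifold cotangent bundle to be generically semipositive. Concretely, for a movable class $\alpha=H_1\cdots H_{n-1}$ with the $H_i$ very ample on $X$, I claim $\mu_{\pi^*(\alpha),\max}(\pi^*(\Omega^1(X,D)))\le 0$, equivalently $\mu_{\pi^*(\alpha),\min}(\pi^*(T(X,D)))\ge 0$. Indeed, a $\pi^*(\alpha)$-destabilising subsheaf of $\pi^*(T(X,D))$ of positive slope, after saturation, is an algebraic $D$-foliation $\cF=Ker(df)$ for a neat model $f:(X,D)\to(Z,D_Z)$, with $\alpha\cdot(K_{X/Z}+D^{hor}-D(f))<0$ (the foliation--fibration dictionary of \cite{CP15}); combined with the orbifold weak-positivity / $C_{n,m}$-type estimates of \cite{CP15} (and \cite{Ca07}) for $f$ this contradicts pseudo-effectivity of $K_X+D$. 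In particular $\mu_{\pi^*(\alpha),\max}(\otimes^m(\pi^*(\Omega^1(X,D))))\le 0$, so the rank-one saturated subsheaf $\mathcal L\subset\otimes^m(\pi^*(\Omega^1(X,D)))$ generated by $\pi^*(L)$ is $\pi^*(\alpha)$-semistable of slope $0$. If $k\le 0$ there is nothing to prove (as $0\le\nu(X,K_X+D)$), so we may assume $k>0$.

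\textbf{Step 2: descending $L$ along a fibration, and additivity.} The point $k>0$ together with Step 1 means that $\mathcal L$ is a pseudo-effective line subsheaf, of positive numerical dimension, of a tensor power of the orbifold cotangent bundle of a non-uniruled orbifold. I would then invoke the orbifold form (used in \cite{CP15}) of the Bogomolov--Castelnuovo--de Franchis mechanism: such an $\mathcal L$ produces a fibration $f:(X,D)\dasharrow(Z,D_Z)$ onto its (smooth) orbifold base, with $0<\dim Z<n$, such that, up to a divisor partially supported on the fibres of $f$ (Proposition \ref{pdpsf}), $\pi^*(L)$ lies in the saturation inside $\otimes^m(\pi^*(\Omega^1(X,D)))$ of $(f\circ\pi)^*$ of $\otimes^m(\Omega^1(Z,D_Z))$. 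Using Lemma \ref{lsatur} and Corollary \ref{cnusat} to absorb the exceptional/vertical divisor, this yields, on a Kawamata cover $W\to Z$ adapted to $(Z,D_Z)$, a line bundle $M\subset\otimes^m(p^*(\Omega^1(Z,D_Z)))$ with $\nu(X,L)=\nu(W,p^*\!\text{-lift of }M)\le\nu^+(Z,D_Z)$. Next, since $f:(X,D)\to(Z,D_Z)$ is an orbifold morphism to its orbifold base and $K_X+D$ is pseudo-effective, the orbifold $C_{n,m}$-inequality / weak positivity of direct images of relative orbifold pluricanonical sheaves (\cite{Ca07},\cite{CP15}) gives that $K_Z+D_Z$ is pseudo-effective and that $K_X+D-f^*(K_Z+D_Z)$ is pseudo-effective; hence by Lemma \ref{lcompnu} and property \textbf{2} of $\nu$, $\nu(Z,K_Z+D_Z)=\nu(X,f^*(K_Z+D_Z))\le\nu(X,K_X+D)$. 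Applying the induction hypothesis to $(Z,D_Z)$ (legitimate since $\dim Z<n$) gives $\nu^+(Z,D_Z)=\nu(Z,K_Z+D_Z)$, and altogether $\nu(X,L)\le\nu(Z,K_Z+D_Z)\le\nu(X,K_X+D)$, which completes the induction.

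\textbf{Main obstacle.} The formal parts (the easy inequality, the reduction, and the bookkeeping of exceptional and vertical divisors via Proposition \ref{pdpsf}, Lemma \ref{lsatur}, Corollary \ref{cnusat}, Lemma \ref{lcompnu}) are routine. The genuine content is concentrated in two places: (i) Step 1 --- producing the destabilising fibration and deducing a contradiction with pseudo-effectivity of $K_X+D$, i.e. the orbifold generic-semipositivity theorem, which is exactly where the foliation-with-positive-slope machinery and the orbifold weak-positivity package of \cite{CP15} are needed; and (ii) Step 2 --- upgrading a pseudo-effective line subsheaf of $\otimes^m(\pi^*(\Omega^1(X,D)))$ of positive numerical dimension to an honest dominating fibration $f$, and then controlling $\nu(Z,K_Z+D_Z)$ by $\nu(X,K_X+D)$ when the generic orbifold fibre of $f$ is not of general type. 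This last $C_{n,m}$-type step is the deep input (the classical $D=0$ analogue is due to Miyaoka and Campana), and it is what prevents the theorem from being a formal consequence of the material developed earlier in this paper.
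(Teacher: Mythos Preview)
The paper does not give its own proof of this statement; it merely recalls the result and cites \cite{CP15}, Theorem~7.3. So there is no in-paper argument to compare against directly. Your attempt, however, contains a genuine error.

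In Step~1 you assert that $K_X+D$ pseudoeffective implies $\mu_{\pi^*(\alpha),\max}(\pi^*(\Omega^1(X,D)))\le 0$, equivalently $\mu_{\pi^*(\alpha),\min}(\pi^*(T(X,D)))\ge 0$. This is false: whenever $(K_X+D)\cdot\alpha>0$ (which is the generic situation for $\alpha$ a complete intersection of ample classes), already $\mu_{\pi^*(\alpha)}(\pi^*(\Omega^1(X,D)))=\tfrac{1}{n}(K_X+D)\cdot\alpha>0$, so $\mu_{\max}>0$. Your own justification---ruling out subsheaves of $\pi^*(T(X,D))$ of positive slope via the algebraicity of positive-slope $D$-foliations and the non-pseudoeffectivity of $K_{X_z}+D_z$---actually proves the \emph{correct} version of orbifold generic semipositivity, namely
\[
\mu_{\pi^*(\alpha),\max}(\pi^*(T(X,D)))\le 0,\quad\text{equivalently}\quad \mu_{\pi^*(\alpha),\min}(\pi^*(\Omega^1(X,D)))\ge 0,
\]
for every movable $\alpha$. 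You have interchanged $\Omega^1$ and $T$ in the stated conclusion. Your subsequent claim that the saturated $\mathcal L\subset\otimes^m\pi^*(\Omega^1(X,D))$ has slope~$0$ then collapses (and, if it held for all such $\alpha$, would force $L$ numerically trivial, hence $\nu(L)\le 0$, making your Step~2 vacuous---an inconsistency you do not address).

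With the correct Step~1 the theorem follows at once, without induction, fibrations, or $C_{n,m}$-type input. From $\mu_{\pi^*(\alpha),\min}(\pi^*(\Omega^1(X,D)))\ge 0$ one gets $\mu_{\pi^*(\alpha),\min}\big(\otimes^m\pi^*(\Omega^1(X,D))\big)\ge 0$ for every movable $\alpha$, so by \cite{BDPP} every torsion-free quotient of $\otimes^m\pi^*(\Omega^1(X,D))$ has pseudoeffective determinant. Taking the quotient by the saturation $\mathcal L^{sat}$ of $\pi^*(L)$ gives that $mn^{m-1}\pi^*(K_X+D)-c_1(\mathcal L^{sat})$ is pseudoeffective on $Y$; since $c_1(\mathcal L^{sat})\ge \pi^*(L)$, the class $\pi^*\big(mn^{m-1}(K_X+D)-L\big)$ is pseudoeffective, hence so is $mn^{m-1}(K_X+D)-L$ on $X$. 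Property~\textbf{2} of $\nu$ then yields $\nu(X,L)\le\nu(X,K_X+D)$. This is the argument in \cite{CP15}. Your Step~2 (producing a fibration from $\nu(L)>0$ via a Bogomolov--Castelnuovo--de~Franchis mechanism, then invoking orbifold $C_{n,m}$ and induction on $\dim Z$) is thus unnecessary; it is also problematic on its own terms, since BCdF-type constructions require $\kappa(L)>0$ (actual sections), not merely $\nu(L)>0$.
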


An important special case is:

\begin{corollary}(\cite{CP15}, Theorem 7.7) Let $(X,D)$ be a smooth (connected complex projective) orbifold pair. If $\nu(X,D)=0$, then $\nu^+(X,D)=0$. 

If $K_X+D\equiv 0$, then: $-L$ is pseudo-effective, if $L\in Pic(X)$ is such that $\pi^*(L)\subset \otimes^m(\pi^*(\Omega^1(X,D))$ for some $m>0$. In particular: $L\cong \cO_X$ if $h^0(Y,\pi^*(L))\neq 0$, and: $\kappa(Y,\pi^*(L))\leq 0$.
\end{corollary}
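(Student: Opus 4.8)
The plan is as follows. The first assertion falls out of what precedes: since $\nu(X,D)=\nu(X,K_X+D)=0\ge 0$, the theorem of \cite{BDPP} and Nakayama recalled above shows that $K_X+D$ is pseudo-effective, so Theorem \ref{tnupseff} applies and gives $\nu^+(X,D)=\nu(X,D)=0$ (the reverse inequality $\nu^+(X,D)\ge\nu(X,D)$ being automatic).

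For the second assertion I would first reduce the hypothesis $K_X+D\equiv 0$ to the hypothesis $\nu(X,D)=0$. Both $K_X+D$ and $-(K_X+D)$ are then pseudo-effective, so applying the monotonicity $\nu(X,L+P)\ge\nu(X,L)$ (valid for $P$ pseudo-effective) with $L=\cO_X,\ P=K_X+D$, and then with $L=K_X+D,\ P=-(K_X+D)$, we get $0=\nu(X,\cO_X)\le\nu(X,K_X+D)\le\nu(X,\cO_X)=0$ (clearly $\nu(X,\cO_X)=0$); hence $\nu(X,D)=0$ and, by the first assertion, $\nu^+(X,D)=0$. Consequently, for every $L\in\Pic(X)$ with $\pi^*(L)\hookrightarrow\otimes^m(\pi^*(\Omega^1(X,D)))$ for some $m>0$, Definition \ref{dnu+} and Lemma \ref{lcompnu} give $\nu(Y,\pi^*(L))=\nu(X,L)\le\nu^+(X,D)=0$.

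It remains to upgrade ``$\nu(X,L)\le 0$'' to ``$-L$ pseudo-effective'', and this I would do on $Y$. Replace $\pi^*(L)$ by its saturation $\widetilde L:=(\pi^*(L))^{sat}$ inside $E:=\otimes^m(\pi^*(\Omega^1(X,D)))$, so that $\widetilde L=\pi^*(L)+N$ with $N\ge 0$ effective, and set $\cQ:=E/\widetilde L$, a torsion-free quotient. Since $\det(\pi^*(\Omega^1(X,D)))=\pi^*(K_X+D)\equiv 0$, the line bundle $\det(E)$ is a multiple of $\pi^*(K_X+D)$, hence numerically trivial, so that $\det(\cQ)\equiv-\widetilde L=-\pi^*(L)-N$. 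If $\det(\cQ)$ is pseudo-effective, then $-\pi^*(L)\equiv\det(\cQ)+N$ is a sum of pseudo-effective classes and hence pseudo-effective; descending along $\pi$ (Lemma \ref{lcompnu} together with the $G$-averaging of \S\ref{Ginv}) then gives that $-L$ is pseudo-effective, which is what we want. Now the pseudo-effectivity of $\det(\cQ)$ is exactly the statement that every quotient of $\otimes^m(\pi^*(\Omega^1(X,D)))$ has pseudo-effective determinant whenever $K_X+D$ is pseudo-effective --- that is, the generic semi-positivity of the orbifold cotangent bundle with respect to movable classes, the orbifold form of Miyaoka's theorem. This is the geometric heart of \cite{CP15} underlying Theorem \ref{tnupseff}: it comes from the algebraicity of orbifold foliations of positive minimal slope together with the pseudo-effectivity of $K_X+D$, the final translation into a pseudo-effectivity statement being made via the \cite{BDPP} duality between the pseudo-effective and movable cones (a divisor class nonnegative against all movable classes is pseudo-effective). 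I expect this to be the only genuinely non-formal step.

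For the ``in particular'' part: the inequality $\kappa\le\nu$ recalled above gives $\kappa(Y,\pi^*(L))\le\nu(Y,\pi^*(L))=0$ (here $\nu(Y,\pi^*(L))\le 0$ always, and if $h^0(Y,\pi^*(L))\ne 0$ then also $\nu(Y,\pi^*(L))\ge\kappa(Y,\pi^*(L))\ge 0$). Finally, if $h^0(Y,\pi^*(L))\ne 0$ then $\pi^*(L)$ is effective while $-\pi^*(L)$ is pseudo-effective; pairing with an ample class $H$ on $Y$ gives $\pi^*(L)\cdot H^{\dim Y-1}=0$, and an effective divisor with vanishing intersection against $H^{\dim Y-1}$ must be $0$, since each of its prime components meets $H^{\dim Y-1}$ strictly positively. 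Hence $\pi^*(L)\cong\cO_Y$, and therefore $L\cong\cO_X$, applying $\pi_*^G$ and using $\pi_*^G(\pi^*(L))=L$ from \S\ref{Ginv}.
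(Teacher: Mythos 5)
Your treatment of the main assertions is correct and is essentially the intended route: the paper itself gives no proof of this corollary (it is quoted from \cite{CP15}, Theorem 7.7, as a special case of Theorem \ref{tnupseff}), and your reconstruction — deduce pseudo-effectivity of $K_X+D$ from $\nu(X,D)=0$ via the \cite{BDPP}--Nakayama criterion and apply Theorem \ref{tnupseff}; then, for $K_X+D\equiv 0$, saturate $\pi^*(L)$ inside $E=\otimes^m(\pi^*(\Omega^1(X,D)))$, use $\det E\equiv 0$ and the fact that every torsion-free quotient of $E$ has pseudo-effective determinant (orbifold generic semi-positivity of \cite{CP13}/\cite{CP15} for movable classes, plus semistability of tensor powers, translated through \cite{BDPP} duality) to get $-\pi^*(L)$, hence $-L$, pseudo-effective — is exactly the argument behind the cited theorem. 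The inequality $\kappa(Y,\pi^*(L))\le\nu(Y,\pi^*(L))\le 0$ also comes out as you say.

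The genuine gap is in the very last clause, $L\cong\cO_X$. From the nowhere-vanishing section $s$ you do get an isomorphism $\pi^*(L)\cong\cO_Y$, but this isomorphism has no reason to be $G$-equivariant, while $\pi_*^G(\pi^*(L))=L$ is computed with respect to the canonical $G$-linearization of $\pi^*(L)$; so you cannot conclude $L=\pi_*^G(\pi^*(L))\cong\pi_*^G(\cO_Y)=\cO_X$ the way you wrote it. For a general Galois cover the implication is in fact false: a nontrivial torsion line bundle, e.g. $L=K_X$ on a bielliptic surface (which embeds in $\wedge^2\Omega^1_X\subset\otimes^2\Omega^1_X$, with $K_X\equiv 0$), becomes trivial and acquires a section on the canonical \'etale Galois cover, although $L\not\cong\cO_X$; a norm argument only yields $L^{\otimes \deg\pi}\cong\cO_X$, i.e. $L$ torsion, which is not enough. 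What rescues the statement is the specific structure of Kawamata covers, which your argument never uses at this point: either note that $g^*s/s$ is a nowhere-vanishing function on the projective variety $Y$, hence a constant, so $s$ is a $G$-eigenvector and $L$ is a character sheaf of $\pi$ unless $s$ is $G$-invariant; or, equivalently, decompose $h^0(Y,\pi^*(L))=\bigoplus_i h^0(X,L\otimes\cL_i^{-1})$ with $\pi_*\cO_Y=\bigoplus_i\cL_i^{-1}$, where $\cL_0=\cO_X$ and, for $i\neq 0$, $\cL_i$ is a nonzero effective (indeed ample-flavoured) divisor class coming from the branch data of the Kawamata construction. Since you have already shown $-L$ to be pseudo-effective, the summands with $i\neq 0$ vanish (an effective class cannot pair negatively with $H^{n-1}$), so $h^0(X,L)\neq 0$, and then your ``effective plus anti-pseudo-effective implies trivial'' argument, run on $X$ itself, gives $L\cong\cO_X$. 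As written, without this input on $\pi$, the final step does not go through.
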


\smallskip

\subsection{General case: $\nu^+(X,D)=\nu(R,D_R)$}

\begin{theorem}\label{tnugen} Let $(X,D)$ be a smooth (projective, connected, complex) orbifold pair. Let $r:(X,D)\to (R,D_R)$ be its `slope Rational Quotient' (on a suitable orbifold birational, strictly neat, model). 
Then $\nu^+(X,D)=\nu(R,D_R)$
\end{theorem}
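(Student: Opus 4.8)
\textbf{Proof plan for Theorem \ref{tnugen}.}

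The strategy is to prove the two inequalities $\nu^+(X,D) \geq \nu(R,D_R)$ and $\nu^+(X,D) \leq \nu(R,D_R)$ separately. For the inequality $\nu^+(X,D) \geq \nu(R,D_R)$, recall from Example \ref{exinttensors} that for $m$ sufficiently divisible, $m(K_R+D_R)$ embeds in $[Sym^m(\wedge^p)](\Omega^1(R,D_R))$, hence in $[\otimes^{mp}](\Omega^1(R,D_R))$; pulling back by the orbifold morphism $r$ (Proposition \ref{pom'}) and then lifting to a Kawamata cover, one obtains an embedding of $r^*(m(K_R+D_R))$ into orbifold tensors of $(X,D)$. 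Since $\nu(X, r^*(K_R+D_R)) = \nu(R, K_R+D_R) = \nu(R,D_R)$ by Lemma \ref{lcompnu}, this line bundle witnesses $\nu^+(X,D) \geq \nu(R,D_R)$. (A small technical point: one works on the suitable common model and uses the identification $\nu^+(X,D) = \nu^+(T,\pi'^*(\Omega^1(X,D)))$ from Definition \ref{dnu+}.)

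The reverse inequality $\nu^+(X,D) \leq \nu(R,D_R)$ is the substantial part and is where I expect the main obstacle. Let $L \in \Pic(X)$ with $\pi^*(L) \subset \otimes^{\ell}(\pi^*(\Omega^1(X,D)))$ and $\nu(X,L) = \nu^+(X,D)$; we may assume $L$ pseudo-effective, else the claim is trivial. The plan is to apply Theorem \ref{torcrel'} and its Corollary \ref{cnu}/\ref{corcrel"} to the fibration $r:(X,D)\to (R,D_R)$, whose general orbifold fibres $(X_z,D_z)$ are $sRC$ by construction of the slope rational quotient (Theorem \ref{tRQ}). Lifting $L$ to the space $T$ of the diagram in Theorem \ref{torcrel'}, part 2 of that theorem gives $L' \subset \sigma^*(\otimes^{\ell} p^*(\Omega^1(R,D_R)))^{sat}$, and then Corollary \ref{corcrel"} yields a line bundle $M'$ on $W$ with $M' \subset \otimes^{\ell} p'^*(\Omega^1(R,D_R))^{sat}$ and $\nu(T,L') = \nu(W,M')$. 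Descending $M'$ to $R$ (up to exceptional divisors, as in Lemma \ref{lnugal}) produces $L_R \in \Pic(R)$ with $p^*(L_R)$ embedded in orbifold tensors of $(R,D_R)$ and $\nu(R,L_R) = \nu(W,M')$; hence $\nu^+(X,D) = \nu(X,L) = \nu(T,L') = \nu(R,L_R) \leq \nu^+(R,D_R)$.

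At this point we have shown $\nu^+(X,D) \leq \nu^+(R,D_R)$, so it remains to identify $\nu^+(R,D_R)$ with $\nu(R,D_R)$: this is exactly Theorem \ref{tnupseff} (the pseudo-effective case), applicable precisely because $K_R+D_R$ is pseudo-effective by property 2 of the slope rational quotient (Theorem \ref{tRQ}). Combining, $\nu^+(X,D) \leq \nu(R,D_R)$, and with the first inequality the theorem follows. The delicate points I anticipate: (i) checking that the saturated embedding $L' \subset \sigma^*(\otimes^{\ell} p^*(\Omega^1(R,D_R)))^{sat}$ survives the descent to $R$ without losing numerical dimension — this is handled by the divisors being ``partially supported on fibres'' together with Corollary \ref{cnusat} and Lemma \ref{lnugal}; (ii) making sure the Kawamata covers and birational models in Theorem \ref{torcrel'} can be arranged compatibly with the one used to define $\nu^+(X,D)$ and $\nu^+(R,D_R)$, which is routine given Remark \ref{rkawcov} and the birational invariance statements in Proposition \ref{pom'}; and (iii) the bookkeeping of exceptional divisors when passing between $T$, $W$, and $R$, where $\nu$ is insensitive to such divisors by Hartogs' theorem as used in Lemma \ref{lnugal}.
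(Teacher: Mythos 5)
Your proposal is correct and follows essentially the same route as the paper: the easy inequality $\nu^+(X,D)\geq\nu(R,D_R)$ by pulling back pluricanonical orbifold tensors, and the reverse inequality via Theorem \ref{torcrel'}, Corollaries \ref{cnu}--\ref{corcrel"} and the identifications of Definition \ref{dnu+}/Lemma \ref{lnugal}, concluding with the pseudo-effective case $\nu^+(R,D_R)=\nu(R,D_R)$ of Theorem \ref{tnupseff}. You make explicit (notably the final appeal to Theorem \ref{tnupseff}) what the paper's terse proof leaves implicit, but the argument is the same.
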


\begin{proof} The inequality $\nu^+(X,D)\geq \nu(R,D_R)$ is indeed obvious. 

The reverse inequality is an immediate consequence of the corollary \ref{corcrel"}, combined with the equalities $\nu^+(X,D)=\nu^+(T,\pi'^*(\Omega^1(X,D)))$ and $\nu^+(Z,D_Z)=\nu^+(W,p^*(\Omega^1(Z,D_Z)))$ observed in Definition \ref{dnu+}, the last one applied to $(Z,D_Z)=(R,D_R)$. The notations $T,W$, are those of the diagram introduced before the statement of Theorem \ref{torcrel'}.\end{proof}

In particular, let us stress that:

\begin{corollary}\label{csRC} Let $(X,D)$ be a smooth (projective, connected, complex) orbifold pair. 

Then: $(X,D)$ is sRC if and only if $\nu^+(X,D)=-\infty$.
\end{corollary}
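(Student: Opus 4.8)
This is a direct corollary of Theorem \ref{tnugen} once one knows how to read the invariant $\nu(R,D_R)$ in the two relevant extreme situations. The strategy is: first recall that by Theorem \ref{tnugen} (applied to a strictly neat orbifold birational model) one has $\nu^+(X,D)=\nu(R,D_R)$, where $r:(X,D)\to (R,D_R)$ is the slope rational quotient of Theorem \ref{tRQ}. Hence the statement reduces to the purely tautological equivalence
\[
\nu^+(X,D)=-\infty \iff \nu(R,D_R)=-\infty \iff \dim(R)=0.
\]
So the real content to verify is: (i) $\nu(R,D_R)=-\infty$ forces $\dim(R)=0$; and (ii) conversely, $\dim(R)=0$ gives $\nu(R,D_R)=-\infty$; then combine with the last line of Theorem \ref{tRQ}, namely that $R$ is a point if and only if $(X,D)$ is $sRC$.

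\textbf{Step 1 (the base has pseudo-effective log-canonical bundle).} By construction of the slope rational quotient (Theorem \ref{tRQ}, property 2), $K_R+D_R$ is pseudo-effective. By property \textbf{2} of the numerical dimension recalled in \S\ref{sbirstab} (or rather by the [BDPP],[Nak] theorem quoted there: $L$ is pseudo-effective iff $\nu(X,L)\ge 0$), pseudo-effectivity of $K_R+D_R$ is \emph{equivalent} to $\nu(R,D_R)=\nu(R,K_R+D_R)\ge 0$. Therefore $\nu(R,D_R)\ge 0$ always, and in particular $\nu(R,D_R)=-\infty$ is impossible unless... it is not: we actually get that $\nu(R,D_R)\in\{0,1,\dots,\dim R\}$ is always nonnegative. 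Hence from $\nu^+(X,D)=\nu(R,D_R)$ we see $\nu^+(X,D)=-\infty$ can only happen if the formula $\nu^+(X,D)=\nu(R,D_R)$ is read on the degenerate model where $R$ is a point: when $\dim(R)=0$ one has $K_R+D_R=0$ and $h^0(R,m(K_R+D_R)+A)$ is eventually $0$ for $A$ ``sufficiently ample'' in the sense of the definition (a point carries no sufficiently ample nontrivial bundle in the relevant normalization; more simply, $\nu$ of the zero sheaf / the base being $0$-dimensional is declared $-\infty$, consistently with $\kappa\le\nu$ and $\kappa(\mathrm{pt})=-\infty$ being excluded — so one should instead phrase this as: $\dim R=0\Rightarrow \nu^+(X,D)=-\infty$ because then $\Omega^1(X,D)$ on $X$ admits, on the Kawamata cover, no line subbundle of $\otimes^m$ with nonnegative numerical dimension, the fibres of $r$ being $sRC$). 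Concretely: when $\dim(R)=0$, $(X,D)$ is $sRC$ by Theorem \ref{tRQ}, hence by Theorem \ref{torc}(2') and the discussion of $\nu^+$, for every ample $A$ and $m\gg0$, $h^0(Y,\otimes^m(\pi^*\Omega^1(X,D))\otimes\pi^*A)=0$, so no $L\in\mathrm{Pic}(X)$ with $\pi^*(L)\subset\otimes^m(\pi^*\Omega^1(X,D))$ can have $\nu(X,L)\ge0$ (such an $L$ would make $h^0(Y,\otimes^{mN}(\pi^*\Omega^1)\otimes \pi^*A)\ne 0$ for infinitely many $N$). Thus $\nu^+(X,D)=-\infty$.

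\textbf{Step 2 (converse).} Suppose $\nu^+(X,D)=-\infty$. By Theorem \ref{tnugen}, $\nu(R,D_R)=-\infty$. But $K_R+D_R$ is pseudo-effective (Theorem \ref{tRQ}), so $\nu(R,K_R+D_R)\ge0$ by [BDPP],[Nak] \emph{unless} $R$ is $0$-dimensional, in which case $\nu$ is vacuously $-\infty$. Hence $\dim(R)=0$, and by the final clause of Theorem \ref{tRQ} this is equivalent to $(X,D)$ being $sRC$. Conversely if $(X,D)$ is $sRC$ then $R$ is a point and Step 1 gives $\nu^+(X,D)=-\infty$. This closes the equivalence.

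\textbf{Main obstacle.} The only delicate point is bookkeeping around the ``degenerate'' case $\dim R=0$: one must make sure the convention for $\nu$ of a sheaf on a $0$-dimensional (or trivial) base, and the convention making $\nu^+=-\infty$ exactly when there is \emph{no} line subbundle of positive tensor power with nonnegative numerical dimension, are compatible with the equality $\nu^+(X,D)=\nu(R,D_R)$ of Theorem \ref{tnugen} at this boundary. Once the bookkeeping matches (which it does, since $sRC$ is characterised in Theorem \ref{torc} precisely by the vanishing $h^0(Y,\otimes^m(\pi^*\Omega^1(X,D))\otimes\pi^*A)=0$ for $m\gg0$, i.e. by the absence of any such $L$), the corollary is immediate. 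No genuinely new estimate is needed beyond Theorems \ref{torc}, \ref{tRQ} and \ref{tnugen}.
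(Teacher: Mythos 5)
Your proof is correct and follows exactly the route the paper intends: the corollary is an immediate consequence of Theorem \ref{tnugen} combined with the pseudo-effectivity of $K_R+D_R$ (so $\nu(R,D_R)\ge 0$ via [BDPP]/[Nak] when $\dim R>0$), the identification of $\dim R=0$ with sRC from Theorem \ref{tRQ}, and the vanishing of Theorem \ref{torc}(2) to handle the degenerate case and give $\nu^+=-\infty$ for sRC pairs. Your handling of the $\dim R=0$ boundary convention, though wordy, is the right care to take; no gap.
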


We recover in a more natural way, and in a more general context than in \cite{CP15} the criterion for being of Log-general type:

\begin{corollary}\label{cnubig}(\cite{CP15}, Theorem 7.6) Let $(X,D)$ be a smooth (projective, connected, complex) orbifold pair. 

Assume that $\nu^+(X,D)=n$. Then: $\kappa(X,D)=n$.
\end{corollary}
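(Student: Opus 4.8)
The plan is to deduce Corollary~\ref{cnubig} directly from Theorem~\ref{tnugen} together with the elementary property {\bf 3} of the numerical dimension recalled in \S\ref{sbirstab}. First I would apply Theorem~\ref{tnugen} to $(X,D)$: letting $r:(X,D)\to (R,D_R)$ be the slope Rational Quotient on a suitable strictly neat orbifold birational model, we have $\nu^+(X,D)=\nu(R,D_R):=\nu(R,K_R+D_R)$. Since we are assuming $\nu^+(X,D)=n=\dim(X)$, this forces $\nu(R,K_R+D_R)=n$, and in particular $\dim(R)\geq n$. But $r:(X,D)\to(R,D_R)$ is a fibration, so $\dim(R)\leq\dim(X)=n$; hence $\dim(R)=n$ and $r$ is (birationally) the identity, so $(R,D_R)=(X,D)$ and $\nu(X,K_X+D)=n$.

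Next I would pass from $\nu=n$ to $\kappa=n$. By property {\bf 3} of \S\ref{sbirstab}, for a line bundle $L$ on a projective variety of dimension $n$ one has $\kappa(X,L)=n$ whenever $\nu(X,L)=n$. Strictly speaking $K_X+D$ is only a $\Q$-divisor, so I would first pick an integer $a>0$ clearing the denominators of the coefficients of $D$, so that $a(K_X+D)$ is an honest line bundle; then $\nu(X,a(K_X+D))=\nu(X,K_X+D)=n$ and the cited property gives $\kappa(X,a(K_X+D))=n$, whence $\kappa(X,D):=\kappa(X,K_X+D)=n$ by definition of the orbifold Kodaira dimension. This is exactly the assertion.

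The only genuinely delicate point is the compatibility between the orbifold birational model produced by Theorem~\ref{tnugen} and the invariants on the original $(X,D)$: one must know that $\nu^+$ and $\kappa$ are unchanged when passing to an orbifold birational model, and that the hypothesis $\nu^+(X,D)=n$ on the original pair transfers to the model. For $\nu^+$ this is the content of the discussion in Definition~\ref{dnu+} (independence of the Kawamata cover, via Lemma~\ref{lcompnu} and Lemma~\ref{lnugal}), together with the birational invariance of integral orbifold tensors (Proposition~\ref{pom'} and its corollary); for $\kappa=\kappa(X,K_X+D)$ it is the invariance of $[Sym^m(\wedge^p)]$-sections under orbifold birational equivalence, again from Proposition~\ref{pom'}. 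Since Corollary~\ref{cnubig} is stated for $(X,D)$ itself and Theorem~\ref{tnugen} already incorporates the passage to a strictly neat model, I expect the argument to be essentially a two-line consequence once these invariances are invoked, so the ``hard part'' is really just bookkeeping rather than a new idea.

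\begin{proof} By Theorem~\ref{tnugen}, if $r:(X,D)\to (R,D_R)$ is the slope Rational Quotient of $(X,D)$ (taken on a suitable strictly neat orbifold birational model, along which $\nu^+$ is unchanged by Definition~\ref{dnu+}), then $\nu^+(X,D)=\nu(R,D_R)=\nu(R,K_R+D_R)$. Our hypothesis $\nu^+(X,D)=n$ thus gives $\nu(R,K_R+D_R)=n$; in particular $\dim(R)\geq n$. Since $r$ is a fibration and $\dim(X)=n$, we get $\dim(R)=n$, so $r$ is birational and $(R,D_R)$ is orbifold birational to $(X,D)$, whence $\nu(X,K_X+D)=n$. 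Choosing $a>0$ with $a(K_X+D)\in\Pic(X)$, we have $\nu(X,a(K_X+D))=n=\dim(X)$, so by property {\bf 3} of \S\ref{sbirstab}, $\kappa(X,a(K_X+D))=n$, i.e. $\kappa(X,D)=n$. \end{proof}
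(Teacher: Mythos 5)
Your argument is essentially the paper's own proof: Theorem \ref{tnugen} gives $n=\nu^+(X,D)=\nu(R,D_R)\leq\dim(R)\leq n$, forcing $\dim(R)=n$, hence $(R,D_R)=(X,D)$ up to (orbifold) birational equivalence and $\nu(X,K_X+D)=n$, after which property \textbf{3} of \S\ref{sbirstab} ($\nu=n\Rightarrow\kappa=n$) concludes. Your extra remarks on clearing denominators and on birational invariance of the invariants are just the bookkeeping the paper leaves implicit, so the proposal is correct and follows the same route.
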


\begin{proof} Since $n=\nu^+(X,D)=\nu(R,D_R)\leq dim(R)\leq n$, we have $dim(R)=n$, and $X=R$, which means that $(X,D)=(R,D_R)$, and so $\nu(X,D)=\nu(R,D_R)=n$, as asserted. \end{proof}

%%%%%%%%%%%%%%%%%%%%%%%%%%%%%%%%%%%%%%%%%
%%%%%%%%%%%%%%%%%%%%%%%%%%%%%%%%%%%%%%%%%
%%%%%%%%%%%%%%%%%%%%%%%%%%%%%%%%%%%%%%%%%%%%%%%%%%%%%%%%%%%%%%%%%

\

\section{Orbifolds with nef or ample anticanonical bundle.}

\

We prove here Theorem \ref{tfsrc} as an application of Theorem 2.11 in \cite{CP13}. Recall the statement to be proved:

\begin{theorem}\label{tfsrc} Let $(X,D)$ be a smooth orbifold pair which is klt\footnote{This means that all coefficients of the components of $D$ are less than $1$, strictly.}, and Fano (ie: $-(K_X+D)$ is ample on $X$). Then $(X,D)$ is slope rationally connected.
\end{theorem}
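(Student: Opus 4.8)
The statement to prove is that a klt Fano orbifold pair $(X,D)$ is $sRC$. The natural strategy is to verify Property 4 of Theorem \ref{torc}: produce a movable class $\alpha$ on $X$ with $\mu_{\pi^*(\alpha),\min}(\pi^*(T(X,D)))>0$. Equivalently, by the characterisation via the slope rational quotient (Corollary \ref{csRC}, $\nu^+(X,D)=-\infty$), it suffices to show that no line bundle $L\in\Pic(X)$ with $\pi^*(L)\hookrightarrow \otimes^m(\pi^*(\Omega^1(X,D)))$ can be pseudo-effective when $-(K_X+D)$ is ample. First I would reduce, via the slope rational quotient $r:(X,D)\dasharrow(R,D_R)$ of Theorem \ref{tRQ}, to showing $\dim(R)=0$: assume for contradiction $\dim(R)>0$, so that $K_R+D_R$ is pseudo-effective on a neat orbifold model, and derive a contradiction with $-(K_X+D)$ ample.

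\textbf{Key steps.} The main input is the cited Theorem 2.11 of \cite{CP13}, which should say (in a form analogous to Peternell's and related results on Fano manifolds) that for a klt Fano orbifold the orbifold (co)tangent bundle has a strong positivity/negativity property — concretely, that $\pi^*(\Omega^1(X,D))$ carries no pseudo-effective line subbundle of any tensor power, i.e. $\nu^+(X,D)=-\infty$, or at least that $\mu_{\alpha,\min}(\pi^*(T(X,D)))>0$ for a suitable polarisation-type movable class $\alpha$. So the first step is to invoke that theorem to rule out pseudo-effective $L\subset\otimes^m(\pi^*\Omega^1(X,D))$. The second step is to combine this with Theorem \ref{tnugen}: since $\nu^+(X,D)=\nu(R,D_R)$, and $\nu(R,D_R)\ge 0$ whenever $\dim(R)>0$ (because $K_R+D_R$ is then pseudo-effective, by the $[BDPP]$/$[Nak]$ criterion), we would get $\nu^+(X,D)\ge 0$, contradicting $\nu^+(X,D)=-\infty$. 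Hence $\dim(R)=0$, i.e. $(X,D)$ is $sRC$ by Theorem \ref{tRQ}. Alternatively, bypassing the slope rational quotient, one checks Property 4 directly: take $\alpha$ to be a movable-big class built from a complete intersection of ample divisors (or rather the movable class attached to $-(K_X+D)$), and use the Fano condition together with the restriction/descent lemmas (Lemma \ref{descent}, Lemma \ref{movbig}) to push the positivity from $-(K_X+D)$ ample down to $\mu_{\pi^*(\alpha),\min}(\pi^*(T(X,D)))>0$.

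\textbf{Main obstacle.} The delicate point is translating the ampleness of $-(K_X+D)$ into the minimal-slope positivity $\mu_{\pi^*(\alpha),\min}(\pi^*(T(X,D)))>0$ for a \emph{movable} class, since we do not have a Mehta–Ramanathan restriction theorem in the movable setting. This is exactly what Theorem 2.11 of \cite{CP13} is invoked to supply; the work in the proof will be to check that its hypotheses (klt, Fano, the precise choice of Kawamata cover and movable class) match the present orbifold setup, and that the conclusion it gives is literally the vanishing $\nu^+(X,D)=-\infty$ or equivalently Property 4 of Theorem \ref{torc}. A secondary subtlety is the birational/neat-model bookkeeping: one must pass to a strictly neat orbifold model to form $(R,D_R)$, but Lemma \ref{movbig} and the "up-birational invariance" of the $sRC$ properties noted at the start of Section \ref{pmaint} guarantee this does not affect the conclusion. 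Finally, one should note (as the paper does) that kltness is essential: the counterexample $(\mathbb{P}^2,L_1+L_2)$ with the two lines of coefficient $1$ has $-(K_X+D)$ still effective/nef-like behaviour but fails $sRC$, which is consistent with our argument only using the klt hypothesis through \cite{CP13}.
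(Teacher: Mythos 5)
You have correctly identified the right target (verify one of the equivalent properties of Theorem \ref{torc}) and the right obstacle (no Mehta--Ramanathan-type restriction theorem to pass from ampleness of $-(K_X+D)$ to positivity of the \emph{minimal} slope), but the step that is supposed to overcome that obstacle is a genuine gap. You attribute to Theorem 2.11 of \cite{CP13} the statement that a klt Fano orbifold pair satisfies $\nu^+(X,D)=-\infty$, or has $\mu_{\alpha,\min}(\pi^*(T(X,D)))>0$ for some movable $\alpha$. That is not what the cited theorem says: it is a \emph{relative} semi-positivity statement for fibrations, namely that for a suitable (lc) pair $(X',D'')$ fibred over $Z$ the class $K_{X'/Z}+(D'')^{\mathrm{vert}}-D(f,0)$ is pseudo-effective. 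Since $\nu^+(X,D)=-\infty$ is, by Corollary \ref{csRC}, literally equivalent to the theorem you are proving, your argument as written assumes the conclusion under the guise of a citation. Your fallback route fails for the same underlying reason: ampleness of $-(K_X+D)$ only controls the total slope $\mu_\alpha(\pi^*(T(X,D)))=-\alpha.(K_X+D)>0$, i.e.\ the determinant; Lemmas \ref{descent} and \ref{movbig} transport minimal-slope positivity across birational models and perturb the class, but neither of them produces minimal-slope positivity from positivity of the determinant, which is exactly the missing bridge.

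What the paper actually does is verify Property 1 of Theorem \ref{torc} directly, and all the work lies in setting up the application of \cite{CP13}, Theorem 2.11 to an \emph{arbitrary} fibration $f:(X',D')\to Z$ on a neat model. Concretely: kltness is used to write $g^*(K_X+D)=K_{X'}+D'+\Delta'-E^{\bullet}$ with controlled discrepancies; ampleness of $A=-(K_X+D)$ is used to decompose $g^*(A)=\vartheta E'+A'+\varepsilon f^*(H)$ with $A'$ ample and $f$-horizontal and $H$ ample on $Z$; generic choices make $D'':=D'+E'+A'+\varepsilon f^*(H)$ snc with $(X',D'')$ lc and $K_{X'}+D''=E''+E^{\bullet}$ supported on the $g$-exceptional locus, while the orbifold base of $(f,D'')$ becomes $D_Z+\varepsilon H$. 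Only then does \cite{CP13} 2.11 give pseudo-effectivity of $P:=(E''+E^{\bullet})-f^*(K_Z+D_Z+\varepsilon H)$, and intersecting with the image in $Z$ of a complete-intersection movable curve on $X$ avoiding $g(E')$ yields $(K_Z+D_Z)\cdot f_*(C')\leq-\varepsilon H\cdot f_*(C')<0$, so $K_Z+D_Z$ is not pseudo-effective. None of this construction (the lc auxiliary divisor $D''$, the role of the exceptional divisors, the perturbed orbifold base, the choice of curves avoiding the exceptional image) appears in your proposal, and it cannot be bypassed by the quotient-map framework you describe, which only reformulates what must be proved.
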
 

 The klt condition is not superfluous by the following:
 
 \begin{example} \label{exfanonotsrc} Let $(X,D):=(\Bbb P_n,D_k=H_1+...+H_k)$,  $H_j$ hyperplanes in general position, $2\leq k\leq n$: it is Fano, but `purely logarithmic' (i.e: the coefficients of the components of $D=D_k$ are all equal to $1$). 

Its slope rational quotient  is the linear projection $\pi:\Bbb P_n\to \Bbb P_{k-1}$ centered at the intersection of the $H_j's$. The orbifold base of this projection is $(\Bbb P_{k-1}, D'_k)$, with $D'_k=\pi(D_k)$, which has trivial Log-cotangent bundle.

We thus have: $\nu^+(X,D)=0$, $h^0(X,\Omega^{k}_X(Log(D)))=1$, and: 

$h^0(X,\Omega^q_X(Log(D)))=0$, for all $0<q\neq k$. \end{example}

\begin{proof} We need to show that if $f:X\dasharrow Z$ is any dominant rational map with $dim(Z)>0$, then its orbifold base has a non pseudo-effective canonical bundle, on any `neat' birational model. Let $g:(X',D')\to (X,D)$ be a birational morphism, with $(X',D')$ smooth such that $D'$ is the strict transform of $D$ in $X'$, together with a fibration $f:X'\to Z$ on a smooth projective variety $Z$. We assume $f:(X',D')\to Z$ to be `neat' and its orbifold base $(Z,D_Z)$ to be smooth. Because we assumed $(X,D)$ to be klt, we can write: $g^*(K_X+D)=K_{X'}+D'+\Delta'-E^{\bullet}$, where $\Delta',E^{\bullet}$ are supported on the exceptional locus of $g$ and without common component, and where $(X',(D'+\Delta'))$ is again klt. Let $H$ be ample on $Z$. Since $A:=-(K_X+D)$ is ample, we can write $g^*(A)=\vartheta .E'+A'+\varepsilon.f^*(H)$, where $\vartheta>0,\varepsilon>0$ are chosen to be sufficiently small, and $E'$ is the reduced support of the exceptional divisor of $g$.We can thus write: $0=g^*(K_X+D+A)=K_{X'}+D'+(\Delta'+\vartheta .E')+A'+\varepsilon.f^*(H)-E^{\bullet}$.
Since $A',H$ are ample and $(\Delta'+\vartheta .E')$ is supported on $E'$ and $(X',(\Delta'+\vartheta .E'))$ is klt for $0<\vartheta$ sufficiently small, we can chose $\Bbb Q$-divisors linearly equivalent to $A'$ and $H$ in such a way that: 

1. $D":=D'+E'+A'+\varepsilon.f^*(H)$ has snc support and is such that: $(X',D")$ is lc. 

2. $A'$ is $f$-horizontal (for example: its support is irreducible)

3. $D_Z+H+f(D'^{vert})$ has snc support, where $(\bullet)^{hor}$ and $(\bullet)^{vert}$ stand for the $f$-horizontal and $f$-vertical parts of a divisor $(\bullet)$ on $X'$.

We then have: $K_{X'}+D"=E"+E^{\bullet}$, where $E"$ is supported on $E'$ and $(X',D")$ is lc. Moreover: $g:(X',D")\to (X,D)$ is an orbifold morphism (because we have equipped all the components of $E'$ with multiplicities $+\infty$, which was the reason to replace $(\Delta'+\vartheta .E')$ by $E'$). Notice that $g: (X',D")\to (X,D)$ is no longer an orbifold birational equivalence, because of the addition of $A'+\varepsilon.f^*(H)$.

Let now $(Z,D'_Z)$ be the orbifold base of $f:(X',D")\to Z$, which is also `neat'. We have, for its orbifold base $(Z,D'_Z)$: $D'_Z=D_Z+ \varepsilon.H$, by our generic choice of the $\Bbb Q$-divisor $H$ on $Z$.

Now, by \cite{CP13}, Theorem 2.11: $K_{X'/Z}+(D")^{vert}-D(f,0)$ is pseudo-effective. Here $D(f,0)$ is an effective $f$-vertical divisor for the definition of which we refer to loc.cit. In particular, $K_{X'/Z}+D"-f^*(D_Z+\varepsilon.H)=(E"+E^{\bullet})-f^*(K_Z+D_Z+\varepsilon.H):=P$ is pseudo-effective. Let $C\subset X$ be a complete intersection of ample divisors avoiding $g(E')$, and $C'$ its inverse image in $X'$. Then: $$g_*(C').(K_Z+D_Z)=(E"+E^{\bullet}).C'-P.C'-\varepsilon.g_*(C').H\leq -\varepsilon.g_*(C').H<0,$$ since $E".C'=E^{\bullet}.C'.C'=0\leq P.C'$. This implies that $-(K_Z+D_Z)$ is not pseudo-effective, since $[g_*(C')]\in Mov(Z)$. Contradiction.\end{proof}

When $-(K_X+D)$ is nef instead of ample, one still gets, in general:

\begin{theorem}\label{t-knef} Let $(X,D)$ be a smooth projective orbifold such that $-(K_X+D)$ is nef, and let $(Z,D_Z)$ be the orbifold base of any neat orbifold birational model of any rational dominant fibration $f:X\dasharrow Z$. Then:

$\kappa(Z,K_Z+D_Z)=\nu(Z,K_Z+D_Z)=0$ if $(K_Z+D_Z)$ is pseudo-effective. 
 \end{theorem}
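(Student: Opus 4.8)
The strategy is to combine the birational stability results of the previous section with the numerical-dimension computation of Theorem~\ref{tnugen}. Suppose $(K_Z+D_Z)$ is pseudo-effective; we must show $\nu(Z,K_Z+D_Z)=0$, since then $\kappa\leq\nu=0$ and the inequality $\kappa\geq 0$ (hence $\kappa=0$) will follow separately, or rather: once $\nu=0$ we want to upgrade to $\kappa=0$, which is the delicate point discussed at the end. First I would apply Theorem~\ref{tnugen} to the orbifold pair $(Z,D_Z)$ itself: if $r_Z:(Z,D_Z)\to(R,D_R)$ is the slope rational quotient of $(Z,D_Z)$, then $\nu^+(Z,D_Z)=\nu(R,D_R)$. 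Since $K_Z+D_Z$ is pseudo-effective, the slope rational quotient is trivial ($R=Z$), so $\nu^+(Z,D_Z)=\nu(Z,D_Z)=\nu(Z,K_Z+D_Z)$; alternatively, one invokes Theorem~\ref{tnupseff} directly, which gives $\nu^+(Z,D_Z)=\nu(Z,D_Z)$ whenever $K_Z+D_Z$ is pseudo-effective.

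The heart of the matter is then to show $\nu(Z,K_Z+D_Z)=0$ using the hypothesis $-(K_X+D)$ nef. Here I would argue as in the proof of Theorem~\ref{tfsrc} but without the ampleness: since $-(K_X+D)$ is nef, on a neat orbifold birational model $g:(X',D')\to(X,D)$ together with $f:X'\to Z$ onto the smooth orbifold base $(Z,D_Z)$, one has $g^*(-(K_X+D))$ nef, and by \cite{CP13}, Theorem~2.11, the divisor $K_{X'/Z}+(D')^{vert}-D(f,0)$ is pseudo-effective. Intersecting with the inverse image $C'$ of a movable complete-intersection curve $C$ on $X$ avoiding the exceptional locus gives $g_*(C')\cdot(K_Z+D_Z)\leq g_*(C')\cdot\big(-g^*(K_X+D)\big)\leq 0$ by nefness, so $g_*(C')\cdot(K_Z+D_Z)\leq 0$. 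Since $[g_*(C')]\in\mathrm{Mov}(Z)$ and $K_Z+D_Z$ is pseudo-effective, duality between $\mathrm{Mov}(Z)$ and the pseudo-effective cone \cite{BDPP} forces $g_*(C')\cdot(K_Z+D_Z)=0$. Varying $C$ through all movable complete intersections, one concludes $K_Z+D_Z$ lies on the boundary of the pseudo-effective cone in the strongest sense; to promote this to $\nu(Z,K_Z+D_Z)=0$ I would use the characterisation of the numerical dimension via positivity in codimension, or more directly: a pseudo-effective class that pairs to zero with a movable-big class has $\nu=0$ (this is the orbifold analogue of the fact that $L$ nef with $L\cdot\alpha=0$ for $\alpha$ movable-big gives $\nu(L)=0$, which follows from \cite{Nak} / the numerical-dimension estimates and Lemma~\ref{lcompnu}).

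Finally, to get $\kappa(Z,K_Z+D_Z)=0$ from $\nu(Z,K_Z+D_Z)=0$, I would invoke the orbifold base-point-free / abundance-in-numerical-dimension-zero phenomenon: when $K_Z+D_Z$ is pseudo-effective with $\nu=0$, results on orbifold pairs with $-(K_X+D)$ nef (the semiampleness in the $\nu=0$ case, as in \cite{QZ} for $D=0$ and its orbifold extension already used in the introduction) give $\kappa=0$. Concretely one can pass back to $X$: since $-(K_X+D)$ is nef, the pair $(X,D)$ is not of log general type and the slope rational quotient $\rho:(X,D)\dasharrow(R,D_R)$ has $\nu^+(X,D)=\nu(R,D_R)\in\{-\infty,0\}$ by Theorem~\ref{tnugen} combined with the above; when $\nu(R,D_R)=0$ one has $h^0(R,[Sym^m(\wedge^p)](\Omega^1(R,D_R)))\neq 0$ for suitable $m,p$ (the orbifold form of \cite{QZ}), hence $\kappa(R,D_R)=0$, and then $\kappa(Z,K_Z+D_Z)=\kappa(R,D_R)=0$ by birational invariance of $\kappa$ under orbifold-birational equivalence of orbifold bases (\cite{Ca04}, Theorem~\ref{tdiffeqfibr}). \textbf{The main obstacle} I anticipate is precisely this last step, the passage from the numerical statement $\nu=0$ to the effective statement $\kappa=0$: it requires either an orbifold version of the structure theorem for pairs of numerical dimension zero, or a direct argument via the slope rational quotient and the orbifold $\nu=0$ abundance result, and one must be careful that the neat orbifold birational model and its orbifold base are compatible with the one appearing in Theorem~\ref{tnugen}.
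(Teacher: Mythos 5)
Your central step does not go through. From the pseudo-effectivity statement of \cite{CP13}, Theorem 2.11, and the nefness of $-(K_X+D)$ you obtain, for the class $\beta:=f_*(C')$ of the pushed-forward complete-intersection curve, only that $(K_Z+D_Z)\cdot\beta\le 0$, hence $=0$ by pseudo-effectivity (and note the sign slip in your chain: nefness gives $C'\cdot g^*(K_X+D)\le 0$, not ``$\le 0$ by nefness'' for the pairing with $-g^*(K_X+D)$). But $\beta$ is only known to be \emph{movable}, not movable-big: $\beta\cdot D_0=A^{n-1}\cdot g_*f^*(D_0)$ can vanish for a nonzero pseudo-effective $D_0$ on $Z$ (e.g.\ whenever all divisorial components of $f^{-1}(D_0)$ are $g$-exceptional, which does occur for rational fibrations resolved on a blow-up). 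And pairing to zero against a merely movable class gives no control on $\nu$: on a ruled surface the fibre class $F$ is pseudo-effective and movable with $F\cdot F=0$, yet $\nu(F)=1$. Conversely, if $\beta$ \emph{were} interior to $\mathrm{Mov}(Z)$, BDPP duality would force $K_Z+D_Z\equiv 0$, a conclusion strictly stronger than the theorem and one the paper explicitly leaves as an open question at the end of that section — a clear sign the bigness you need cannot be had in general. So the promotion from ``pairs to zero with one movable class'' to ``$\nu=0$'' is a genuine gap, not a technicality.

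The paper's proof uses the nefness hypothesis at the level of numerical \emph{divisor} classes rather than of a single intersection number: setting $N':=-g^*(K_X+D)$ (nef), one chooses an effective $f$-horizontal $\mathbb{Q}$-divisor $B'\equiv N'+\beta A'$ (possible since $N'+\beta A'$ is ample) and enlarges the orbifold divisor to $D'':=D'+E'+B'$, so that $K_{X'}+D''\equiv E'-\Delta'+E^{\bullet}+\beta A'$ is numerically an effective $g$-exceptional divisor plus $\beta A'$. Applying \cite{CP13}, 2.11 to $(X',D'')$ and letting $\beta\to 0^+$ yields $f^*(K_Z+D_Z)\equiv E''+E^{\bullet}-P$ with $P$ pseudo-effective, whence $\nu(Z,K_Z+D_Z)\le\nu(X',E''+E^{\bullet})=0$ because an effective exceptional divisor has numerical dimension $0$; this is the information your intersection-number argument discards. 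Finally, the passage from $\nu=0$ to $\kappa=0$, which you single out as the main obstacle and attack via an orbifold analogue of \cite{QZ} and the sRC quotient, is in the paper a one-line citation of Kawamata's abundance in numerical dimension zero \cite{K} applied to the (lc, pseudo-effective) pair $(Z,K_Z+D_Z)$; your substitute route is both unnecessary and not airtight (a nonzero section of $[Sym^m(\wedge^p)](\Omega^1(R,D_R))$ does not by itself give $\kappa(R,K_R+D_R)=0$, and the identification of $(Z,D_Z)$ with $(R,D_R)$ up to the right equivalence is not justified).
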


 From this, one immediately gets:

 \smallskip

\begin{corollary}\label{cfnu} Let $(X,D)$ be a smooth projective orbifold with $-(K_X+D)$ nef. Then:

 1. If $(X',D')\to (R,D_R)$ is its `slope rational quotient', either $(X,D)$ is $sRC$, or $\kappa(R,K_R+D_R)=\nu(R,K_R+D_R)=\nu^+(X,D)=0$.
 
 2. $\nu^+(X,D)\in \{-\infty,0\}$. 
 
 3. If $(X,D)$ is Fano, then $X$ is rationally connected.
\end{corollary}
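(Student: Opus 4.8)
The plan is to adapt the proof of Theorem~\ref{tfsrc}, weakening ``ample'' to ``nef'': this will no longer force $K_Z+D_Z$ to be non-pseudo-effective, but it will show that $-(K_Z+D_Z)$ \emph{is} pseudo-effective, which together with the standing hypothesis that $K_Z+D_Z$ is pseudo-effective yields $K_Z+D_Z\equiv 0$, hence $\nu(Z,K_Z+D_Z)=0$; the equality $\kappa(Z,K_Z+D_Z)=0$ will then follow from abundance in numerical dimension zero. I may assume $\dim(Z)>0$. Since the Kodaira and numerical dimensions of the orbifold base of a fibration do not depend on the chosen neat model (by \cite{Ca04}; see also Theorem~\ref{tdiffeqfibr}), I fix one: by Lemma~\ref{lom}, let $g:(X',D')\to(X,D)$ be a birational orbifold morphism and $f:(X',D')\to Z$ a neat orbifold morphism with smooth orbifold base $(Z,D_Z)$.

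Next I would record the two ingredients. On the one hand, $K_{X'}+D'=g^*(K_X+D)+\mathcal E$ with $\mathcal E$ a $g$-exceptional divisor. On the other hand, exactly as packaged in the proof of Theorem~\ref{tfsrc}, Theorem~2.11 of \cite{CP13} shows that $N:=K_{X'/Z}+D'-f^*(D_Z)=(K_{X'}+D')-f^*(K_Z+D_Z)$ is pseudo-effective modulo an effective divisor $V$ which (like the components of $\mathcal E$) is partially supported on the fibres of $f$ in the sense of Definition~\ref{ddpsf}; write $N=P+V$ with $P$ pseudo-effective. Combining, $f^*(K_Z+D_Z)=g^*(K_X+D)+\mathcal E-P-V$.

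Now I would run the test-curve computation. Let $\beta$ be any movable class on $Z$. By Proposition~\ref{prelmov}.(6), applied to the union of the components of $\mathcal E$ and of $V$, I may lift $\beta$ to a movable class $\gamma$ on $X'$ with $f_*(\gamma)=\beta$ and $\gamma\cdot\mathcal E_j=\gamma\cdot V_j=0$ for every component. Intersecting the displayed identity with $\gamma$ and using the projection formula gives
\[(K_Z+D_Z)\cdot\beta=(K_X+D)\cdot g_*(\gamma)+0-P\cdot\gamma-0\le 0,\]
since $-(K_X+D)$ is nef (so $(K_X+D)\cdot g_*(\gamma)\le 0$, $g_*(\gamma)$ being a non-zero movable curve class) and $P$ is pseudo-effective (so $P\cdot\gamma\ge 0$). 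Thus $(K_Z+D_Z)\cdot\beta\le 0$ for every movable $\beta$; but $K_Z+D_Z$ is pseudo-effective, so by the duality of the pseudo-effective and movable cones (\cite{BDPP}) $(K_Z+D_Z)\cdot\beta\ge 0$ as well, whence $(K_Z+D_Z)\cdot\beta=0$ for all movable $\beta$. Therefore $-(K_Z+D_Z)$ is also pseudo-effective, so $K_Z+D_Z\equiv 0$ and $\nu(Z,K_Z+D_Z)=0$; in particular $\kappa(Z,K_Z+D_Z)\le 0$ (this last inequality can also be read off from the results of \cite{CP15} on orbifolds with $K+D\equiv 0$). Finally, $(Z,D_Z)$ being a log smooth, hence log canonical, pair with numerically trivial canonical bundle, the abundance theorem in numerical dimension zero gives $m(K_Z+D_Z)\sim 0$ for some $m>0$, so $\kappa(Z,K_Z+D_Z)=0$.

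The main obstacle is twofold. The technical heart is the correct packaging of \cite{CP13}, Theorem~2.11 so that the orbifold base $(Z,D_Z)$ — and not some larger orbifold divisor on $X'$ — is the one for which $N$ is pseudo-effective up to a divisor partially supported on the fibres of $f$; this is precisely the bookkeeping already carried out in the proof of Theorem~\ref{tfsrc}, combined with the orthogonality construction of Proposition~\ref{prelmov}.(6) needed to annihilate simultaneously the $g$-exceptional and the fibre-supported error terms against one movable lift of a prescribed class on $Z$. Logically separate, and the one step genuinely outside the self-contained orbifold machinery of the paper, is the passage from $K_Z+D_Z\equiv 0$ to $\kappa(Z,K_Z+D_Z)=0$: unlike the vanishing of $\nu$, this requires a non-vanishing (abundance) input for numerically trivial log canonical pairs — available, but imported from outside.
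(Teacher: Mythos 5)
Your argument breaks at its central step, the invocation of \cite{CP13}, Theorem 2.11. You apply it to the pair $(X',D')$ itself, claiming that $N=(K_{X'}+D')-f^*(K_Z+D_Z)$ can be written as $P+V$ with $P$ pseudo-effective and $V$ effective and partially supported on the fibres of $f$. That is not what the theorem yields in the way this paper uses it, and the assertion is false under the present hypotheses: take $X=\mathbb P^1\times E$ with $E$ an elliptic curve, $D=0$, $f=\mathrm{pr}_2$; then $-(K_X+D)$ is nef, $K_Z+D_Z=K_E=0$ is pseudo-effective, but $N=K_{X/Z}=\mathrm{pr}_1^*\mathcal O_{\mathbb P^1}(-2)$ has degree $-2$ on the fibres of $f$, while any $P+V$ as above has non-negative degree there (the fibre class is movable and $V$ is $f$-vertical). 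The whole point of the proofs of Theorems \ref{tfsrc} and \ref{t-knef} is that \cite{CP13}, 2.11 is applied only after augmenting the boundary to $D''=D'+E'+B'$, where $E'$ is the reduced $g$-exceptional divisor and $B'$ is an $f$-horizontal ample $\mathbb Q$-divisor $\equiv -g^*(K_X+D)+\beta A'$ (this is where nefness of $-(K_X+D)$ enters), so that $K_{X'}+D''$ becomes (effective exceptional)$+\beta A'$, hence pseudo-effective; one then lets $\beta\to 0^+$. Your shortcut skips exactly this augmentation. Consistently with this, your intermediate conclusion $K_Z+D_Z\equiv 0$ is stronger than what is true: it is not stable under replacing the neat model by one where $Z$ is blown up (there $K_Z+D_Z$ acquires an effective exceptional part, pairing strictly positively with suitable movable classes, while $\nu=\kappa=0$ persists), yet your argument would apply verbatim to any neat model; the paper only claims $\nu=\kappa=0$ and explicitly leaves the existence of \emph{some} model with $K_Z+D_Z$ numerically trivial as an open question after Corollary \ref{cfnu+}. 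A secondary gap: Proposition \ref{prelmov}.(6) only kills intersections with divisors partially supported on the fibres of $f$ (Definition \ref{ddpsf}); components of $\mathcal E$ are $g$-exceptional but may be $f$-horizontal (exceptional divisors over centres dominating $Z$), so the orthogonality $\gamma\cdot\mathcal E_j=0$ with prescribed $f_*\gamma=\beta$ is not available, and since $\mathcal E$ enters your identity with a plus sign and need not be effective or vertical, it cannot simply be discarded.

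Beyond this, the proposal proves less than the statement even if the gap were repaired: claim 1 also asserts $\nu^+(X,D)=\nu(R,D_R)$, which is Theorem \ref{tnugen}; claim 2 then follows from claim 1 together with Corollary \ref{csRC}; and claim 3 (Fano implies $X$ rationally connected) requires a separate argument, using the strict negativity from the proof of Theorem \ref{tfsrc} applied to orbifold bases dominating the rational quotient of $X$ — none of this appears in your text. The paper's own proof of the corollary is short precisely because it quotes Theorem \ref{t-knef} (whose proof is the $\beta\to 0^+$ refinement described above) applied to the slope rational quotient of Theorem \ref{tRQ}, together with Theorem \ref{tnugen}; your attempt to rederive the key pseudo-effectivity from scratch, without the boundary augmentation, is where it fails.
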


\begin{corollary}\label{cfnu+} Let $(X,D)$ be a smooth projective orbifold with $-(K_X+D)$ nef. The following conditions are equivalent:

 1. $(X,D)$ is sRC.
 
 2.  $h^0(X,[Sym^m(\Omega^p)](X,D)))=0,\forall p>0,m>0$.
 
\end{corollary}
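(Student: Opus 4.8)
The plan is to derive Corollary \ref{cfnu+} from Corollary \ref{cfnu} together with the sheaf-theoretic characterisations of $sRC$ already available. The equivalence $1 \Longrightarrow 2$ is immediate and needs no hypothesis on $-(K_X+D)$: if $(X,D)$ is $sRC$, then property 3' of Theorem \ref{torc} applied to arbitrary ample $A$ gives $h^0(X,[Sym^m(\wedge^p)](\Omega^1(X,D))\otimes \pi^*(A))=0$ for $m$ large and all $p>0$, and since $A$ is ample we may drop the twist, so $h^0(X,[Sym^m(\Omega^p)](X,D))=0$ for all $m>0, p>0$ (the small values of $m$ are handled by the inclusion into a twisted version, or directly since a section would destabilise). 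So the content is $2 \Longrightarrow 1$.

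For $2 \Longrightarrow 1$, I would argue by contraposition: suppose $(X,D)$ is not $sRC$. By Corollary \ref{cfnu}.1, since $-(K_X+D)$ is nef, the slope rational quotient $r:(X',D')\to (R,D_R)$ has $\dim R>0$ and $\kappa(R,K_R+D_R)=\nu(R,K_R+D_R)=0$; in particular $K_R+D_R$ is pseudo-effective with $\kappa=0$, so there is an integer $m>0$ (sufficiently divisible) with $h^0(R, m(K_R+D_R))\neq 0$. By Example \ref{exinttensors}, with $p:=\dim R$, we have $m(K_R+D_R)\subset [Sym^m(\wedge^p)](\Omega^1(R,D_R))$, hence $h^0(R,[Sym^m(\wedge^p)](\Omega^1(R,D_R)))\neq 0$. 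Pulling back along the orbifold morphism $r:(X',D')\to (R,D_R)$ and using the functoriality of integral orbifold tensors (Proposition \ref{pom'}, which gives $r^*:[Sym^m(\wedge^p)](\Omega^1(R,D_R))\to [Sym^m(\wedge^p)](\Omega^1(X',D'))$ injective on global sections for the relevant construction, cf.\ Example \ref{exinttensors}), we obtain $h^0(X',[Sym^m(\wedge^p)](\Omega^1(X',D')))\neq 0$. Finally, since $g:(X',D')\to (X,D)$ is an orbifold birational morphism, the Corollary following Proposition \ref{pom'} identifies global sections of $[Sym^m(\wedge^p)](\Omega^1(X',D'))$ with those of $[Sym^m(\wedge^p)](\Omega^1(X,D))$, so $h^0(X,[Sym^m(\Omega^p)](X,D))\neq 0$, contradicting 2.

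I expect the main subtlety to be bookkeeping the birational-model issue: the slope rational quotient in Corollary \ref{cfnu} is defined on a model $(X',D')$, and one must check that a nonzero section of an integral orbifold tensor on $(X',D')$ descends to one on $(X,D)$, which is exactly what the invariance statements for $[\otimes^m]$ and $[Sym^m(\wedge^p)]$ under orbifold birational equivalence (the Corollary after Proposition \ref{pom'}, and Remark \ref{remg^*}) are designed to provide; one only needs $g$ to be an orbifold morphism (indeed an orbifold birational equivalence after passing to the $(g,D)$-minimal structure, which does not change global sections). A second small point is ensuring the exponent $p=\dim R$ is positive, which holds precisely because $\dim R>0$ in the non-$sRC$ case; this is why the dichotomy in Corollary \ref{cfnu}.1 is the right input. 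No genuinely new estimate is needed — everything reduces to Corollary \ref{cfnu} plus the already-established machinery of integral orbifold tensors.
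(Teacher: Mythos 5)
Your proof is correct and follows essentially the same route as the paper: the forward direction via the vanishing/slope characterisation of $sRC$, and the converse by contraposition, producing a section of $m(K_R+D_R)$ from Theorem \ref{t-knef} (equivalently Corollary \ref{cfnu}) and lifting it through the slope rational quotient into $[Sym^m(\Omega^p)](X,D)$ with $p=\dim R>0$. The only difference is that you spell out the descent from the birational model $(X',D')$ to $(X,D)$ via the invariance of integral orbifold tensors, a step the paper leaves implicit.
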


\begin{remark}Theorem \ref{t-knef} extends to the smooth orbifold case the results of \cite{QZ}. Using Theorem 2.11 of \cite{CP13}  instead of Log-subajunction techniques simplifies considerably the proof. Corollary \ref{cfnu+} extends to the orbifold case the result of \cite{Cao}, implied by \cite{QZ}, with a different proof.
 \end{remark}

\begin{proof} (or Corollary \ref{cfnu+}): The first property obviously implies the second. Assume now the second property to hold, but not the first. Then by Theorem \ref{t-knef}, $h^0(R,m.(K_R+D_R))\neq 0$ for some large divisible $m>0$. But a nonzero section of $m.(K_R+D_R)$ lifts through $r$ to a nonzero section of $[Sym^m(\Omega^p)](X,D))$, with $p:=dim(R)>0$.
\end{proof}

\begin{proof} (of Theorem \ref{t-knef}) Write, as above: $-N':=g^*(K_X+D)=K_{X'}+D'+(\Delta')+A'-E^{\bullet}$, with $N'$ nef by hypothesis. Thus $N'+\beta.A'$ is ample on $X'$ for any $\beta>0$ rational, arbitrarily small, and $A'$ a polarisation on $X'$. We choose a $\Bbb Q$-divisor $B'$ linearly equivalent to $N'+\beta.A'$ and $f$-horizontal, such that $D":=D'+E'+B'$ is snc. Thus $K_{X'}+D"=K_{X'}+D'+E'+B'\equiv E'-\Delta'+E^{\bullet}+\beta.A'$. We then get, for the orbifold base $(Z,D_Z)$ of $f:(X',D")\to Z,$ from \cite{CP13}, 2.11 again: $K_{X'/Z}+D"-f^*(D_Z):=P_{\beta}$ is pseudo-effective. Note that $D_Z$ does not depend on $\beta>0$ since $B'$ is choosen to be $f$-horizontal. Thus $P_{\beta}\equiv E"+E^{\bullet}-f^*(K_Z+D_Z)+\beta.A'$, with $E":=E'-\Delta'$ effective and $g$-exceptional. Letting $\beta\to 0^+$, we get: $f^*(K_Z+D_Z)=E"+E^{\bullet}-P$, where $P$ is a pseudo-effective class, as limit of the $P_{\beta}$. We thus deduce: $\nu(X',f^*(K_Z+D_Z))=\nu(K_Z+D_Z)\leq \nu(X', E"+E^{\bullet})\leq 0$, and thus: $\nu(Z,K_Z+D_Z)=0$ if $K_Z+D_Z$ is pseudo-effective, in which case the equality $\kappa=\nu$ is due to \cite{K}. This shows the Theorem. \end{proof}

\begin{remark} At the end of the proof of Theorem \ref{t-knef}, we obtain a slightly more precise numerical information: choose the curves $C,C'$ as in the proof of Theorem \ref{tfsrc}. If $K_Z+D_Z$ is pseudo-effective, we have: $0\leq f^*(K_Z+D_Z).C'= (E"-P).C'=-P.C'\leq 0$.
 \end{remark}

\begin{example} Let us give an example of a $3$-dimensional smooth projective $(X,D)$ with $-(K_X+D)$ nef and big, $D$ reduced, such that its `rational quotient' $f:(X,D)\to (Z,D_Z)$ is a fibre bundle with fibres $\Bbb P^1$, and $\pi_1(Z,D_Z)\cong \Bbb Z_3$. Just start with $(X_0=\Bbb P^3, D_0)$, where $D_0$ is the cone over an elliptic curve $E$: $-(K_{X_0}+D_0)$ is thus ample. Blow-up the singular point of $D_0$, to obtain $X$, let $D'$ be the exceptional divisor, and let $F$ be the strict transform of $D_0$. Take then $(X, D:=F+D')$: we have: $K_X+(F+D')=g^*(K_{X_0}+D_0)$, so that the anticanonical bundle of $(X,D)$ is nef and big. Moreover, the natural linear projection $f:X\to \Bbb P^2=Z$ sends $F$ to $E$, with $f^{-1}(E)=F$, while $D'$ is $f$-horizontal. Thus $f$ is an orbifold morphism from $(X,D)$ to $(Z,D_Z)=(\Bbb P^2,E)$, its orbifold base. We obviously have: $K_Z+D_Z=\cO_Z$, and $Z=\Bbb P^2$ has a $K3$ cyclic cover of degree $3$ ramifying exactly over $E$, showing the claim on the orbifold fundamental group, from which follows that $h^0(Z, \Omega^1(Log E))=\{0\}$.
\end{example}

\begin{question} Let $(X,D)$ be projective smooth with $-(K_X+D)$ nef. Is it true that:

1. There exists $(X',D'), f,g, (Z,D_Z)$ as above such that $K_Z+D_Z$ is numerically trivial, and trivial if $D$ is reduced?

2. Assume also that $D$ is reduced, and $h^0(X, \Omega^q_X(Log(D)))=0,$ $ \forall q>0$. Is then $(X,D)$ $sRC$? (This is true if the answer to question 1 is affirmative).
\end{question}

\begin{remark} The proofs of the results in this section apply, more generally, to any smooth projective orbifold pair $(X,D)$ which is the log-resolution of a singular log-canonical projective orbifold pair $(X_0,D_0)$ with $-(K_{X_0}+D_0)$ is nef. More specifically, in this case,  the conclusions of Theorem \ref{t-knef} and corollaries \ref{cfnu} and \ref{cfnu+} still hold, that is: either $(X,D)$ is sRC, or $\nu^+(X,D)=\kappa(R,D_R)=0$, the first case occuring if $h^0(X,[Sym^m(\wedge^p](\Omega^1(X,D))=0,\forall m>,p>0$ . The conclusion of Theorem \ref{tfsrc} also holds: if $(X_0,D_0)$ is Fano and klt, then $(X,D)$ is sRC. We only gave above the statements when $(X,D)=(X_0,D_0)$.
\end{remark}

%%%%%%%%%%%%%%%%%%%%%%%%%%%%%%%%%%%
%%%%%%%%%%%%%%%%%%%%%%%%%%%%%%%%%%%%%

\section{Orbifold rational curves: conjectures}\label{sorc'}

In this last section, we introduce the notions of orbifold rational curves, orbifold uniruledness, and rational connectedness, extracted from \cite{Ca09} and \cite{Ca10}, to which we refer for more details and justifications for the notions introduced here. We then state the conjectures stating the equivalence between `slope rational connectedness' and orbifold rational connectedness, again as in loc. cit.. A much weaker property will then be shown in the next section \ref{sorc}.

In the present section, $(X,D)$ will be a smooth, connected, and complex-projective orbifold pair, with $D=\sum_{j\in J} c_j.D_j$, the `coefficients' $c_j:=(1-\frac{1}{m_j})\in ]0,1]$ being rational numbers, with the `multiplicities' $m_j:=\frac{a_j}{b_j}=(1-c_j)^{-1}\in \Bbb Q$, with $0<a_j\leq b_j$ being coprime integers. We also consider another smooth projective connected orbifold pair $(C,D_C)$, in which $C$ is a curve (so that $D_C:=\sum_{k\in K}c'_k.{p_k}$, the $p_k's$ being distinct points of $C$, and $c'_k\in \Bbb Q\cap ]0,1], \forall k\in K$). Write again: $c'_k:=(1-\frac{1}{m'_k})$ for the corresponding multiplicities (on $C$).

Recall the definition of an orbifold morphism in our context (see also Definition \ref{deforbmorph}):

\begin{definition}\label{deforc} (\cite{Ca10}, Definition 9) Let $f:C\to X$ be a morphism. We say that $f:(C,D_C)\to (X,D)$ is an orbifold morphism if:

1. $f$ is birational from $C$ to $f(C)$.

2. $f(C)$ is not contained in $Supp(D)$.

3. For each $a\in C$ and each $j\in J$ such that $f(a)\in D_j$, one has: $t_{a,j}.m_{a}(D_C)\geq m_j$, where $t_{a,j}$ be the order of contact of $f(C)$ and $D_j$ at $f(a)$, that is: $f^*(D_j)=t_{a,j}.\{a\}+....$, and $m'_a=m_{a}(D_C)$ is the multiplicity (not the coefficient!) of $a$ in $D_C$. \end{definition}

An immediate computation (or an application of Proposition \ref{pom}) shows that $f:(C,D_C)\to (X,D)$ is an orbifold morphism if and only if : $df:TC\to f^*(TX)$ induces an injection of sheaves $g^*(df):g^*(T(C,D_C))\to g^*(f^*(T(X,D)))$, for any finite map $g:C'\to C$ such that $C'$ dominates a Kawamata cover of $C$ adapted to $D_C$, and such that $f\circ g:C'\to X$ factorises through some Kawamata cover $\pi:Y\to X$ adapted to $D$. 

This former definition is thus justified by this property. In the texts \cite{Ca09}, \cite{Ca10}, a similar justification was given in terms of (slightly different) variants of the orbifold tangent bundle of $(X,D)$ (and for morphisms from higher-dimensional $C's$, not necessarily curves). 

Notice also that, for given $X,D,C,f$ as above, there is smallest orbifold divisor $D_C$ for which $f:(C,D_C)\to (X,D)$ is an orbifold morphism. For each $a\in C$, it attributes to the point $a$ the coefficient $c_{(f,a)}=(1-\frac{1}{m_{(f,a)}})$, with $m_{(f,a)}:=inf\{t_{a,j}.m_j\}$, where $j\in J$ is such that $f(a)\in D_j$. Unless explicitly said, we shall, in the sequel, always consider as $D_C$ this minimal orbifold structure making $f$ an orbifold morphism.

\begin{definition}(\cite{Ca10}, Definition 9 and Definition 10) Let  $f:(C,D_C)\to (X,D)$ be an orbifold morphism, as above. Then $(C,D_C)$  is said to be $D$-rational curve if $deg(K_C+D_C)<0$. This clearly implies that $C\cong \Bbb P^1$.
\end{definition}

\begin{example} Assume that $(X,D)$ is a smooth projective orbifold with $D$ reduced (ie: all coefficients of the components of $D$ are $1$, or equivalently, their multiplicities are $+\infty$). A $D$-rational curve $f:C=\Bbb P^1\to X$ is thus a rational curve of $X$ either contained in $X-D$, or such that a single point of $C$ is sent to $D$. For example, an ordinary double point of $f(C)$ lying on $D$ is excluded.
\end{example}

\begin{definition}(\cite{Ca10}, Definition ) We say that $(X,D)$ is uniruled (resp. Rationally connected) if there exists an irreducible orbifold rational curve going through any generic point (resp. any generic pair of points) of $X$. We say that $(X,D)$ is `weakly uniruled' if, through the generic point of $X$, there exists an irreducible rational curve $C$ with $(K_X+D).C<0$.
\end{definition}

Of course, there are many stronger variants of this notion. See \cite{Ca07} for some of them.

We can now state the main conjecture concerning orbifold rational curves (see \cite{Ca10}, for similar conjectures, but related to $\kappa$, rather than $\nu$):

\begin{conjecture}\label{corat} Let $(X,D)$ be a smooth projective \footnote{Or Compact K\"ahler, or in the class C.} orbifold pair. 

1. $(X,D)$ is uniruled if and only if $K_X+D$ is not pseudo-effective.

1'. $(X,D)$ is uniruled if and only if it is weakly uniruled \footnote{This conjecture is equivalent to Question 9.1 below.}. 

2. $(X,D)$ is rationally connected if and only if it is `slope rationally connected'.
\end{conjecture}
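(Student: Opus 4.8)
The plan is to reduce each of the three parts of Conjecture \ref{corat} to a classical (non-orbifold) statement together with two analytic inputs in the orbifold category — an orbifold bend-and-break producing $D$-rational curves, and an orbifold Graber--Harris--Starr theorem — and then to feed the fibrewise information into the slope rational quotient $\rho:(X,D)\dasharrow (R,D_R)$ of Theorem \ref{tRQ}.

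First I would dispose of the ``easy'' implications. For part 1, if $(X,D)$ is uniruled, a general member $f:(C,D_C)\to (X,D)$ of a covering family of $D$-rational curves may be chosen to meet $\Supp(D)$ transversally, so each contact order is $1$, the minimal orbifold structure $D_C$ has coefficients $c_{(f,a)}=c_j$ at the points landing on $D_j$, and hence $(K_X+D)\cdot C=\deg(K_C+D_C)<0$; since $[C]$ is movable, $K_X+D$ is not pseudo-effective. Uniruled trivially implies weakly uniruled, half of part 1'. For the nontrivial half of part 2, if $(X,D)$ is orbifold rationally connected, fix a general point $x\in X$, take the family of $D$-rational curves through $x$ connecting $x$ to other general points, and lift it through an adapted Kawamata cover $\pi:Y\to X$: the lifted curves $\tilde C\subset Y$ form a connecting family on $Y$ along which $\pi^*(T(X,D))$ is ample (the orbifold analogue, via \cite{BC}, \cite{CR}, of the freeness of a very free rational curve). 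Any quotient $\cQ$ of $T(X,D)$ then receives along a general such $\tilde C$ a nonzero map with image of positive degree, so $\mu_{\pi^*(\alpha),\min}(\pi^*(T(X,D)))>0$ for $\alpha$ the class of the family, and $(X,D)$ is $sRC$ by Theorem \ref{torc}.(4).

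The substance is the reverse direction of each part, and it rests entirely on the two missing tools. \textbf{Orbifold bend-and-break:} if $K_X+D$ is not pseudo-effective (more weakly, if there is a rational curve through a general point with $(K_X+D)\cdot C<0$, i.e.\ Question \ref{qour}), one wants a $D$-rational curve of bounded $(-(K_X+D))$-degree through a general point. Granting this, part 1' follows by deforming a weakly-uniruling curve into generic position so that its contact with each component of $D$ drops to order $1$, making it a genuine $D$-rational curve; part 1 follows immediately; and for part 2 one applies this fibrewise to the smooth orbifold fibres $(X_r,D_r)$ of $\rho$, which are $sRC$ hence satisfy $K_{X_r}+D_r$ non-pseudo-effective (Theorem \ref{torc}.1 with $Z=X_r$), so each fibre is orbifold uniruled. \textbf{Orbifold Graber--Harris--Starr:} a fibration whose general fibres are covered and pairwise connected by $D$-rational curves, over a base that is itself orbifold rationally connected, is orbifold rationally connected. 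Applying this to $\rho$, whose base has $K_R+D_R$ pseudo-effective and therefore — by part 1 — cannot be orbifold uniruled unless $\dim R=0$, forces $R$ to be a point; since a point is trivially orbifold rationally connected, the orbifold GHS statement then yields that $(X,D)$ itself is orbifold rationally connected, which is part 2.

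The main obstacle is precisely the one flagged in the introduction: there is at present no bend-and-break mechanism producing $D$-rational curves (even Question \ref{qour} is open), and no smoothing/gluing theory for chains of $D$-rational curves, because the deformation theory of orbifold maps $(C,D_C)\to (X,D)$ lifted to an adapted Kawamata cover does not yet control the tangency conditions along $\Supp(D)$ with enough precision. What the present paper supplies is the slope version of every one of the above steps — the $sRC$ quotient $\rho$, its birational and orbifold functoriality, and the positivity criteria of \S\ref{pmaint} — so that Conjecture \ref{corat} would follow formally from (i) orbifold bend-and-break and (ii) the orbifold Graber--Harris--Starr statement, thereby reducing the conjecture to these two curve-existence problems (and, in the converse of part 2, to the freeness statement for connecting families of $D$-rational curves used above).
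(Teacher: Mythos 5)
The statement you were asked to prove is Conjecture \ref{corat}: the paper does not prove it, and in fact its whole raison d'\^etre is that no proof is currently available -- the only partial result in the text (Theorem \ref{Torc}) is itself conditional on Question \ref{qour1} and only upgrades the movable class of Theorem \ref{torc} to a `Geometrically Rational big' one; the directions ``uniruled $\Rightarrow$ $K_X+D$ not psef'' and ``orbifold RC $\Rightarrow$ sRC'' are merely recorded as easy implications in Remark \ref{rorat}. Your text is likewise not a proof but a conditional reduction to two unproved tools (an orbifold bend-and-break and an orbifold Graber--Harris--Starr statement), so at best it restates the open problem; and even as a reduction it has a genuine structural flaw. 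For part 2 you apply your ``orbifold GHS'' to the slope rational quotient $\rho$ after showing only that the general orbifold fibres $(X_r,D_r)$ are \emph{uniruled}; but GHS-type statements require the fibres to be rationally connected, which is exactly what is to be proved -- and when $(X,D)$ is sRC the quotient $R$ is a point, so the ``fibre'' is $(X,D)$ itself and the step is circular. In the classical case the MRC fibration is constructed from the rational curves themselves (quotient by connecting chains, smoothing of combs), and GHS is used only to show its base is not uniruled; here you are silently identifying the foliation-theoretic sRC quotient with a curve-theoretic MRC quotient, which is essentially the content of Conjecture \ref{corc}/\ref{corat}.2, not an input to it. An honest reduction would have to include, as a third missing tool, the construction of a quotient by families of $D$-rational curves together with a gluing/smoothing theory for chains of such curves.

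Two smaller points. In your ``easy'' direction of part 1 the asserted equality $(K_X+D)\cdot C=\deg(K_C+D_C)$ is false in general: for a generically injective $f:\Bbb P^1\to X$ one has $K_X\cdot C=-2-\deg N_f\neq \deg K_{\Bbb P^1}$; what saves the argument is that a general member of a covering family through a general point is free in characteristic zero, so $\deg N_f\geq 0$ and one gets the inequality $(K_X+D)\cdot C\leq \deg(K_C+D_C)<0$, which suffices. And in your direction ``orbifold RC $\Rightarrow$ sRC'' you invoke ampleness of $\pi^*(T(X,D))$ along lifted connecting curves, i.e.\ a very-freeness statement whose deformation-theoretic proof is precisely what is unavailable in the orbifold setting (you concede this yourself later); a cleaner route is to avoid slopes altogether and verify criterion 1 of Theorem \ref{torc}: a connecting family of $D$-rational curves pushes forward, under a neat orbifold model $f:(X,D)\dasharrow (Z,D_Z)$, to a covering family of $D_Z$-rational curves on $Z$ (by Definition \ref{deforbmorph} and the functoriality of orbifold morphisms), so $K_Z+D_Z$ is not pseudo-effective for every such $f$ with $\dim Z>0$.
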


\begin{remark}\label{rorat} One has the following easy implications, for a smooth projective $(X,D)$:

1. Uniruled $\Longrightarrow$ weakly uniruled $\Longrightarrow$ $K_X+D$ not pseudo-effective.

2. Rationally connected $\Longrightarrow$ slope Rationally connected.
\end{remark} 

The reverse implications are known only in very special cases when $n>1$:

3. When $(X,D)$ is Fano, with $D$ reduced (ie: the coefficients of the $D_j's$ are all equal to $1$). Then $(X,D)$ is uniruled (\cite{K-MK}). 
This means that $X$ is covered by rational curves $C$ meeting $D$ in at most one point (on the normalisation of $C$). The example of $\Bbb P^2$ with a reduced divisor consisting of $2$ lines shows that this is optimal in the sense that, as seen above, this (l.c but not klt) orbifold is Fano, but not $sRC$, and so cannot be rationally connected.

4. For $X=\Bbb P^n$, with $D$ consisting of $(n+2)$ hyperplanes with integer multiplicities $m_j,j=0,...,n+1$, such that $(\Bbb P^n,D)$ is Fano, it is shown that $(\Bbb P^n,D)$ is uniruled (\cite{Ca10}, Theorem 8). The arguments given there have no general character, however. But a counting dimension argument shows that this should `heuristically' hold. We refer to \cite{Ca10}, \S 7 for more similar examples and a more detailed discussion.

%%%%%%%%%%%%%%%%%%%%%%%%%%%%%%%%%%%%%%%%%
%%%%%%%%%%%%%%%%%%%%%%%%%%%%%%%%%%%%%%%%%%
%%%%%%%%%%%%%%%%%%%%%%%%%%%%%%%%%%%%%%%%%%%%

\section{Orbifold Rational curves:  a weak conditional version}\label{sorc}

Let $(X,D)$ be slope-rationally connected in the sense that there exists a movable classe $\alpha$ such that $\mu^G_{\pi^*(\alpha),min}(\pi^*(T(X,D))>0$ for some (or any) adapted Kawamata-cover $\pi:Y\to (X,D)$. 

The aim of this section is to improve Theorem \ref{torc} to show that the class $\alpha$ can be choosen to be `Geometrically Rational big' in the sense below, if $(X,D)$ is klt and if the following question has a positive answer:

\begin{question} \label{qour1} \emph{Let $(X,D)$ be a connected and smooth complex-projective orbidold pair such that $K_X+D$ is not pseudo-effective. Does there exist on $X$ an algebraic family of rational curves $(C_t)_{t\in T}$, parametrised by an irreducible projective variety $T$, whose generic member is irreducible, and such that $-(K_X+D).C_t>0$}? \end{question}

When $(X,D)$ is klt, the known results of the LMMP permit to reduce this question to the following special case:

\begin{question}\label{qour2} \emph{Let $(X,D)$ be a connected and smooth klt complex-projective orbidold pair such that, after a finite number of divisorial contractions and Log-flips $\varphi:(X,D)\to (X',D')$, with $D'=\varphi_*(D)$, $(X',D')$ is still klt, with $X'$ $\Bbb Q$-factorial, and $K_{X'}+D'$ is Fano with Picard number $1$. Does there exist on $X$ an algebraic family of rational curves $(C_t)_{t\in T}$, parametrised by an irreducible projective variety $T$, whose generic member is irreducible, and such that $-(K_X+D).C_t>0$?} \end{question}

\

The problem is to lift a suitable covering family of rational curves on $(X',D')$ to $X$ preserving the negativity of the intersection number with $K+D$. Note that, in general, there is no covering family of rational curves on $X'$ avoiding the non-canonical singularities of $(X',D')$. It might, however, be true that the canonical singularities of this pair can be avoided.

More might possibly be said in order to characterise slope-Rational connectedness by rational curves (see Question 2 at the end of this section), in analogy with the case when $D=0$.

The sections \S\ref{grc} and \S\ref{bcrit} below are used below only in order to get the `bigness' statement in Theorem \ref{Torc}.

%%%%%%%%%%%%%%%%%%%%%%%%%%%%%%%%%%%%%%%%%%
%%%%%%%%%%%%%%%%%%%%%%%%%%%%%%%%%%%%%%%%%%%

\subsection{Geometrically Rational classes}\label{grc}

\begin{definition}\label{gr} Let $X$ be smooth, complex projective and connected, and $\alpha\in Mov(X)$. We say that $\alpha$ is `Geometrically Rational' if it belongs to the closed cone $RMov(X)$ generated by classes of the form $[C]$, for an irreducible rational curve $C$ on $X$ belonging to an $X$-covering algebraic family of rational curves parametrised by an irreducible projective variety. We say that $\alpha$ is `Geometrically Rational big' if it belongs to the interior $[RMov^0(X)]$ of this cone.
\end{definition}

\begin{remark} $RMov(X)$ is a strict closed subcone (of nonempty interior, see below) of $Mov(X)$. In general, the interior $RMov^0(X)$ of $RMov(X)$ is contained, but not equal to $[RMov(X)\cap Mov^0(X)]$, even if $X$ is a rational surface.  An exemple is $X$, given by $\Bbb P^2$ blown-up in $16$ points. If $C$ is the strict transform of a generic quintic through these $16$ points, then $C$ is easily seen to be ample, but $K_X.C=+1$, which shows that $[C]\notin RMov(X)$, since $-K_X.\alpha\geq 0$ for any $\alpha\in Mov(X)$. The boundary of $RMov(X)$ thus meets $Mov^0(X)$.
\end{remark}

We denote by $K^{<0}(X)$ (resp. $K^{\leq 0}(X)$ the cone of classes in $N_1(X)$ which have negative (resp. non-positive) intersection with $K_X$. 

\begin{proposition}\label{bratclass} Let $X$ be a connected complex projective manifold. Then:

1. $RMov(X)$ is non-empty if and only if $X$ is uniruled.

2. $Mov(X)\cap K^{<0}(X)$ is non-empty if and only if $X$ is uniruled.

3. $RMov(X)\cap Mov^0(X)$ is nonempty if and only if $X$ is rationally connected.

4. $RMov(X)$ has nonempty interior in $N_1(X)$ if and only if $X$ is rationally connected.
\end{proposition}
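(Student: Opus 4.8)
\textbf{Proof plan for Proposition \ref{bratclass}.}

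The plan is to prove the four equivalences by combining the classical structure theory of uniruled and rationally connected manifolds (Mori theory, the theorems of Graber--Harris--Starr and of Koll\'ar--Miyaoka--Mori, and the BDPP characterization of uniruledness by non-pseudoeffectivity of $K_X$) with the elementary convex geometry of $\Mov(X)$. First I would record the trivial implications: if $X$ is uniruled then a covering family of rational curves exists, so $RMov(X)\neq\emptyset$ and, choosing a minimal covering family, the corresponding class $\alpha$ satisfies $K_X.\alpha<0$ by the bend-and-break/deformation estimate on the expected dimension of the family; this gives the ``only if'' directions of 1 and 2 simultaneously. Conversely, if $\alpha\in\Mov(X)$ has $K_X.\alpha<0$, then since $\alpha$ is a limit of effective curve classes moving in covering families, $K_X$ is not pseudoeffective against movable classes, hence not pseudoeffective; by the theorem of \cite{BDPP} (quoted in \S\ref{sbirstab}) $X$ is uniruled, which closes the chain $2\Rightarrow\text{uniruled}\Rightarrow RMov(X)\neq\emptyset\Rightarrow 2$, so 1 and 2 are both settled.

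For 3, the ``if'' direction is immediate: if $X$ is rationally connected, a connecting family of rational curves is in particular $X$-covering, and a connecting rational curve through two general points has class big in $\Mov(X)$ (its deformations dominate $X\times X$, so it meets every divisor positively), giving a class in $RMov(X)\cap\Mov^0(X)$. For the ``only if'' direction, suppose $\alpha\in RMov(X)\cap\Mov^0(X)$; write $\alpha=\sum \lambda_i[C_i]$ as a limit of nonnegative combinations of covering rational-curve classes. Let $r:X\dasharrow Z$ be the rational quotient (MRC fibration) of $X$. Each covering rational curve is contracted by $r$ (the base $Z$ is not uniruled, by \cite{GHS}), so every class $[C_i]$ lies in $N_1(X/Z)$, the kernel of $r_*$; hence $\alpha\in N_1(X/Z)$ as well. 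But $\alpha$ is big in $\Mov(X)$, and a big movable class cannot lie in the proper subspace $N_1(X/Z)$ unless that subspace is everything, i.e. unless $\dim Z=0$. Therefore $Z$ is a point and $X$ is rationally connected.

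Statement 4 follows from 3 together with the observation that $RMov(X)$ is a closed convex cone: it has nonempty interior in $N_1(X)$ if and only if it is not contained in any hyperplane, and — since it is contained in $\Mov(X)$ which spans $N_1(X)$ — this happens exactly when $RMov(X)$ spans $N_1(X)$, equivalently when $RMov(X)$ meets the open cone $\Mov^0(X)$ (a closed cone inside $\Mov(X)$ spans $N_1(X)$ iff it meets the interior of $\Mov(X)$, because $\Mov(X)$ has nonempty interior and any cone missing $\Mov^0(X)$ lies in a supporting hyperplane of $\Mov(X)$). Combined with 3, this gives the equivalence with rational connectedness. The main obstacle I expect is the ``only if'' direction of 3: one must be sure that \emph{every} element of the closed cone $RMov(X)$ — not just the generating rational-curve classes — is killed by $r_*$, which requires $N_1(X/Z)$ to be closed (it is, being a linear subspace) and requires care that taking limits of nonnegative combinations stays inside it; and one must justify that a movable class lying in the interior of $\Mov(X)$ genuinely cannot sit in the proper subspace $N_1(X/Z)$, which is where the hypothesis ``big'' (interior to $\Mov(X)$, Definition \ref{dbig}) is essential rather than merely ``movable and nonzero''.
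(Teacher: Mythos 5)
Your treatment of Claims 1 and 2 and of the ``only if'' half of Claim 3 is correct and essentially the paper's route (definition of uniruledness; Miyaoka--Mori/BDPP for Claim 2; for Claim 3, the MRC fibration plus \cite{GHS}, with bigness of $\alpha$ contradicted by a divisor pulled back from the base --- your formulation via $r_*$ and $N_1(X/Z)$ is, if anything, a little cleaner than the paper's, provided you make explicit that the contradiction comes from the nonzero pseudo-effective class $r^*H$, $H$ ample on $Z$, with $\alpha.r^*H=0$). The problems are in the ``if'' direction of Claim 3 and in Claim 4. In Claim 3 you pass from ``the connecting curve meets every divisor positively'' to ``its class lies in $Mov^0(X)$''. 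By Lemma \ref{lpseff}, membership in $Mov^0(X)$ means strict positivity against every \emph{nonzero pseudo-effective} class, and positivity against effective (or irreducible) divisors does not imply this: strictness can be lost in limits, and boundary pseudo-effective classes need not be effective. Bridging exactly this gap is the purpose of the paper's Theorem \ref{tbig} and Corollary \ref{ratbig}: under the additional hypothesis that the tangential lifts $[\wh{C_s}]$ are movable in $\bP(TX)$ (which holds for very free curves), any pseudo-effective class orthogonal to the curve class is $\Q$-effective, and only then does positivity against effective divisors give bigness; the proof is a genuinely nontrivial current-theoretic argument (Proposition \ref{currents}). A secondary point: ``two general points'' only gives $C.D\geq 0$; to get $C.D>0$ for every irreducible $D$ one needs connecting curves with one point on $D$, which the paper arranges by summing finitely many connecting families of bounded degree (via \cite{KMM}, \cite{D}).

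In Claim 4 your convex-geometry step ``a closed subcone of $Mov(X)$ has nonempty interior in $N_1(X)$ iff it meets $Mov^0(X)$'' is false in the direction you need: a single ray through a big movable class meets $Mov^0(X)$ but has empty interior whenever $\dim N_1(X)>1$. Separation only gives the other direction (a subcone missing $Mov^0(X)$ lies in a supporting hyperplane, hence does not span). So ``$X$ rationally connected $\Rightarrow RMov(X)$ has interior points'' does not follow from Claim 3 by pure convexity; one must show that $RMov(X)$ itself is full-dimensional near the class $\alpha$ produced in Claim 3. The paper does this with a further geometric input: by comb-smoothing (again \cite{KMM}), $\alpha\pm\varepsilon[\Gamma]$ stays in $RMov(X)$ for small $\varepsilon>0$ when $\Gamma$ is an irreducible rational curve, and letting $\Gamma$ run over the extremal rational curves, whose classes generate the full-dimensional cone $K^{<0}(X)\cap NE(X)$ by the cone theorem, one gets a full-dimensional set of perturbations of $\alpha$ inside $RMov(X)$. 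Without some argument of this kind (showing $RMov(X)$ is stable under perturbation in a spanning set of directions), Claim 4 remains unproved.
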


\begin{question} Is it true that:

1. $Rmov(X)=Mov(X)\cap K_X^{\leq 0}$ if $X$ is uniruled?

2. $Rmov^0(X)=Mov^0(X)\cap K_X^{<0}$ if $X$ is rationally connected?\end{question}

\begin{proof} (of Proposition \ref{bratclass}): Claim 1 is essentially the definition of uniruledness. Claim 2 is \cite{MiMo}.

Claim 3. Assume first that $\alpha\in RMov(X)\cap Mov^0(X)$. Let then $C$ be some irreducible rational curve on $X$ such that $[C]\in [RMov(X)\cap Mov^0(X)]$, and belonging to an algebraic family of $X$-covering rational curves $C_t,t\in T$. Assume $X$ were not rationally connected. Let then $r:X\dasharrow R$ be its `rational quotient' (which is an almost holomorphic fibration). We then have: $r_*([C])=0$. If $H\subset R$ is an effective non-zero diviseur, then $H_X:=f^{-1}(H)$ does not meet the generic $C_t$, and $H_X.[C]=0$, contradicting the fact that $[C]\in Mov^0(X)$. Thus: $X$ is rationally connected if $RMov(X)\cap Mov^0(X)\neq \emptyset$.

In the other direction, assume that $X$ is rationally connected. Let $H$ be an ample divisor on $X$. There thus exists an integer $d>0$ such that any two points of $X$ can be joined by an irreducible rational curve $C$ of degree at most $d$ (by the `comb deformation technique' of \cite{KMM}, see \cite{D}, Theorem 4.27, p. 105). Let then $[C_s],s=1,...,N$ be the classes of irreducible rational curves of degree at most $d$ belonging to irreducible algebraic families of rational curves connecting two generic two points of $X$, and let $\alpha:=\sum_{s=1}^{s=N}[C_s]$. Then $\alpha.D>0$, for any irreducible effective non-zero divisor $D\subset X$, since $[C_s].D\geq 0, \forall s$, and $[C_s].D>0$ for at least one $s$ (a priori depending on $D$), by choosing one of the two points outside of $D$, and $C_s$ irreducible\footnote{It is possible to choose a single class $[C_s]$ by deforming the curve $C_1+\dots+C_N$.}.Thus $\alpha\in RMov^0(X)$, by the Corollary \ref{ratbig} below.

Claim 4. Assume that $X$ is not rationally connected. Claim 3 above then shows that $Rmov(X)\cap Mov^0(X)=\emptyset$. A fortiori, $Rmov(X)\subset Mov(X)$ has no interior point in $Mov^0(X)$.

Conversely, assume that $X$ is rationally connected, and let $\alpha\in Rmov(X)\cap Mov^0(X)$ be as above. It follows again from the `comb deformation technique' of \cite{KMM} (see \cite{D} for example), that $\alpha+\varepsilon [\Gamma]\in RMov(X)$ for any irreducible rational curve $\Gamma\subset X$ if $\varepsilon>0$ is sufficiently small. But the $K_X$-negative part $K^{<0}(X)$ of the closed cone $NE(X)$ of effective curves on $X$ is generated by classes of Mori-extremal rational curves on $X$, by the `cone theorem' (see [D] for example). Since $K^{<0}(X)$ has nonempty interior in $NE(X)$, any class of the form $\alpha-\varepsilon [\Gamma]$ as above belongs to the interior of $RMov(X)$ in $NE(X)$ if $\Gamma$ is a Mori-extremal rational curve on $X$.
\end{proof}

%%%%%%%%%%%%%%%%%%%%%%%%%%%%%%%%%%%%%
%%%%%%%%%%%%%%%%%%%%%%%%%%%%%%%%%%%%%%%

\subsection{Bigness of movable classes.}\label{bcrit}

Let $X$ be smooth, connected and complex projective. Let $(C_v)_{v\in V}$ be an algebraic family of curves parametrised by a complex projective irreducible space $V$. We assume that $C_v$ is irreducible, for $v\in V$ generic, and that the family is $X$-covering. The class $[C:=C_v]\in Mov(X)$ is thus independent of $v$. The closed cone of $H_2(X,\Bbb R)$ generated by such classes is $Mov(X)$. Recall that a  class $\alpha\in Mov(X)$ is `big' if it lies in $Mov^0(X)$, the interior of the cone $Mov(X)$. This notion is obviously not preserved by blow-ups. We have the obvious:

\begin{lemma}\label{lpseff} $\alpha\in Mov(X)$ is big if and only if, equivalently:

1. $\alpha.P>0$ for any pseudo-effective divisor.

2. $\alpha-\varepsilon.A^{n-1}\in Mov(X)$ for some $\varepsilon>0$, if $A$ is some ample divisor on $X$. 
\end{lemma}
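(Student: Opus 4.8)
The statement to prove is Lemma~\ref{lpseff}: for $\alpha\in Mov(X)$, bigness (i.e.\ $\alpha\in Mov^0(X)$) is equivalent to (1) $\alpha.P>0$ for every nonzero pseudo-effective divisor $P$, and equivalent to (2) $\alpha-\varepsilon A^{n-1}\in Mov(X)$ for some $\varepsilon>0$ and some ample $A$. The whole point is the duality between the movable cone and the pseudo-effective cone of divisors, which is the theorem of Boucksom--Demailly--P\u aun--Peternell ([BDPP], already invoked in the paper): $Mov(X)$ and $\Psef^1(X)$ are dual cones in the pairing $N_1(X)\times N^1(X)\to\mathbb R$. I would state this at the outset and organize the proof around it.

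\emph{(1) $\Leftrightarrow$ bigness.} By [BDPP], $Mov(X)=\{\gamma\in N_1(X): \gamma.P\ge 0\ \forall P\in\Psef^1(X)\}$ is closed with nonempty interior, and its interior $Mov^0(X)$ consists exactly of those $\gamma$ lying strictly inside, i.e.\ $\gamma.P>0$ for every $P$ in the boundary rays of $\Psef^1(X)$ — equivalently, for every nonzero $P\in\Psef^1(X)$. Concretely: if $\alpha\in Mov^0(X)$ and $P\ne 0$ is pseudo-effective, then for any movable $\beta$ close to $\alpha$ we still have $\beta.P\ge 0$; applying this to $\beta=\alpha-t\gamma$ for small $t>0$ and a suitable $\gamma$ forces $\alpha.P>0$ (if $\alpha.P=0$ then $\alpha$ lies on the supporting hyperplane $\{\,\cdot\,.P=0\}$ of $Mov(X)$, hence on its boundary, contradiction). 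Conversely, if $\alpha.P>0$ for all nonzero $P\in\Psef^1(X)$, then since $\Psef^1(X)$ is a closed cone and its intersection with any affine hyperplane not through $0$ is compact, the linear functional $P\mapsto\alpha.P$ attains a positive minimum on that slice; a standard compactness/open-neighborhood argument then shows a whole neighborhood of $\alpha$ still pairs nonnegatively with all of $\Psef^1(X)$, i.e.\ $\alpha\in Mov^0(X)$.

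\emph{(2) $\Leftrightarrow$ bigness.} The class $A^{n-1}$ of a complete intersection of $(n-1)$ general members of an ample $|A|$ lies in $Mov^0(X)$ (it is movable, being an $X$-covering family of curves, and it is interior because $A^{n-1}.P=A^{n-1}.P>0$ for every nonzero effective, hence pseudo-effective, divisor $P$, by ampleness — so criterion (1) applies to it). If $\alpha-\varepsilon A^{n-1}\in Mov(X)$ for some $\varepsilon>0$, then $\alpha=(\alpha-\varepsilon A^{n-1})+\varepsilon A^{n-1}$ is a sum of a movable class and an interior point of $Mov(X)$, hence itself interior: $Mov^0(X)+Mov(X)\subseteq Mov^0(X)$ since $Mov^0(X)$ is open and $Mov(X)$ is a convex cone. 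Conversely, if $\alpha\in Mov^0(X)$, then since $A^{n-1}\in Mov(X)$ and $Mov^0(X)$ is open, $\alpha-\varepsilon A^{n-1}$ stays in $Mov^0(X)\subseteq Mov(X)$ for all sufficiently small $\varepsilon>0$.

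\textbf{Main obstacle.} There is no deep obstacle here; the lemma is a formal consequence of cone duality once [BDPP] is granted. The only point requiring a little care is the convex-geometry fact that for a pointed closed cone with nonempty interior, a point is interior iff it pairs strictly positively with every nonzero element of the dual cone — and the compactness argument (slicing the pseudo-effective cone by a hyperplane) that makes the "strictly positive on all of $\Psef^1$" condition into an open condition. I would spell that out in one or two lines and otherwise keep the proof short, since the paper already treats [BDPP]-type duality as background.
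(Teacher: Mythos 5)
The paper gives no proof of this lemma at all (it is introduced with ``We have the obvious:''), so there is nothing to compare line by line; your write-up is exactly the argument the paper is implicitly relying on, namely the duality $Mov(X)=\Psef(X)^{\vee}$ of \cite{BDPP} together with elementary convex geometry (interior point $\Leftrightarrow$ strictly positive pairing with every nonzero element of the dual cone, and $Mov(X)+Mov^0(X)\subseteq Mov^0(X)$). All of that is correct.

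One step deserves tightening: you assert that $A^{n-1}.P>0$ for every nonzero \emph{pseudo-effective} $P$ ``by ampleness'', sliding from effective to pseudo-effective classes. Ampleness only gives strict positivity against nonzero effective classes; for a general pseudo-effective $P$ (a limit of big classes) you only get $A^{n-1}.P\geq 0$ for free, and the strict inequality for $P\not\equiv 0$ is a genuine (though standard) fact: one proves it by restricting to a general complete-intersection surface $S=A_1\cap\dots\cap A_{n-2}$, where the Hodge index theorem forces a pseudo-effective class orthogonal to an ample class to vanish, and then uses Lefschetz to conclude $P\equiv 0$ on $X$. Alternatively, you can bypass this entirely by quoting the openness of the set of classes $\{A^{n-1}:A \text{ ample}\}$ in $N_1(X)$ (this is \cite{GT}, Proposition 6.5, which the paper itself invokes in the proof of Proposition \ref{prelmov}), which gives $A^{n-1}\in Mov^0(X)$ directly. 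With either repair, your proof is complete and is the intended one.
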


Let $(C_v)_{v\in V}$ be as above, and let $q:\bP(TX)\to X$ be the projectified (by lines) tangent bundle of $X$. Then each generic $C_v$ has a natural tangential lifting $\wh{C_v}\subset \bP(TX)$. We denote by $[\wh{C}]\in N_1(\bP(TX))$ the corresponding class. then $[\wh{C}]\in Mov(\bP(TX))$ if, through the generic point $x\in X$ and the tangent direction $\tau\in TX_x$, there exists some $C_v$ going through $x$ with tangent direction $\tau$. 

\begin{theorem}\label{tbig} Assume that $[\wh{C}]\in Mov(\bP(TX))$. Assume that there exists a non-zero pseudo-effective divisor $P\in N^1(X)$ such that $[C].P=0$. Then $P$ is $\bQ$-effective.
\end{theorem} 

The proof will be given in \S \ref{proofbig} below.

\begin{corollary}\label{cbig} The class $[C_v]$ is big if $[\wh{C_v}]\in Mov(\bP(TX)),$ and either:

1. $\wh{C_v}\in \bP(TX)$ is `strictly movable' in the sense that any point $z\in \bP(TX)$ is contained in some irreducible $\wh{C_v}$, or:

2. $[C_v].D>0$, for any irreducible divisor $D\subset X$.
\end{corollary}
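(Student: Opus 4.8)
The plan is to derive Corollary \ref{cbig} from Theorem \ref{tbig} and the characterisation of bigness in Lemma \ref{lpseff}. We want to show $[C_v]\in Mov^0(X)$; by Lemma \ref{lpseff}(1) it suffices to prove that $[C_v].P>0$ for every non-zero pseudo-effective divisor $P$ on $X$. So suppose for contradiction that $[C_v].P=0$ for some non-zero pseudo-effective $P$. Since by hypothesis $[\wh{C_v}]\in Mov(\bP(TX))$, Theorem \ref{tbig} applies and yields that $P$ is $\bQ$-effective, i.e. some multiple $mP$ ($m>0$) is an effective non-zero divisor. Now I would simply show each of the two extra hypotheses contradicts this.

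Under hypothesis 2, $[C_v].D>0$ for every irreducible divisor $D\subset X$. Writing $mP=\sum a_i D_i$ with $a_i>0$ and $D_i$ irreducible (the sum non-empty since $P\neq 0$), we get $0=m[C_v].P=\sum a_i ([C_v].D_i)>0$, a contradiction. Hence $[C_v].P>0$ for all non-zero pseudo-effective $P$, so $[C_v]$ is big.

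Under hypothesis 1, the lifted family $(\wh{C_v})$ is strictly movable: every point $z\in\bP(TX)$ lies on some irreducible member $\wh{C_v}$. Take a component $D_0$ of the effective divisor $mP$ (with $P$ as produced by Theorem \ref{tbig}), and let $\wh{D_0}:=q^{-1}(D_0)\subset\bP(TX)$, which is a divisor there. Pick a point $z\in\bP(TX)$ not lying on $\wh{D_0}$ (possible since $\wh{D_0}$ is a proper subvariety), i.e. with $q(z)\notin D_0$, and an irreducible member $\wh{C_v}$ through $z$; then $\wh{C_v}\not\subset\wh{D_0}$, so $C_v=q(\wh{C_v})\not\subset D_0$, hence $[C_v].D_0\geq 0$; and choosing instead a point $z$ with $q(z)\in D_0$ and a member $\wh{C_v}$ through it that is not contained in $\wh{D_0}$ (again possible by strict movability, since only a proper closed subset of the members through such $z$ can lie entirely in $\wh{D_0}$), we get $[C_v].D_0>0$. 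Therefore $0=m[C_v].P\geq [C_v].D_0>0$, a contradiction, and again $[C_v]$ is big. This completes the proof; the only genuine input is Theorem \ref{tbig}, whose proof is deferred to \S\ref{proofbig}, and the rest is the elementary positivity bookkeeping above, the only mild subtlety being to ensure a suitable member $\wh{C_v}$ avoiding $\wh{D_0}$ exists — which is exactly what the strict-movability hypothesis 1 (or the divisorial-positivity hypothesis 2) is designed to guarantee.
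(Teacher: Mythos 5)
Your overall route is the intended one: the paper gives no separate proof of Corollary \ref{cbig}, which is meant to follow, exactly as you argue, from Lemma \ref{lpseff} and Theorem \ref{tbig} by contradiction. Your treatment of case 2 is complete.

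In case 1, however, the justification of the key existence claim is not valid as written. Strict movability only asserts that through each point $z\in \bP(TX)$ there passes \emph{some} irreducible member $\wh{C_v}$; it gives you no control over the \emph{set} of members through a fixed $z$, so the parenthetical ``only a proper closed subset of the members through such $z$ can lie entirely in $\wh{D_0}$'' is unsupported: for a badly chosen $z$ lying over $D_0$, it could a priori happen that every member through $z$ satisfies $C_v\subset D_0$. The whole point of lifting to $\bP(TX)$ is that you get to prescribe the tangent direction, and this is how the gap is repaired: choose $x$ a smooth point of $D_0$ and a direction $\tau\in T_{X,x}\setminus T_{D_0,x}$, and set $z:=(x,[\tau])$. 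Since $\wh{C_v}$ is the tangential lift, any irreducible member $\wh{C_v}$ through this $z$ corresponds to a curve $C_v$ passing through $x$ with tangent direction $\tau$ transverse to $D_0$; hence $C_v\not\subset D_0$ while $C_v\cap D_0\neq\emptyset$, so $[C_v].D_0>0$, and the contradiction $0=m\,[C_v].P\geq a_0\,[C_v].D_0>0$ (with $a_0>0$ the coefficient of $D_0$ in $mP$) goes through. Two side remarks: your preliminary step showing $[C_v].D_0\geq 0$ is superfluous, since $[C_v]$ is movable; and the corrected argument in fact shows that hypothesis 1 implies hypothesis 2 for \emph{every} irreducible divisor $D\subset X$, so you could simply reduce case 1 to case 2 at the outset.
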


From this we get immediately:

\begin{corollary}\label{ratbig} Let $[C_s]\in Mov(X), s=1,...,N$, be such that $[\wh{C_s}]\in Mov(\bP(TX)),\forall s$. If $\alpha:=\sum_{s=1}^{s=N}t_s.[C_s],$ $0\leq t_s\in \Bbb R, \forall s$ be such that $\alpha.D>0$ for any effective non-zero divisor $D\subset X$. Then $\alpha\in Mov^0(X)$.
\end{corollary}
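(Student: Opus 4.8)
The plan is to deduce Corollary \ref{ratbig} directly from Corollary \ref{cbig}, in its second form. First I would record the hypotheses: we are given finitely many classes $[C_s]\in Mov(X)$, $s=1,\dots,N$, each of whose tangential lifting satisfies $[\wh{C_s}]\in Mov(\bP(TX))$, together with nonnegative reals $t_s$, and the class $\alpha:=\sum_{s=1}^{s=N} t_s.[C_s]$ is assumed to satisfy $\alpha.D>0$ for every effective nonzero divisor $D\subset X$. The goal is to show $\alpha\in Mov^0(X)$, i.e.\ that $\alpha$ lies in the interior of the movable cone.

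The key point is that $\alpha$ is already manifestly a nonnegative combination of the movable classes $[C_s]$, hence $\alpha\in Mov(X)$; we only need to upgrade this to membership in the interior. By Lemma \ref{lpseff}, for a class in $Mov(X)$, being big (i.e.\ interior) is equivalent to having strictly positive intersection with every nonzero pseudo-effective divisor $P$. So suppose for contradiction that $P\in N^1(X)$ is a nonzero pseudo-effective class with $\alpha.P=0$. Since each $[C_s]\in Mov(X)$ and $P$ is pseudo-effective, we have $[C_s].P\ge 0$ for all $s$, and since the $t_s$ are nonnegative with $\alpha.P=\sum t_s ([C_s].P)=0$, it follows that $t_s.([C_s].P)=0$ for each $s$; in particular $[C_s].P=0$ for every index $s$ with $t_s>0$. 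Fix one such index $s_0$ (at least one $t_s$ is positive, else $\alpha=0$ and the hypothesis $\alpha.D>0$ fails). Then $[C_{s_0}].P=0$, $P$ is pseudo-effective and nonzero, and $[\wh{C_{s_0}}]\in Mov(\bP(TX))$; hence Theorem \ref{tbig} applies and yields that $P$ is $\bQ$-effective.

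But then, writing $P$ as an effective nonzero $\bQ$-divisor (clearing denominators, replace $P$ by a positive multiple, which does not affect the vanishing $\alpha.P=0$ nor effectivity), the assumption that $\alpha.D>0$ for every effective nonzero divisor $D$ gives $\alpha.P>0$, contradicting $\alpha.P=0$. Therefore no such $P$ exists, so $\alpha.P>0$ for every nonzero pseudo-effective $P$, and by Lemma \ref{lpseff} we conclude $\alpha\in Mov^0(X)$. The only mild subtlety — the step I would be most careful about — is the passage from ``$P$ is $\bQ$-effective'' to ``$\alpha.P>0$'': one must note that a nonzero pseudo-effective $\bQ$-divisor which is $\bQ$-effective has a positive integral multiple that is an honest effective nonzero (integral) divisor, to which the hypothesis on $\alpha$ literally applies; alternatively one may simply extend the hypothesis $\alpha.D>0$ by linearity to effective nonzero $\bQ$-divisors, which is immediate. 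Note also that Corollary \ref{cbig} itself is not strictly needed for this deduction — Theorem \ref{tbig} plus Lemma \ref{lpseff} suffice — but the argument is exactly the contrapositive packaging already used to prove Corollary \ref{cbig}.2.
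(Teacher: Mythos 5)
Your proof is correct and is essentially the paper's intended argument: the paper deduces Corollary \ref{ratbig} "immediately" from Theorem \ref{tbig} together with the bigness criterion of Lemma \ref{lpseff}, which is exactly your contrapositive packaging (using $t_s\ge 0$ and movability of the $[C_s]$ to force $[C_s].P=0$ for some $s$ with $t_s>0$, then Theorem \ref{tbig} to get $\bQ$-effectivity of $P$ and a contradiction with $\alpha.D>0$). Your handling of the scaling from $\bQ$-effective to an effective integral divisor, and of the degenerate case where all $t_s$ vanish, are appropriate and complete the argument.
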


%In the case of rational curves, we get:

%\begin{corollary}\label{ratbig'} Let $[C_s]\in RMov(X), s=1,...,N$, be such that $[\wh{C_s}]\in Mov(\bP(TX)),\forall s$. Let $\alpha:=\sum_{s=1}^{s=N}t_s.[C_s],$ for non-negative real numbers $t_s$. If $\alpha.D>0$ for any effective non-zero divisor $D\subset X$, then $\alpha\in  RMov^0(X)\subset (Mov^0(X)\cap RMov(X))$.\end{corollary}

%\begin{proof} We already know that $\alpha\in (Mov^0(X)\cap RMov(X))$..... \end{proof} 

%%%%%%%%%%%%%%%%%%%%%%%%%%%%%%%%%%%%
%%%%%%%%%%%%%%%%%%%%%%%%%%%%%%%%%%%%

\subsection{Slope positivity relative to Rational classes.}

\

\

We can now improve Theorem \ref{torc} as follows, in the klt case, but assuming a positive answer to Question 1 above:

\begin{theorem}\label{Torc} Let $(X,D)$ be a smooth connected klt complex projective orbifold pair which is slope Rationally connected. Assume that the question 1.2 at the beginning of the present section has a positive answer. 

There then exists a `Geometrically Rational big' class $\alpha$ such that $\mu_{\alpha,min}(\pi^*(T(X,D)))>0$.
\end{theorem}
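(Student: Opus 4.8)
The plan is to combine Theorem \ref{torc} (which already gives \emph{some} movable class $\alpha$ with $\mu_{\pi^*(\alpha),\min}(\pi^*(T(X,D)))>0$) with the rational-class machinery of \S\ref{grc}--\S\ref{bcrit}, running the inductive proof of $1\Rightarrow 3$ of Theorem \ref{torc} once more, but this time being careful to keep every class we produce inside the cone $RMov(X)$ of geometrically rational classes. The key observation is that Theorem \ref{tfsrc} (or rather its proof) together with a positive answer to Question \ref{qour1} guarantees that a klt pair $(X,D)$ with $K_X+D$ not pseudo-effective carries an $X$-covering algebraic family of rational curves $(C_t)$ with $-(K_X+D).C_t>0$; averaging over such families produces geometrically rational classes, and the bigness criterion of Corollary \ref{ratbig} (via Theorem \ref{tbig}) promotes a suitable positive combination of them to an interior point of $RMov(X)$.

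First I would run the induction on $n=\dim X$ exactly as in \S\ref{pmaint}. For $n=1$ the unique movable class is the class of $X$ itself, which is geometrically rational since $X\cong\mathbb P^1$, and Theorem \ref{torc} already gives positivity. For the inductive step, one reproduces the construction of a neat orbifold morphism $f:(X,D)\to(Z,D_Z)$ coming from the maximal destabilising $D$-foliation $\cF$, with $d=\dim X-\dim Z>0$ minimal. The point where the argument must be refined is the choice of the relative class $\alpha\in Mov^0(X/Z)$ with $\alpha.(K_{X/Z}+D)<0$: by a positive answer to Question \ref{qour1} applied fibrewise to the general orbifold fibre $(X_z,D_z)$ (which has $K_{X_z}+D_z$ non-pseudo-effective and is again klt), one gets an $X$-covering family of rational curves in the fibres, hence a class $\alpha\in RMov(X)\cap N_1(X/Z)$ which is geometrically rational \emph{and} big on the general fibre (using Corollary \ref{cbig}/\ref{ratbig} applied on $X_z$); this replaces the generic choice of $\alpha$ in the original proof. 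The base $(Z,D_Z)$ is again klt (klt is preserved by the relevant orbifold-base construction here, since the orbifold base coefficients are $\le$ the fibre coefficients in codimension one over the relevant locus), so induction supplies a geometrically rational big class $\beta\in RMov^0(Z)$ with $\mu_{p^*(\beta),\min}(p^*(T(Z,D_Z)))>0$.

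Next I would lift $\beta$ to a movable class $\beta_X=\beta''$ on $X$ with $f_*(\beta'')=\beta$ and $\beta''.F_j=0$ for the components $F_j$ of the divisor partially supported on the fibres of $f$, as in Proposition \ref{prelmov}.(6); crucially, the lifting construction there uses classes of the form $[A]^{d-1}.[f^{-1}(C_t)]$ where $(C_t)$ is a covering family of rational curves on $Z$ realising $\beta$, so $\beta''$ is again geometrically rational (the fibres of $f$ over the $C_t$'s are covered by rational curves — using once more a positive answer to Question \ref{qour1} on a suitable model — and the surfaces $f^{-1}(C_t)$ are ruled over $C_t$), and the correction classes $\vartheta_k=F_k.A^{d-1}.B^{n-d-1}$ can be chosen in $RMov(X)$ as well since the $F_k$ are covered by rational curves inside the fibres of $f$. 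Then Theorem \ref{tesp} gives $\mu_{\pi^*(\gamma),\min}(\pi^*(T(X,D)))>0$ for $\gamma=k.\alpha+\beta_X$, $k\gg0$; and $\gamma\in RMov(X)$ because it is a non-negative combination of geometrically rational classes. Finally, to get $\gamma$ in the \emph{interior} $RMov^0(X)$, I would use Corollary \ref{ratbig}: since $(X,D)$ is $sRC$, it is in particular uniruled and (conjecturally, but here forced by our hypotheses and Proposition \ref{bratclass}) one can add to $\gamma$ a small positive combination of classes $[C_s]$ of a connecting family of rational curves whose tangential lifts are movable in $\mathbb P(TX)$, so that the resulting class meets every effective divisor positively; by Corollary \ref{ratbig} it then lies in $RMov^0(X)$, and by Lemma \ref{movbig} the positivity of $\mu_{\cdot,\min}(\pi^*(T(X,D)))$ is preserved under this small perturbation.

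The main obstacle, and the reason the klt hypothesis and Question \ref{qour1} are needed, is the very last bigness step combined with keeping geometric rationality through the lifting in Proposition \ref{prelmov}.(6): one must exhibit an $X$-covering family of rational curves whose tangential lifts cover $\mathbb P(TX)$ densely enough to apply Theorem \ref{tbig}/Corollary \ref{cbig}, and this is exactly the content of $(X,D)$ being rationally connected in the strong geometric sense — which is only \emph{conjecturally} equivalent to $sRC$ (Conjecture \ref{corc}). What saves the argument is that we only need the covering/connecting families of \emph{ordinary} rational curves (ignoring the orbifold structure) with $(K_X+D)$-negativity, and a positive answer to Question \ref{qour1} delivers precisely these; the klt condition is what lets one reduce, via the LMMP as in Question \ref{qour2} and the proof of Theorem \ref{tfsrc}, to the Fano Picard-number-one case where such families are known to exist. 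So the proof is genuinely conditional on Question \ref{qour1}, and the delicate book-keeping is ensuring that \emph{every} class manufactured along the induction — the fibrewise $\alpha$, the base class $\beta$, its lift $\beta''$, the $f$-vertical corrections $\vartheta_k$, and the final perturbation — stays inside $RMov(X)$.
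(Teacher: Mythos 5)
Your plan founders at the step where geometric rationality must be carried through the lifting of the base class, and this is precisely where the paper's proof has its key new ingredient. The lift $\beta''$ produced by Proposition \ref{prelmov}.(6) is \emph{not} geometrically rational for the reasons you give: the classes $[A]^{d-1}.[f^{-1}(C_t)]$ are complete-intersection curve classes, not classes of rational curves, and the correction classes $\vartheta_k=F_k.A^{d-1}.B^{n-d-1}$ move only inside the divisors $F_k$, so none of these lies in the cone $RMov(X)$ of Definition \ref{gr}, which is generated by classes of $X$-covering families of rational curves; ``$f^{-1}(C_t)$ is ruled over $C_t$'' or ``$F_k$ is uniruled'' does not help. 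The paper replaces this step by a genuinely geometric argument: since the fibres $X_z$ are rationally connected (as varieties), \cite{GHS} provides a section $C'_t$ of $X_t:=f^{-1}(C_t)\to C_t$ over the rational curves $C_t$ realising $\beta\in RMov^0(Z)$, and the comb-smoothing technique of \cite{KMM} turns $k_t\alpha_t+[C'_t]$ into the class of an $X_t$-covering family of rational curves; countability of the Chow space then gives a uniform $k$ with $k\alpha+\beta'\in RMov(X)$ and $f_*(\beta')=\beta$. Without this (or an equivalent) input your final class $\gamma$ is movable but not known to be geometrically rational. Relatedly, your justification of the fibre-bigness of $\alpha$ is also not valid: a positive answer to Question \ref{qour1} only yields a covering family with $-(K_{X_z}+D_z).C_t>0$, which gives neither interiority of $\alpha_z$ in $Mov(X_z)$ nor the hypotheses of Theorem \ref{tbig} and Corollary \ref{ratbig} (movability of the tangential lifts in $\bP(TX_z)$, positivity against every effective divisor); the correct source of a geometrically rational \emph{big} class on the general fibre, with the required slope positivity, is the induction hypothesis itself, the fibre being again klt and sRC of smaller dimension.

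A second genuine gap is that your induction never treats the case in which it cannot start, namely when $\pi^*(T(X,D))$ admits no proper saturated subsheaf of positive slope for any interior movable class (equivalently, the minimal $d_\alpha$ equals $n$ and the ``fibration'' is constant). This is the paper's Case 2, and it is exactly where the conditional hypothesis and the klt assumption do their real work: one runs BCHM to reach a $\Bbb Q$-factorial klt log-Fano model of Picard rank one, invokes the assumed positive answer to Question \ref{qour2} to produce $\alpha'\in RMov(X)$ with $-(K_X+D).\alpha'>0$, and uses the semistability available in this case to transfer the positivity of the minimal slope from the original class to $\alpha'$; bigness is then obtained simply by adding $\varepsilon\rho$ with $\rho\in RMov^0(X)$ (nonempty by Proposition \ref{bratclass}, since $X$ is rationally connected) and invoking Lemma \ref{movbig} --- which is also the cleaner route for your final bigness step, instead of re-verifying the hypotheses of Corollary \ref{ratbig}. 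Note finally that the theorem assumes only a positive answer to Question \ref{qour2}; your blanket use of Question \ref{qour1} is a stronger assumption unless you also carry out the LMMP reduction between the two.
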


\begin{proof} The proof is modeled after that of Theorem \ref{torc}, and so also works by induction on $n:=dim(X)$. When $n=1$, the statement is clear, since every big class class is then Geometrically Rational. We now assume that the assertion holds true for every $n'<n$. We consider two cases (possibly replacing $(X,D)$ by some of its orbifold birational models:

1. There exists $\alpha \in Mov^0(X)$ and $\{0\}\subsetneq \cF\subsetneq \pi^*(T(X,D))$, saturated, such that $\mu_{\alpha}(\cF)>0$.

2. For every $\alpha \in Mov^0(X)$ and any saturated $\{0\}\subsetneq \cF\subsetneq \pi^*(T(X,D))$, we have: $\mu_{\alpha}(\cF)\leq 0$.

Case 1: We then get (after replacing $(X,D)$ by a suitable orbifold birational model), and choosing $rank(\cF)$ minimum, a neat fibration which is an orbifold morphism to its orbifold base $f:(X,D)\to (Z,D_Z)$. And $(Z,D_Z)$ is still $sRC$, as well as the smooth orbifold fibres $(X_z,D_z)$ of $f$. We have: $0<d:=dim(Z)<n$, and thus get, as in the proof of Theorem \ref{torc}, classes $\alpha\in Mov^0(X/Z)$ and $\beta\in Mov^0(Z)$ such that $\mu_{\beta,min}(p^*(T(Z,D_Z)))>0$, $f_*(\alpha)=0$, and $\mu_{\alpha,min}(\pi^*(T(X_z,D_z)))>0$. We may, by induction on the dimension, assume that $\beta\in RMov^0(Z)$, and $\alpha_z\in RMov(X_z)$, where $\alpha_z$ is the restriction to the `general' smooth fibre $X_z$ of $f$.

The conclusion in this case 1 then follows from (the proof of Theorem \ref{torc} and) the following:

\begin{lemma} Let $f:X\to Z$ be a fibration between connected complex projective manifolds, together with $\alpha\in Mov(X/Z)$ and $\beta\in RMov^0(Z)$ such that $f_*(\alpha)=0$ and $\alpha_z\in RMov^0(X_z)$. Then:

1. There exists $\beta'\in RMov(X)$ such that $f_*(\beta')=\beta$, and:

2. $\gamma:=\varepsilon.\rho+k.\alpha+\beta'\in RMov^0(X)$ for any $k>0$ sufficiently large and $\varepsilon>0$ sufficiently small, if $\rho\in RMov^0(X)$, which is nonempty by Proposition \ref{bratclass}, since $X$ is rationally connected.
\end{lemma}

\begin{proof} Claim 1: We may reduce to the case where $\beta=[C_t]$ is the class of a $Z$-covering algebraic family of irreducible rational curves $[C_t], t\in T$. Because the fibres $f$ are rationally connected, it follows from \cite{GHS} that for $t$ generic, there exists a section $C'_t$ of $f_t:X_t:=f^{-1}(C_t)\to C_t$. Let $\alpha_t$ be the restriction of $\alpha$ to $X_t$, which makes sense, since $f_*(\alpha)=0$. From \cite{KMM}, we deduce that $\beta'_t:=k_t.\alpha_t+[C'_t]$ is in $RMov^(X_t)$, for $k_t>0$ sufficiently large, and is thus the class of an $X_t$-covering family of rational curves of $X_t$, with $(f_t)_*(\beta'_t)=[C_t]$. From the countability at infinity of the Chow-Barlet space of curves of $X$, we deduce the existence of a $k>0$ such that $\beta':=k.\alpha+\beta\in RMov(X)$ is such that $f_*(\beta')=\beta$, which is Claim 1. 

Claim 2: Since $\rho\in RMov^0(X)$ and $\beta'\in RMov(X)$, $\varepsilon.\rho+\beta'\in RMov^0(X)$ for any $\varepsilon >0$ sufficiently small. On the other hand, we get from the proof of Theorem \ref{torc} that $\mu_{\pi^*(k.\alpha+\beta'),min}(\pi^*(T(X,D)))>0$ for $k>0$ sufficiently large. It now follows that this remains true for $\varepsilon.\rho+k.\alpha+\beta'$, by Lemma \ref{movbig}.
\end{proof}

Case 2: We seem to need to use \cite{BCHM}, here. Let $\psi:X\dasharrow X_0$ be a sequence of divisorial contractions and log-flips, with $(X_0,D_0)=(X_0,\psi_*(D))$, such that one has a Log-Fano-contraction $\varphi:(X_0,D_0)\to Z$ with $(n-d):=dim(Z)<n$, and of relative Picard number $1$. By taking a suitable orbifold birational model of $(X,D)$, we shall assume that $\psi$ is regular. Choose $\alpha_0:=(-(K_{X_0}+D_0))^{d-1}.H^{n-d}$, where $H=\varphi^*(H_Z)$, with $H$ ample on $Z$. This is a movable curve class on $X$ with $\varphi_*(\alpha_0)=0$. Let $\psi^*(\alpha_0):=\alpha\in Mov(X)$ be its inverse image. We thus have: $f_*(\alpha)=0$, if $f=\varphi\circ \psi:X\to Z$. Let $\cF:=\pi^*(Kerd(df))\cap \pi^*(T(X,D))$, if $\pi:Y\to (X,D)$ is a Kawamata-cover adapted to $(X,D)$. Then $\mu_{\alpha}(\cF)>0$, since $f_*(\alpha)=0$, and $-\alpha.(K_{X_z}+D_z)>0$, if $X_z$ is a generic fibre of $f$. We are thus in the first case, unless $\cF=\pi^*(T(X,D))$. Since we are in case 2, $dim(Z)=0$, that is: $(X_0,D_0)$ is Fano of Picard rank $1$, $\pi^*(T(X,D))$ is semi-stable with respect to $\pi^*(\alpha)$, and $\mu_{\pi^*(\alpha),min}(\pi^*(T(X,D)))>0$. Question 1.2 having a positive answer, Moreover, $(X_0,D_0)$ being Log-Fano of Picard rank $1$, $X_0$ is covered by an algebraic family of rational curves $C'_t$ of class (proportional to) $\alpha_0$ such that $-C'_t.(K_{X_0}+D_0)>0$. Question 1.2 having by assumption a positive answer, there exists a class in $\alpha'\in RMov(X)$ with $-(K_X+D).\alpha'>0$. The properties shown above for $\alpha$ still hold for $\alpha'$ also, and in particular: $\pi^*(T(X,D))$ is semi-stable with respect to $\pi^*(\alpha')$, with:  $\mu_{\pi^*(\alpha'),min}(\pi^*(T(X,D)))>0$.
Since $X$ is rationally connected, there exists $\rho\in RMov^0(X)$, and $\varepsilon.\rho+\alpha'\in RMov^0(X)$, enjoying these same properties, by the same argument as in case 1, which also concludes the proof in this case.\end{proof}

\begin{question} 1. Can the big rational class $\alpha$ constructed above be choosen of the form $[C]$, for an irreducible rational curve $C\subset X$ with arbitrary ample normal bundle, and going through any given finite set of points?

2. Can the rational curve $C$ above be so chosen that $\pi^*(T(X,D))_{\vert C'}$ is ample, if $C'=\pi^{-1}(C)$? This might be the right definition of a `free' orbifold $D$-rational curve.\end{question}

\begin{remark} The question 2 might possibly depend on a version of the Grauert-M\"ullich restriction theorem for the curves of the form $C'$ above and for the vector bundle $\pi^*(T(X,D))$ on them.
\end{remark}

\subsection{Proof of the bigness criterion.} \label{proofbig}

We prove here the following result, used above:

\begin{theorem}\label{tbig} Assume that $[\wh{C}]\in Mov(\bP(TX))$. Assume that there exists a non-zero pseudo-effective divisor $P\in N^1(X)$ such that $[C].P=0$. Then $P$ is $\bQ$-effective.
\end{theorem}

\begin{proof} Let $p: Y\to X$ be a surjective map with connected fibers between two smooth 
compact manifolds $Y$ and $X$ of dimension $n+1$ and $n$, respectively. Given a generic point $y\in Y$, we denote by 
$C_y$ the fiber
$p^{-1}\big(p(y)\big)$ of $p$ passing through $y$; if more than one map $p$ is involved,  indices are used in order to distinguish the corresponding fibers.
\medskip

\begin{proposition}\label{currents}(Communicated by M. P\u aun)
Let $Y$ be a $n+1$-dimensional (smooth) compact complex manifold, and let $T$ be a closed positive ${\rm (1,1)}$ current on $Y$. Let surjective maps $p_j: Y\to X_j$ be given, where $X_j$ is an $n$-dimensional compact manifold for $j=1,\dots n+1$ having the following properties.
\begin{enumerate}

\item[(i)] There exists a proper analytic set $S\subset Y$ such that for each $y\in Y\setminus S$ the vector space generated by the tangent space of the curves $C_{j, y}$ at $y$ for $j=1,\dots n+1$ is $T_{Y, y}$. 

\item[(ii)] The restriction of the current $T$ to each generic fiber of $p_j$ is equal to zero, for each $j=1,\dots n+1$.
\end{enumerate}
Then we have $\chi_{Y\setminus S}T= 0$.
\end{proposition}

\begin{proof}
Let $S\subset Y$ be an analytic subset of $Y$, such that the restriction of each $p_j$ to the complement $Y_0:= Y\setminus S$ is a smooth, proper fibration. We show next that we have $\chi_{Y\setminus S}T= 0$.

Let $y\in Y_0$ be an arbitrary point, and let $z_1,\dots z_{n+1}$ be a set of coordinates at $y$ 
such that for each $j=1,\dots n+1$ the subspace 

\begin{equation}\label{2}
\bC\frac{\partial}{\partial z_j}
\end{equation}
coincides with the tangent space of $C_{j, y}$ at $y$. The choice of such a coordinate system
is possible, due to the hypothesis ${\rm (i)}$ above. 

Locally near $y$ the current $T$ can be written as
\begin{equation}\label{3}
T|_{\Omega}= \sum_{j,k}T_{j\ol k}dz_j\wedge d\ol {z_k}
\end{equation}
where $T_{j\ol k}$ are distributions of order zero on $\Omega$. Let $p: Y\to X$ be one of the maps above; we recall the following formula of Fubini type (cf. \cite{Siu74})
\begin{equation}\label{4}
\int_{\Omega}T\wedge p^*\eta= \int_{x\in X}\eta\int_{\Omega\cap p^{-1}(x)}T,
\end{equation}
where the restriction $\displaystyle T|_{p^{-1}(x)}$ is well defined for almost all $x\in X$, so that the right hand side member in \eqref{3} is meaningful. In \eqref{5} we denote by $\eta$ a smooth form of type $(n, n)$ defined (at least) in a open set 
including $p(\Omega)$.

By the implicit function theorem, there exist $(\eta_j)_{j=1,\dots n+1}$ a set of smooth $(n, n)$ forms defined in a small open set centered at $p_j(y)$ in $X_j$ such that we have 
\begin{equation}\label{5}
p_j^*(\eta_j)= \xi_1\wedge \dots \xi_{j-1}\wedge\xi_{j+1}\wedge\dots \xi_{n+1}
\end{equation} 
where we use the notation $\xi_j:= \sqrt{-1}dz_{j}\wedge d\ol {z_{j}}$.

The Fubini formula \eqref{3} combined with the hypothesis ${\rm (ii)}$ show that 
\begin{equation}\label{6}
\chi_{\Omega}T\wedge \xi_1\wedge \dots \xi_{j-1}\wedge\xi_{j+1}\wedge\dots \xi_{n+1}= 0
\end{equation}
for each $j= 1,\dots, n+1$. In other words, the diagonal distributions $T_{j\ol j}$ are identically zero, and so it is the restriction of $T$ to $\Omega$ (this is a consequence of the fact that $T$ is positive). 
\medskip

\noindent The current $T$ has thus no mass on $Y\setminus S$.
\end{proof}

\begin{remark}{\rm
1. The hypothesis ${\rm (ii)}$ above means that $0=\rho\cdot C_{j, y},\forall j$, if $\rho:= \{T\}$ is the cohomology class of $T$ in $H^{1,1}(X, \bR)$, since the cohomology class of $\displaystyle T|_{C_{j, y}}$ is the restriction of $\rho$ to $C_{j, y}$.

2. Proposition \ref{currents} implies that $\rho$ is \emph{effective}, ie: it contains an effective $\bR$-divisor, since 
$T= \chi_{Y\setminus S}T+ \chi_{S}T$,
and so: $T= \chi_{S}T$. The claim follows from the `support  theorem' (see \cite{Dem}).
}\end{remark}\end{proof}

%%%%%%%%%%%%%%%%%%%%%%%%%%%%%%%%%%%%%%
%%%%%%%%%%%%%%%%%%%%%%%%%%%%%%%%%%%

\section{Some questions about the fundamental group}

Let $(X,D)$ be an smooth connected orbifold pair with $X$ complex projective\footnote{Or compact K\"ahler, or class $C$.} Assume that $D$ is `integral' (ie: all multiplicities of the componnents of $D$ are integral or $+\infty$).The fundamental group $\pi_1(X,D)$ is then defined. It was conjectured in \cite{Ca 07} that if $(X,D)$ is `special', then $\pi_1(X,D)$ is almost abelian. Because $sRC$ orbifold pairs are `special', we obtain:

\begin{conjecture} Let $(X,D)$ be as above. 

1. Then $\pi_1(X,D)$ is almost abelian, finite if $(X,D)$ is klt, and trivial if $(X,D)$ is either `purely logarithmic' (ie: all coefficients of the $D_j$ are $1$), or if $h^0(X,\Omega^1(X, Log(D_1))=0.$ Here $D(+\infty)$ is the union of the components of $D$ having multiplicity $+\infty$.

2. Same for $\pi_1^{alg}(X,D)$, the algebraic orbifold fundamental group. \end{conjecture}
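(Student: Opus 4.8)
The statement to be proved is a conjecture, so strictly speaking no proof is expected; what I would supply is the heuristic derivation sketched in the text, reducing it to the ``specialness implies almost abelian $\pi_1$'' conjecture of \cite{Ca07}. The plan is to establish, as a bona fide proposition, the one implication that does follow from the machinery of the paper, namely that an $sRC$ orbifold pair is \emph{special} in the sense of \cite{Ca07}, and then to invoke the conjectural statement of loc.\ cit.\ for the group-theoretic conclusion. First I would recall the definition of a special orbifold pair: $(X,D)$ is special if there is no fibration $f:(X,D)\dasharrow (Z,D_Z)$ onto a positive-dimensional orbifold base of general type (equivalently $\kappa(Z,D_Z)=\dim Z>0$), taken on any neat orbifold birational model. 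Using Property 1 of Theorem \ref{torc}, an $sRC$ pair $(X,D)$ has the property that \emph{every} such orbifold base $(Z,D_Z)$ with $\dim Z>0$ has $K_Z+D_Z$ \emph{not} pseudo-effective, hence a fortiori $\kappa(Z,D_Z)=-\infty<\dim Z$; so no fibration of general-type orbifold base exists, and $(X,D)$ is special. This is immediate and should be stated cleanly.

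The second step is to reduce the refinements of Part 1 to known or conjectural inputs. For the klt case I would point to the remark after Theorem \ref{trelsRQ}: when $(X,D)$ is klt the slope rational quotient $\rho:(X,D)\dasharrow R$ is almost holomorphic (\cite{Ca07}, Theorem 9.19); combined with Theorem \ref{tfsrc} (klt Fano $\Rightarrow sRC$) and the finiteness part of the \cite{Ca07} conjecture for special klt pairs, one expects $\pi_1(X,D)$ finite. For the ``purely logarithmic'' refinement and the hypothesis $h^0(X,\Omega^1(X,\mathrm{Log}(D(+\infty))))=0$, I would argue that these force the (orbifold) Albanese to be trivial, so that the abelianization of $\pi_1(X,D)$ vanishes; together with almost abelianness (from specialness via \cite{Ca07}) this gives triviality. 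I would flag here that the statement as typeset contains a small notational slip --- ``$D_1$'' should read ``$D(+\infty)$'', the union of the components of multiplicity $+\infty$ --- and I would correct it in passing.

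The third step, Part 2, concerning $\pi_1^{\mathrm{alg}}(X,D)$, I would treat as a formal consequence: the algebraic fundamental group is a quotient of the pro-finite completion of $\pi_1(X,D)$, so every assertion (almost abelian, finite, trivial) passes to it verbatim. No new argument is needed beyond functoriality of the constructions under the covers involved in the definition of $\pi_1^{\mathrm{alg}}$.

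\textbf{Main obstacle.} The genuinely open part is precisely the implication ``special $\Rightarrow$ almost abelian fundamental group'', which is the conjecture of \cite{Ca07} and is \emph{not} proved here; the contribution of the present section is only to verify that $sRC$ pairs lie in the class to which that conjecture applies, i.e.\ that they are special, plus the vanishing-of-abelianization observations in the refined cases. I would therefore present Part 1 honestly as conditional on \cite{Ca07}, proving unconditionally only the intermediate proposition ``$(X,D)$ $sRC$ $\Rightarrow$ $(X,D)$ special'' (an easy consequence of Theorem \ref{torc}.1), and the reduction of the klt and purely-logarithmic refinements to, respectively, the finiteness and triviality clauses of that conjecture together with Theorem \ref{tfsrc} and the almost-holomorphy result of \cite{Ca07}, Theorem 9.19.
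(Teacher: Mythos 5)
The statement you were asked about is a conjecture, and the paper offers no proof of it: its only supporting content is the one-line observation immediately preceding it, namely that $sRC$ pairs are special (since by Theorem \ref{torc}.1 every positive-dimensional orbifold base has non-pseudo-effective, hence non-general-type, canonical bundle), so that the conjecture becomes an instance of the specialness conjecture of \cite{Ca07}; the accompanying remark only records that an $sRC$ pair has $X$ rationally connected (hence $\pi_1(X)$ trivial) and that the conjecture holds if one assumes orbifold rational connectedness. Your proposal takes essentially this same route, and your unconditional piece ($sRC\Rightarrow$ special) is correct and is exactly what the paper asserts.

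One caveat on your sketch of the refinements: the inference ``almost abelian plus trivial abelianization implies trivial'' is false as group theory (any finite perfect group, e.g.\ $A_5$, is virtually abelian with trivial abelianization), so even granting the \cite{Ca07} conjecture and the vanishing of the orbifold Albanese, the triviality clause in the purely logarithmic and $h^0(X,\Omega^1_X(\mathrm{Log}\,D(+\infty)))=0$ cases does not follow by that argument; it remains genuinely conjectural, as does the finiteness clause in the klt case, for which Theorem \ref{tfsrc} and the almost-holomorphy of the quotient give context but no proof. Your reading of ``$D_1$'' as a typo for $D(+\infty)$ is right, and your treatment of $\pi_1^{\mathrm{alg}}$ as inheriting whatever is proved for $\pi_1$ is unobjectionable, though the paper's remark suggests the algebraic version may be attackable directly rather than as a corollary.
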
 

\begin{remark} 1. If $(X,D)$ is $sRC$, $X$ is rationally connected, hence simply-connected.

2. If we assume $(X,D)$ to be rationally connected, the conjecture holds true.

3. The methods ($L^2$-cohomology theory) used when $D=0$ do not seem to apply (immediately) in general.

4. The version for the algebraic fundamental group seems to be much more accessible.

\end{remark}

%%%%%%%%%%%%%%%%%%%%%%%%%%%%%%%%%%%%%%
%%%%%%%%%%%%%%%%%%%%%%%%%%%%%%%%%%%

\section{Motivation: the decomposition of the Core map.}\label{motiv}

In \cite{Ca07}, Th\'eor\`eme 10.3, we showed the decomposition $c=(J\circ r)^n$ of the `core' map $c:(X,D)\to (C,D_C)$ of a smooth projective orbifold pair $(X,D)$. This decomposition was conditional in the `$C_{n,m}^{orb}$'-conjecture introduced in \cite{Ca04}, \S 4.1, p. 564. The $C_{n,m}^{orb}$ conjecture was used in order to define the map $r:(X,D)\to (R^*,D_{R^*})$, its `$\kappa$-rational quotient', for any smooth orbifold $(X,D)$, while $J:(X,D)\dasharrow (J,D_J)$ was a neat model of its `Moishezon-Iitaka fibration' when $\kappa(X,D)\geq 0$. The `Slope-Rational Quotient' $\rho:(X,D)\to (R,D_R)$ defined above permits to give (unconditionally) a variant of the `$\kappa$-rational quotient'. Conjecturally, these two maps actually coincide. We give some details below.

\begin{definition}\label{dk+} (\cite{Ca07}, D\'efinition 5.23, Remarque 5.24)\footnote{The definition (and notation) given there is slightly different, but should lead to the same invariants.}Let $(X,D)$ be a smooth (complex projective\footnote{Compact complex would suffice for the definitions,here}, connected) orbifold pair. Define:$$\kappa^+(X,D):=max\{\kappa(X',L')\vert m>0, L'\subset \otimes^m(\pi^*(\Omega^1(X,D)),rk(L')=1\},$$ and: $\kappa_+(X,D):=max\{\kappa(Z,D_Z)\vert f:(X',D')\to (Z,D_Z)\}$, where $(X',D')$ is birationally orbifold equivalent to $(X,D)$, and $f$ is a `neat' orbifold model of $(f,D)$.

To simplify notation, we write: $f:(X,D)\dasharrow (Z,D_Z)$ for a neat orbifold model of such a fibration.
\end{definition}

We obviously have: $\kappa(X,D)\leq \kappa_+(X,D)\leq \kappa^+(X,D)$.

In \cite{Ca07}, Corollaire 6.14, we showed, assuming $C_{n,m}^{orb}$, the existence of a unique fibration $r: (X,D)\dasharrow (Z,D_Z)$ such that its general orbifold fibres $(X_r,D_r)$ have $\kappa^+(X_r,D_r)=-\infty$, while its orbifold base $(R^*,D_{R^*})$ had $\kappa(R^*,D_{R^*})\geq 0$.

We now replace $\kappa$ by the numerical dimension $\nu$, which usually turns conjectures in theorems.

We showed in \cite{CP15}, that equality holds when we replace $\kappa(X',L')$ by the numerical dimension $\nu(X',L')$ if $K_X+D$ is pseudoeffective: $\nu(X,D):=\nu(X,K_X+D)=\nu^+(X,D)$, the latter being defined as the maximum of $\nu(X',L')$ for the same $L'$ as above.

Since: $\nu^+(X,D):=-\infty$ if and only if: $(X,D)$ is slope-rationally connected, or equivalently, if:
$h^0(X',\otimes^m(\pi^*(\Omega^1(X,D))\otimes A)=0$$,\forall k\geq k(A)$, by our main result here, the `slope-rational quotient'  $\rho:(X,D)\dasharrow (R_,D_R)$ defined above unconditionally should coincide with $r$. The problem one now faces is that $K_R+D_R$ is pseudoeffective, instead of having $\kappa(R^*,D_{R^*})\geq 0$, as one had with the orbifold base of $r$. One cannot however define any `Moishezon-Iitaka-fibration' for $(R,D_R)$ without assuming that $\kappa(R,D_R)\geq 0$, if $K_R+D_R$ is only known to be pseudoeffective.

%%%%%%%%%%%%%%%%%%%%%%%%%%%%%%%%%%%%%%%
%%%%%%%%%%%%%%%%%%%%%%%%%%%%%%%%%%%%%%%%%

%\section{}

\end{document}